\begin{document}

\begin{center}

{\bf{\LARGE{Hyperparameter tuning via trajectory predictions: Stochastic prox-linear methods in matrix sensing}}}

\vspace*{.2in}

{\large{
\begin{tabular}{ccc}
Mengqi Lou$^{\star}$, Kabir Aladin Verchand$^{\star,\ddagger}$, Ashwin Pananjady$^{\star, \dagger}$
\end{tabular}
}}
\vspace*{.2in}

\begin{tabular}{c}
Schools of $^\star$Industrial and Systems Engineering and
$^\dagger$Electrical and Computer Engineering, \\
Georgia Institute of Technology\\
$^\ddagger$Statistical Laboratory, University of Cambridge
\end{tabular}

\vspace*{.2in}

\today

\vspace*{.2in}

\begin{abstract}
Motivated by the desire to understand stochastic algorithms for nonconvex optimization that are robust to their hyperparameter choices, we analyze a mini-batched prox-linear iterative algorithm for the problem of recovering an unknown rank-1 matrix from rank-$1$ Gaussian measurements corrupted by noise. We derive a deterministic recursion that predicts the error of this method and show, using a non-asymptotic framework, that this prediction is accurate for any batch-size and a large range of step-sizes. In particular, our analysis reveals that this method, though stochastic, converges linearly from a local initialization with a fixed step-size to a statistical error floor.
Our analysis also exposes how the batch-size, step-size, and noise level affect the (linear) convergence rate and the eventual statistical estimation error, and we demonstrate how to use our deterministic predictions to perform hyperparameter tuning (e.g. step-size and batch-size selection) without ever running the method. On a technical level, our analysis is enabled in part by showing that the fluctuations of the empirical iterates around our deterministic predictions scale with the error of the previous iterate. 
\end{abstract}
\end{center}

\section{Introduction}\label{sec:intro}
We consider estimating a rank one matrix $\bcoefX_{\star} \bcoefZ_{\star}^{\top} \in \mathbb{R}^{d \times d}$ from online, i.i.d. observations $(y_i, \bx_i, \bz_i)$ drawn according to the statistical model
\begin{align} \label{eq:model}
y_i =  \langle \bx_i, \bcoefX_{\star} \rangle \cdot \langle \bz_i, \bcoefZ_{\star} \rangle + \epsilon_i.
\end{align}
Here $\bx_i \in \mathbb{R}^d$ and $\bz_i \in \mathbb{R}^d$ are sensing vectors, typically drawn i.i.d. from some distribution, and $\epsilon_i$ denotes zero-mean noise in the measurements.  This model finds applications in diverse areas of science and engineering, including astronomy, medical imaging, and communications~\citep{jefferies1993restoration,wang1998blind,campisi2017blind}.  For instance, it forms an example of the \emph{blind deconvolution} problem in statistical signal processing (see, e.g.,~\cite{recht2010guaranteed,ahmed2013blind} and the references therein for several applications of this problem).

We are interested in the model-fitting problem, and the natural least squares population objective $\widebar{L}: \mathbb{R}^d \times \mathbb{R}^d \rightarrow \mathbb{R}$ (corresponding to the scaled negative log-likelihood of our observations under Gaussian noise) can be written as
\begin{align} \label{eq:pop-loss}
\widebar{L}(\bcoefX, \bcoefZ) = 
\EE\big\{ \big( y - \langle \bx,\bcoefX \rangle \cdot \langle \bz, \bcoefZ \rangle \big)^{2} \big\}, 
\end{align}
where the conditional distribution of $y$ given $\bx, \bz$ is as specified by the model~\eqref{eq:model}.  Note that $\widebar{L}$ is a jointly nonconvex function in the parameters $(\bcoefX, \bcoefZ)$. With the goal of minimizing the population loss $\widebar{L}$, we consider online algorithms which operate on a mini-batch of size $m$ with $1\leq m\leq d$ for which we draw a \textit{fresh}\footnote{Although our indexing of observations does not reflect this, each mini-batch is drawn independently of all other observations.} set of observations $\{y_{i}, \bx_{i}, \bz_{i}\}_{i=1}^{m}$ at each iteration and form the averaged loss
\begin{align}\label{eq:global-loss-scalar-form}
	L_{m}(\bcoefX, \bcoefZ) = \frac{1}{m} \sum_{i=1}^{m} \big( y_{i} - \langle \bx_i,\bcoefX \rangle \cdot \langle \bz_i, \bcoefZ \rangle \big)^{2}.
\end{align}

Our particular focus is on an online, stochastic composite optimization method, which is a member of the aProx family~\citep[Eq. (4)]{asi2019stochastic}.  To elucidate the connection, we write the loss function $L_{m}$ in Eq.~\eqref{eq:global-loss-scalar-form} as a composition of two functions
\begin{align} \label{eq:global-loss}
L_m(\bcoefX, \bcoefZ) = \frac{1}{m} \big\| F_m(\bcoefX, \bcoefZ) \big\|_{2}^{2}, \qquad \text{ where } \qquad F_m(\bcoefX, \bcoefZ) = \by - (\bX \bcoefX) \odot (\bZ \bcoefZ),
\end{align}
where $\odot$ denotes the Hadamard product, and we collect responses into a vector $\by = [y_1 \;\vert\; \cdots \;\vert\; y_m]^{\top}$ as well as the sensing vectors into data matrices $\bX = [\bx_1\; \vert\; \cdots\;\vert\;\bx_m]^{\top} \in \mathbb{R}^{m \times d}$ and $\bZ = [\bz_1\;\vert\;\cdots\;\vert\; \bz_m]^{\top} \in \mathbb{R}^{m \times d}$.  Then the prox-linear update for each iteration  $t=0,1,\dots, T-1$ is given by
\begin{align} \label{eq:prox-linear-updates}
	\begin{bmatrix} \bcoefX_{t+1} \\ \bcoefZ_{t+1} \end{bmatrix} &= \argmin_{\bcoefX,\bcoefZ \in \mathbb{R}^{d}}\; \frac{1}{m} \bigg \| F_m(\bcoefX_t,\bcoefZ_t) + \nabla F_{m}(\bcoefX_t,\bcoefZ_t) \left[\begin{array}{c} \bcoefX - \bcoefX_t \\ \bcoefZ - \bcoefZ_t \end{array}\right] \bigg \|_{2}^{2} + \lambda \cdot \bigg\| \left[\begin{array}{c} \bcoefX - \bcoefX_t \\ \bcoefZ - \bcoefZ_t \end{array}\right] \bigg \|_{2}^{2},
\end{align}
where $\nabla F_{m} \in \mathbb{R}^{m\times 2d}$ denotes the Jacobian of $F_{m}$. Here, $\lambda>0$ is a hyperparameter that can be interpreted as an inverse step-size. Note that the prox-linear update in~\eqref{eq:prox-linear-updates} encompasses complex methods, e.g., it is equivalent to the Gauss--Newton method when $\lambda = 0$.

To set the stage for our analysis to follow, we write the update in~\eqref{eq:prox-linear-updates} in closed form. To this end, we denote $G_i = \bx_i^{\top} \bcoefX_{t}, \GZ_i = \bz_i^{\top} \bcoefZ_t$ and $\ba_{i}^{\top} = \big[\GZ_i \bx_{i}^{\top} \;\; G_i \bz_{i}^{\top}\big]$ for each $1\leq i\leq m$; define the pair of diagonal matrices $\bW = \diag(\bX \bcoefX_t)$, $\widetilde{\bW} = \diag(\bZ \bcoefZ_{t})$ and collect the vectors $\ba_{i}$ into a concatenated data matrix $\bA = [\ba_1 \; \vert \; \ba_2 \; \vert \; \ldots \; \vert \; \ba_m]^{\top} = \bigl[\widetilde{\bW}\bX \; \vert \; \bW \bZ\bigr] \in \mathbb{R}^{m \times 2d}$.
Armed with this notation, we explicitly solve the quadratic program defining the prox-linear iterates in~\eqref{eq:prox-linear-updates} to obtain
\begin{align}\label{eq:closed-form-update}
	\left[\begin{array}{c} \bcoefX_{t+1} \\ \bcoefZ_{t+1} \end{array}\right]  = \bigl(\bA^{\top} \bA + \lambda m \bI\bigr)^{-1} \cdot \Bigl(\bA^{\top} \bigl(\by + \diag(\bW \bWtil)\bigr) + \lambda m \cdot \left[\begin{array}{c} \bcoefX_{t} \\ \bcoefZ_{t} \end{array}\right]\Bigr)=: \prox\big([ \bcoefX_t \;\vert\; \bcoefZ_t]\big),
\end{align}
where we have defined a (random) update function $\prox: \mathbb{R}^{2d} \rightarrow \mathbb{R}^{2d}$ for convenience.
Note that $\bW, \bWtil$ are diagonal matrices, and consequently $\diag(\bW \bWtil)$ is a vector in $\mathbb{R}^m$.  

\subsection{Motivation and main contributions} \label{sec:motivation}
Our goal is to sharply characterize how the problem parameters including batch-size $m$, inverse step-size $\lambda$, and noise level $\sigma$ affect the convergence behavior of the prox-linear update in Eq.~\eqref{eq:closed-form-update}.  We begin by situating our results in the context of a motivating numerical experiment. 

\paragraph{Motivation \#1: Fine-grained convergence phenomena.}  Set the dimension to $d = 200$ and noise level to $\sigma = 0.01$.  We initialize the algorithm locally around the ground truth parameters, setting both $\bcoefX_{0}$ and $\bcoefZ_{0}$ such that $\|\bcoefX_{0} - \bcoefX_{\star}\|_{2}^{2} = \|\bcoefZ_{0} - \bcoefZ_{\star}\|_{2}^{2} = 0.02$.  Subsequently, for a fixed batch-size $m$, inverse step-size $\lambda$ and number of iterations $T$, we run the stochastic prox-linear method in~\eqref{eq:prox-linear-updates}.  At each iteration, we sample independent data $(\bx_i,\bz_i)_{i=1}^{m} \overset{\mathsf{i.i.d.}}{\sim} \mathsf{N}(\boldsymbol{0},\bI_d)$ and noise $(\epsilon_i)_{i=1}^{m} \overset{\mathsf{i.i.d.}}{\sim} \mathsf{N}(0,\sigma^{2})$ to form the responses $(y_i)_{i=1}^{m}$ according to the model~\eqref{eq:model}.  

We isolate the effect of batch-size and inverse step-size by considering two experiments.  In the first, we vary the batch-size $m = 8, 16, 32$.  For each batch-size, we set $\lambda = 100$ for each iteration $t \leq 1500$ and subsequently increase the inverse step-size as $\lambda = 100 + t$ for $t > 1500$.  Increasing the inverse step-size in this manner corresponds to a linearly decaying step-size.  We then plot, in Figure~\ref{subfig:intro-a}, the estimation error\footnote{The metric $\Err_t$ is equivalent to the Frobenius norm error $\|\bcoefX_{t}\bcoefZ_{t}^{\top} - \bcoefX_{\star} \bcoefZ_{\star}^{\top}\|_{F}^{2}$ up to a multiplicative, universal constant factor.} $\mathsf{Err}_t$  over iterations.  
In our second experiment, we fix the batch-size to $m=32$ and set three initial inverse step-sizes $\lambda_{0} = 1,10,100$. For each initial inverse step-size $\lambda_{0}$, we then fix the inverse step-size $\lambda_t = \lambda_{0} + t \cdot \mathbbm{1}\{t > 1500\}$.  That is, as before, we decay the step-size linearly after a constant number of iterations.  We plot the estimation error $\Err_{t}$ over iterations in Figure~\ref{subfig:intro-b}. 

In both of the experiments, the method appears to display three distinct phases in its iterations.  In the first phase, the error decays linearly to a point of stagnation.  Crucially, both the linear rate of convergence as well as error floor reached after this rapid decay appear to depend on both the batch-size as well as the step-size.  In the second phase, the iterates stagnate until the step-size is decreased.  Finally, in the third phase, the error decreases at a sub-linear rate when the step-size is decreased.

While several recent results~\citep[see, e.g., ][]{davis2019stochastic,chadha2022accelerated,asi2020minibatch} have characterized the third phase of sub-linear convergence, several questions remain regarding the first two phases.  For instance, the only linear rates of convergence for stochastic prox-linear iterates that we are aware of require either (i.) noiseless observations in which $\sigma = 0$~\citep{asi2019importance,chadha2022accelerated} or (ii.) sharp growth~\citep[Eq. (2.4)]{davis2023stochastic}.  We note that neither does the loss in~\eqref{eq:global-loss} enjoy sharp growth around its minimizers, nor is $\sigma = 0$ in the experiments shown in Figure~\ref{fig:intro}.  

Motivated by these observations, we focus on the initial phase and characterize the dependence of both the linear rate of convergence as well as the initial noise floor on the batch-size $m$, the initial inverse step-size $\lambda$, and the level of noise $\sigma$.

\paragraph{Motivation \#2: Efficient hyperparameter tuning.} 
The experiment results in Figure~\ref{fig:intro} demonstrates the trade-offs for the choice of batch-size $m$ and inverse step-size $\lambda^{-1}$. For example, for larger $m$, the method enjoys a faster convergence rate but also incurs a larger computational cost at each iteration; on the other hand, for smaller $\lambda$, the method enjoys a faster convergence rate but stagnates at a larger error floor. In order to achieve both computational efficiency and estimation accuracy, a practitioner needs to jointly tune the batch-size $m$ and inverse step-size $\lambda^{-1}$. One approach is via \emph{hyperparameter tuning} which consists of selecting a sequence of combinations of $m$ and $\lambda$, subsequently running the update~\eqref{eq:closed-form-update} for each choice of $m$ and $\lambda$, and selecting the $m$ and $\lambda$ whose empirical performance is the best. The aforementioned approach can be computationally expensive if we want to test many choices of $m$ and $\lambda$ for a high-dimensional problem.  Motivated by this, we develop a low-dimensional, deterministic trajectory prediction that can be run efficiently and without using any data.  The produced trajectory predictions can then be used to tune the hyperparameters in an offline fashion.

\begin{figure}
	\centering
	\begin{subfigure}[b]{0.48\textwidth}
		\centering
		\includegraphics[scale=0.5]{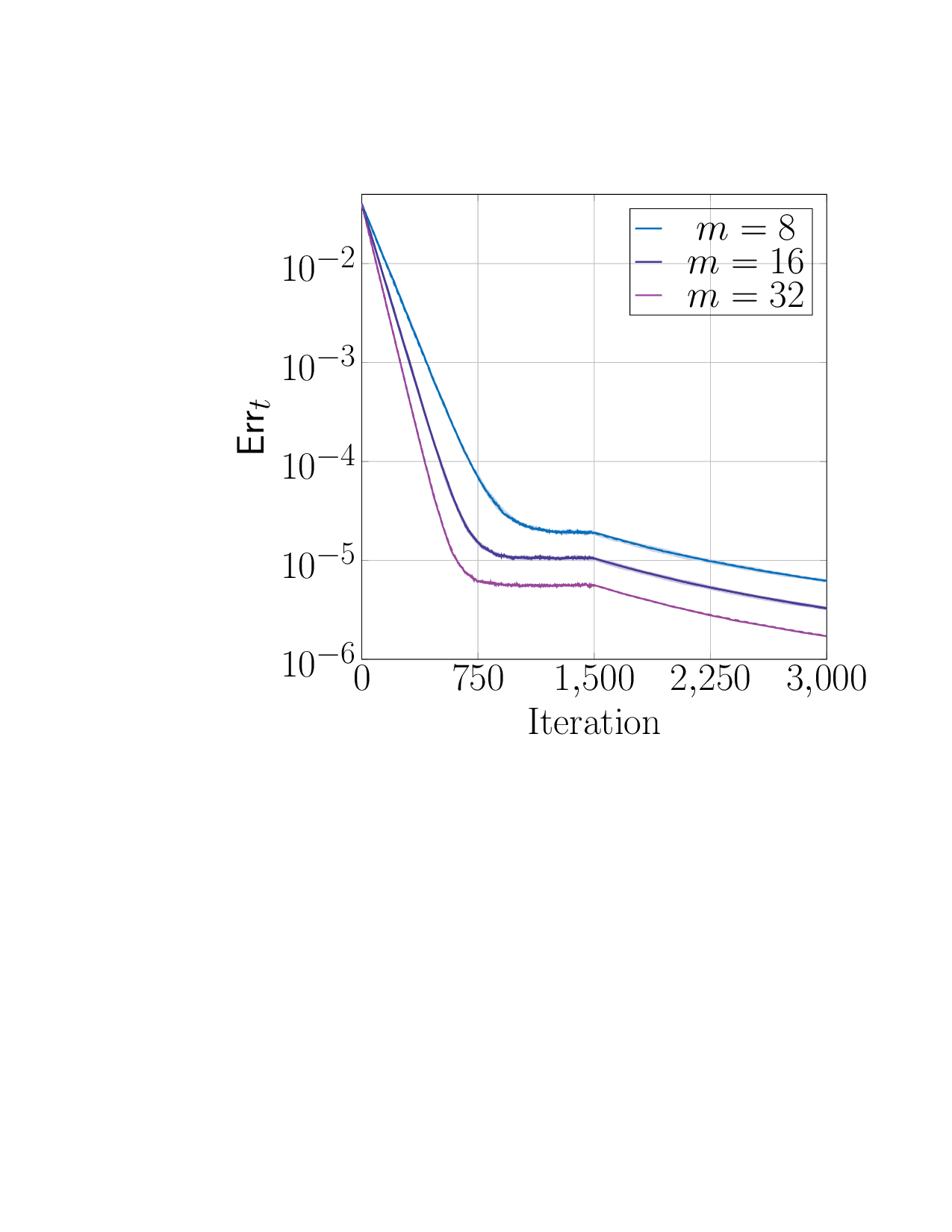}
		\caption{$\sigma = 10^{-2},\; d = 200.$}    
		\label{subfig:intro-a}
	\end{subfigure}
	\hfill
	\begin{subfigure}[b]{0.48\textwidth}  
		\centering 
		\includegraphics[scale=0.5]{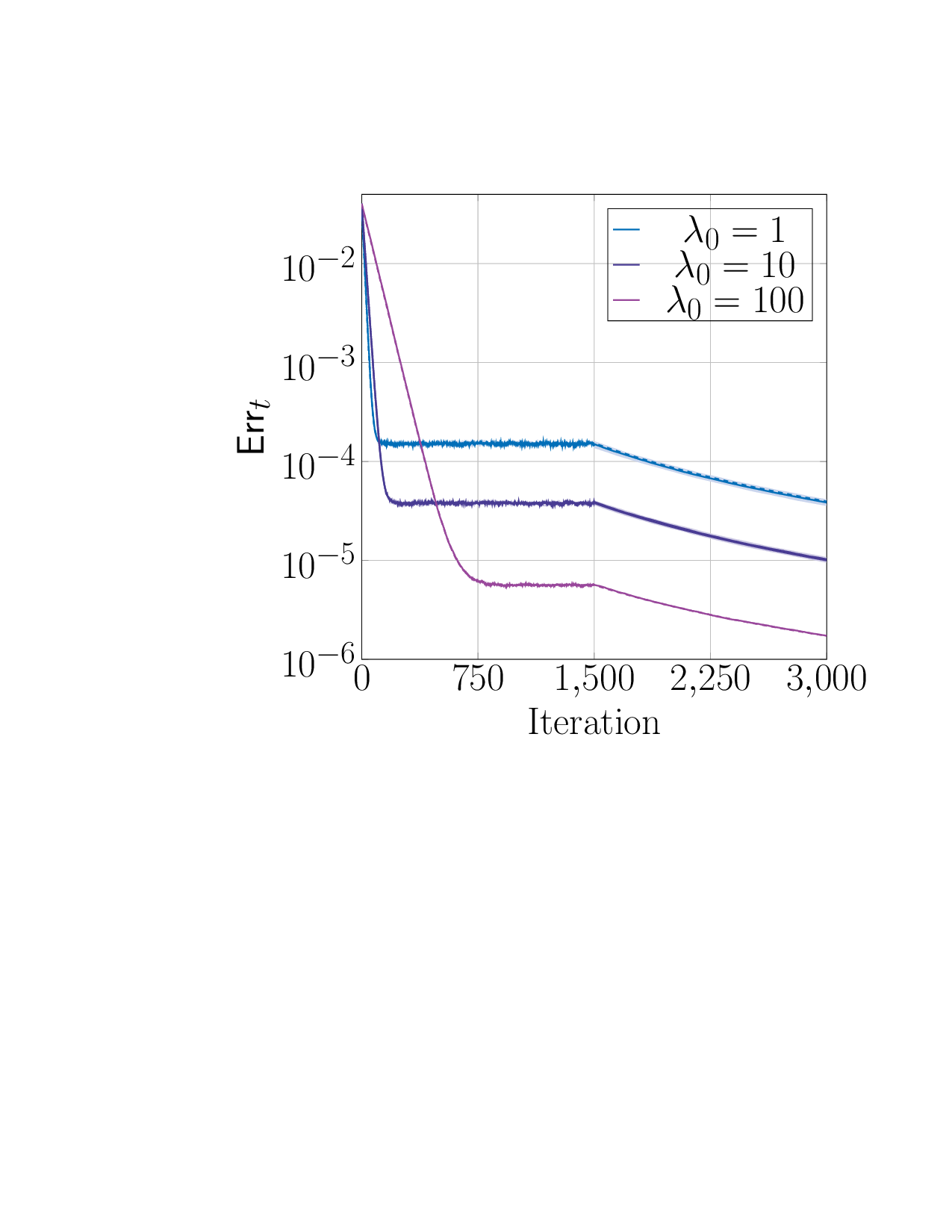}
		\caption{$\sigma = 10^{-2},\;d=200.$}
		\label{subfig:intro-b}
	\end{subfigure}
	\caption{Panel (a) demonstrates the convergence behavior of different batch-sizes $m = 8,16,32$. Panel (b) demonstrates the convergence behavior of different initial inverse step-sizes $\lambda_{0} = 1,10,100$. Each experiment consists of $30$ independent trials and shaded envelopes denote the interquartile range over the $30$ independent trials. Solid lines denote the median of $\Err_{t}$ over the independent trials and dash lines (barely visible) denote the deterministic predicted error $\Err_{t}^{\mathsf{seq}}$ (see Section~\ref{sec:experimental-results} for its definition).} 
	\label{fig:intro}
\end{figure}

\paragraph{Main Contributions.}
In order to describe our main contributions, we first require some definitions and assumptions. Recall that our goal is to estimate the ground-truth pair $(\bcoefX_{\star}, \bcoefZ_{\star})$. To assess the convergence of the prox-linear iterations to this pair, we consider a four-dimensional state $\alpha: \mathbb{R}^d \rightarrow \mathbb{R}, \beta: \mathbb{R}^d \rightarrow \mathbb{R}, \widetilde{\alpha}: \mathbb{R}^d \rightarrow \mathbb{R},$ and $\widetilde{\beta}: \mathbb{R}^d \rightarrow \mathbb{R}$, whose components we define as
\begin{align}\label{eq:generic-state-evolution}
	\parcompX(\bcoefX) := \langle \bcoefX, \bcoefX_{\star}\rangle, \quad \perpcompX(\bcoefX) := \| \bP_{\bcoefX_{\star}}^{\perp} \bcoefX \|_2, \quad \parcompZ(\bcoefZ) := \langle \bcoefZ, \bcoefZ_{\star}\rangle, \quad \perpcompZ(\bcoefZ) := \| \bP_{\bcoefZ_{\star}}^{\perp} \bcoefZ \|_2. 
\end{align}
Using this, we define our state at time $t$ as the quadruple consisting of components
\begin{align} \label{eq:comps}
	\parcompX_{t} = \parcompX(\bcoefX_{t}), \quad  \perpcompX_{t} = \perpcompX(\bcoefX_{t}), \quad
	\parcompZ_{t} = \parcompZ(\bcoefZ_{t}), \quad  \perpcompZ_{t} = \perpcompZ(\bcoefZ_{t}).
\end{align}
Throughout, we make the following assumptions.
\begin{assumption}\label{assumption-unit-norm}
	The coefficient vectors $\bcoefX_{\star},\bcoefZ_{\star}$ satisfy $\|\bcoefX_{\star}\|_{2} = \| \bcoefZ_{\star} \|_{2} = 1$.
\end{assumption}
\begin{assumption}\label{assumption}
	The sensing vectors are drawn as $\{\bx_i,\bz_i\}_{i=1}^{m} \overset{\mathsf{i.i.d.}}{\sim} \mathsf{N}(0,\bI_{d}) \otimes \mathsf{N}(0,\bI_{d})$ and the noise as $\{\epsilon_i\}_{i=1}^{m} \overset{\mathsf{i.i.d.}}{\sim} \mathsf{N}(0,\sigma^2)$, independently of the sensing vectors.
\end{assumption}

Equipped with this notation, we turn now to descriptions of our main results. In short, they characterize how the stochastic prox-linear iterates behave as a function of the batch-size $m$, the inverse step-size $\lambda$, and the noise level $\sigma$.
\begin{enumerate}
	\item \textbf{Sharp, deterministic predictions which adapt to problem error.} Consider running one-step of the prox-linear update specified by the function $\prox$~\eqref{eq:closed-form-update} starting from a pair $(\bcoefX_{\sharp}, \bcoefZ_{\sharp})$ and let $[\bcoefX_{+}^{\top} \; \vert \; \bcoefZ_{+}^{\top}]^{\top} = \prox([\bcoefX_{\sharp} \; \vert\; \bcoefZ_{\sharp}])$. 
	For \emph{all} minibatch-sizes $1\leq m\leq d$ and a large range of step-size $\lambda \gtrsim (1+\sigma)d/m$,
	we derive explicit deterministic predictions $(\parcompdetX_{+},\perpcompdetX_{+},\parcompdetZ_{+},\perpcompdetZ_{+})$ (see Section~\ref{sec:expilicit-formulas-prediction} for their precise forms) that closely track their empirical counterparts.  Furthermore, in Theorem~\ref{thm:one-step-prediction} to follow, we show that with high probability,
	\[
	\max\Big\{ \big|\parcompX(\bcoefX_{+}) - \parcompdetX_{+} \big|,\big|\perpcompX(\bcoefX_{+}) - \perpcompdetX_{+} \big|, \big|\parcompZ(\bcoefZ_{+}) - \parcompdetZ_{+}\big|, \big|\perpcompZ(\bcoefZ_{+}) - \perpcompdetZ_{+}\big|\Big\} \lesssim \frac{\|\bcoefX_{\sharp} \bcoefZ_{\sharp}^{\top} - \bcoefX_{\star} \bcoefZ_{\star}^{\top}\|_{F}+\sigma}{\lambda \sqrt{m}},
	\]
	where $\lesssim$ hides polylogarithmic factors in $d$.  Note that this guarantee is fully non-asymptotic, and
	provides bounds on the deviation which scale with the estimation error $\|\bcoefX_{\sharp} \bcoefZ_{\sharp}^{\top} - \bcoefX_{\star} \bcoefZ_{\star}^{\top}\|_{F}$.  This, in turn, enables a transparent convergence analysis of the iterations for all noise levels $\sigma\geq 0$. See Section~\ref{sec:one-step-prediction} for a detailed discussion.
	
	\item \textbf{Fine-grained convergence analysis.} We use our deterministic predictions to execute an iterate-by-iterate analysis of the stochastic prox-linear algorithm from a local initialization. This analysis reveals several fine-grained properties of the convergence behavior. In particular, for the step-size choice $\lambda^{-1} \asymp m/(d(1+\sigma^2))$ and batch-size $m \gtrsim \mathsf{polylog}(d)$, we show that it takes
	\[
	\tau = \Theta\Big( \frac{d(1+\sigma^2)}{m} \cdot \log\Big(\frac{1}{\sigma^{2}} \Big) \Big)\quad \text{iterations to guarantee} \quad \|\bcoefX_{\tau} \bcoefZ_{\tau}^{\top} - \bcoefX_{\star} \bcoefZ_{\star}^{\top}\|_{F}^{2} \lesssim \sigma^2.
	\] 
	This reveals a linear speed-up in the batch-size $m$ for \emph{all} noise levels $\sigma \geq 0$. As a consequence, the total sample complexity for reaching estimation error $\sigma^{2}$ is $O(d(1+\sigma^2)\log(1/\sigma^2))$.  Moreover, for other step-size choices $\lambda^{-1} \lesssim m/(d(1+\sigma^2))$, it takes 
	\[
	\tau = \Theta\Big(\lambda \cdot \log\Big( \frac{\lambda m}{d\sigma^2} \Big) \Big) \quad \text{iterations to guarantee} \quad
	\|\bcoefX_{\tau} \bcoefZ_{\tau}^{\top} - \bcoefX_{\star} \bcoefZ_{\star}^{\top}\|_{F}^{2} \lesssim \frac{\sigma^2d}{\lambda m},
	\]
which in turn quantifies the dependence of the convergence behavior on the step-size $\lambda^{-1}$. In particular, by decreasing the step-size $\lambda^{-1}$, the iteration complexity increases while the eventual estimation error decreases. We emphasize that our guarantees on iteration complexity are sharp in the sense that our bounds provide both upper and lower bounds on the rate of convergence. See Theorem~\ref{thm:local-sharp-convergence-results} for a precise statement and  Section~\ref{sec:sharp-local-convergence-result} for a detailed discussion.  
\end{enumerate}

\subsection{Related work}
We situate our results within the broader literature on stochastic aProx methods as well as learning dynamics, beginning with stochastic aProx methods.  

\paragraph{Stochastic aProx methods} Several recent works have focused on convergence guarantees for aProx methods.~\cite{duchi2018stochastic} proved that these methods converge to first-order stationary points for weakly convex functions.~\cite{asi2019importance} showed these methods exhibit robustness to problem families and algorithmic parameters.
~\cite{davis2019stochastic} established the sublinear convergence rate $O(T^{-1/4})$ for weakly convex functions, where $T$ is the number of iterations.  In turn,~\cite{asi2019stochastic} identified the class of interpolation problems---for which each component function in the empirical\footnote{Some of these papers define a stronger interpolation condition that depends on the population loss~\eqref{eq:pop-loss}, but in both the empirical and population definitions, the interpolation condition is only satisfied in our problem when $\sigma = 0$.} loss~\eqref{eq:global-loss-scalar-form} shares a minimizer---as a benign class under which stochastic prox-linear methods enjoy linear convergence under expected strong growth conditions on the stochastic objective.  Note that in our setting, any interpolation problem is noiseless, with $\sigma = 0$. \cite{asi2020minibatch} and~\cite{chadha2022accelerated} extended these methods to accommodate mini-batches of size $m$ and established the sublinear convergence rate $O((Tm)^{-1/2})$ for convex functions, as well as a linear convergence rate for interpolation problems under a so-called $\gamma$-growth condition~\cite[Ass. 4]{chadha2022accelerated}.~\cite{davis2023stochastic} proved that these methods with geometric step decay enjoy local linear rate of convergence for non-convex problems that satisfy the sharp growth condition.  We also mention that---under the assumption that the component functions are strongly convex and smooth---\cite{vaswani2022towards} established an initial linear convergence rate of SGD (not the prox-linear method) under a step-size schedule which is both noise adaptive and problem parameter adaptive.
The aforementioned results are all geometric in nature and are not comparable with our own, which are based on a statistical model that does not always satisfy their conditions. On the one hand, our results are proved for a canonical statistical model that possesses its own problem-specific, geometric structure. On the other hand, they are much more fine-grained and sharp, and expose several phenomena in addition to those mentioned above. We defer a detailed comparison to the discussion following Theorem~\ref{thm:local-sharp-convergence-results} in Section~\ref{sec:sharp-local-convergence-result}. 

\paragraph{Learning dynamics and trajectory analyses} 
Another line of relevant literature focuses on deriving learning dynamics for iterative algorithms. In particular, one line of work characterized the dynamics of SGD for least squares problems~\citep{paquette2021sgd,paquette2021dynamics,balasubramanian2023high} and for non-convex problems~\citep{collins2023hitting,arous2021online,arous2023high}. Although these previous works provide powerful machinery, they do not straightforwardly apply to complex algorithms such as the prox-linear method. To see this more clearly, note that the SGD update with step-size $\gamma$ for our loss $L_{m}(\bcoefX,\bcoefZ)$~\eqref{eq:global-loss} can be written as
\[
\begin{bmatrix} \bcoefX_{t+1} \\ \bcoefZ_{t+1} \end{bmatrix} = \begin{bmatrix} \bcoefX_{t} \\ \bcoefZ_{t} \end{bmatrix} - \gamma \cdot \nabla L_{m} (\bcoefX_{t},\bcoefZ_{t}).
\]
Since the SGD update provides a simple relationship between the next iterate $(\bcoefX_{t+1},\bcoefZ_{t+1})$ and the previous iterate $(\bcoefX_{t},\bcoefZ_{t})$, one can directly apply a Taylor expansion to relate the statistics of the two iterates, which forms an important step in the works~\cite{collins2023hitting,arous2023high}. However, the prox-linear update~\eqref{eq:closed-form-update} is more complicated than SGD, e.g., there is a complicated random matrix inverse $(\bA^{\top} \bA + \lambda m \bI)^{-1}$, whence it becomes difficult to apply the Taylor expansion trick to derive learning dynamics for the prox-linear update. Another line of work characterized the asymptotic behavior of ``generalized" first-order methods~\citep{celentano2020estimation,celentano2021high} using techniques from the literature of approximate message passing or AMP~\citep{donoho2009message,bayati2011lasso} in the asymptotic regime where $m,d \rightarrow +\infty$ and $m/d \rightarrow \delta \in (0,+\infty)$.  These analyses offer the distinct advantage that the method need not be online, and samples may be re-used.  However, they do not apply directly to higher-order methods like the method considered in this paper.  Moreover, we are interested in the non-asymptotic regime in which $1 \leq m \leq d$ and $d$ is finite.  Our non-asymptotic analysis allows us to bypass the so-called ``extensive" batch-size assumption in which the batch-size $m$ scales linearly with the dimension $d$~\citep[see, e.g.,][]{gerbelot2022rigorous}.  The most related recent work is the sequence of papers~\citep{chandrasekher2022alternating,chandrasekher2023sharp} in which the authors derived deterministic predictions beyond first-order methods. However, these predictions require the batch-size $m\geq d$ and thus cannot be directly applied to analyze mini-batched algorithms. Additionally, these predictions are only accurate up to fluctuations of order $m^{-1/2}$, which means that the predictions become meaningless when the 
error of interest falls below the level $m^{-1/2}$. This becomes especially problematic for analyzing iterative algorithms for low noise problems ($\sigma \downarrow 0$) and for small batch-sizes $m$.

\subsection{Notation and organization}

We let $[d]$ denote the set of natural numbers less than or equal to $d$, let $\mathbbm{1}\{\cdot\}$ denote the
indicator function and let $\mathcal{S}^{d-1} = \{\bv \in \mathbb{R}^{d} \;|\; \|\bv\|_{2} = 1\}$. For two sequences of non-negative reals $\{f_n\}_{n
	\geq 1}$ and $\{g_n \}_{n \geq 1}$, we use $f_n \lesssim g_n$ to indicate that
there is a universal positive constant $C$ such that $f_n \leq C g_n$ for all
$n \geq 1$. The relation $f_n \gtrsim g_n$ indicates that $g_n \lesssim f_n$,
and we say that $f_n \asymp g_n$ if both $f_n \lesssim g_n$ and $f_n \gtrsim
g_n$ hold simultaneously. We also use standard order notation $f_n = \order
(g_n)$ to indicate that $f_n \lesssim g_n$ and $f_n = \ordertil(g_n)$ to
indicate that $f_n \lesssim
g_n \log^c n$, for a universal constant $c>0$. We say that $f_n = \Omega(g_n)$ (resp. $f_n = \widetilde{\Omega}(g_n)$) if $g_n = \order(f_n)$ (resp. $g_n = \ordertil(f_n)$). The notation $f_n = o(g_n)$ is
used when $\lim_{n \to \infty} f_n / g_n = 0$, and $f_n =
\omega(g_n)$ when $g_n = o(f_n)$. Throughout, we use $c, C$ to denote universal
positive constants, and their values may change from line to line. We denote by $\mathsf{N}(\bm{\mu}, \bSig)$ a normal distribution with mean $\bm{\mu}$ and covariance matrix $\bSig$. We say that $X \overset{(d)}{=} Y$ for two random variables $X$ and $Y$ that are equal in distribution.

The remainder of the paper is organized as follows. We provide our main results in Section~\ref{sec:main-results}. In Section~\ref{sec:one-step-prediction}, we provide our one-step deterministic predictions for the prox-linear update (see Theorem~\ref{thm:one-step-prediction}); in Section~\ref{sec:sharp-local-convergence-result}, we use our one-step updates to prove a sharp linear convergence result for the prox-linear update from a local initialization (see Theorem~\ref{thm:local-sharp-convergence-results}); and in Section~\ref{sec:experimental-results}, we present numerical experiments which demonstrate how our deterministic predictions can be used to tune the hyperparameters without running the prox-linear update. In Section~\ref{sec:overview-techniques}, we provide a heuristic calculation to demonstrate the techniques for deriving the deterministic predictions and provide the explicit formulas of the deterministic predictions. We prove Theorem~\ref{thm:one-step-prediction} and Theorem~\ref{thm:local-sharp-convergence-results} in Sections~\ref{thm:one-step-prediction-main-proof} and~\ref{sec:main-proof-convergence-result}, respectively. Proofs of technical lemmas are postponed to the appendices.
\section{Main Results}\label{sec:main-results}
In this section, we provide our main results on the stochastic prox-linear method in~\eqref{eq:closed-form-update}.  In Section~\ref{sec:one-step-prediction}, we provide a non-asymptotic, one-step guarantee.  Then, in Section~\ref{sec:sharp-local-convergence-result}, we leverage this one-step guarantee to prove a two-sided convergence guarantee for the prox-linear method.

\subsection{Deterministic one-step predictions}\label{sec:one-step-prediction}
Consider a pair $(\bcoefX_{\sharp}, \bcoefZ_{\sharp})$ and let $(\parcompX_{\sharp}, \perpcompX_{\sharp}, \parcompZ_{\sharp}, \perpcompZ_{\sharp})$ denote its corresponding state as defined in Eq.~\eqref{eq:generic-state-evolution}.  The deterministic predictions---starting from the pair $(\bcoefX_{\sharp}, \bcoefZ_{\sharp})$---are specified as 
\begin{subequations}\label{alpha-beta-det-prediction}
	\begin{align}
	\hspace{-0.4cm}
		\parcompdetX_{+} &= \alphaXmap_{m,d,\sigma,\lambda}(\parcompX_{\sharp},\perpcompX_{\sharp},\parcompZ_{\sharp},\perpcompZ_{\sharp}), \quad(\perpcompdetX_{+})^{2} = \big( \iotaXmap_{m,d,\sigma,\lambda}(\parcompX_{\sharp},\perpcompX_{\sharp},\parcompZ_{\sharp},\perpcompZ_{\sharp}) \big)^{2} + \etaXmap_{m,d,\sigma,\lambda}(\parcompX_{\sharp},\perpcompX_{\sharp},\parcompZ_{\sharp},\perpcompZ_{\sharp}),
		 \\
	\hspace{-0.4cm}
		\parcompdetZ_{+} &=  \alphaZmap_{m,d,\sigma,\lambda}(\parcompX_{\sharp},\perpcompX_{\sharp},\parcompZ_{\sharp},\perpcompZ_{\sharp}), \quad (\perpcompdetZ_{+})^{2} = \big( \iotaZmap_{m,d,\sigma,\lambda}(\parcompX_{\sharp},\perpcompX_{\sharp},\parcompZ_{\sharp},\perpcompZ_{\sharp}) \big)^{2} + \etaZmap_{m,d,\sigma,\lambda}(\parcompX_{\sharp},\perpcompX_{\sharp},\parcompZ_{\sharp},\perpcompZ_{\sharp}),
	\end{align}
\end{subequations}
where $\alphaXmap_{m,d,\sigma,\lambda},\;\alphaZmap_{m,d,\sigma,\lambda},\;\iotaXmap_{m,d,\sigma,\lambda},\;\iotaZmap_{m,d,\sigma,\lambda},\;\etaXmap_{m,d,\sigma,\lambda},\;\etaZmap_{m,d,\sigma,\lambda}:\mathbb{R}^{4} \rightarrow \mathbb{R}$ are functions of the states $(\parcompX_{\sharp},\perpcompX_{\sharp},\parcompZ_{\sharp},\perpcompZ_{\sharp})$, which are also parameterized by the tuple of problem-specific parameters $(m, d, \sigma, \lambda)$. Their explicit expressions are provided in Section~\ref{sec:expilicit-formulas-prediction}.  

We state our one-step guarantee in terms of the quantity $\Err_\sharp$, defined as 
\begin{align}\label{eq-definition-error}
	\Err_{\sharp} = (\parcompX_{\sharp}\parcompZ_{\sharp}-1)^{2} + \perpcompX_{\sharp}^{2} + \perpcompZ_{\sharp}^{2}.
\end{align}
We note (see Lemma~\ref{fixed-point-equations-unique-solution}) that if $\perpcompX_{\sharp},\perpcompZ_{\sharp} \leq 0.1$ and $0.3 \leq \|\bcoefX_\sharp\|_{2}, \|\bcoefZ_\sharp\|_{2} \leq 1.7$, then 
\begin{align}\label{equivalent-Errt-frobenius-distance}
	\frac{1}{5} \cdot \|\bcoefX_{\sharp} \bcoefZ_{\sharp}^{\top} - \bcoefX_{\star} \bcoefZ_{\star}^{\top} \|_{F}^{2} \leq \Err_{\sharp} \leq 12.5 \cdot \|\bcoefX_{\sharp} \bcoefZ_{\sharp}^{\top} - \bcoefX_{\star} \bcoefZ_{\star}^{\top} \|_{F}^{2},
\end{align}
so that the reader should think of $\Err_{\sharp}$ as equivalent to $\|\bcoefX_{\sharp} \bcoefZ_{\sharp}^{\top} - \bcoefX_{\star} \bcoefZ_{\star}^{\top} \|_{F}^{2}$ up to a universal constant.  We are now poised to state our main result for this section, which shows that the tuple of random variables $(\alpha(\bcoefX_{+}), \beta(\bcoefX_{+}), \alpha(\bcoefZ_{+}), \beta(\bcoefZ_{+}))$ is closely tracked by the deterministic predictions in Eq.~\eqref{alpha-beta-det-prediction}.
\begin{theorem}\label{thm:one-step-prediction}
	Suppose data are drawn from the model~\eqref{eq:model} and let Assumptions~\ref{assumption-unit-norm} and~\ref{assumption} hold.  Let $\bcoefX_{\sharp}, \bcoefZ_{\sharp} \in \mathbb{R}^d$ satisfy $K_1 \leq \| \bcoefX_\sharp \|_2, \| \bcoefZ_\sharp \|_2 \leq K_2$ for a pair of universal, positive constants $K_1 \leq K_2$.  Consider the updates specified by the function $\prox$~\eqref{eq:closed-form-update} and let $[\bcoefX_{+} \; \vert \; \bcoefZ_{+}]^{\top} = \prox\bigl([\bcoefX_{\sharp} \; \vert \; \bcoefZ_{\sharp}]\bigr)$.  Use the shorthand $(\parcompX_{+}, \perpcompX_{+}, \parcompX_{+}, \perpcompZ_{+}) = (\alpha(\bcoefX_{+}), \beta(\bcoefX_{+}), \alpha(\bcoefZ_{+}), \beta(\bcoefZ_{+}))$~\eqref{eq:generic-state-evolution} and let the deterministic predictions $(\parcompX_{+}^{\det}, \perpcompX_{+}^{\det}, \parcompZ_{+}^{\det}, \perpcompZ_{+}^{\det})$ be as in Eq.~\eqref{alpha-beta-det-prediction}.  Then, there exist positive constants $d_{0},C_{1},C_{2}$, depending only on $K_1$ and $K_2$, such that for
	\[
	1\leq m\leq d, \qquad \qquad \lambda \geq C_{1}\frac{(1+\sigma)d}{m}, \qquad \qquad   \text{ and } \qquad \qquad d\geq d_{0},
	\]
	the following hold with probability at least $1 - d^{-20}$.
	\begin{subequations}\label{ineq:one-step-deviation}
		\begin{itemize}
			\item[(a)] The parallel components satisfy
			\begin{align}
				\big|\parcompX_{+} - \parcompdetX_{+} \big| \vee \big|\parcompZ_{+} - \parcompdetZ_{+}\big| \leq C_{2}\max\bigg\{ \frac{ \sqrt{\Err_{\sharp}} + \sigma}{\lambda} \cdot \frac{\log^{6}(d)}{\sqrt{m}} ,\; d^{-30}\bigg\}.
			\end{align}
			\item[(b)] The perpendicular components satisfy
			\begin{align}
				\big| \perpcompX_{+}^{2} - (\perpcompdetX_{+})^{2} \big| \vee \big| \perpcompZ_{+}^{2} - (\perpcompdetZ_{+})^{2}\big| &\leq C_{2}  \max\bigg\{  \frac{\Err_{\sharp} + \sigma^{2} }{\lambda}  \cdot \frac{\log^{6}(d)}{\sqrt{m}} ,\; d^{-30}\bigg\}.
			\end{align}
		\end{itemize}
	\end{subequations}
\end{theorem}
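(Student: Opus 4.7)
The plan is to reformulate the prox-linear step in an error-adaptive way and then compare it against a deterministic reference obtained by replacing the random objects in the closed-form update by their expectations. Writing $\boldsymbol{u}_\sharp = [\bcoefX_\sharp^\top \,|\, \bcoefZ_\sharp^\top]^\top$ and analogously $\boldsymbol{u}_+$, and using the elementary identity $\bA \boldsymbol{u}_\sharp = 2\diag(\bW\bWtil)$, the update~\eqref{eq:closed-form-update} rearranges to
\[
\boldsymbol{u}_+ - \boldsymbol{u}_\sharp \;=\; (\bA^\top \bA + \lambda m \bI)^{-1} \bA^\top \boldsymbol{r}, \qquad \boldsymbol{r} := \by - \diag(\bW\bWtil),
\]
whose $i$-th coordinate is the residual $r_i = \bx_i^\top (\bcoefX_\star \bcoefZ_\star^\top - \bcoefX_\sharp \bcoefZ_\sharp^\top) \bz_i + \epsilon_i$. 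Since $\EE[r_i^2] \asymp \Err_\sharp + \sigma^2$ by~\eqref{equivalent-Errt-frobenius-distance}, the error-adaptive factor $\sqrt{\Err_\sharp}+\sigma$ appearing in the theorem is naturally explained as the typical per-coordinate size of $\boldsymbol{r}$.

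Set $\bM^{\mathsf{det}} := \EE[\bA^\top \bA]$ and $\boldsymbol{q}^{\mathsf{det}} := \EE[\bA^\top \boldsymbol{r}]$; the deterministic reference is then $\boldsymbol{u}_+^{\mathsf{det}} := \boldsymbol{u}_\sharp + (\bM^{\mathsf{det}} + \lambda m \bI)^{-1} \boldsymbol{q}^{\mathsf{det}}$. A short calculation shows that $\bM^{\mathsf{det}}/m$ has scaled-identity diagonal blocks $\|\bcoefZ_\sharp\|_2^2\, \bI_d$ and $\|\bcoefX_\sharp\|_2^2\, \bI_d$, and rank-one off-diagonal blocks $\bcoefX_\sharp \bcoefZ_\sharp^\top$ and its transpose; Woodbury then yields a closed-form inverse and confirms that $\boldsymbol{u}_+^{\mathsf{det}}$ lives in the four-dimensional subspace $\mathrm{span}\{\bcoefX_\star,\bcoefX_\sharp\}\oplus \mathrm{span}\{\bcoefZ_\star,\bcoefZ_\sharp\}$, matching the form of the prediction announced for Section~\ref{sec:expilicit-formulas-prediction}. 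The resolvent identity then yields the key decomposition
\[
\boldsymbol{u}_+ - \boldsymbol{u}_+^{\mathsf{det}} = (\bA^\top \bA + \lambda m \bI)^{-1}(\bA^\top \boldsymbol{r} - \boldsymbol{q}^{\mathsf{det}}) - (\bA^\top \bA + \lambda m \bI)^{-1}(\bA^\top \bA - \bM^{\mathsf{det}})(\bM^{\mathsf{det}} + \lambda m \bI)^{-1} \boldsymbol{q}^{\mathsf{det}},
\]
which cleanly separates signal fluctuations from matrix fluctuations.

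For the parallel components it suffices to take inner products of this decomposition with $[\bcoefX_\star^\top\,|\,\boldsymbol{0}^\top]^\top$ or $[\boldsymbol{0}^\top\,|\,\bcoefZ_\star^\top]^\top$, which reduces the analysis to controlling the projection of $\bA^\top \boldsymbol{r} - \boldsymbol{q}^{\mathsf{det}}$ and $\bA^\top \bA - \bM^{\mathsf{det}}$ onto the four-dimensional state subspace $V := \mathrm{span}\{\bcoefX_\star,\bcoefX_\sharp\}\oplus \mathrm{span}\{\bcoefZ_\star,\bcoefZ_\sharp\}$. Each coordinate of $\bA^\top \boldsymbol{r}$ is a polynomial of degree at most four in jointly Gaussian variables; Gaussian hypercontractivity and Hanson--Wright / Wiener-chaos moment estimates (applied conditionally on $\bZ$ to decouple the bilinear structure) give $\|P_V(\bA^\top \boldsymbol{r} - \boldsymbol{q}^{\mathsf{det}})\|_2 \lesssim \sqrt{m}(\sqrt{\Err_\sharp}+\sigma)\cdot \polylog$ with the required probability, and combined with $\opnorm{(\bA^\top \bA + \lambda m \bI)^{-1}} \leq 1/(\lambda m)$ this produces the target $(\sqrt{\Err_\sharp}+\sigma)\polylog/(\lambda \sqrt{m})$ contribution. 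The matrix-fluctuation term is handled similarly: matrix-Bernstein and Hanson--Wright-type bounds for the $V$-projection of $\bA^\top \bA - \bM^{\mathsf{det}}$ combine with the norm bound $\|(\bM^{\mathsf{det}}+\lambda m \bI)^{-1} \boldsymbol{q}^{\mathsf{det}}\|_2 \lesssim \sqrt{\Err_\sharp}/\lambda$ (from the explicit Woodbury inverse) to yield a matching contribution. The squared-perpendicular bounds then follow from the factorization $|\perpcompX_+^2 - (\perpcompdetX_+)^2| = (\perpcompX_+ + \perpcompdetX_+)|\perpcompX_+ - \perpcompdetX_+|$, using $\perpcompX_+ + \perpcompdetX_+ \lesssim \sqrt{\Err_\sharp}+\sigma$ from the explicit deterministic formula together with a parallel-style bound on the difference.

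The main obstacle will be obtaining deviation bounds that shrink like $1/\sqrt{m}$ uniformly across the range $1\leq m\leq d$, including the small-batch regime. Generic matrix-concentration estimates yield only $\opnorm{\bA^\top \bA - \bM^{\mathsf{det}}} = \ordertil(d + \sqrt{dm})$, which when $m \ll d$ would overwhelm the $\lambda m$ regularization and destroy the target scaling. The resolution is to exploit that only the $V$-projection of $\bA^\top \bA - \bM^{\mathsf{det}}$ couples meaningfully to the state---where concentration is of order $\ordertil(\sqrt{m})$ rather than $\ordertil(d)$---while the off-subspace part is controlled by crude operator-norm bounds that are absorbed by the step-size hypothesis $\lambda \gtrsim (1+\sigma)d/m$. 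This subspace-restricted concentration, combined with careful tracking of Gaussian chaos terms of multiple degrees and the separation of $\bx$- and $\bz$-randomness via conditional arguments, is where the bulk of the technical work will lie.
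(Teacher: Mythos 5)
There is a genuine gap, and it is at the heart of the argument: your deterministic reference $\boldsymbol{u}_+^{\mathsf{det}} = \boldsymbol{u}_\sharp + (\bM^{\mathsf{det}}+\lambda m \bI)^{-1}\boldsymbol{q}^{\mathsf{det}}$ with $\bM^{\mathsf{det}}=\EE[\bA^\top\bA]$ is not the prediction~\eqref{alpha-beta-det-prediction} that the theorem asserts closeness to, and the two differ by more than the claimed error in the allowed parameter regime. The resolvent of a random Gram matrix does not concentrate around the resolvent of its expectation: with $\lambda m \asymp d$ the fluctuation $\bA^\top\bA-\bM^{\mathsf{det}}$ has operator norm comparable to $\lambda m$, and its feedback through the inverse produces an order-one multiplicative ``self-energy'' correction. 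This is exactly what the fixed-point scalars $r_1,r_2$ of Eq.~\eqref{eq:fixed-point} encode: the paper's prediction uses effective denominators like $\lambda+V_1$ with $V_1=\EE\{r_1r_2G_2^2/(r_1r_2+r_1G_1^2+r_2G_2^2)\}<\LZ_\sharp^2$, whereas your reference would produce $\lambda+\LZ_\sharp^2$. For example, comparing $\thetaXdet_2=\frac{\parcompZ_\sharp\perpcompX_\sharp}{\LZ_\sharp^2L_\sharp}\cdot\frac{V_1}{V_1+\lambda}$ with its naive analogue gives a discrepancy of order $\sqrt{\Err_\sharp}\cdot d/(\lambda^2 m)$, which exceeds the target $\sqrt{\Err_\sharp}\log^6(d)/(\lambda\sqrt{m})$ whenever $\lambda \ll d/\sqrt{m}$ --- a regime the theorem explicitly covers (e.g.\ $\lambda\asymp d/m$ with $m\asymp d$). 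Your proposed fix --- that ``only the $V$-projection of $\bA^\top\bA-\bM^{\mathsf{det}}$ couples meaningfully to the state'' --- is false: in the cross term $\langle(\bA^\top\bA+\lambda m\bI)^{-1}\bu,(\bA^\top\bA-\bM^{\mathsf{det}})\boldsymbol{w}\rangle$ the left vector is random, lives in all of $\mathbb{R}^{2d}$, and is correlated with the fluctuation, so the bulk (off-$V$) fluctuations contribute a nonvanishing mean. The paper avoids this entirely by deriving \emph{exact} leave-one-direction-out linear systems for $\theta(\bu_i),\widetilde\theta(\bv_i)$ (Lemma~\ref{lemma-leave-one-out}) and then proving concentration of the resulting scalar quadratic forms ($M_1,M_{11},\dots$) to the fixed-point quantities $V,V_1,V_2$ via Sherman--Morrison plus trace-of-inverse concentration (Lemma~\ref{lemma:probability-bound-trace-inverse}); your route would need an equivalent self-consistent computation to land on the correct deterministic equivalent.

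A secondary gap concerns part (b): $\perpcompX_+$ is the $\ell_2$ norm of a $(d-1)$-dimensional projection, so a ``parallel-style bound on the difference $|\perpcompX_+-\perpcompdetX_+|$'' via inner products with fixed directions does not control it. The dominant contribution to $\perpcompX_+^2$ is the fully orthogonal mass $\etaX_+^2=\|\bP^\perp_{\mathsf{span}\{\bcoefX_\star,\bcoefX_\sharp\}}\bcoefX_+\|_2^2$, an aggregate of $d-2$ individually negligible coordinates whose sum is comparable to $\sigma^2 d/(\lambda^2 m)$; the paper must compute $(d-2)\,\EE\{\langle\bcoefX_+,\bu_3\rangle^2\}$ via a separate second-moment analysis and a second fixed-point system~\eqref{fixed-point-eq-eta-updates}, which has no counterpart in your outline.
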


We provide the proof of Theorem~\ref{thm:one-step-prediction} in Section~\ref{thm:one-step-prediction-main-proof}. Note that we have made no attempt to optimize the log factors in either deviation component, and these can likely be made smaller. Some discussion of the result appears below; we defer commentary on the proof technique to Section~\ref{sec:techniques-one-step-prediction} in which a detailed overview is provided.  

First, we note that our result holds for \emph{any} batch-size $m \in \{1, 2, \ldots, d\}$.  This bridges the gap between deterministic predictions for single sample and constant batch-size methods which rely on controlling stochastic processes~\citep{tan2023online,arous2021online} or equivalent differential equations~\citep{paquette2021sgd} and the extensive batch-size setting in which $m \asymp d$~\citep{gerbelot2022rigorous}.   While these works all study the dynamics of SGD or related first-order methods, it is instructive to make a qualitative comparison even though our result holds for the (significantly more complex) prox-linear iterations.

To illustrate, consider two cases. Recalling that our inverse step-size $\lambda$ may scale as $\lambda \asymp d/m$, consider a batch-size of $m=1$ to obtain
\[
\big|\parcompX_{+} - \parcompdetX_{+} \big| \vee \big|\parcompZ_{+} - \parcompdetZ_{+}\big| \leq C_{2}\max\bigg\{ \frac{ \sqrt{\Err_{\sharp}} + \sigma}{(1 + \sigma) d} \cdot\log^{6}(d) ,\; d^{-30}\bigg\},
\]
and similarly for the perpendicular component $\perpcompX$.  By contrast, if we consider a large batch-size of $m \asymp d$, our bounds read as
\[
\big|\parcompX_{+} - \parcompdetX_{+} \big| \vee \big|\parcompZ_{+} - \parcompdetZ_{+}\big| \leq C_{2}\max\bigg\{ \frac{ \sqrt{\Err_{\sharp}} + \sigma}{1 + \sigma} \cdot \frac{\log^{6}(d)}{\sqrt{d}} ,\; d^{-30}\bigg\}.
\]
That is, for constant batch-size, the fluctuations scale as $\asymp 1/d$, whereas for batch-sizes of commensurate order with the dimension, our fluctuations are larger and scale as $\asymp 1/\sqrt{d}$.  As we will see in the sequel, this distinction in behavior of the fluctuation reflects the relative time-scales on which these bounds are required to hold to ensure convergence.

Second, recall that $\Err_{\sharp} \lesssim \|\bcoefX_{\sharp} \bcoefZ_{\sharp}^{\top} - \bcoefX_{\star}\bcoefZ_{\star}^{\top}\|_{F}^{2}$~\eqref{equivalent-Errt-frobenius-distance} to aid discussion. Theorem~\ref{thm:one-step-prediction} shows that the deviation of the components $\alpha_{+}$ and $\beta_{+}$ around their deterministic predictions scales with both the estimation error $\|\bcoefX_{\sharp} \bcoefZ_{\sharp}^{\top} - \bcoefX_{\star}\bcoefZ_{\star}^{\top}\|_{F}$ as well as the noise level $\sigma$.  Let us now compare this guarantee explicitly with the previous results~\citet[Theorem 1]{chandrasekher2023sharp} and~\citet[Theorem 1]{chandrasekher2022alternating}, setting $\sigma = 0$
and $m \asymp d$ to transparently facilitate the comparison.  The guarantees in~\citet[Theorem 1]{chandrasekher2023sharp} and~\citet[Theorem 1]{chandrasekher2022alternating} scale as $1/d^{1/4}$ and $1/\sqrt{d}$, respectively, so that the fluctuation bounds dominate the error achieved by the algorithm as soon as the error is small enough.  By contrast, 
our guarantee scales as $\sqrt{\Err_{\sharp}/d}$, which is always of lower order as compared to $\sqrt{\Err_{\sharp}}$.  This ensures that as we run the algorithm, the fluctuations of the empirical states around their deterministic counterparts will decrease at the same rate as the estimation error of the method, which in turn facilitates our convergence analysis of the prox-linear update in the low noise regime.  

Finally, we note that the deterministic predictions in Eq.~\eqref{alpha-beta-det-prediction} can be computed efficiently since we only require solutions to two-dimensional fixed point equations and some scalar calculations (see Section~\ref{sec:expilicit-formulas-prediction} for the explicit formulas).  Later, in Section~\ref{sec:experimental-results}, we use these one-step deterministic predictions to obtain a full \emph{trajectory} prediction.

\subsection{Convergence result} \label{sec:sharp-local-convergence-result}
In order to state our convergence guarantee, we define the empirical error as
\begin{align}\label{def:empirical-error-iterations}
	\Err_{t} = (\parcompX_{t} \parcompZ_t -1)^{2} + \perpcompX_{t}^{2} + \perpcompZ_{t}^{2}, \quad \text{ for all } \; 0\leq t\leq T.
\end{align}
We also require an assumption on the initialization. 
\begin{assumption}[Initialization] \label{asm:initialization}
	The initialization $\bcoefX_{0}, \bcoefZ_{0}$ satisfies both
	\begin{align}\label{local-convergence-conditions}
		\Err_{0} \leq K_0 \qquad \text{ and } \qquad 0.5 \leq \|\bcoefX_{0}\|_{2},\|\bcoefZ_{0}\|_{2} \leq 1.5, 
	\end{align}
    for $K_0$ a small enough, universal, positive constant.
\end{assumption}
Equipped with this assumption, we state our main convergence result.
\begin{theorem}\label{thm:local-sharp-convergence-results}
	Suppose Assumptions~\ref{assumption-unit-norm}--\ref{asm:initialization} hold and consider the observation model~\eqref{eq:model} and mini-batched prox-linear updates in~\eqref{eq:closed-form-update} run for $T$ iterations. There exists a tuple of universal, positive constants $(d_0,c_1,c_2,C,C_1,C_2)$ such that the following statement holds with probability at least $1-(T+1)d^{-18}$:  If the inverse step-size $\lambda$, the batch-size $m$, dimension $d$, and number of iterations $T$ satisfy
	\begin{subequations}\label{local-convergence-conditions-all}
		\begin{align}\label{step-batch-conditions}
			\lambda \geq \frac{C(1+\sigma^{2})d}{m}&,   &&C(1+\sigma^{4})\log^{12}(d) \leq m \leq d,\\
			\label{iteration-dimension-condition}
			d \geq d_{0}&, && 0 \leq T \leq \frac{\lambda}{c_{1}} \log \Big( \min \Big(\frac{\lambda m}{\sigma^{2}d} , \frac{\sqrt{m}}{\sigma^{2}\polylog} \Big) \Big),
		\end{align}
	\end{subequations}
	then we have for all $0 \leq t\leq T$,
	\begin{align}\label{ineq-local-sharp-convergence}
		\Big(1-\frac{c_{2}}{\lambda} \Big) \cdot \Err_{t}  +   \frac{C_{2}\sigma^{2}d}{\lambda^{2}m} - \frac{C_{1}\polylog \sigma^{2}}{\lambda \sqrt{m}} \leq \Err_{t+1} \leq \Big(1-\frac{c_{1}}{\lambda} \Big) \cdot \Err_{t} + \frac{C_{1}\sigma^{2}d}{\lambda^{2}m} + \frac{C_{1}\polylog\sigma^{2}}{\lambda\sqrt{m}}.
	\end{align}
\end{theorem}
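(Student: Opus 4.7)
The plan is to reduce the empirical recursion~\eqref{ineq-local-sharp-convergence} to a deterministic contraction analysis of the one-step maps combined with the fluctuation bound from Theorem~\ref{thm:one-step-prediction}, and then to close an induction over $t$. Let
$\Err_{t+1}^{\mathsf{det}} := (\parcompdetX_{t+1}\parcompdetZ_{t+1}-1)^{2} + (\perpcompdetX_{t+1})^{2} + (\perpcompdetZ_{t+1})^{2}$
denote the squared error that one would obtain by applying the deterministic one-step maps~\eqref{alpha-beta-det-prediction} to the \emph{random} previous state $(\parcompX_{t}, \perpcompX_{t}, \parcompZ_{t}, \perpcompZ_{t})$. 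I would add and subtract $\Err_{t+1}^{\mathsf{det}}$ inside~\eqref{ineq-local-sharp-convergence} and bound the two resulting pieces separately.

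\emph{Deterministic contraction.} Using the explicit formulas from Section~\ref{sec:expilicit-formulas-prediction} for the maps $\alphaXmap, \alphaZmap, \iotaXmap, \iotaZmap, \etaXmap, \etaZmap$, I would Taylor expand around the basin $\parcompX_{t}, \parcompZ_{t} \approx 1$ and $\perpcompX_{t}, \perpcompZ_{t} \approx 0$ in the regime $\lambda \gtrsim (1+\sigma^{2})d/m$. The leading-order behaviour should read $\parcompdetX_{+} - 1 \approx (1-c/\lambda)(\parcompX_{t}-1) + (\text{cross terms})$, with the variance maps $\etaXmap, \etaZmap$ contributing an additive noise of order $\sigma^{2}d/(\lambda^{2} m)$. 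Assembling these into the quadratic summary $\Err_{t+1}^{\mathsf{det}}$ should yield the two-sided bound
\begin{equation*}
(1-c_{2}/\lambda)\Err_{t} + C_{2}\sigma^{2}d/(\lambda^{2} m) \;\leq\; \Err_{t+1}^{\mathsf{det}} \;\leq\; (1-c_{1}/\lambda)\Err_{t} + C_{1}\sigma^{2}d/(\lambda^{2}m).
\end{equation*}

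\emph{Stochastic fluctuation.} I invoke Theorem~\ref{thm:one-step-prediction} at iteration $t$ and write $\Err_{t+1} - \Err_{t+1}^{\mathsf{det}}$ as a sum of differences of squares. Applying the identity $a^{2} - b^{2} = (a-b)(a+b)$, the boundedness $\|\bcoefX_{t+1}\|_{2}, \|\bcoefZ_{t+1}\|_{2} \asymp 1$, together with the parallel-component bound of order $(\sqrt{\Err_{t}}+\sigma)\polylog/(\lambda\sqrt{m})$ and the perpendicular bound of order $(\Err_{t}+\sigma^{2})\polylog/(\lambda\sqrt{m})$, should produce
\begin{equation*}
\bigl|\Err_{t+1} - \Err_{t+1}^{\mathsf{det}}\bigr| \;\leq\; C\cdot \polylog\cdot (\Err_{t} + \sigma^{2})/(\lambda\sqrt{m}).
\end{equation*}
Under the assumption $m \gtrsim (1+\sigma^{4})\polylog^{2}$ from~\eqref{step-batch-conditions}, the $\Err_{t}$-part of this fluctuation is strictly smaller than $\Err_{t}/\lambda$ and can be absorbed into the deterministic contraction (at the cost of slightly adjusting $c_{1}, c_{2}$); the $\sigma^{2}$-part produces the additive $\polylog\sigma^{2}/(\lambda\sqrt{m})$ term appearing on both sides of~\eqref{ineq-local-sharp-convergence}.

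The main technical obstacle is running this argument \emph{inductively}: Theorem~\ref{thm:one-step-prediction} requires $K_{1} \leq \|\bcoefX_{t}\|_{2}, \|\bcoefZ_{t}\|_{2} \leq K_{2}$ at every step, which I would deduce from an inductive invariant of the form $\Err_{t} \leq K_{0}$ together with control on $\|\bcoefX_{t}\|_{2}, \|\bcoefZ_{t}\|_{2}$ around $1$. Iterating the upper half of~\eqref{ineq-local-sharp-convergence} produces a geometric series at rate $1-c_{1}/\lambda$, whose noise contributions accumulate to $\Theta(\sigma^{2}d/(\lambda m) + \polylog\sigma^{2}/\sqrt{m})$ as $t$ grows. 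The iteration horizon in~\eqref{iteration-dimension-condition} is precisely the time until $\Err_{t}$ decays to this noise floor, and is calibrated so that neither mode of the noise pushes $\Err_{t}$ past $K_{0}$; the $\min(\cdot)$ in~\eqref{iteration-dimension-condition} reflects whichever of the two noise contributions dominates. For $t$ below this horizon, the invariant $\Err_{t} \leq K_{0}$ is preserved and the inductive step closes. Finally, a union bound over $t \in \{0, 1, \ldots, T\}$ of the per-step failure probability $d^{-20}$ from Theorem~\ref{thm:one-step-prediction} yields the stated overall probability $1-(T+1)d^{-18}$.
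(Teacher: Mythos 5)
Your high-level route --- splitting $\Err_{t+1}$ into a deterministic one-step contraction of $\Err_t$ plus a fluctuation controlled by Theorem~\ref{thm:one-step-prediction}, absorbing the $\Err_t$-proportional part of the fluctuation into the contraction using $m\gtrsim(1+\sigma^4)\log^{12}(d)$, and then inducting with a union bound --- is exactly the paper's strategy (Lemmas~\ref{lemma:one-step-deterministic-convergence} and~\ref{lemma:one-step-empirical-convergence} followed by induction in Section~\ref{sec:main-proof-convergence-result}).

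There is, however, a genuine gap in how you close the induction. To invoke Theorem~\ref{thm:one-step-prediction} at step $t$ you need $K_1\leq\|\bcoefX_t\|_2,\|\bcoefZ_t\|_2\leq K_2$, and you propose to deduce this from the invariant $\Err_t\leq K_0$. That cannot work on its own: $\Err_t=(\parcompX_t\parcompZ_t-1)^2+\perpcompX_t^2+\perpcompZ_t^2$ is invariant under the rescaling $(\bcoefX,\bcoefZ)\mapsto(s\bcoefX,s^{-1}\bcoefZ)$, so $\Err_t$ being tiny is perfectly consistent with $\|\bcoefX_t\|_2$ drifting to $100$ while $\|\bcoefZ_t\|_2$ drifts to $1/100$. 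The paper closes this hole with a \emph{separate} per-step drift bound on the individual parallel components, $|\parcompX_{t+1}-\parcompX_t|\lesssim\sqrt{\Err_t}/\lambda+\polylog\,\sigma/(\lambda\sqrt{m})$ (inequality~\eqref{bound-alpha-t+1-t-epsilon}), which is summed over $t$: the $\sqrt{\Err_t}/\lambda$ part telescopes geometrically to $O(\sqrt{K_0})$, while the additive $\polylog\,\sigma/(\lambda\sqrt{m})$ part does \emph{not} decay and contributes $T\cdot\polylog\,\sigma/(\lambda\sqrt{m})$. This is the real reason for the upper bound on $T$ in~\eqref{iteration-dimension-condition} --- not, as you suggest, to prevent the noise from pushing $\Err_t$ past $K_0$ (the recursion itself keeps $\Err_t\leq\Err_0+O(\sigma^2d/(\lambda m)+\polylog\,\sigma^2/\sqrt{m})$ for all $t$, with no horizon needed). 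Without the drift bound and the resulting control of $\|\bcoefX_t\|_2,\|\bcoefZ_t\|_2$, the inductive step does not go through; you should add this ingredient (or an equivalent mechanism for pinning down the individual scales) to make the argument complete.
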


As before, we have not optimized constant or polylogarithmic factors, which can likely be improved. To aid discussion, recall that inequality~\eqref{equivalent-Errt-frobenius-distance} holds for all iterations, so that the reader should think that $\Err_{t}$ is equivalent (up to a constant) to the error $\|\bcoefX_{t} \bcoefZ_{t}^{\top} - \bcoefX_{\star} \bcoefZ_{\star}^{\top}\|_{F}^{2}$. A few remarks follow.

First, we explain the conditions required by Theorem~\ref{thm:local-sharp-convergence-results}. Note that condition~\eqref{local-convergence-conditions} is equivalent to a local initialization requirement, i.e., the initial estimation error $\Err_{0}$ is smaller than a universal constant and the initial estimators $\bcoefX_{0},\bcoefZ_{0}$ are of constant norm. Such an assumption is typically required for non-convex problems to guarantee linear convergence~\citep[see, e.g., ][as an example]{chi2019nonconvex}. Eq.~\eqref{step-batch-conditions} specifies a condition on the inverse step-size $\lambda$ and batch-size $m$.  Note that while our one-step predictions (Theorem~\ref{thm:one-step-prediction}) hold for any batch-size $1 \leq m \leq d$, our convergence guarantees in Theorem~\ref{thm:one-step-prediction}
require batch-sizes which scale polylogarithmically in $d$, and thus Theorem~\ref{thm:one-step-prediction} does not cover the purely stochastic case $m=1$. Condition~\eqref{iteration-dimension-condition} specifies an upper bound of the number of iterations $T$, which is large enough to guarantee that the iterates converge to a noise floor. 

Second, we simplify the convergence guarantee for the noiseless case. By setting $\sigma = 0$ and $\lambda = Cd/m$, Eq.~\eqref{ineq-local-sharp-convergence} simplifies to 
\begin{align} \label{eq:interpolation-cor}
	\Big( 1 - \frac{c_2m}{Cd} \Big) \cdot \Err_{t} \leq \Err_{t+1} \leq \Big( 1 - \frac{c_1m}{Cd} \Big) \cdot \Err_{t}.
\end{align}
This establishes a linear convergence guarantee and shows that the error decreases at a faster rate as the batch-size $m$ increases. The upper bound in Eq.~\eqref{eq:interpolation-cor} recovers the guarantees of~\cite{asi2019stochastic,asi2020minibatch,chadha2022accelerated} for the interpolation version of our problem, while also providing a matching lower bound on the convergence rate. 

Third, we turn to a general noise level $\sigma>0$.  Theorem~\ref{thm:local-sharp-convergence-results} implies that the prox-linear update adapts to problem difficulty in terms of the rate at which it converges~\citep{chandrasekher2022alternating,agarwal2012fast} and enjoys a linear speed-up in batch-size~\citep{asi2020minibatch,chadha2022accelerated}. In particular, by setting $\lambda = Cd(1+\sigma^{2})/m$ and $(1+\sigma^{4})\log^{12}(d) \ll m \leq d$, inequality~\eqref{ineq-local-sharp-convergence} implies
\begin{align}\label{ineq-local-sharp-convergence-standard-regime}
	\Big( 1-\frac{c_{2}m}{C(1+\sigma^{2})d} \Big) \cdot \Err_{t} + \frac{C_2 \sigma^{2} d}{2\lambda^{2} m} \leq \Err_{t+1} \leq \Big( 1-\frac{c_{1}m}{C(1+\sigma^{2})d} \Big) \cdot \Err_{t} + \frac{2C_1 \sigma^{2} d}{\lambda^{2} m}.
\end{align}
Consequently, starting from a local initialization satisfying inequality~\eqref{local-convergence-conditions} and running the prox-linear update for 
\begin{align}\label{interation-complexity-standard-regime}
	\tau = \Theta\Big( \frac{(1+\sigma^{2})d}{m} \log\Big( \frac{\Err_{0}}{\sigma^{2}}\Big) \Big) \quad \text{iterations}, \quad \text{we obtain} \quad \Err_{\tau} \lesssim \sigma^{2}.
\end{align}
The sandwich relations~\eqref{ineq-local-sharp-convergence-standard-regime} and~\eqref{interation-complexity-standard-regime} establish sharp upper and lower bounds both on the convergence rate and iteration complexity of the prox-linear update.  Note the explicit dependence on the problem parameters, including noise-dependent convergence rates.  We illustrate this difference in convergence behavior in Figure~\ref{fig:standard} where we consider $m = 4,8,16,32$ and $\lambda = (1+\sigma^2)d/m$, noting the monotone relationship (in $m$) in the speed of convergence. 

Fourth, Theorem~\ref{thm:local-sharp-convergence-results} reveals the robustness of choices of $\lambda$~\citep{asi2019importance} and how it affects the convergence behavior. In particular, as long as we choose $\lambda \geq C(1+\sigma^{2})d/m$, inequality~\eqref{ineq-local-sharp-convergence} holds, so that starting by satisfying inequality~\eqref{local-convergence-conditions} and running the prox-linear update for 
\begin{align}\label{interation-complexity-all-lambda-regime}
\hspace{-1cm}
	\tau = \Theta\bigg( \lambda \log \bigg( \Err_{0} \cdot \min \bigg\{ \frac{\lambda m}{\sigma^{2}d} , \frac{\sqrt{m}}{\sigma^{2}\polylog} \bigg\} \bigg) \bigg) \; \text{iterations, we have } \Err_{\tau} \lesssim \frac{\sigma^{2}d}{\lambda m} + \frac{\polylog \sigma^{2}}{\sqrt{m}}.
\end{align}
Inequality~\eqref{interation-complexity-all-lambda-regime} reveals a tension between the noise floor reached and the speed of convergence governed by the inverse step-size $\lambda$.   
In particular, as $\lambda$ increases (i.e., step-size decreases), the iteration complexity increases while the eventual error $\Err_{\tau}$ decreases. We illustrate this difference in convergence behavior in Figure~\ref{fig:lambda} where we consider $\lambda = 1,10,100,200$ and $m=32$, noting the monotone relationship (in $\lambda$) in the speed of convergence and also in the eventual error floor.

Finally, let us concretely situate our contribution in relation to recent works studying aProx methods~\citep[Eq. (4)]{asi2019stochastic}. As previously mentioned, for interpolation problems (corresponding to $\sigma=0$ in our case),~\citet[Proposition 1]{chadha2022accelerated} and~\citet[Theorem 2]{asi2020minibatch} prove that aProx methods enjoy linear convergence and enjoy a linear speedup in the batch-size $m$.   We note that these results hold in a more general setting, whereas ours are specific to the problem at hand.  Specializing to prox-linear methods for rank-one matrix sensing, we complement and improve their guarantees along two axes. First, our result shows there is a linear speed up for all noise levels $\sigma \geq 0$. Second, we prove a method-specific matching lower bound on the iteration complexity, showing that the speed-up by increasing batch-size can be no better than linear. In a complementary line of work,~\citet[Theorem 4.1]{davis2019stochastic} shows that the population loss function value of the average of prox-linear iterates converges to its minimal value at a sublinear rate $O(T^{-1/2})$. Our result improves their guarantee in the sense that the iterates exhibit linear convergence to a noise-dominated neighbor of the ground truth $(\bcoefX_{\star},\bcoefZ_{\star})$, and characterizes the size of this neighborhood.~\citet[Proposition 5]{asi2019stochastic} also obtained a similar linear convergence guarantee for strongly-convex stochastic loss functions while our stochastic loss $L_{m}$ is non-convex.~\cite{davis2023stochastic} proved that aProx methods with geometric step decay enjoy a local linear rate of convergence for non-convex problems that satisfy the sharp growth condition, but our population loss $\widebar{L}$ does not satisfy the sharp growth condition in~\citet[Eq. (2.4)]{davis2023stochastic}.

We provide the proofs of Theorem~\ref{thm:local-sharp-convergence-results} and the consequent inequalities~\eqref{ineq-local-sharp-convergence-standard-regime},~\eqref{interation-complexity-standard-regime} and~\eqref{interation-complexity-all-lambda-regime} in Section~\ref{sec:main-proof-convergence-result}. 
Our proof technique proceeds in two conceptually simple but analytically involved steps. First, we apply the one-step updates from Theorem~\ref{thm:one-step-prediction} to reduce the complexity from studying a high-dimensional, random iteration to studying a four-dimensional, deterministic recursion. Second, we show that convergence properties suggested by the deterministic updates~\eqref{alpha-beta-det-prediction} are not affected by the fluctuations of the empirical states around their predictions when the batch-size $m$ is large enough.

\subsection{Trajectory predictions and application to hyperparameter tuning}\label{sec:experimental-results}
In Theorem~\ref{thm:one-step-prediction}, we derived one-step, deterministic predictions of the update and in turn used these predictions to obtain concrete convergence guarantees in Theorem~\ref{thm:local-sharp-convergence-results}.  In this section, we demonstrate how to obtain a full trajectory prediction and use this to tune the pair of hyperparameters $(m, \lambda)$.  

\subsubsection{Obtaining a trajectory prediction}
Consider an initialization $(\bcoefX_{0}, \bcoefZ_{0}) \in \mathbb{R}^{d} \times \mathbb{R}^{d}$ and use this to construct an initial state $(\parcompX_{0},\perpcompX_{0},\parcompZ_{0},\perpcompZ_{0})$.  Then, given the tuple of parameters $(m,d,\sigma,\lambda)$, we recall the explicit formulae of the prediction functions $\alphaXmap_{m,d,\sigma,\lambda},\alphaZmap_{m,d,\sigma,\lambda},\iotaXmap_{m,d,\sigma,\lambda},\iotaZmap_{m,d,\sigma,\lambda},\etaXmap_{m,d,\sigma,\lambda},\etaZmap_{m,d,\sigma,\lambda}:\mathbb{R}^{4} \rightarrow \mathbb{R}$ in Section~\ref{sec:expilicit-formulas-prediction} and use these to define a deterministic map $\detstatemap_{m,d,\sigma,\lambda}:\mathbb{R}^{4} \rightarrow \mathbb{R}^{4}$, defined as
\begin{align} \label{eq:deterministic-map}
	\detstatemap_{m,d,\sigma,\lambda}\big(\parcompX,\perpcompX,\parcompZ,\perpcompZ\big) &= \big(\parcompdetX,\perpcompdetX,\parcompdetZ,\perpcompdetZ \big).
\end{align}
Above, we have additionally used the quantities 
\begin{align*}
 \parcompdetX = \alphaXmap_{m,d,\sigma,\lambda}\big(\parcompX,\perpcompX,\parcompZ,\perpcompZ\big),\qquad \perpcompdetX &= \Big( \iotaXmap_{m,d,\sigma,\lambda}\big(\parcompX,\perpcompX,\parcompZ,\perpcompZ\big)^{2} + \etaXmap_{m,d,\sigma,\lambda}\big(\parcompX,\perpcompX,\parcompZ,\perpcompZ\big) \Big)^{-1/2}, 
	\\ \parcompdetZ = \alphaZmap_{m,d,\sigma,\lambda}\big(\parcompX,\perpcompX,\parcompZ,\perpcompZ\big),\qquad \perpcompdetZ &= \Big( \iotaZmap_{m,d,\sigma,\lambda}\big(\parcompX,\perpcompX,\parcompZ,\perpcompZ\big)^{2} + \etaZmap_{m,d,\sigma,\lambda}\big(\parcompX,\perpcompX,\parcompZ,\perpcompZ\big) \Big)^{-1/2}.
\end{align*}
Equipped with the map $\mathcal{T}$ in~\eqref{eq:deterministic-map}, we generate a deterministic prediction of the state at iterate $t$ by iterating this map $t$ times.  That is, we generate a deterministic sequence $\big\{\parcompseqX_{t},\perpcompseqX_{t},\parcompseqZ_{t},\perpcompseqZ_{t}\big\}_{t=0}^{T}$ with $\big(\parcompseqX_{0},\perpcompseqX_{0},\parcompseqZ_{0},\perpcompseqZ_{0}\big) = (\parcompX_{0},\perpcompX_{0},\parcompZ_{0},\perpcompZ_{0})$ as
\begin{align} \label{eq:trajectory-prediction}
\big(\parcompseqX_{t},\perpcompseqX_{t},\parcompseqZ_{t},\perpcompseqZ_{t}\big) = \detstatemap_{m,d,\sigma,\lambda}\big(\parcompseqX_{t-1},\perpcompseqX_{t-1},\parcompseqZ_{t-1},\perpcompseqZ_{t-1}\big) = \detstatemap_{m,d,\sigma,\lambda}^{t}\big(\parcompseqX_{0},\perpcompseqX_{0},\parcompseqZ_{0},\perpcompseqZ_{0}\big).
\end{align}
We then define the predicted error sequence $\{\Err_{t}^{\mathsf{seq}}\}_{t = 1}^{T}$ as
\begin{align} \label{eq:error-prediction}
\Err_{t}^{\mathsf{seq}} = \big( \parcompseqX_{t} \parcompseqZ_{t} - 1\big)^{2} + \big(\perpcompseqX_{t} \big)^{2} + \big( \perpcompseqZ_{t} \big)^{2}.
\end{align}
Note that the sequence $\{ \Err_{t}^{\mathsf{seq}} \}$ is the deterministic analog of the empirical error sequence $\{ \Err_{t} \}$ defined in Eq.~\eqref{def:empirical-error-iterations}.  We now use these trajectory predictions to perform hyperparameter tuning. 

\subsubsection{Hyperparameter tuning}
The trajectory prediction in~\eqref{eq:trajectory-prediction} and its associated error~\eqref{eq:error-prediction} require knowledge of the tuple of parameters $(m, d, \sigma, \lambda)$.  Given this tuple of parameters, the key advantage of our trajectory prediction lies in its efficiency: a single trajectory over $T = 1000$ iterations takes less than $0.25$ seconds to generate for all the illustrated values of dimension. Of this quadruple of parameters, the user clearly has exact knowledge of the dimension $d$, and an estimate of the noise standard deviation $\sigma$ may be previously available or estimated via bootstrapping. We would like to use the predictions to tune the batch-size $m$ and inverse step-size $\lambda$ in order to ensure rapid convergence to a reasonable error floor---we now consider two families of experiments that isolate the effects of these parameters. 
\begin{figure}[!hbtp]
	\centering
	\begin{subfigure}[b]{0.48\textwidth}
		\centering
		\includegraphics[scale=0.5]{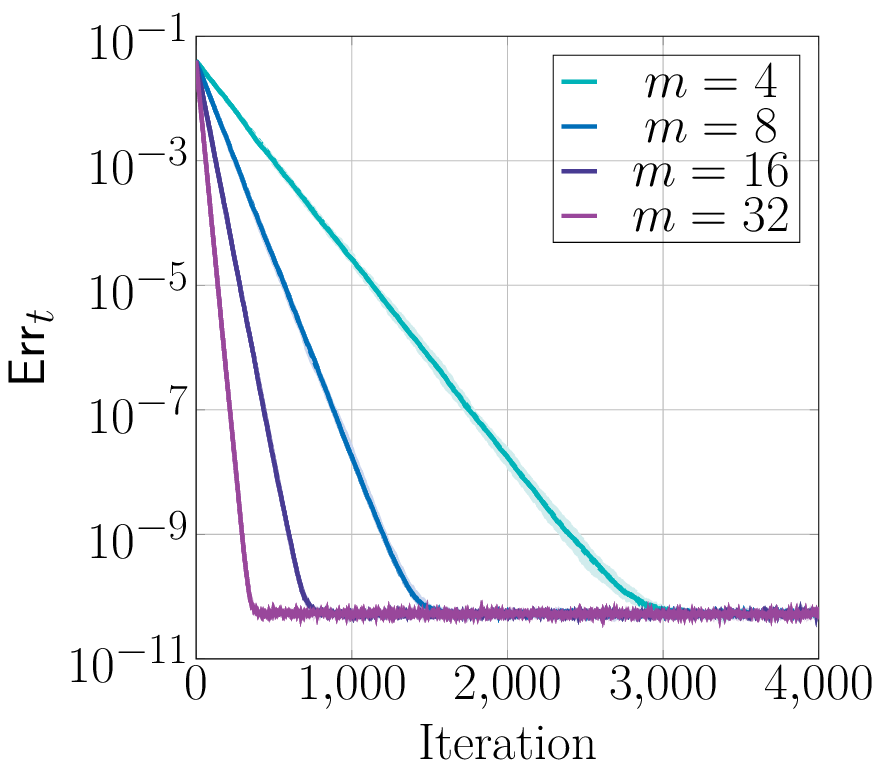}
		\caption{$\sigma = 10^{-5},\; d = 200.$}    
		\label{subfig:standard-a}
	\end{subfigure}
	\hfill
	\begin{subfigure}[b]{0.48\textwidth}  
		\centering 
		\includegraphics[scale=0.5]{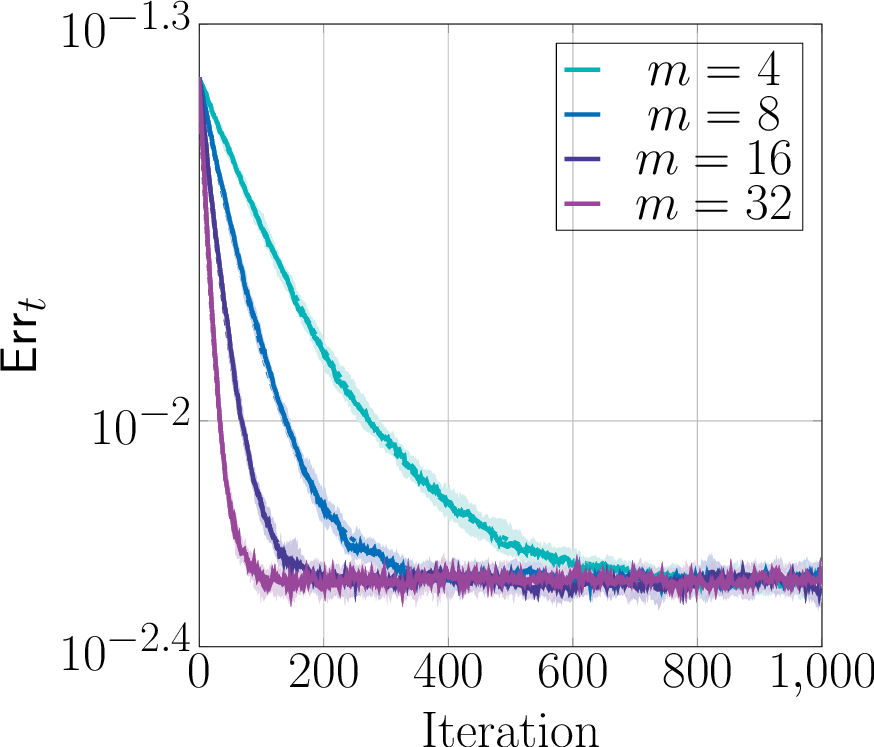}
		\caption{$\sigma = 0.1,\;d=200.$}
		\label{subfig:standard-b}
	\end{subfigure}
	\caption{Low noise (panel (a)) and high noise (panel (b)) behavior of the prox-linear method for batch-sizes $m=4,8,16,32$ and inverse step-size choice $\lambda = (1+\sigma^{2})d/m$. Each experiment starts from an initialization satisfying $\parcompX_{0} = \parcompZ_{0} = 0.99$ and $\|\bcoefX_{0}\|_{2} = \|\bcoefZ_{0}\|_{2} = 1$ and runs to convergence. 
		In panel (a), each experiment consists of $10$ independent trials and the shaded envelopes ($m=4$) denote the range over the $10$ trials. In panel (b), each experiment consists of $30$ independent trials and the shaded envelopes denote the interquartile range over the $30$ trials. Solid lines denote the median of the empirical error $\Err_{t}$~\eqref{def:empirical-error-iterations} over the independent trials and dashed lines (barely visible) denote the predicted error $\Err_{t}^{\mathsf{seq}}$~\eqref{eq:error-prediction}.} 
	\label{fig:standard}
\end{figure}

First, we consider a fixed inverse step-size choice $\lambda(m) = (1 + \sigma^2)d/m$ and study the effect of varying the batch-size.  To this end, recall from Eq.~\eqref{interation-complexity-standard-regime} that this choice of step-size implies an iteration complexity of $\widetilde{\Theta}((1+\sigma^2)d/m)$ to reach a parameter-independent error floor of size $\widetilde{O}(\sigma^2)$.  That is, the iteration complexity monotonically decreases as the batch-size increases and there is no tradeoff with the eventual error floor which is reached.  Equipped with a tight characterization of the per iteration complexity of the method, the user can then compute the iteration complexity for each batch-size $m$ from our trajectory prediction and use the combination of this information to inform their batch-size selection.  We illustrate this in Figure~\ref{fig:standard}, noting that the user may elect to take a batch-size of $m=16$ upon noticing the diminishing gains in iteration complexity purely from observing the deterministic trajectory. As is clear from the figure, the deterministic and empirical trajectories are nearly indistinguishable, so this design choice is also a good one (with high probability) for the random iterates that one would obtain by running the algorithm. 

Next, we turn to fixing the batch-size $m$ and varying the inverse step-size $\lambda$.  Recall from Theorem~\ref{thm:local-sharp-convergence-results} that, unlike the previous case, there exists a tension---induced by the inverse step-size $\lambda$---between the rate of convergence and the size of the noise-dominated neighborhood to which the iterates linearly converge.  In particular, larger $\lambda$ yields slower convergence to a smaller neighborhood, whereas smaller $\lambda$ yields faster convergence albeit to a larger neighborhood.  Faced with this, the user may---without ever running the data---plot several deterministic trajectories, varying $\lambda$ in each, to determine their desired trade-off.  In Figure~\ref{fig:lambda}, we vary $\lambda \in \{1, 10, 100, 200\}$ for a fixed batch-size of $m = 32$ and in dimension $d = 200$.  
We plot both deterministic trajectories and $30$ independent trials of empirical trajectories, noting that the difference is barely visible.  Zooming into Figure~\ref{fig:lambda}(a), the user may elect to pick $\lambda = 10$ as for larger $\lambda$, the iteration complexity increases by an order of magnitude, but the user may be satisfied with the eventual error on the order of $10^{-10}$.  By contrast, in Figure~\ref{fig:lambda}(b), the user may elect to pick $\lambda = 200$ as the difference in iteration complexity is much smaller, but the difference between $10^{-2}$ and $10^{-3}$ in eventual error may be non-negligible. Both of these design choices can be made purely from observing the deterministic trajectory, but as guaranteed by our theory, they will be suitable for the random iterations with high probability.

\begin{figure}[!hbtp]
	\centering
	\begin{subfigure}[b]{0.48\textwidth}
		\centering
		\includegraphics[scale=0.48]{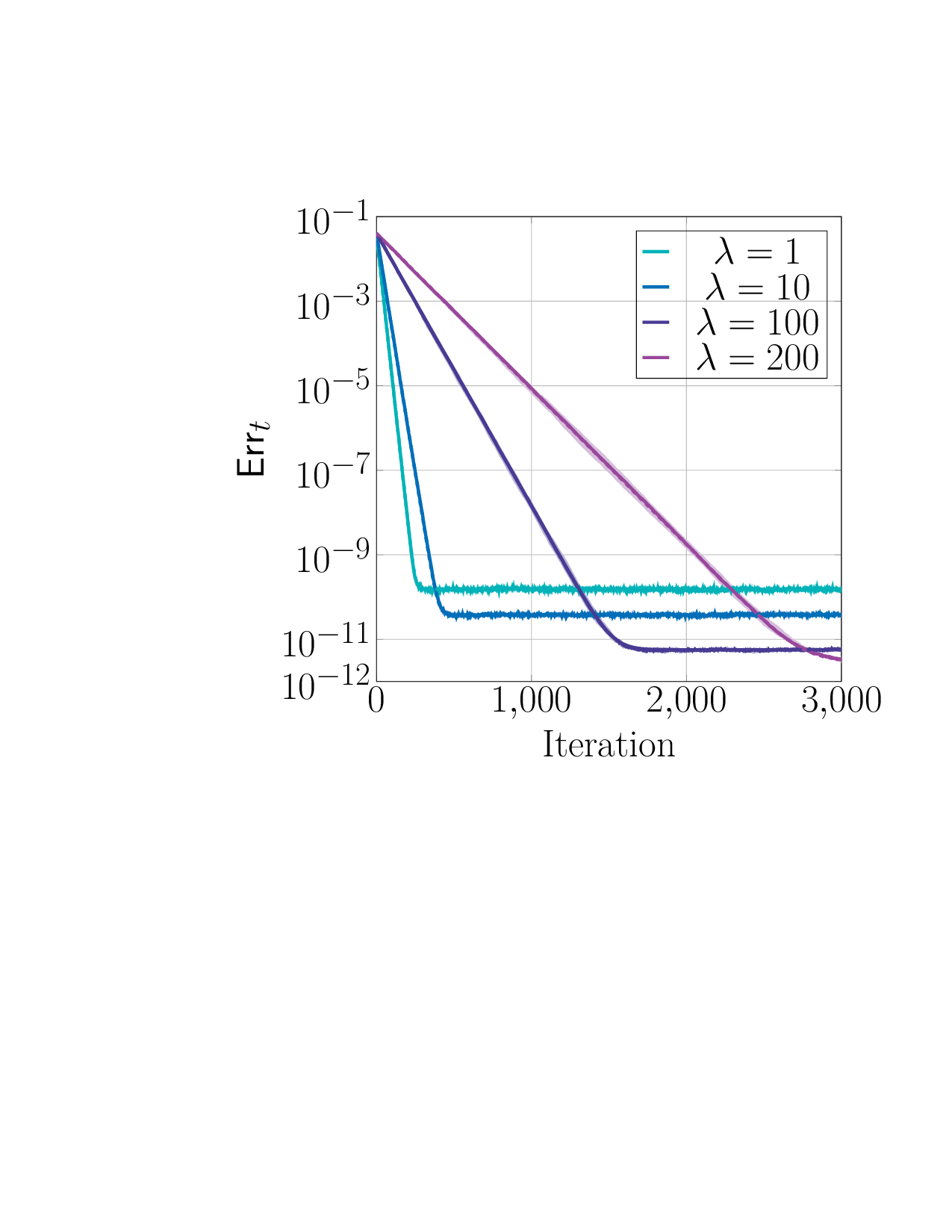}
		\caption{$\sigma = 10^{-5},\; d = 200,\; m = 32$.}    
		\label{subfig:lambda-a}
	\end{subfigure}
	\hfill
	\begin{subfigure}[b]{0.48\textwidth}  
		\centering 
		\includegraphics[scale=0.48]{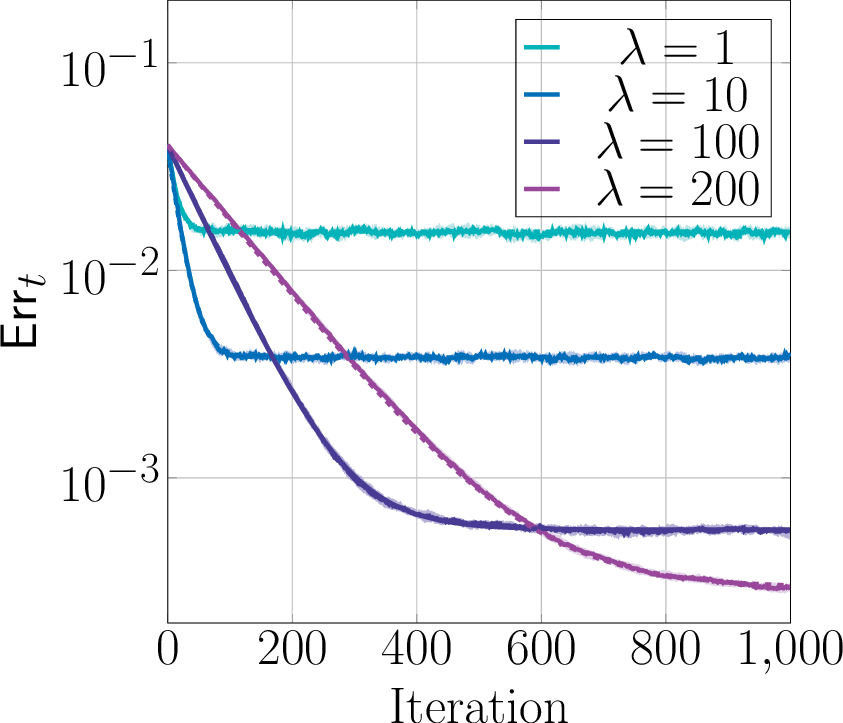}
		\caption{$\sigma = 0.1,\; d=200,\; m = 32$.}
		\label{subfig:lambda-b}
	\end{subfigure}
	\caption{Low noise (panel (a)) and high noise (panel (b)) behavior of the prox-linear method for batch-size $m=32$ and inverse step-size selections $\lambda = 1, 10, 100, 200$. Each experiment starts from an initialization satisfying $\parcompX_{0} = \parcompZ_{0} = 0.99$ and $\|\bcoefX_{0}\|_{2} = \|\bcoefZ_{0}\|_{2} = 1$ and runs to convergence. In panel (a), each experiment consists of $10$ independent trials and shaded envelopes ($\lambda = 200$) denote the range over the $10$ trials. In panel (b), each experiment consists of $30$ independent trials and shaded envelopes denote the interquartile range over the $30$ trials. Solid lines denote the median of $\Err_{t}$~\eqref{def:empirical-error-iterations} over the independent trials and dashed lines (barely visible) denote the predicted error $\Err_{t}^{\mathsf{seq}}$~\eqref{eq:error-prediction}.} 
	\label{fig:lambda}
\end{figure}
\section{A heuristic derivation and explicit formulas}\label{sec:overview-techniques}
In this section, we first present a heuristic derivation of the deterministic prediction of the parallel component. Then we give the deferred explicit formulas of the prediction functions. 

\subsection{A heuristic derivation of the prediction for the parallel component} \label{sec:techniques-one-step-prediction}

In this section, we give a high-level overview of the techniques that we use to derive the deterministic predictions, focusing on the parallel components $\parcompX_{+}$ and $\parcompZ_{+}$. Throughout, we use the shorthand
\begin{align}\label{eq:shorthand-length-L-Ltil}
L_\sharp^2 = \parcompX_\sharp^2+\perpcompX_\sharp^2 \quad \text{and} \quad \LZ_{\sharp}^{2} = \parcompZ_\sharp^2+\perpcompZ_\sharp^2.
\end{align}
\paragraph{Step 1: Decomposition.}
Since our updates are online and the data is Gaussian, the current iterate $\bcoefX_{+}$ is correlated with the past iterates only through the subspace $\mathsf{span}(\bcoefX_{\sharp}, \bcoefX_{\star})$ (and similarly for $\bcoefZ_{+}$).  We thus decompose the pair of parallel components $(\parcompX_{+},\parcompZ_{+})$ on this subspace using Gram--Schmidt orthonormalization.  In particular, consider the four unit vectors
\begin{align}\label{definition-u1-u2-v1-v2}
	\bu_{1} = \frac{\bcoefX_{\sharp}}{\|\bcoefX_{\sharp}\|_{2}},\quad \bu_{2} = \frac{\bP_{\bcoefX_{\sharp}}^{\perp} \bcoefX_{\star}}{\|\bP_{\bcoefX_{\sharp}}^{\perp} \bcoefX_{\star}\|_{2}}, \quad 
	\bv_{1} = \frac{\bcoefZ_{\sharp}}{\|\bcoefZ_{\sharp}\|_{2}} \quad \text{and} \quad \bv_{2} = \frac{\bP_{\bcoefZ_{\sharp}}^{\perp} \bcoefZ_{\star}}{\|\bP_{\bcoefZ_{\sharp}}^{\perp} \bcoefZ_{\star}\|_{2}},
\end{align}
where $\bP_{\bcoefX_{\sharp}}^{\perp}$ denotes the projection matrix onto the orthogonal complement of one-dimensional subspace spanned by $\bcoefX_{\sharp}$ (and similarly for $\bP_{\bcoefZ_{\sharp}}^{\perp}$). We then decompose $\parcompX_{+}$ and $\parcompZ_{+}$ on the two-dimensional subspace spanned by $\bu_1$ and $\bu_2$ as
\begin{align}\label{eq:alpha-component-decomposition-intro}
	\parcompX_{+} = \frac{\parcompX_{\sharp}}{L_{\sharp}} \cdot \langle \bcoefX_{+}, \bu_1 \rangle + \frac{\perpcompX_\sharp}{L_\sharp} \cdot \langle \bcoefX_{+}, \bu_2 \rangle \quad \text{and} \quad \parcompZ_{+} = \frac{\parcompZ_{\sharp}}{\LZ_{\sharp}} \cdot \langle \bcoefZ_{+}, \bv_1 \rangle + \frac{\perpcompZ_\sharp}{\LZ_\sharp} \cdot \langle \bcoefZ_{+}, \bv_2 \rangle.
\end{align}
Then, with $\theta(\bu_i) := \langle \bu_i, \bcoefX_{+} \rangle$ (and similarly for $\thetatil(\bv_i)$), we define deterministic counterparts $\thetaXdet_1$, $\thetaXdet_2$ so that $\parcompdetX_{+} =  (\parcompX_{\sharp}/L_{\sharp}) \cdot \thetaXdet_1 + (\perpcompX_{\sharp}/L_{\sharp}) \cdot \thetaXdet_2$ (see Section~\ref{sec:concentration-proof} for explicit expressions).  Thus, in order to bound the error $\lvert \parcompX_{+} - \parcompdetX_{+} \lvert$, it suffices to bound the decomposition
\[
\bigl \lvert \theta(\bu_1) - \thetaXdet_1 \bigr \rvert \leq \underbrace{\bigl \lvert \theta(\bu_1) - \EE[\theta(\bu_1)] \bigr \rvert}_{\text{Stochastic error}} + \underbrace{\bigl \lvert \EE[\theta(\bu_1)] - \thetaXdet_1 \bigr \rvert}_{\text{Bias}},
\]
and similarly for $\theta(\bu_2), \thetatil(\bv_1), \thetatil(\bv_2)$.  

\paragraph{Step 2a: Controlling the stochastic error via a leave-one-\emph{sample} out argument}
We view the random variable $\theta(\bu_1)$ as a function of the data $\{\bx_k,\bz_k,\epsilon_k\}_{k=1}^{m}$. Indeed, define the function $f_{\bu_1}: \mathbb{R}^{(2d+1) \times m} \rightarrow \mathbb{R}$ as $f_{\bu_1} = \langle \bcoefX_{+},\bu_1\rangle$. It suffices to show that $f_{\bu_1}$ concentrates around its expectation. We then study the bounded difference property of $f_{\bu_1}$ using the leave-one-sample out technique and then employ Warnke's typical bounded difference inequality~\citep[Theorem 2]{warnke2016method} to facilitate our analysis. See Section~\ref{sec:stochastic-error-parallel-component} for details.

\paragraph{Step 2b: Controlling the bias via a leave-one-\emph{direction} out argument}
In order to control the bias, we employ a leave-one-out technique that is similar in spirit to that of~\citet{el2013robust}. 

Let $\bu_1$ and $\bu_2$ be defined as in Step 1 and let $\{\bu_1, \bu_2, \ldots, \bu_d\} \in \mathbb{R}^d \times \mathbb{R}^d \times \ldots \times \mathbb{R}^d$ denote an orthonormal completion of a basis of $\mathbb{R}^d$.  Define $\{\bv_1, \bv_2, \ldots, \bv_d\} \in \mathbb{R}^d \times \mathbb{R}^d \times \ldots \times \mathbb{R}^d$ similarly.  We will analyze the effect of computing the iterates in~\eqref{eq:closed-form-update} upon leaving out one of $\bu_1, \bu_2$ (as well as $\bv_1, \bv_2$).  To this end, we define orthogonal leave-one-out matrices $\bO_{\bu_i},\bO_{\bv_i} \in \mathbb{R}^{d \times (d-1)}$ whose columns consist of $\{\bu_{j}\}_{j\neq i}$ and $\{\bv_{j}\}_{j\neq i}$ respectively.  Let $\bW = \diag(\bX\bcoefX_{\sharp})$ and $\bWtil =  \diag(\bZ\bcoefZ_{\sharp})$ and define the leave one out data $\bA_{\bu_i,\bv_i} = \left[ \bWtil \bX\bO_{\bu_i} \; \bW \bZ \bO_{\bv_i} \right]$, noting that the Gaussianity of $\bX$ implies that $\bX \bO_{\bu_i}$ is independent of $\bX\bu_i$ (and similarly for $\bZ\bO_{\bv_i}, \bZ \bv_i$).  The update in~\eqref{eq:closed-form-update} can then be written in terms of the ``projection" matrix\footnote{Let us emphasize that this is only a projection matrix when $\lambda = 0$, otherwise it can be understood as the corresponding quantity in ridge-regularized least squares regression.} 
\begin{align}\label{definition-of-Puv-Auv}
	\bP_{\bu_i,\bv_i} := \bI - \bA_{\bu_i,\bv_i} ( \bA_{\bu_i,\bv_i}^{\top} \bA_{\bu_i,\bv_i} + \lambda m \bI)^{-1} \bA_{\bu_i,\bv_i}^{\top}.
\end{align}
Equipped with these preliminary notions, we may utilize the KKT conditions which specify the updates in~\eqref{eq:closed-form-update} (see Section~\ref{sec:proof-bias-term-parallel-component} for details) to obtain the following approximations
\begin{align}\label{eq:approx-components-introduction}
		\hspace{-0.1cm}\EE[\theta(\bu_1)] \approx L_\sharp + \EE\Biggl\{\frac{L_\sharp\, \Bigl(\frac{\parcompX_\sharp\parcompZ_\sharp}{L_\sharp^2} - \widetilde{L}_{\sharp}^2\Bigr) M_1}{\lambda L_\sharp^2\widetilde{L}_{\sharp}^2 + M_1(L_\sharp^2 + \widetilde{L}_{\sharp}^2)}\Biggr\}\quad  \text{ and } \quad
	\EE[\theta(\bu_2)] \approx \frac{\parcompZ_{\sharp}}{\widetilde{L}_{\sharp}^{2}}  \frac{\perpcompX_{\sharp}}{L_{\sharp}} \cdot \EE\Biggl\{\frac{ M_{11} }{ M_{11}+\lambda}\Biggr\},  
\end{align}
where the random variables $M_{11}$ and $M_1$ are defined as 
\begin{align*}
	&M_{11} = \frac{\langle \bX\bu_2, \bWtil \bP_{\bu_2,\bv_2} \bWtil \bX\bu_2\rangle}{m} \qquad  \text{ and } \qquad  M_{1} = \frac{\langle \diag(\bW\bWtil), \bP_{\bu_1,\bv_1} \diag(\bW\bWtil)\rangle}{m} .
\end{align*}
With the closed-form expression in Eq.~\eqref{eq:approx-components-introduction}, it suffices to study the concentration of the random variables $M_{11}$ and $M_{1}$.

\paragraph{Step 3: Concentration of the constituent components $M_{11}$ and $M_1$.}  We now derive the typical values around which $M_{11}$ and $M_1$ concentrate. Substituting $M_{11}$ and $M_1$ in Eq.~\eqref{eq:approx-components-introduction} by their typical values yields the formulas of $\thetaXdet_1$ and $\thetaXdet_{2}$ and subsequently yields the formula of $\parcompdetX_{+}$ as $\parcompdetX_{+} =  (\parcompX_{\sharp}/L_{\sharp}) \cdot \thetaXdet_1 + (\perpcompX_{\sharp}/L_{\sharp}) \cdot \thetaXdet_2$.

Recall that by our construction of $\bu_2$ and $\bv_2$, it holds $\langle\bu_2,\bcoefX_{\sharp}\rangle = \langle\bv_2,\bcoefZ_{\sharp}\rangle = 0$, whence since $\bX$ and $\bZ$ are Gaussian, the tuples of random variables  $\{\bX\bu_2$, $\bZ \bv_2\}$ and $\{\bP_{\bu_{2},\bv_{2}}, \bW, \bWtil\}$ are independent of each other.  We thus deduce the following sequence of approximations (see Section~\ref{subsec:proof-concentration-M11-M22-M12-M1-M2} for details)
\begin{align}\label{eq:M11-approximation}
M_{11} 	\overset{\1}{\approx} \frac{1}{m}\trace \big(\bWtil \bP_{\bu_2,\bv_2} \bWtil\big) = \frac{1}{m} \sum_{i=1}^{m} \bWtil(i,i)^{2}\bP_{\bu_2,\bv_2}(i,i) 
\overset{\2}{=} \frac{1}{m} \sum_{i=1}^{m} \frac{(\bz_{i}^{\top} \bcoefZ_{\sharp})^{2} }{1+\ba_i^{\top} (\sum_{j\neq i} \ba_{j} \ba_{j}^{\top} + \lambda m \bI)^{-1} \ba_{i} },
\end{align}
where the approximation $\1$ follows from the Hanson--Wright inequality~\citep[Theorem 6.2.1]{vershynin2018high} and the equivalence $\2$ follows upon applying the Sherman--Morrison rank one update formula to compute $\bP_{\bu_2,\bv_2}(i,i)$, additionally using the notation $\bWtil(i,i) = \bz_i^{\top} \bcoefZ_{\sharp}$ and $\ba_j^{\top} = \begin{bmatrix} \bz_j^{\top}\bcoefZ_\sharp \cdot \bx_{j}^{\top}\bO_{\bu_2} & \bx_j^{\top}\bcoefX_\sharp \cdot \bz_j^{\top} \bO_{\bu_2}  \end{bmatrix} \in \mathbb{R}^{2(d-1)}$ is the $j$th row of $ \bA_{\bu_2,\bv_2}$. Continuing, we express the quantity $\sum_{j\neq i} \ba_{j} \ba_{j}^{\top} + \lambda m \bI$ as a block matrix, writing
\begin{subequations}\label{eq:block-matrix-formula}
\begin{align}
&\sum_{j\neq i} \ba_{j} \ba_{j}^{\top} + \lambda m \bI = \begin{bmatrix} \bB & \bC\\ \bC^{\top} & \bD \end{bmatrix}, \quad \text{where} \\
\bB = \sum_{j\neq i}^{m} (\bz_{j}^{\top} \bcoefZ_{\sharp})^{2} \bO_{\bu_2}^{\top} \bx_{j} (\bO_{\bu_2}^{\top} &\bx_{j})^{\top} + \lambda m \bI, \qquad \bC = \sum_{j\neq i} (\bx_{j}^{\top} \bcoefX_{\sharp} )(\bz_{j}^{\top} \bcoefZ_{\sharp}) \bO_{\bu_2}^{\top} \bx_{j} (\bO_{\bv_2}^{\top}\bz_{j})^{\top},\\
& \bD =  \sum_{j\neq i} (\bx_{j}^{\top} \bcoefX_{\sharp})^{2} \bO_{\bv_2}^{\top}\bz_{j} (\bO_{\bv_2}^{\top}\bz_{j})^{\top} + \lambda m \bI.
\end{align}
Thus, by block matrix inversion,
\begin{align} \label{eq:block-matrix-proof-sketch}
\hspace{-0.5cm}
	\bigg(\sum_{j\neq i} \ba_{j} \ba_{j}^{\top} + \lambda m \bI \bigg)^{-1} = \left[ \begin{array}{cc} (\bB - \bC \bD^{-1} \bC^{\top})^{-1} &  -(\bB - \bC \bD^{-1} \bC^{\top})^{-1}\bC \bD^{-1} \\ -(\bD - \bC^{\top} \bB^{-1} \bC)^{-1}\bC^{\top} \bB^{-1} & (\bD - \bC^{\top} \bB^{-1} \bC)^{-1}\end{array} \right].
\end{align}
\end{subequations}
Since, by construction, $\ba_i$ is independent of each of the matrices $\bB, \bC$, and $\bD$, we deduce the following sequence of approximations (see Section~\ref{subsec:proof-concentration-M11-M22-M12-M1-M2} for details)
\begin{align*}
	\ba_i^{\top} \biggl(\sum_{j\neq i} \ba_{j} \ba_{j}^{\top} + \lambda m \bI \biggr)^{-1} \ba_{i} &\overset{\1}{\approx} (\bz_{i}^{\top} \bcoefZ_{\sharp})^{2} \cdot \bx_{i}^{\top} (\bB - \bC \bD^{-1} \bC^{\top})^{-1} \bx_{i} + (\bx_{i}^{\top} \bcoefX_{\sharp})^{2} \cdot \bz_{i}^{\top}  (\bD - \bC^{\top} \bB^{-1} \bC)^{-1} \bz_{i} 
	\\
	&\overset{\2}{\approx} (\bz_{i}^{\top} \bcoefZ_{\sharp})^{2}  \cdot \trace\big( (\bB - \bC \bD^{-1} \bC^{\top})^{-1} \big) +  (\bx_{i}^{\top} \bcoefX_{\sharp})^{2} \cdot  \trace\big( (\bD - \bC^{\top} \bB^{-1} \bC)^{-1}\big),
\end{align*}
where step $\1$ follows as the quadratic forms involving each of the off-diagonal blocks is zero-mean and step $\2$ follows upon applying the Hanson--Wright inequality.  The most technical step, then, is to approximately compute the inverse trace quantities in the preceding display.  Indeed, in Section~\ref{concentration-trace-inverse}, we apply the method of typical bounded differences~\citep{warnke2016method} to deduce that, with probability at least $1 - Ce^{-cdt^2}$, 
\begin{align}\label{introduction-concentartion-tarce-inverse}
	\big| \trace\big( ( \bB - \bC \bD^{-1} \bC^{\top})^{-1} \big) - r_{1}^{-1} \big|\leq t \quad \text{and} \quad \big| \trace\big( ( \bD - \bC^{\top} \bB^{-1} \bC )^{-1} \big) - r_{2}^{-1} \big| \leq t,
\end{align}
where $r_1, r_2$ are scalars which satisfy a system of two equations, stated explicitly in Section~\ref{sec:expilicit-formulas-prediction} to follow.  We show in Lemma~\ref{fixed-point-equations-unique-solution} that as long as $\| \bcoefX_{\sharp} \|_2, \| \bcoefZ_{\sharp} \|_2 \leq \lambda$, we have the sandwich relation $\frac{\lambda m}{d} \leq r_1, r_2 \leq 2\frac{\lambda m}{d}$,
so that the quantities $r_1$ and $r_2$ scale linearly in $\frac{\lambda m}{d}$.

Now, combining the approximations~\eqref{eq:M11-approximation} and~\eqref{introduction-concentartion-tarce-inverse}, and applying Bernstein's inequality~\citep[Theorem 2.8.1]{vershynin2018high} we obtain
\[
	M_{11} \approx \frac{1}{m} \sum_{i=1}^{m} \frac{(\bz_{i}^{\top} \bcoefZ_{\sharp})^{2} }{1 + (\bz_{i}^{\top} \bcoefZ_{\sharp})^{2} \cdot r_{1}^{-1} + (\bx_{i}^{\top} \bcoefX_{\sharp})^{2} \cdot r_{2}^{-1} } \approx \EE\bigg\{ \frac{r_1r_2 G_2^2}{r_1r_2 + r_2 G_2^2 + r_1 G_1^2 } \bigg\},
\]
where in the last step we have let $G_1 \sim \mathsf{N}(0, L_\sharp^2)$ and $G_2 \sim \mathsf{N}(0, \LZ_\sharp^2)$ be two independent random variables.  Proceeding similarly yields the approximation (see Section~\ref{subsec:proof-concentration-M11-M22-M12-M1-M2} for details)
\[
 	M_1 \approx \EE\bigg\{ \frac{r_1r_2 G_1^2 G_2^2}{r_1r_2 + r_2 G_2^2 + r_1 G_1^2 } \bigg\}.
\]
Now, substituting the approximations of $M_{11}$ and $M_1$ into Eq.~\eqref{eq:approx-components-introduction} yields the formulas of $\thetaXdet_1$ and $\thetaXdet_2$.

\subsection{Explicit formulas of the deterministic predictions}\label{sec:expilicit-formulas-prediction}
Armed with the intuition of the previous section, we now provide explicit expressions for the functions $\alphaXmap_{m,d,\sigma,\lambda},\;\alphaZmap_{m,d,\sigma,\lambda},\;\iotaXmap_{m,d,\sigma,\lambda},\;\iotaZmap_{m,d,\sigma,\lambda},\;\etaXmap_{m,d,\sigma,\lambda},\;\etaZmap_{m,d,\sigma,\lambda}:\mathbb{R}^{4} \rightarrow \mathbb{R}$ used to define our deterministic predictions.  

For inputs $(\parcompX,\perpcompX,\parcompZ,\perpcompZ)$, we use the shorthand $L = \sqrt{\parcompX^{2}+\perpcompX^{2}}$ and $\LZ = \sqrt{\parcompZ^{2} + \perpcompZ^{2}}$ and let $G_1,G_2$ be two independent random variables such that $G_1 \sim \mathsf{N}(0,L^2)$ and $G_2 \sim \mathsf{N}(0,\LZ^2)$. Recall that $\oversamp = m/d$.  The scalars $r_1, r_2 \geq 0$ considered in the previous section are defined to be the unique\footnote{See Lemma~\ref{fixed-point-equations-unique-solution} for a proof of uniqueness} solution to the following fixed point equations
\begin{align}\label{eq:fixed-point}
\lambda + \EE\bigg\{ \frac{r_1r_2 G_2^2}{r_1r_2 + r_1G_1^2 + r_2G_2^2}\bigg\} = \frac{r_1}{\oversamp} \qquad \text{ and } \qquad 
\lambda + \EE\bigg\{ \frac{r_1r_2 G_1^2}{r_1r_2 + r_1G_1^2 + r_2G_2^2}\bigg\} = \frac{r_2}{\oversamp}.
\end{align}
We further define the deterministic quantities $(V,V_1,V_2)$ as following
\begin{align}\label{eq:V-V1-V2}
\hspace{-1cm}
V = \EE\bigg\{ \frac{r_1r_2G_1^2G_2^2}{r_1r_2 + r_1G_1^2 + r_2G_2^2} \bigg\}, \;V_1 = \EE\bigg\{ \frac{r_1r_2G_2^2}{r_1r_2 + r_1G_1^2 + r_2G_2^2} \bigg\},\;V_2 = \EE\bigg\{ \frac{r_1r_2G_1^2}{r_1r_2 + r_1G_1^2 + r_2G_2^2} \bigg\}.
\end{align}
With these definitions in hand, we define
\begin{subequations}\label{et_updates_eq}
\begin{align}
	\alphaXmap_{m,d,\sigma,\lambda}(\parcompX,\perpcompX,\parcompZ,\perpcompZ) &= \frac{V\big(\frac{\parcompX \parcompZ}{L^2} + L^2\big) + \lambda L^2\LZ^2}{V(L^2+\LZ^2) + \lambda L^2\LZ^2} \cdot \parcompX + \frac{V_1\perpcompX^2}{L^2\LZ^2(V_1 + \lambda)} \cdot \parcompZ \\
	\alphaZmap_{m,d,\sigma,\lambda}(\parcompX,\perpcompX,\parcompZ,\perpcompZ) &= \frac{V\big(\frac{\parcompX \parcompZ}{\LZ^2} + \LZ^2\big) + \lambda L^2\LZ^2}{V(L^2+\LZ^2) + \lambda L^2\LZ^2} \cdot \parcompZ + \frac{V_2\perpcompZ^2}{L^2\LZ^2(V_2 + \lambda)} \cdot \parcompX \\
	\iotaXmap_{m,d,\sigma,\lambda}(\parcompX,\perpcompX,\parcompZ,\perpcompZ) & = \frac{V\big( \frac{\parcompX \parcompZ}{L^2} + L^2\big) + \lambda L^2\LZ^2}{V(L^2+\LZ^2) + \lambda L^2\LZ^2} \cdot \perpcompX - \frac{\parcompX \parcompZ}{L^2\LZ^2} \cdot \frac{V_1}{V_1 + \lambda} \cdot \perpcompX \\
	\iotaZmap_{m,d,\sigma,\lambda}(\parcompX,\perpcompX,\parcompZ,\perpcompZ) & = \frac{V\big(\frac{\parcompX \parcompZ}{\LZ^2} + \LZ^2 \big) + \lambda L^2\LZ^2}{V(L^2+\LZ^2) + \lambda L^2\LZ^2} \cdot \perpcompZ - \frac{\parcompX \parcompZ}{L^2\LZ^2} \cdot \frac{V_2}{V_2 + \lambda} \cdot \perpcompZ.
\end{align}
\end{subequations}
Continuing, we define two deterministic quantities $V_3$ and $V_4$ as 
\begin{align}
&V_{3} = \Big( \sigma^2 + \frac{ \perpcompX^{2}\perpcompZ^{2}}{L^{2}\LZ^{2}} \Big) \EE\bigg\{\frac{r_2^2G_2^2}{(r_1r_2 + r_1G_1^2 + r_2G_2^2)^2}\bigg\} + 
\frac{\lambda^{2} \Big( \frac{\parcompX\parcompZ}{L^{2}\LZ^{2}} - 1 \Big)^{2}}{\big(\lambda + V(L^{-2}+\LZ^{-2}) \big)^{2}}  \EE\bigg\{\frac{r_2^2G_1^2G_2^4}{(r_1r_2 + r_1G_1^2 + r_2G_2^2)^2}\bigg\} \nonumber 
\\ \label{definition-V3}
& + \frac{(\lambda\parcompZ\perpcompX)^{2}}{(\lambda+V_1)^{2}\LZ^4L^2} \EE\bigg\{\frac{r_2^2G_2^4}{(r_1r_2 + r_1G_1^2 + r_2G_2^2)^2}\bigg\} + \frac{(\lambda\parcompX \perpcompZ)^{2}}{(\lambda+V_2)^{2}L^4\LZ^2} \EE\bigg\{\frac{r_2^2G_{1}^{2}G_2^2}{(r_1r_2 + r_1G_1^2 + r_2G_2^2)^2}\bigg\}.\\
&V_{4} = \Big( \sigma^2 + \frac{\perpcompX^{2}\perpcompZ^{2}}{L^{2}\LZ^{2} }\Big) \EE\bigg\{\frac{r_1^2G_1^2}{(r_1r_2 + r_1G_1^2 + r_2G_2^2)^2}\bigg\} + 
\frac{\lambda^{2} \cdot \Big( \frac{ \parcompX\parcompZ}{L^{2}\LZ^{2}} - 1 \Big)^{2}}{\big(\lambda + V(L^{-2}+\LZ^{-2}) \big)^{2}} \EE\bigg\{\frac{r_1^2G_1^4G_2^2}{(r_1r_2 + r_1G_1^2 + r_2G_2^2)^2}\bigg\} \nonumber\\ 
\label{definition-V4}
& + \frac{(\lambda\parcompX\perpcompZ)^{2}}{(\lambda+V_2)^{2}\LZ^2L^3} \EE\bigg\{\frac{r_1^2G_1^4}{(r_1r_2 + r_1G_1^2 + r_2G_2^2)^2}\bigg\} + 
\frac{(\lambda\parcompZ\perpcompX)^{2}}{(\lambda+V_1)^{2}L^2\LZ^4} \EE\bigg\{\frac{r_1^2G_{1}^{2}G_2^2}{(r_1r_2 + r_1G_1^2 + r_2G_2^2)^2}\bigg\}.
\end{align}
Now, let $\etaX,\etaZ \geq 0$ be the solution of the following fixed point equations
\begin{subequations}\label{fixed-point-eq-eta-updates}
\begin{align}
\label{det_updates_eta1}
\hspace{-0.5cm}
\etaX^2 = \frac{(d-2)m}{d^{2}} \bigg(\etaX^{2} \cdot \EE\bigg\{\frac{  r_2^2G_2^4}{(r_1r_2 + r_1G_1^2 + r_2G_2^2)^2}\bigg\} + 
\etaZ^{2} \cdot \EE\bigg\{ \frac{ r_2^2G_1^2G_2^2}{(r_1r_2 + r_1G_1^2 + r_2G_2^2)^2}\bigg\} + V_{3} \bigg),\\
\label{det_updates_eta2}
\hspace{-0.5cm}
\etaZ^2 = \frac{(d-2)m}{d^{2}} \bigg(\etaZ^{2} \cdot \EE\bigg\{\frac{ r_1^2G_1^4}{(r_1r_2 + r_1G_1^2 + r_2G_2^2)^2}\bigg\} + \etaX^{2} \cdot \EE\bigg\{\frac{ r_1^2G_1^2G_2^2}{(r_1r_2 + r_1G_1^2 + r_2G_2^2)^2}\bigg\} + V_{4} \bigg).
\end{align}
\end{subequations}
Now, we define 
\begin{align}\label{eta_updates_eq}
	\etaXmap_{m,d,\sigma,\lambda}(\parcompX,\perpcompX,\parcompZ,\perpcompZ) = \etaX^2 \quad \text{and} \quad 
	\etaZmap_{m,d,\sigma,\lambda}(\parcompX,\perpcompX,\parcompZ,\perpcompZ) = \etaZ^2.
\end{align}
We show (see Lemma~\ref{fixed-point-equations-unique-solution}) that Eq.~\eqref{det_updates_eta1} and Eq.~\eqref{det_updates_eta2} have a unique non-negative solution, so that the functions $\etaXmap_{m,d,\sigma,\lambda}$ and $\etaZmap_{m,d,\sigma,\lambda}$ are well defined.

\section{Proof of Theorem~\ref{thm:one-step-prediction}: One-step predictions}\label{thm:one-step-prediction-main-proof}
We prove the parallel component and the perpendicular component separately.
\subsection{Parallel component: Proof of Theorem~\ref{thm:one-step-prediction}(a)}\label{sec:concentration-proof}
We proceed by executing the steps outlined in Section~\ref{sec:techniques-one-step-prediction}.  In particular, recall the tuple of unit vectors $(\bu_1, \bu_2, \bv_1, \bv_2)$ from Eq.~\eqref{definition-u1-u2-v1-v2} and decompose $\parcompX_+$ and $\parcompZ_+$ as in Eq.~\eqref{eq:alpha-component-decomposition-intro} and recall that 
\[
\parcompX_{+} = \frac{\parcompX_{\sharp}}{L_{\sharp}} \cdot \theta(\bu_1) +  \frac{\perpcompX_\sharp}{L_\sharp} \cdot \theta(\bu_2) \qquad \text{ and } \qquad \parcompZ_{+} = \frac{\parcompZ_{\sharp}}{\LZ_{\sharp}} \cdot \thetatil(\bv_1) + \frac{\perpcompZ_\sharp}{\LZ_\sharp} \cdot \thetaZ(\bv_2),
\]
where we defined $\theta(\bu_i) = \langle \bu_i, \bcoefX_{+} \rangle$ and $\theta(\bv_i) = \langle \bv_i, \bcoefZ_{+} \rangle$. 
Consequently, it suffices to understand the concentration properties of the random variables $\theta(\bu_1), \theta(\bu_2), \thetaZ(\bv_1)$, and $\thetaZ(\bv_2)$ around their typical values
\begin{subequations}
	\begin{align}\label{definition-theta12-det}
		&\thetaXdet_{1} = L_{\sharp} + L_{\sharp} \cdot \frac{\big(\frac{\parcompX_{\sharp}\parcompZ_{\sharp}}{L_{\sharp}^2} - \widetilde{L}_{\sharp}^2\big) \cdot V  }{\lambda L_\sharp^2\widetilde{L}_{\sharp}^2 + V(L_\sharp^2 + \widetilde{L}_{\sharp}^2)}, &&\thetaXdet_{2} = \frac{\parcompZ_{\sharp}\perpcompX_{\sharp}}{\widetilde{L}_{\sharp}^{2}L_{\sharp} } \cdot \frac{V_{1}}{V_{1} +\lambda },\\
		\label{definition-tilde-theta12-det}
		& \thetaZdet_{1} = \LZ_{\sharp} + \LZ_{\sharp} \cdot \frac{\big( \frac{\parcompX_{\sharp}\parcompZ_{\sharp}}{\LZ_{\sharp}^2} - L_{\sharp}^2 \big) \cdot V  }{\lambda L_\sharp^2\widetilde{L}_{\sharp}^2 + V(L_\sharp^2 + \widetilde{L}_{\sharp}^2)},
		&&\thetaZdet_{2} = \frac{\parcompX_{\sharp}\perpcompZ_{\sharp}}{L_{\sharp}^{2}\LZ_{\sharp} } \cdot \frac{V_{2}}{V_{2} +\lambda },
	\end{align}
\end{subequations}
respectively, where the tuple of scalars $(V,V_{1},V_{2})$ as defined in Eq.~\eqref{eq:V-V1-V2}.  The deterministic prediction can then be written as $\parcompdetX_+ = \frac{\parcompX_{\sharp}}{L_{\sharp}}\cdot \thetaXdet_{1} + \frac{\perpcompX_{\sharp}}{L_{\sharp}} \cdot \thetaXdet_{2}$.  Thus, by the triangle inequality, 
\begin{subequations}\label{decomposition-alpha-concentration}
	\begin{align}
		|\parcompX_{+} - \parcompdetX_{+}| &\leq \frac{\parcompX_{\sharp}}{L_{\sharp}}\cdot\big( |\theta(\bu_{1}) - \thetaXdet_{1} |\big) +
		\frac{\perpcompX_{\sharp}}{L_{\sharp}}\cdot \big( |\theta(\bu_{2}) - \thetaXdet_{2}| \big).
	\end{align}
	Applying the triangle inequality once more yields the decomposition:
	\begin{align}\label{eq:decomp-parallel-component-step1}
		\big|\thetaX(\bu_i) - \thetaXdet_i \big| \leq \underbrace{\big| \thetaX(\bu_i) - \EE\{ \thetaX(\bu_i) \} \big|}_{\text{Stochastic error}} + 
		\underbrace{\big| \EE\{ \thetaX(\bu_i) \} - \thetaXdet_{i} \big|}_{\text{Bias}},\quad  \text{for } i =1,2.
	\end{align}
\end{subequations}
We claim the following pair of bounds
\begin{subequations}\label{ineq:bias-stochastic-error-terms-parallel}
	\begin{align}
	\label{bound-u-expectation-u}
	\bigl \lvert \theta(\bu_1) - \EE\{ \theta(\bu_1) \} \bigr \rvert \; \vee \; \bigl \lvert \theta(\bu_2) - \EE\{ \theta(\bu_2) \} \bigr \rvert &\lesssim \max\bigg\{ \frac{\log^{3}(d)(\sqrt{\Err_{\sharp}} + \sigma )}{\lambda \sqrt{m}} , d^{-50}\bigg\} \\
	\label{upper-bound-expectation-u-det}
	\bigl \lvert \EE\{\theta(\bu_{1})\} - \thetaXdet_{1} \bigr \rvert \vee \bigl \lvert  \EE\{\theta(\bu_{2})\} - \thetaXdet_{2} \bigr \rvert &\lesssim \frac{\sqrt{\Err_{\sharp}}}{\lambda} \cdot \bigg( \frac{\log^{6}(d)}{\sqrt{d}} + \frac{\log^{2.5}(m)}{\sqrt{m}} \bigg),
	\end{align}
\end{subequations}
where the bound~\eqref{bound-u-expectation-u} holds with probability at least $1 - 2d^{-110}$.  We provide the proof of inequalities~\eqref{bound-u-expectation-u} and~\eqref{upper-bound-expectation-u-det} in Sections~\ref{sec:stochastic-error-parallel-component} and~\ref{sec:proof-bias-term-parallel-component}, respectively.  Noting that $\parcompX_{\sharp}/L_{\sharp},\perpcompX_{\sharp}/L_{\sharp} \leq 1$, and substituting the above bounds into the pair of inequalities~\eqref{decomposition-alpha-concentration} yields the result.  It remains to establish the bounds~\eqref{ineq:bias-stochastic-error-terms-parallel}.

\subsubsection{Bounding the stochastic error~\eqref{bound-u-expectation-u}}\label{sec:stochastic-error-parallel-component}
This section is devoted to establishing the following stronger bound, which holds with probability at least $1 - d^{-110}$ for any $\bu \in \mathbb{S}^{2d-1}$:
\begin{align} \label{ineq:stochastic-error-concentration}
\bigg| \Big \langle \bu, \left[\begin{array}{c} \bcoefX_{+}\\ \bcoefZ_{+} \\ \end{array}\right] \Big \rangle - \EE\bigg\{  \Big \langle \bu, \left[\begin{array}{c} \bcoefX_{+}\\ \bcoefZ_{+} \\ \end{array}\right] \Big \rangle \bigg\} \bigg| \leq \max\bigg\{ \frac{C_3\log^{3}(d)(\sqrt{\Err_{\sharp}} + \sigma )}{\lambda \sqrt{m}} , d^{-50}\bigg\},
\end{align}
since the inequality~\eqref{bound-u-expectation-u} then follows upon letting $\bu = [\bu_1^{\top} \; \vert \; \boldsymbol{0}^{\top}]^{\top}$ and $\bu = [\bu_2^{\top} \; \vert \; \boldsymbol{0}^{\top}]^{\top}$.  We turn now to the proof of the inequality~\eqref{ineq:stochastic-error-concentration}. 

Note that we require fine-grained control on the deviations since the guarantee~\eqref{ineq:stochastic-error-concentration} depends on $\Err_{\sharp}$.  In order to facilitate these fine-grained deviation bounds, we employ Warnke's typical bounded differences inequality~\citep[Theorem 2]{warnke2016method}.  In particular, consider the function $f: \mathbb{R}^{(2d +1) \times m} \rightarrow \mathbb{R}$, defined as
 \begin{align}\label{definition-f-component-innerproduct}
 	f_{\bu}\big(\{\bx_{k},\bz_{k},\epsilon_{k}\}_{k=1}^{m} \big) := \big \langle \bu, [\bcoefX_{+}^{\top} \; \vert \; \bcoefZ_{+}^{\top}]^{\top} \big \rangle, 
 \end{align}
and note that it suffices to show that $f_{\bu}\big(\{\bx_{k},\bz_{k},\epsilon_{k}\}_{k=1}^{m} \big)$ concentrates around its expectation with deviations which are $O\big((\sqrt{\Err_{\sharp}} + \sigma)/(\lambda \sqrt{m})\big)$.  In order to apply the typical bounded differences inequality, we must construct a regularity set $\mathcal{S}$ on which (i.) the data $\{\bx_k, \bz_k, \epsilon_k\}_{k=1}^{m}$ lies with high probability and (ii.) the function $f_{\bu}$ enjoys the bounded differences property.  

Towards constructing the regularity set $\mathcal{S}$, we define the pair of estimators 
\begin{subequations}
	\begin{align}
		\label{estimators-leave-i-sample-out}
		\left[\begin{array}{c} \bcoefX^{-(i)} \\ \bcoefZ^{-(i)} \\ \end{array}\right] &= \argmin_{\bcoefX,\bcoefZ \in \mathbb{R}^{d}} \sum_{k \neq i}\big(y_{k} + G_{k}\widetilde{G}_{k} - \widetilde{G}_{k} \bx_{k}^{\top}\bcoefX - G_{k} \bz_{k}^{\top}\bcoefZ \big)^{2} + \lambda m \bigg \| \begin{bmatrix} \bcoefX - \bcoefX_{\sharp} \\ \bcoefZ - \bcoefZ_{\sharp} \end{bmatrix} \bigg\|_2^{2}, \\
		\label{estimators-leave-ij-sample-out}
		\left[\begin{array}{c} \bcoefX^{-(i,j)} \\ \bcoefZ^{-(i,j)} \\ \end{array}\right] &= \argmin_{\bcoefX,\bcoefZ \in \mathbb{R}^{d}}
		\sum_{k \neq i,j}\big(y_{k} + G_{k}\widetilde{G}_{k} - \widetilde{G}_{k} \bx_{k}^{\top}\bcoefX - G_{k} \bz_{k}^{\top}\bcoefZ \big)^{2} + \lambda m \bigg \| \begin{bmatrix} \bcoefX - \bcoefX_{\sharp} \\ \bcoefZ - \bcoefZ_{\sharp} \end{bmatrix} \bigg\|_2^{2},
	\end{align}
\end{subequations}
where we form $[\bcoefX^{-(i)} \, \vert \, \bcoefZ^{-(i)}]$ by leaving out the sample $(\bx_{i},\bz_{i},y_{i})$ and similarly, we form $[\bcoefX^{-(i,j)}\, \vert\, \bcoefZ^{-(i,j)}]$ by leaving out the two samples $\{\bx_{k},\bz_{k},y_{k}\}_{k=i,j}$.  We recall (see the discussion following Eq.~\eqref{eq:closed-form-update}) the shorthand $G_i = \bx_{i}^{\top} \bcoefX_{\sharp}$, $\GZ_i = \bz_i^{\top} \bcoefZ_{\sharp}$ and $\ba_i^{\top} = [\GZ_i \bx_i^{\top} \; \vert \; G_i \bz_i^{\top}]$. We further use the shorthand $\bcoefX^{\perp} = \bP_{\bcoefX_{\star}}^{\perp}\bcoefX_{\sharp}/ \perpcompX_{\sharp}$, $\bcoefZ^{\perp} = \bP_{\bcoefZ_{\star}}^{\perp}\bcoefZ_{\sharp}/ \perpcompZ_{\sharp}$ to describe the normalized orthogonal components of $\bcoefX_{\sharp}$ and $\bcoefZ_{\sharp}$, respectively, as well as the matrix $\bSig = \sum_{k=1}^{m} \ba_{k}\ba_{k}^{\top} + \lambda m \bI$ and its leave-one-out and leave-two-out analogues 
\[
\bSig_{i} = \sum_{k \neq i} \ba_{k}\ba_{k}^{\top} + \lambda m \bI,\qquad \text{ and } \qquad \bSig_{i,j} =  \sum_{k \neq i,j} \ba_{k}\ba_{k}^{\top} + \lambda m \bI, \text{ for } i,j \in [m].
\]
The regularity set $\mathcal{S} \subseteq \mathbb{R}^{(2d+1) \times m}$ is then defined as the subset stable upon leaving one or two samples out.  In particular, let $\mathcal{S}_1 \subseteq \mathbb{R}^{(2d+1) \times m}$ denote the event that---for a fixed unit vector $\bu$---the vectors $\ba_i$ and $\ba_j$ are approximately orthogonal to the leave one out matrices $\bSig_{i}, \bSig_{i,j}$:
	\begin{align*}
		\mathcal{S}_1 := \Big\{ &\{\bx_{i},\bz_{i},\epsilon_{i}\}_{i=1}^{m} \in \mathbb{R}^{(2d+1) \times m}: \forall\;i,j \in [m],\quad  | \bu^{\top} \bSig_{i}^{-1} \ba_{i} |,| \bu^{\top} \bSig_{i,j}^{-1} \ba_{j} | \leq \frac{C\log d}{\lambda m} \Bigr\}.
	\end{align*}
We then let the set $\mathcal{S}_2 \subseteq \mathbb{R}^{(2d+1) \times m}$ denote the event that the noise is bounded above by $\sqrt{\log{d}}$, the data $\bx_i, \bz_i$ are bounded in norm by $\sqrt{d}$ and are approximately orthogonal to $\bcoefX_{\star}, \bcoefX^{\perp}$ and $\bcoefZ_{\star}, \bcoefZ^{\perp}$, respectively:
\begin{align*}
	\mathcal{S}_2 := \Big\{ &\{\bx_{i},\bz_{i},\epsilon_{i}\}_{i=1}^{m} \in \mathbb{R}^{(2d+1) \times m}:\; \forall\;i \in [m],\quad |\bx_{i}^{\top} \bcoefX_{\star}|, |\bx_{i}^{\top} \bcoefX^{\perp}|, |\bz_{i}^{\top} \bcoefZ_{\star}|, |\bz_{i}^{\top} \bcoefZ^{\perp}| \leq C\sqrt{\log d}, \nonumber\\
	& \qquad \qquad \qquad \qquad \qquad \qquad \qquad \qquad\|\bx_{i}\|_{2}, \|\bz_{i}\|_{2} \leq C\sqrt{d},  \qquad \text{ and } \qquad \epsilon_{i} \leq C \sigma \sqrt{\log d} \Bigr\}.
\end{align*}
Finally, we require that $\bx_i, \bz_i$ are approximately orthogonal to the differences $\bcoefX_{\sharp} - \bcoefX^{-(i)}$ and $\bcoefZ_{\sharp} - \bcoefZ^{-(i)}$(and similarly for the leave two out quantities):
\begin{align*}
		\mathcal{S}_3 := \bigg\{ \{\bx_{i},\bz_{i},&\epsilon_{i}\}_{i=1}^{m} \in \mathbb{R}^{(2d+1) \times m}: \; \frac{|\bx_{i}^{\top} (\bcoefX_{\sharp} - \bcoefX^{-(i)})|}{\| \bcoefX_{\sharp} - \bcoefX^{-(i)} \|_{2}} \leq C \sqrt{\log d}, \quad \frac{|\bx_{i}^{\top} (\bcoefX_{\sharp} - \bcoefX^{-(i,j)})|}{\| \bcoefX_{\sharp} - \bcoefX^{-(i,j)} \|_{2}} \leq C \sqrt{\log d}, \nonumber \\
		& \qquad \frac{|\bz_{i}^{\top} (\bcoefZ_{\sharp} - \bcoefZ^{-(i)})|}{\| \bcoefZ_{\sharp} - \bcoefZ^{-(i)} \|_{2}} \leq C \sqrt{\log d}, \quad \frac{|\bz_{i}^{\top} (\bcoefZ_{\sharp} - \bcoefZ^{-(i,j)})|}{\| \bcoefZ_{\sharp} - \bcoefZ^{-(i,j)} \|_{2}} \leq C \sqrt{\log d}, \qquad \forall\;i,j \in [m]  \bigg\},
\end{align*}
where in each of the above three sets, $C$ is a large, positive universal constant.  The regularity set $\mathcal{S}$ is then defined to be the intersection of the above three sets:
\begin{align} \label{definition-regularity-set-component-expectation}
	\mathcal{S} = \mathcal{S}_1 \cap \mathcal{S}_2 \cap \mathcal{S}_3.
\end{align}
Let us now outline the properties that the regularity set $\mathcal{S}$ enjoys. First, define the Hamming metric $\rho:\mathbb{R}^{(2d+1)\times m} \times \mathbb{R}^{(2d+1)\times m} \rightarrow \mathbb{Z}_{\geq 0}$ as
\begin{align}\label{eq:hamming-metric-function}
	\rho\big(\{\bx_{k},\bz_{k},\epsilon_{k}\}_{k=1}^{n}, \{\bx'_{k},\bz'_{k},\epsilon'_{k}\}_{k=1}^{n} \big) = \sum_{k=1}^{m} \mathbbm{1}\big\{ (\bx_{k},\bz_{k},\epsilon_{k}) \neq (\bx'_{k},\bz'_{k},\epsilon'_{k}) \big\},
\end{align}
and let $\bM := \{\bx_{i},\bz_{i},\epsilon_{i}\}_{i=1}^{m}$ and $\bM' := \{\bx'_{i},\bz'_{i},\epsilon'_{i}\}_{i=1}^{m}$.  The following lemma demonstrates that the function $f_{\bu}$ enjoys the bounded difference property on the regularity set $\mathcal{S}$; we provide its proof in Section~\ref{proof-auxiliary-lemma1-component-expectation-concentration}.
\begin{lemma}\label{auxiliary-lemma1-component-expectation-concentration}
	Consider $f_{\bu}$ in~\eqref{definition-f-component-innerproduct} and the regularity set $\mathcal{S}$ in~\eqref{definition-regularity-set-component-expectation}. The following hold.
	\begin{itemize}
		\item[(a)] There exists a universal, positive constant $C'$ such that if $\lambda m \geq C' d(1+\sigma)$, then $|f_{\bu}(\bM)| \leq d,\; \forall\; \bM \in \mathcal{S}$.
		\item[(b)] There exists a universal, positive constant $C_{0}$ such that for all $\bM, \bM' \in \mathcal{S}$ which satisfy $\rho(\bM, \bM') \leq 2$, 
		\begin{align*}
			&| f_{\bu}(\bM) - f_{\bu}(\bM') | \leq \Delta \qquad \text{ where }  \qquad \Delta = \frac{C_{0}\log^{3}(d)(\sqrt{\Err_{\sharp}} + \sigma)}{\lambda m}.
		\end{align*} 
	\end{itemize}
\end{lemma}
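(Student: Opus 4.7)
The plan is to derive both parts from the closed-form representation of $[\bcoefX_{+} \; \vert \; \bcoefZ_{+}]$ in Eq.~\eqref{eq:closed-form-update}, comparing it to the leave-one-out and leave-two-out estimators in Eqs.~\eqref{estimators-leave-i-sample-out} and~\eqref{estimators-leave-ij-sample-out}. The regularity set $\mathcal{S}$ is tailored so that each of its three components supplies a distinct quantitative input: $\mathcal{S}_1$ bounds quadratic forms involving $\bSig_i^{-1}$ and $\bSig_{i,j}^{-1}$; $\mathcal{S}_2$ provides coordinate-wise control on the sensing vectors and noise (together with an operator-norm bound on $\bA$ via a suitably enlarged event); and $\mathcal{S}_3$ ensures approximate orthogonality of $\bx_i, \bz_i$ to the leave-one-out and leave-two-out parameter perturbations.

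Part (a) will follow from a direct norm bound on the closed form. Using $\|\bSig^{-1}\|_{\op} \leq (\lambda m)^{-1}$, the bound $\|\bA\|_{\op} \lesssim \sqrt{d \log d}$ (which on $\mathcal{S}_2$ follows from $|G_k|, |\GZ_k| \lesssim \sqrt{\log d}$ plus a standard Gaussian operator-norm estimate for $\bX, \bZ$), and the entrywise bound $\|\by + \diag(\bW\bWtil)\|_2 \lesssim \sqrt{m}(1+\sigma)\log d$, we obtain $\|[\bcoefX_{+} \; \vert \; \bcoefZ_{+}]\|_2 \lesssim \sqrt{d}(1+\sigma)\log(d)/\lambda + K_2$, which is at most $d$ as soon as $\lambda m \geq C' d(1+\sigma)$ and $d$ is large.

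For part (b), I treat the two Hamming cases by passing through a common leave-out estimator. When $\rho(\bM, \bM') = 1$ with $\bM, \bM'$ differing only at index $i$, the leave-one-out estimator $[\bcoefX^{-(i)} \; \vert \; \bcoefZ^{-(i)}]$ is unchanged, so by the triangle inequality it suffices to bound $|\langle \bu, [\bcoefX_{+} \; \vert \; \bcoefZ_{+}] - [\bcoefX^{-(i)} \; \vert \; \bcoefZ^{-(i)}]\rangle|$ on $\mathcal{S}$. Applying Sherman--Morrison to $\bSig^{-1} = (\bSig_i + \ba_i \ba_i^\top)^{-1}$ yields the identity
\begin{align*}
\Big\langle \bu,\, \begin{bmatrix} \bcoefX_{+} - \bcoefX^{-(i)} \\ \bcoefZ_{+} - \bcoefZ^{-(i)} \end{bmatrix}\Big\rangle = \frac{(\bu^\top \bSig_i^{-1}\ba_i) \cdot r_i}{1 + \ba_i^\top \bSig_i^{-1} \ba_i}, \qquad r_i := (y_i - G_i\GZ_i) + \GZ_i \bx_i^\top(\bcoefX_\sharp - \bcoefX^{-(i)}) + G_i \bz_i^\top(\bcoefZ_\sharp - \bcoefZ^{-(i)}).
\end{align*}
The prefactor is bounded by $C\log(d)/(\lambda m)$ on $\mathcal{S}_1$ (the denominator is $\geq 1$ since $\bSig_i^{-1} \succeq 0$). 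The residual $r_i$ is then bounded in two steps: first, writing $y_i - G_i\GZ_i = \bx_i^\top(\bcoefX_\star \bcoefZ_\star^\top - \bcoefX_\sharp \bcoefZ_\sharp^\top)\bz_i + \epsilon_i$ and expanding via $\bcoefX_\sharp = \parcompX_\sharp \bcoefX_\star + \perpcompX_\sharp \bcoefX^{\perp}$ (and analogously for $\bcoefZ_\sharp$), the inner product bounds in $\mathcal{S}_2$ give $|y_i - G_i\GZ_i| \lesssim \log(d)\sqrt{\Err_\sharp} + \sigma\sqrt{\log d}$; second, the optimality condition for $[\bcoefX^{-(i)} \; \vert \; \bcoefZ^{-(i)}]$ yields the a priori bound $\|\bcoefX^{-(i)} - \bcoefX_\sharp\|_2 + \|\bcoefZ^{-(i)} - \bcoefZ_\sharp\|_2 \lesssim (\lambda m)^{-1}\|\bA^\top (\by - \diag(\bW\bWtil))\|_2 \lesssim \sqrt{d}\log^{1.5}(d)(\sqrt{\Err_\sharp}+\sigma)/(\lambda\sqrt{m})$, and the $\mathcal{S}_3$ bounds dominate the cross terms. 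Combining and invoking $\lambda m \geq C'd(1+\sigma)$ to absorb $\sqrt{d}/(\lambda\sqrt{m})$ into a polylog, we get $|r_i| \lesssim \log^{2.5}(d)(\sqrt{\Err_\sharp}+\sigma)$, and the single-sample difference inherits the promised bound (up to polylog tuning absorbed in $C_0$).

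For the Hamming distance two case, I telescope through the leave-one-out estimator to write $[\bcoefX_{+} \; \vert \; \bcoefZ_{+}] - [\bcoefX^{-(i,j)} \; \vert \; \bcoefZ^{-(i,j)}]$ as the sum of a one-sample update adding sample $j$ on top of $[\bcoefX^{-(i,j)} \; \vert \; \bcoefZ^{-(i,j)}]$ and a one-sample update adding sample $i$ on top of $[\bcoefX^{-(i)} \; \vert \; \bcoefZ^{-(i)}]$; the corresponding Sherman--Morrison identities use $|\bu^\top \bSig_{i,j}^{-1}\ba_j|$ from $\mathcal{S}_1$ and the leave-two-out analog of the residual bound, for which $\mathcal{S}_3$'s approximate orthogonality with $\bcoefX_\sharp - \bcoefX^{-(i,j)}$ is exactly what is required. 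Since the leave-two-out estimator is common to $\bM$ and $\bM'$, summing the two contributions (one from each configuration) by the triangle inequality gives the stated bound. The main technical obstacle is obtaining the a priori bound on $\|\bcoefX^{-(i)} - \bcoefX_\sharp\|_2$ with the right polylog dependence, since the naive bound $\sum_k \|\ba_k\||y_k - G_k\GZ_k|$ is too loose by a factor of $\sqrt{m}$; this is resolved by using $\|\bA\|_{\op} \lesssim \sqrt{d\log d}$, a sharper input than the coordinate-wise bounds in $\mathcal{S}_2$ that must be either folded into $\mathcal{S}$ or handled on an enlarged event of comparable probability. Careful bookkeeping of polylog factors then yields the exponent $\log^{3}(d)$ stated in the lemma.
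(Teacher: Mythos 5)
Your overall architecture is the same as the paper's: Sherman--Morrison telescoping through the leave-one-out and leave-two-out estimators, with $\mathcal{S}_1$ supplying the prefactor $|\bu^{\top}\bSig_i^{-1}\ba_i| \lesssim \log(d)/(\lambda m)$, $\mathcal{S}_2$ controlling the model residual $y_i - G_i\GZ_i$ via the orthogonal decomposition of $\bcoefX_{\sharp}, \bcoefZ_{\sharp}$, and $\mathcal{S}_3$ handling the cross terms $\bx_i^{\top}(\bcoefX_{\sharp} - \bcoefX^{-(i)})$. Part (a) also goes through as you describe (and in fact the coordinate-wise bound $\|\ba_k\|_2 \lesssim \sqrt{d\log d}$ from $\mathcal{S}_2$ already suffices there; no operator norm is needed).

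The one genuine gap is exactly the point you flag as "the main technical obstacle": your a priori bound on $\|\bcoefX^{-(i)} - \bcoefX_{\sharp}\|_2$ routes through the normal equations and requires $\|\bA\|_{\op} \lesssim \sqrt{d\log d}$, which is \emph{not} implied by membership in $\mathcal{S}$ as defined in~\eqref{definition-regularity-set-component-expectation}. Since Lemma~\ref{auxiliary-lemma1-component-expectation-concentration} is a deterministic statement that must hold for \emph{every} $\bM \in \mathcal{S}$ (this is what feeds the truncation $f_{\bu}^{\downarrow}$ and Warnke's inequality), you cannot defer the operator-norm control to "an enlarged event of comparable probability"; and folding it into $\mathcal{S}$ changes the set whose complement probability is bounded in Lemma~\ref{auxiliary-lemma2-component-expectation-concentration}. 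The clean fix — and the paper's route — is to avoid the normal equations entirely: compare the leave-one-out objective value at its minimizer with its value at $(\bcoefX_{\sharp}, \bcoefZ_{\sharp})$, which kills the data-fitting term and yields
\begin{align*}
\lambda m \Bigl( \|\bcoefX^{-(i)} - \bcoefX_{\sharp}\|_2^2 + \|\bcoefZ^{-(i)} - \bcoefZ_{\sharp}\|_2^2 \Bigr) \leq \sum_{k \neq i} \bigl(y_k - G_k \GZ_k\bigr)^2 \lesssim m \log^2(d)\bigl(\sigma^2 + \Err_{\sharp}\bigr),
\end{align*}
using only the coordinate-wise controls already in $\mathcal{S}_2$; since $\lambda m \geq d \geq m$, this gives $\|\bcoefX^{-(i)} - \bcoefX_{\sharp}\|_2 \lesssim \log(d)(\sigma + \sqrt{\Err_{\sharp}})$, and the cross terms then contribute $\lesssim \log^2(d)(\sigma + \sqrt{\Err_{\sharp}})$ exactly as needed for the stated $\log^3(d)$ exponent. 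With that substitution your argument closes; the rest of your bookkeeping (including the telescoping for the Hamming-distance-two case through the common leave-two-out estimator) is correct.
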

Having shown that $f_{\bu}$ enjoys the bounded differences property on $\mathcal{S}$, we extend it to a function $f_{\bu}^{\downarrow}$, which enjoys the bounded differences property when \emph{only one} of its arguments lies in $\mathcal{S}$ via truncation.  In particular, let $D = d$ and note that Lemma~\ref{auxiliary-lemma1-component-expectation-concentration}(a) implies $D \geq \sup_{\bM \in \mathcal{S}} |f_{\bu}(\bM)|$ as long as $\lambda m\geq C'd(1+\sigma)$.  We define the extension $f_{\bu}^{\downarrow}:\mathbb{R}^{(2d+1)\times m} \rightarrow \mathbb{R}$ as 
\begin{align}\label{definition-f-down-u-component}
	f_{\bu}^{\downarrow}(\bM)  = \inf_{\bM' \in \mathcal{S}} \Big\{ f_{\bu}\big(\bM'\big) + \Delta \cdot \rho\big(\bM,\bM'\big) + 2D \cdot \mathbbm{1}\big\{ \rho\big(\bM,\bM'\big) > 1 \big\} \Big\},
\end{align}
where $\Delta$ is as in Lemma~\ref{auxiliary-lemma1-component-expectation-concentration}(b). Consequently, by Lemmas~\ref{auxiliary-lemma1-component-expectation-concentration} and~\ref{lemma:bounded-difference-truncate}, we deduce for all $\bM \in \mathcal{S}$ and $\bM' \in \mathbb{R}^{(2d + 1) \times m}$ which satisfy $\rho(\bM, \bM') \leq 1$, both of the following properties hold
\begin{align}\label{properties-truncate-f-u-component}
	f_{\bu}^{\downarrow}(\bM) = f_{\bu}(\bM), \qquad \text{ and } \qquad 
	|f_{\bu}^{\downarrow}(\bM) - f_{\bu}^{\downarrow}(\bM')| \leq  \Delta.
\end{align}
The following lemma establishes the desideratum that the data lies in $\mathcal{S}$ with high probability and demonstrates that concentration of the truncated function $f_{\bu}^{\downarrow}$ suffices to establish the concentration of the original function $f_{\bu}$.  We provide its proof in Section~\ref{proof-auxiliary-lemma2-component-expectation-concentration}.
\begin{lemma}\label{auxiliary-lemma2-component-expectation-concentration}
	Suppose $\{\bx_{i},\bz_{i}\}_{i=1}^{m} \overset{\mathsf{i.i.d.}}{\sim} \mathsf{N}(\boldsymbol{0},\bI_{d})$, $\{\epsilon_{i}\}_{i=1}^{m} \overset{\mathsf{i.i.d.}}{\sim} \mathsf{N}(0,\sigma^{2})$. There exists universal constants $C,d_{0}$ such that for $d\geq d_{0}, 1\leq m \leq d$, $\lambda m \geq C(1+\sigma)d$ and $t\geq d^{-80}$, we have
	\begin{align*}
		(a.)\;  \Pr\big\{ \{\bx_{i},\bz_{i},\epsilon_{i}\}_{i=1}^{m} &\notin \mathcal{S} \big\} \leq d^{-180}, \qquad (b.)\; \sup_{\{\bx_{i},\bz_{i},\epsilon_{i}\}_{i=1}^{m} \in \mathbb{R}^{(2d+1)\times m}} \big|f_{\bu}^{\downarrow}(\{\bx_{i},\bz_{i},\epsilon_{i}\}_{i=1}^{m}) \big| \lesssim d, \\ 
		\text{ and } \qquad (c.)\; &\Pr\big\{ |f_{\bu} - \EE\{f_{\bu}\}| \geq t \big\} \leq \Pr\big\{ |f_{\bu}^{\downarrow} - \EE\{f_{\bu}^{\downarrow}\}| \geq t/2 \big\} + d^{-180}.
	\end{align*}
\end{lemma}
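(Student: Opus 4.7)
\medskip
\noindent\textbf{Proof proposal for Lemma~\ref{auxiliary-lemma2-component-expectation-concentration}.}
The plan is to handle parts (a)--(c) in sequence, with (a) being the main technical work.

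For part (a), I will decompose $\Pr\{\{\bx_i,\bz_i,\epsilon_i\}_{i=1}^m \notin \mathcal{S}\} \leq \Pr\{\mathcal{S}_1^c\} + \Pr\{\mathcal{S}_2^c\} + \Pr\{\mathcal{S}_3^c\}$ and control each piece by a union bound over $i,j \in [m]$, so each individual event need only fail with probability $d^{-185}$ or so. The event $\mathcal{S}_2$ is purely Gaussian: each $\bx_i^\top \bcoefX_\star, \bx_i^\top \bcoefX^\perp, \bz_i^\top \bcoefZ_\star, \bz_i^\top \bcoefZ^\perp$ is $\mathsf{N}(0,1)$, each $\epsilon_i$ is $\mathsf{N}(0,\sigma^2)$, and $\|\bx_i\|_2^2, \|\bz_i\|_2^2$ are chi-squared, so standard Gaussian and Bernstein tails suffice with the constant $C$ in the definition of $\mathcal{S}_2$ taken large enough. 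The event $\mathcal{S}_3$ requires the key leave-one-out observation that $\bcoefX^{-(i)}$ and $\bcoefX^{-(i,j)}$ are constructed from data \emph{excluding} the $i$th sample, hence are independent of $\bx_i$; conditional on the other samples, $\bx_i^\top(\bcoefX_\sharp - \bcoefX^{-(i)})$ is therefore Gaussian with variance $\|\bcoefX_\sharp - \bcoefX^{-(i)}\|_2^2$, and an analogous statement holds for the leave-two-out quantity and for $\bz_i$. A standard Gaussian tail bound then gives the desired estimates.

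The main obstacle is controlling $\Pr\{\mathcal{S}_1^c\}$, which asks that $|\bu^\top \bSig_i^{-1} \ba_i|$ and $|\bu^\top \bSig_{i,j}^{-1} \ba_j|$ are at most $C\log(d)/(\lambda m)$. Here I again exploit the leave-out structure: $\bSig_i$ (resp. $\bSig_{i,j}$) depends only on samples other than $i$ (resp. other than $i$ and $j$), and is therefore independent of $\ba_i$ (resp. $\ba_j$). Conditional on $\bSig_i$, write $\bSig_i^{-1} \bu = [\bu_1^\top \; \vert \; \bu_2^\top]^\top \in \mathbb{R}^{2d}$. Then
\begin{align*}
\bu^\top \bSig_i^{-1} \ba_i = \GZ_i \cdot \bu_1^\top \bx_i + G_i \cdot \bu_2^\top \bz_i,
\end{align*}
and each of the four factors $G_i, \GZ_i, \bu_1^\top \bx_i, \bu_2^\top \bz_i$ is a univariate Gaussian conditional on $\bSig_i$ (using independence of $\bx_i, \bz_i$ from $\bSig_i$ and from $\bcoefX_\sharp, \bcoefZ_\sharp$ which are deterministic inputs). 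The variances are bounded by $\|\bcoefX_\sharp\|_2^2 \leq K_2^2$, $\|\bcoefZ_\sharp\|_2^2 \leq K_2^2$, $\|\bu_1\|_2^2 \leq \|\bSig_i^{-1}\|^2 \leq (\lambda m)^{-2}$, and similarly for $\|\bu_2\|_2^2$, where I used $\bSig_i \succeq \lambda m \bI$. A Gaussian tail bound applied to each factor at level $C\sqrt{\log d}$ gives $|\bu^\top \bSig_i^{-1}\ba_i| \lesssim \log(d)/(\lambda m)$ with probability at least $1-d^{-200}$; union bounding over $i,j \in [m]$ yields the claim.

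For part (b), fix any $\bM^* \in \mathcal{S}$ (nonempty by part (a) and a large enough $d_0$). For arbitrary $\bM$, taking $\bM' = \bM^*$ in the infimum~\eqref{definition-f-down-u-component} gives $f_\bu^\downarrow(\bM) \leq f_\bu(\bM^*) + \Delta \cdot m + 2D \leq D + \Delta m + 2D$, and since $\lambda \geq C(1+\sigma)d/m$ forces $\Delta m \lesssim \log^3(d)$, this yields an upper bound of order $d$; the infimum is also bounded below by $-D$ since any candidate $\bM' \in \mathcal{S}$ satisfies $f_\bu(\bM') \geq -D$. For part (c), I will use that $f_\bu^\downarrow = f_\bu$ on $\mathcal{S}$ (from~\eqref{properties-truncate-f-u-component}) to write
\begin{align*}
\Pr\{|f_\bu - \EE f_\bu| \geq t\} \leq \Pr\{\bM \notin \mathcal{S}\} + \Pr\{|f_\bu^\downarrow - \EE f_\bu^\downarrow| \geq t/2\} + \mathbbm{1}\{|\EE f_\bu - \EE f_\bu^\downarrow| \geq t/2\}.
\end{align*}
Since $f_\bu = f_\bu^\downarrow$ on $\mathcal{S}$, $|\EE f_\bu - \EE f_\bu^\downarrow| \leq \EE\{(|f_\bu| + D) \mathbbm{1}\{\bM \notin \mathcal{S}\}\}$. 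Cauchy--Schwarz together with the crude polynomial moment bound $\EE f_\bu^2 \leq \mathrm{poly}(d)$ (obtained directly from the closed form~\eqref{eq:closed-form-update} since $\|(\bA^\top\bA + \lambda m\bI)^{-1}\|\leq (\lambda m)^{-1}$) and $\Pr\{\bM \notin \mathcal{S}\} \leq d^{-180}$ yields $|\EE f_\bu - \EE f_\bu^\downarrow| \leq d^{-80}$, which is $\leq t/2$ in the relevant range. Adjusting the polynomial decay exponents from $180$ to $200$ in part (a) if needed to absorb constants completes the argument.
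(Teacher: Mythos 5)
Your proposal is correct and follows essentially the same route as the paper: part (a) via leave-one/two-out independence of $\ba_i$ from $\bSig_i$ (resp.\ of $\bx_i$ from $\bcoefX_\sharp-\bcoefX^{-(i)}$) plus Gaussian tails and a union bound, part (b) by testing the infimum against a fixed $\bM^*\in\mathcal{S}$ and using $\lambda m \gtrsim (1+\sigma)d$ to control $m\Delta$, and part (c) by splitting off $\{\bM\notin\mathcal{S}\}$ where $f_{\bu}=f_{\bu}^{\downarrow}$ fails and bounding $|\EE f_{\bu}-\EE f_{\bu}^{\downarrow}|$ through Cauchy--Schwarz with a crude polynomial moment bound on $f_{\bu}$. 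The details (including the cancellation of the $\sigma$ dependence through the step-size condition) all check out.
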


It remains to control the quantity $\Pr\{|f_{\bu}^{\downarrow} - \EE\{f_{\bu}^{\downarrow}\}| \geq t/2 \}$.  Combining inequality~\eqref{properties-truncate-f-u-component} and Lemma~\ref{auxiliary-lemma2-component-expectation-concentration}(b.) yields (for $\rho(\bM,\bM') \leq 1$)
\[
\big| f_{\bu}^{\downarrow}(\bM) - f_{\bu}^{\downarrow}(\bM') \big| \leq 
\left\{ \begin{array}{c} \Delta, \text{ if } \bM \in \mathcal{S},\\ Cd, \text{otherwise}. \\ \end{array}\right.
\]
Then, we set $c_{k} = \Delta, d_{k} = Cd$ and $\gamma_{k} \in (0,1]$ for each $k \in [n]$ and apply~\citet[Theorem 2]{warnke2016method} to deduce  that for all $t>0$
\[
\Pr\Big\{ |f_{\bu}^{\downarrow}(\bM) - \EE\{ f_{\bu}^{\downarrow}(\bM) \} | \geq t \Big\} \leq 2\exp\Big\{ -\frac{t^{2}}{4\sum_{k=1}^{m}(c_{k}^{2} + \gamma_{k}^{2}d_{k}^{2}) }\Big\} + \sum_{k=1}^{m} \gamma_{k}^{-1} \cdot \Pr\{\bM \notin \mathcal{S}\}.
\]
Setting $\gamma_{k} = d^{-60}$ for each $k\in[m]$ and applying Lemma~\ref{auxiliary-lemma2-component-expectation-concentration}(a.), we obtain that with probability at least $1-d^{-115}$,
\begin{align*}
	|f_{\bu}^{\downarrow}(\bM) - \EE\{ f_{\bu}^{\downarrow}(\bM) \} | &\leq C\sqrt{\log(d)} \max\Big\{ \Big(\sum_{k=1}^{m}c_{k}^{2} \Big)^{1/2}, \Big( \sum_{k=1}^{m}\gamma_{k}^{2}d_{k}^{2} \Big)^{1/2}  \Big\} \\& \leq C\sqrt{\log(d)} \max \big\{ \sqrt{m}\Delta, d^{-52}  \big\}  \leq \max\Big\{ \frac{C'\log^{3.5}(d)(\sqrt{\Err_{\sharp}} + \sigma )}{\lambda \sqrt{m}} , d^{-50}\Big\}.
\end{align*}
Finally, applying Lemma~\ref{auxiliary-lemma2-component-expectation-concentration}(c.), we obtain that with probability at least $1-d^{-110}$,
\[
|f_{\bu} - \EE\{f_{\bu}\}| \leq \max\Big\{ \frac{C'\log^{3.5}(d)(\sqrt{\Err_{\sharp}} + \sigma )}{\lambda \sqrt{m}} , d^{-50}\Big\},
\]
which concludes the proof of inequality~\eqref{ineq:stochastic-error-concentration}.

\subsubsection{Bounding the bias~\eqref{upper-bound-expectation-u-det}}\label{sec:proof-bias-term-parallel-component}
Our bounds on the bias term follow step 2b of the strategy outlined in Section~\ref{sec:techniques-one-step-prediction}.  We begin with some notation.  For any directions $\bu, \bv \in \mathbb{S}^{d-1}$, we let $\theta(\bu) = \langle \bu, \bcoefX_{+}\rangle$ and $\widetilde{\theta}(\bv) = \langle \bv, \bcoefZ_{+}\rangle$.  Additionally, we let $\bO_{\bu} \in \mathbb{R}^{d \times (d-1)}$ and $\bO_{\bv} \in \mathbb{R}^{d \times (d-1)}$ consist of columns which form orthonormal bases of the $d-1$ dimensional subspaces orthogonal to $\bu$ and $\bv$, respectively.  Recalling that $\bW = \diag(\bX\bcoefX_{\sharp})$ and $\widetilde{\bW} = \diag(\bZ \bcoefZ_{\sharp})$, we then let $\bA_{\bu, \bv} = [ \widetilde{\bW} \bX \bO_{\bu} \; \vert \; \bW \bZ \bO_{\bv}]$ and $\bP_{\bu, \bv} = \bI - \bA_{\bu,\bv} \big(\bA_{\bu,\bv}^{\top}\bA_{\bu,\bv} + \lambda m \bI\big)^{-1}\bA_{\bu,\bv}^{\top}$ denote leave-one-out variants of the data $\bA$ and the ``projection" matrix $\bP$ which do not depend on the directions $\bu$ and $\bv$.  The following lemma, whose proof we defer to Section~\ref{proof-lemma-leave-one-direction-out}, characterizes $\theta(\bu)$ and $\widetilde{\theta}(\bv)$ as solutions to a linear system. 
\begin{lemma}\label{lemma-leave-one-out}
	Let $\bcoefX_{+}, \bcoefZ_{+}, \bcoefX_{\sharp}, \bcoefZ_{\sharp}$ be as in Theorem~\ref{thm:one-step-prediction} and consider $\bu, \bv \in \mathbb{S}^{d-1}$. The following holds.
	\begin{align}\label{eq-leave-one-out}
		&\bigg(
		\frac{1}{m}  \begin{bmatrix} (\widetilde{\bW}\bX \bu)^{\top} \\ (\bW \bZ \bv)^{\top} \end{bmatrix} \bP_{\bu,\bv} \bigl[ \widetilde{\bW} \bX \bu \; \vert \; \bW \bZ \bv \bigr]  + \lambda \bI \bigg)
	\begin{bmatrix}
		\theta(\bu)\\ \widetilde{\theta}(\bv)
	\end{bmatrix} =  \lambda 
		\begin{bmatrix}
			\langle \bcoefX_{\sharp} , \bu \rangle \\
			\langle \bcoefZ_{\sharp} , \bv \rangle
		\end{bmatrix}  \\
	& + \frac{1}{m}  \begin{bmatrix}
			(\widetilde{\bW}\bX \bu)^{\top}\\
			(\bW\bZ \bv)^{\top}
		\end{bmatrix} \bP_{\bu,\bv} \bigl(\by + \diag(\bW\widetilde{\bW})\bigr)  - 
		\lambda \begin{bmatrix}
			(\widetilde{\bW}\bX \bu)^{\top}\\
			(\bW\bZ \bv)^{\top}
		\end{bmatrix} \bA_{\bu,\bv} \big(\bA_{\bu,\bv}^{\top} \bA_{\bu,\bv} + \lambda m\bI \big)^{-1} 
		\begin{bmatrix}
			\bO_{\bu}^{\top} \bcoefX_{\sharp}  \\
			\bO_{\bv}^{\top} \bcoefZ_{\sharp}
		\end{bmatrix}. \nonumber
	\end{align} 
\end{lemma}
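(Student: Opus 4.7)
The strategy is to transform the closed-form update~\eqref{eq:closed-form-update} into its KKT optimality condition, split both $\bcoefX_+$ and $\bcoefZ_+$ into components along and orthogonal to $\bu$ and $\bv$ respectively, eliminate the orthogonal components via a $(2d-2)$-dimensional linear solve, and substitute back; the ``projection'' matrix $\bP_{\bu,\bv}$ emerges naturally from the resulting Schur complement.

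First, the update~\eqref{eq:closed-form-update} is equivalent to the KKT system
\[
\bigl(\bA^\top\bA + \lambda m \bI\bigr)\begin{bmatrix}\bcoefX_+\\ \bcoefZ_+\end{bmatrix}
= \bA^\top\bb + \lambda m \begin{bmatrix}\bcoefX_\sharp\\ \bcoefZ_\sharp\end{bmatrix},
\qquad \text{where } \bb := \by + \diag(\bW\bWtil),
\]
with $\bA = [\widetilde{\bW}\bX\mid \bW\bZ]$. Writing $\bcoefX_+ = \theta(\bu)\bu + \bO_{\bu}\bxi$ and $\bcoefZ_+ = \widetilde{\theta}(\bv)\bv + \bO_{\bv}\boldsymbol{\eta}$ where $\bxi = \bO_{\bu}^\top \bcoefX_+$, $\boldsymbol{\eta}=\bO_{\bv}^\top \bcoefZ_+$, and introducing the $m\times 2$ matrix $\bM := [\widetilde{\bW}\bX\bu \mid \bW\bZ\bv]$, we get the decomposition
\[
\bA\begin{bmatrix}\bcoefX_+\\ \bcoefZ_+\end{bmatrix} = \bM\begin{bmatrix}\theta(\bu)\\ \widetilde{\theta}(\bv)\end{bmatrix} + \bA_{\bu,\bv}\begin{bmatrix}\bxi\\ \boldsymbol{\eta}\end{bmatrix}.
\]

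Second, I multiply the KKT equation on the left by the two complementary block-projections $\mathrm{diag}(\bu^\top,\bv^\top)$ and $\mathrm{diag}(\bO_{\bu}^\top,\bO_{\bv}^\top)$. Using $\bu^\top\bcoefX_+ = \theta(\bu)$ and $\bv^\top\bcoefZ_+ = \widetilde{\theta}(\bv)$, the first projection produces the \emph{parallel equation}
\[
\bM^\top\bA\begin{bmatrix}\bcoefX_+\\ \bcoefZ_+\end{bmatrix} + \lambda m \begin{bmatrix}\theta(\bu)\\ \widetilde{\theta}(\bv)\end{bmatrix} = \bM^\top \bb + \lambda m \begin{bmatrix}\langle\bcoefX_\sharp,\bu\rangle\\ \langle\bcoefZ_\sharp,\bv\rangle\end{bmatrix},
\]
while the second produces the \emph{orthogonal equation}
\[
\bA_{\bu,\bv}^\top\bA\begin{bmatrix}\bcoefX_+\\ \bcoefZ_+\end{bmatrix} + \lambda m \begin{bmatrix}\bxi\\ \boldsymbol{\eta}\end{bmatrix} = \bA_{\bu,\bv}^\top \bb + \lambda m \begin{bmatrix}\bO_{\bu}^\top\bcoefX_\sharp\\ \bO_{\bv}^\top\bcoefZ_\sharp\end{bmatrix}.
\]

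Third, substituting the decomposition into the orthogonal equation and using $\bA_{\bu,\bv}^\top\bA_{\bu,\bv}+\lambda m \bI$ is invertible (by the regularizer), I solve
\[
\begin{bmatrix}\bxi\\ \boldsymbol{\eta}\end{bmatrix}
= \bigl(\bA_{\bu,\bv}^\top\bA_{\bu,\bv}+\lambda m \bI\bigr)^{-1}\!\left(\bA_{\bu,\bv}^\top\bb - \bA_{\bu,\bv}^\top \bM\begin{bmatrix}\theta(\bu)\\ \widetilde{\theta}(\bv)\end{bmatrix} + \lambda m \begin{bmatrix}\bO_{\bu}^\top\bcoefX_\sharp\\ \bO_{\bv}^\top\bcoefZ_\sharp\end{bmatrix}\right).
\]
Plugging this back into the parallel equation and combining terms, the pieces $\bM^\top\bM - \bM^\top\bA_{\bu,\bv}(\bA_{\bu,\bv}^\top\bA_{\bu,\bv}+\lambda m \bI)^{-1}\bA_{\bu,\bv}^\top\bM$ and $\bM^\top \bb - \bM^\top\bA_{\bu,\bv}(\bA_{\bu,\bv}^\top\bA_{\bu,\bv}+\lambda m \bI)^{-1}\bA_{\bu,\bv}^\top\bb$ each contract to $\bM^\top\bP_{\bu,\bv}\bM$ and $\bM^\top\bP_{\bu,\bv}\bb$ respectively by the definition~\eqref{definition-of-Puv-Auv} of $\bP_{\bu,\bv}$. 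Dividing through by $m$ yields exactly~\eqref{eq-leave-one-out}, with the residual $\lambda m$-term on $[\bO_{\bu}^\top\bcoefX_\sharp;\bO_{\bv}^\top\bcoefZ_\sharp]$ surviving as the last term.

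There is no analytic obstacle here; the task is entirely algebraic Schur-complement bookkeeping, and the main thing to watch is the correct identification of the combinations that collapse to $\bP_{\bu,\bv}$ versus those that persist as the ridge-style residual $\bM^\top\bA_{\bu,\bv}(\bA_{\bu,\bv}^\top\bA_{\bu,\bv}+\lambda m \bI)^{-1}$ acting on the initialization.
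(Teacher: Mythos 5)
Your proposal is correct and follows essentially the same route as the paper's proof: write the KKT system, project onto the $(\bu,\bv)$ directions and their orthogonal complements, solve the orthogonal block, and substitute back so that the Schur-complement combinations collapse to $\bP_{\bu,\bv}$. The only difference is cosmetic—you work directly in block-matrix form where the paper first expands the KKT conditions as scalar sums before recollecting them.
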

\noindent We next state explicit expressions for the components $\theta(\bu_1), \widetilde{\theta}(\bv_1), \theta(\bu_2),$ and $\widetilde{\theta}(\bv_2)$.  The proofs of the validity of these expressions rely on Lemma~\ref{lemma-leave-one-out}, and we provide them at the end of the section.  The expressions for $\theta(\bu_1)$ and $\widetilde{\theta}(\bv_1)$ are stated in terms of the random variables $M_1$ and $M_2$, defined as 
\begin{align}\label{def:M1M2}
	M_1 &= \frac{1}{m}\diag(\bW\bWtil)^{\top}\bP_{\bu_1,\bv_1}\diag(\bW\bWtil),\nonumber\\ 
	M_2 &= \frac{1}{m}\diag(\bW\bWtil)^{\top}\bP_{\bu_1,\bv_1}\Big( \frac{\parcompX_\sharp\perpcompZ_\sharp}{L_\sharp^{2}\widetilde{L}_{\sharp}} 
	\bW\bZ\bv_2 + \frac{\parcompZ_\sharp\perpcompX_\sharp}{\widetilde{L}_{\sharp}^2L_\sharp}\bWtil\bX\bu_2 + \frac{\perpcompX_\sharp\perpcompZ_\sharp}{L_\sharp\widetilde{L}_{\sharp}} \bX\bu_2 \odot \bZ\bv_2  + \boldsymbol{\epsilon} \Big), 
\end{align}
We then have
\begin{subequations}
	\label{eq:theta-u1-char}
	\begin{align}
		\theta(\bu_1) &= L_\sharp + \frac{L_\sharp \big(\frac{\parcompX_\sharp\parcompZ_\sharp}{L_\sharp^2} - \widetilde{L}_{\sharp}^2 \big) M_1 + L_\sharp\widetilde{L}_{\sharp}^2M_2}{\lambda L_\sharp^2\widetilde{L}_{\sharp}^2 + M_1(L_\sharp^2 + \widetilde{L}_{\sharp}^2)}, \qquad \text{ and }\\
	\widetilde{\theta}(\bv_1) &= \widetilde{L}_{\sharp} + \frac{\widetilde{L}_{\sharp} \big( \frac{\parcompX_\sharp\parcompZ_\sharp}{\widetilde{L}_{\sharp}^2} - L_\sharp^2 \big)M_1 + L_\sharp^2\widetilde{L}_{\sharp}M_2}{\lambda L_\sharp^2\widetilde{L}_{\sharp}^2 + M_1(L_\sharp^2 + \widetilde{L}_{\sharp}^2)}.
	\end{align}
\end{subequations}
We will later also require the random variable $M_3 = \frac{1}{m} \diag(\bW\bWtil)^{\top} \bP_{\bu_{1},\bv_{1}} \bX\bu_{2}\odot \bZ\bv_{2}$.
The expressions for $\theta(\bu_2)$ and $\widetilde{\theta}(\bv_2)$, further rely on the random variables $M_{11}, M_{12}, M_{22}$, as well as $E_1, E_2$ and $F_1, F_2$, which we define presently.
\begin{align} \label{def:M11M12M22}
	M_{11} &= \frac{1}{m}(\bWtil\bX\bu_2)^{\top}\bP_{\bu_2,\bv_2}\bWtil\bX\bu_2, \qquad M_{12} = \frac{1}{m}(\bWtil\bX\bu_2)^{\top}\bP_{\bu_2,\bv_2}\bW\bZ\bv_2,\nonumber\\
	&\qquad \text{ and } \qquad M_{22} = \frac{1}{m}(\bW\bZ\bv_2)^{\top}\bP_{\bu_2,\bv_2}\bW\bZ\bv_2.
\end{align}
The key properties of the random variables $M_1, M_2, M_3, M_{11}, M_{12}$, and $M_{22}$ are summarized in the following lemma, whose proof we defer to Section~\ref{subsec:proof-concentration-M11-M22-M12-M1-M2}.
\begin{lemma}\label{concentration-of-M11-M22-M12-M1-M2}
	Recall the tuple of deterministic quantities $(V,V_1,V_2)$ in Eq.~\eqref{eq:V-V1-V2} and the tuple of random variables $(M_{11},M_{12},M_{22},M_{1}, M_{3})$ in Eq.~\eqref{def:M1M2} and Eq.~\eqref{def:M11M12M22}, and consider the assumptions in Theorem~\ref{thm:one-step-prediction}. Then, the following four properties hold.
	\begin{align*}	
		&(a.)\;\; \EE\big\{|M_{11}-V_{1}|\big\}\;\vee\; \EE\big\{|M_{22}-V_{2}|\big\} \lesssim \frac{\log^{1.5}(d)}{\sqrt{d}} + \frac{1}{\sqrt{m}},\qquad (b.)\;\; \EE\big\{M_{12}^{2}\big\} \lesssim \frac{1}{m},\\
		& (c.)\;\; \EE\big\{|M_{1} - V|\big\} \lesssim \frac{\log^{6}(d)}{\sqrt{d}} + \frac{\log^{2.5}(m)}{\sqrt{m}} \qquad \text{ and } \qquad (d.)\;\; \EE\big\{|M_{3}|\big\} \lesssim \frac{\log^{6}(d)}{\sqrt{d}} + \frac{1}{\sqrt{m}}.
	\end{align*}
\end{lemma}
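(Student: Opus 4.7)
\textbf{Proof plan for Lemma~\ref{concentration-of-M11-M22-M12-M1-M2}.} The strategy is to make rigorous the chain of approximations outlined in Section~\ref{sec:techniques-one-step-prediction}, where each heuristic ``$\approx$'' is replaced by a quantitative concentration statement. I would treat parts (a)--(b) first, since they involve the comparatively ``clean'' leave-one-direction-out projection $\bP_{\bu_2, \bv_2}$, and save parts (c)--(d) for last.

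By the construction in~\eqref{definition-u1-u2-v1-v2}, one has $\bu_2 \perp \bcoefX_\sharp$ and $\bv_2 \perp \bcoefZ_\sharp$, so the Gaussian vectors $\bX \bu_2, \bZ \bv_2 \in \mathbb{R}^m$ are jointly independent of the triple $(\bP_{\bu_2,\bv_2}, \bW, \bWtil)$. Conditioning on this triple, $M_{11}$ is a quadratic form in standard Gaussians and Hanson--Wright~\citep[Thm.~6.2.1]{vershynin2018high} concentrates it around $(1/m)\trace(\bWtil \bP_{\bu_2,\bv_2} \bWtil)$ with conditional fluctuations of order $\opnorm{\bP_{\bu_2,\bv_2}}/\sqrt{m}$. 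The bilinear form $M_{12}$ has conditional mean zero by the independence of $\bX\bu_2$ and $\bZ\bv_2$, from which the $1/m$ bound on $\EE\{M_{12}^2\}$ in part (b) follows directly. For part (a), I next open the trace as $(1/m)\sum_i \bWtil(i,i)^2 \bP_{\bu_2,\bv_2}(i,i)$, apply Sherman--Morrison to the diagonal entries to write $\bP_{\bu_2,\bv_2}(i,i) = (1 + \ba_i^\top \bSig_i^{-1}\ba_i)^{-1}$ with $\bSig_i$ the leave-one-sample-out Gram matrix, and use the block identity~\eqref{eq:block-matrix-formula}--\eqref{eq:block-matrix-proof-sketch} together with independence of $\bx_i, \bz_i$ from the blocks $\bB,\bC,\bD$ to reduce $\ba_i^\top \bSig_i^{-1}\ba_i$ to $\bWtil(i,i)^2 \trace((\bB-\bC\bD^{-1}\bC^\top)^{-1}) + \bW(i,i)^2 \trace((\bD-\bC^\top\bB^{-1}\bC)^{-1})$ via a second Hanson--Wright step. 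The key analytical input is then the trace-concentration bound~\eqref{introduction-concentartion-tarce-inverse} (proved in Section~\ref{concentration-trace-inverse}), which replaces those two traces by $r_1^{-1}$ and $r_2^{-1}$. A Bernstein bound over the decoupled sum in $i$, combined with the fixed-point equations~\eqref{eq:fixed-point} that define $r_1, r_2$, identifies the target value as $V_1$.

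For parts (c)--(d), the quadratic form now involves $\bP_{\bu_1,\bv_1}$ together with vectors whose entries are products of Gaussians rather than single Gaussians. I would decompose
\[
M_1 = \frac{1}{m} \sum_i G_i^2 \GZ_i^2 \, \bP_{\bu_1,\bv_1}(i,i) + \frac{1}{m} \sum_{i \neq j} G_i \GZ_i G_j \GZ_j \, \bP_{\bu_1,\bv_1}(i,j).
\]
Because $\bO_{\bu_1}$ is orthogonal to $\bu_1 = \bcoefX_\sharp / L_\sharp$, the Gaussian matrix $\bX \bO_{\bu_1}$ is independent of $\bW$ (and similarly for $\bZ \bO_{\bv_1}$ and $\bWtil$), so the same Sherman--Morrison/block-inversion pipeline as above yields the approximation $\bP_{\bu_1,\bv_1}(i,i) \approx r_1 r_2/(r_1 r_2 + r_2 \GZ_i^2 + r_1 G_i^2)$, matching the defining integrand of $V$. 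The diagonal sum is then a sum of i.i.d.\ sub-exponential variables and concentrates at $V$ via Bernstein. The off-diagonal terms are handled by a rank-two Sherman--Morrison expansion of $\bP_{\bu_1,\bv_1}(i,j)$ in $(\ba_i, \ba_j)$: the leading contribution has mean zero since $\EE\{G_i \GZ_i\} = 0$, and the remainder is controlled by a direct second-moment calculation, yielding part (c). Part (d) is similar but simpler: the entries of $\bX\bu_2 \odot \bZ\bv_2$ are mean-zero sub-exponential, and after accounting for the low-rank dependence of $\bu_2$ on the range of $\bO_{\bu_1}$ (through its overlap with $\bcoefX_\star$), the mean-zero structure of the constituent vector delivers the leading-order cancellation and the bound follows from a variance computation.

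The main obstacle is tracking the $\log^6(d)/\sqrt{d}$ factor in part (c); the extra polylogarithmic cost relative to part (a)'s $\log^{1.5}(d)/\sqrt{d}$ arises because the relevant quadratic forms are in sub-exponential rather than sub-Gaussian variables, requiring Hanson--Wright for the $\psi_1$ norm along with a union bound across $O(m^2)$ off-diagonal terms. Compounded with the $\log^6(d)$ overhead already carried by the trace-concentration input~\eqref{introduction-concentartion-tarce-inverse}, the bookkeeping of which good events must hold simultaneously, and on what probability scale, is the principal analytical burden; the method of typical bounded differences~\citep{warnke2016method} will likely be needed to push through the sharpest exponents without blowing up constants.
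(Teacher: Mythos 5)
Your proposal follows essentially the same route as the paper's proof: Hanson--Wright to pass from the quadratic forms to traces, Sherman--Morrison plus the block inversion~\eqref{eq:block-matrix-formula} to reduce $\ba_i^{\top}\bSig_i^{-1}\ba_i$ to the two trace inverses, the concentration~\eqref{introduction-concentartion-tarce-inverse} to identify $r_1^{-1}, r_2^{-1}$, Bernstein for the resulting i.i.d.\ sub-exponential sum, and independence of $\bX\bu_2, \bZ\bv_2$ from $(\bW,\bWtil,\bP_{\bu_2,\bv_2})$ for part (b). The one place you deviate is the off-diagonal sums in parts (c)--(d), where you lean toward a decoupled second-moment computation while the paper controls them via Warnke's typical bounded differences over an explicit regularity set --- but you correctly anticipate that this is the delicate step and that the typical bounded differences method is likely required there, so the plan is sound.
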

\noindent The random variables $E_1$ and $E_2$ depend on the noise $\bepsilon$ and are defined as
\begin{align} 
	E_1 &= 
	\frac{1}{m} (\bWtil\bX\bu_2)^{\top} \bP_{\bu_2,\bv_2} \bigg( \Big(1+ \frac{\parcompX_{\sharp}}{L_{\sharp}^{2}}  \frac{\parcompZ_{\sharp}}{\widetilde{L}_{\sharp}^{2}} \Big) \cdot \diag(\bW\bWtil) + \frac{\parcompX_{\sharp}}{L_{\sharp}^{2}}  \frac{\perpcompZ_{\sharp}}{\widetilde{L}_{\sharp}} \cdot \bW\bZ\bv_2 + \frac{\perpcompX_{\sharp} \perpcompZ_{\sharp}}{L_{\sharp}\widetilde{L}_{\sharp}} \cdot \bX\bu_2 \odot \bZ\bv_2 + \boldsymbol{\epsilon} \bigg), \nonumber \\
	E_2 &= 
	\frac{1}{m} (\bW\bZ\bv_2)^{\top} \bP_{\bu_2,\bv_2} \bigg( \Big(1+ \frac{\parcompX_{\sharp}}{L_{\sharp}^{2}} \frac{\parcompZ_{\sharp}}{\widetilde{L}_{\sharp}^{2}} \Big) \cdot \diag(\bW\bWtil) + \frac{\parcompZ_{\sharp}}{\widetilde{L}_{\sharp}^{2}}  \frac{\perpcompX_{\sharp}}{L_{\sharp}} \cdot \bWtil\bX\bu_2 + \frac{\perpcompX_{\sharp} \perpcompZ_{\sharp}}{L_{\sharp}\widetilde{L}_{\sharp}} \cdot \bX\bu_2 \odot \bZ\bv_2 + \boldsymbol{\epsilon} \bigg), \nonumber.
\end{align}
Finally, we define the random variables $F_1$ and $F_2$ as 
\begin{align} 
	F_1 &= \lambda(\bWtil\bX \bu_2)^{\top}\bA_{\bu_2,\bv_2} \big(\bA_{\bu_2,\bv_2}^{\top} \bA_{\bu_2,\bv_2} + \lambda m\bI \big)^{-1} 
	\left[\begin{array}{c}
		\bO_{\bu_2}^{\top}\bcoefX_{\sharp}  \\
		\bO_{\bv_2}^{\top}\bcoefZ_{\sharp}
	\end{array}\right] \qquad \text{ and } \nonumber \\
	F_2 &= \lambda(\bW\bZ \bv_2)^{\top}\bA_{\bu_2,\bv_2} \big(\bA_{\bu_2,\bv_2}^{\top} \bA_{\bu_2,\bv_2} + \lambda m\bI \big)^{-1} 
	\left[\begin{array}{c}
		\bO_{\bu_2}^{\top}\bcoefX_{\sharp}  \\
		\bO_{\bv_2}^{\top}\bcoefZ_{\sharp}
	\end{array}\right].
\end{align}
Equipped with these definitions, we have the following expressions for $\theta(\bu_2)$ and $\widetilde{\theta}(\bv_2)$
\begin{subequations}\label{eq:theta-u2-char}
\begin{align}
\theta(\bu_2) &= \frac{(M_{22}+\lambda)\big(\frac{\parcompZ_{\sharp}}{\widetilde{L}_{\sharp}^{2}} \frac{\perpcompX_{\sharp}}{L_{\sharp}} \cdot M_{11} + E_{1}-F_{1} \big) - M_{12}\big(\frac{\parcompX_{\sharp}}{L_{\sharp}^{2}}  \frac{\perpcompZ_{\sharp}}{\widetilde{L}_{\sharp}} \cdot M_{22} + E_{2} - F_{2} \big)}{(M_{11}+\lambda)(M_{22}+\lambda) - M_{12}^{2}}, \\
\widetilde{\theta}(\bv_2) &= \frac{(M_{11}+\lambda)\big(\frac{\parcompX_{\sharp}}{L_{\sharp}^{2}}  \frac{\perpcompZ_{\sharp}}{\widetilde{L}_{\sharp}} \cdot M_{22} + E_{2} - F_{2} \big) - M_{12}\big(\frac{\parcompZ_{\sharp}}{\widetilde{L}_{\sharp}^{2}} \frac{\perpcompX_{\sharp}}{L_{\sharp}} \cdot M_{11} + E_{1}-F_{1} \big) }{(M_{11}+\lambda)(M_{22}+\lambda) - M_{12}^{2}}.
\end{align}
\end{subequations}
Taking these expressions for granted, we claim the following expressions characterize $\EE\{\theta(\bu_1)\}$ and $\EE\{\theta(\bu_2)\}$.  We defer its proof to the end of the section.
\begin{align}	\label{claim-alpha-iota-concentration}
	\EE\{\theta(\bu_{1})\} &= L_{\sharp} + \EE\Bigg\{ \frac{L_\sharp(\frac{\parcompX_\sharp\parcompZ_\sharp}{L_\sharp^2} - \widetilde{L}_{\sharp}^2) M_1 + \widetilde{L}_{\sharp}\perpcompX_{\sharp} \perpcompZ_{\sharp} M_{3} }{\lambda L_\sharp^2\widetilde{L}_{\sharp}^2 + M_1(L_\sharp^2 + \widetilde{L}_{\sharp}^2)} \Bigg\}, \qquad \text{ and } \nonumber\\ \EE\{\theta(\bu_{2})\} &= \EE\Bigg\{ \frac{\frac{\parcompZ_{\sharp}}{\widetilde{L}_{\sharp}^{2}}  \frac{\perpcompX_{\sharp}}{L_{\sharp}} M_{11} - \frac{\parcompZ_{\sharp}}{\widetilde{L}_{\sharp}^{2}}  \frac{\perpcompX_{\sharp}}{L_{\sharp}} \frac{M_{12}^{2}}{M_{22}+\lambda} }{(M_{11}+\lambda) - \frac{M_{12}^{2}}{M_{22}+\lambda}}  \Bigg\}.
\end{align}
We turn now to bounding $\lvert \EE\{\theta(\bu_1)\} - \theta_1^{\mathsf{det}}\rvert$.  To this end, we combine the claim~\eqref{claim-alpha-iota-concentration} with the explicit expression $\thetaXdet_{1}$ in~\eqref{definition-theta12-det} and apply the triangle inequality to obtain the decomposition
\begin{align*}
	\big| \EE\{\theta(\bu_{1})\} - \thetaXdet_{1} \big| &\leq \big|L_{\sharp}\big(\frac{\parcompX_\sharp\parcompZ_\sharp}{L_\sharp^2} - \widetilde{L}_{\sharp}^2 \big) \big| \cdot T_1 + \widetilde{L}_{\sharp}\perpcompX_{\sharp} \perpcompZ_{\sharp} \cdot T_2,
\end{align*}
where $T_1 = \Big| \EE\Big\{ \frac{M_1 }{\lambda L_\sharp^2\widetilde{L}_{\sharp}^2 + M_1(L_\sharp^2 + \widetilde{L}_{\sharp}^2)} \Big\} -\frac{V}{\lambda L_\sharp^2\widetilde{L}_{\sharp}^2 + V(L_\sharp^2 + \widetilde{L}_{\sharp}^2)}  \Big|$ and $T_2 = \Big| \EE\Big\{ \frac{ M_{3} }{\lambda L_\sharp^2\widetilde{L}_{\sharp}^2 + M_1(L_\sharp^2 + \widetilde{L}_{\sharp}^2)} \Big\}  \Big| $.  Note that $M_{1},M_{11},M_{22}\geq 0$ since $\bP_{\bu_{i},\bv_{i}} \succeq \boldsymbol{0}$ for $i=1,2$.  Moreover, by definition, $V,V_{1}\geq 0$~\eqref{eq:V-V1-V2}. Consequently, applying the triangle inequality yields the pair of bounds $T_{1} \leq \frac{2}{\lambda L_{\sharp}^{2} \LZ_{\sharp}^{2}} \cdot \EE\big\{ |M_{1} - V|\big\}$ and $T_{2} \leq \frac{1}{\lambda L_{\sharp}^{2} \LZ_{\sharp}^{2}} \cdot \EE\big\{|M_{3}|\big\}$.  Then, applying Lemma~\ref{concentration-of-M11-M22-M12-M1-M2}(c.),(d.) yields the pair of inequalities
\begin{align*} 
	T_1 \lesssim \frac{1}{\lambda L_{\sharp}^{2} \LZ_{\sharp}^{2}} \bigg(\frac{\log^{6}(d)}{\sqrt{d}} + \frac{\log^{2.5}(m)}{\sqrt{m}} \bigg),\qquad \text{ and } \qquad T_{2} \lesssim \frac{1}{\lambda L_{\sharp}^{2} \LZ_{\sharp}^{2}} \bigg( \frac{\log^{6}(d)}{\sqrt{d}} + \frac{1}{\sqrt{m}}\bigg).
\end{align*}
Combining the previous two displays and using the assumption that $L_{\sharp}, \widetilde{L}_{\sharp} \asymp 1$ yields the inequality
\begin{align} \label{bound-theta1-bias}
\big| \EE\{\theta(\bu_{1})\} - \thetaXdet_{1} \big| \lesssim  \frac{\big|\parcompX_{\sharp}\parcompZ_{\sharp} - L_{\sharp}^{2} \LZ_{\sharp}^{2} \big| + \perpcompX_{\sharp} + \perpcompZ_{\sharp}}{\lambda} \cdot \bigg( \frac{\log^{6}(d)}{\sqrt{d}} + \frac{\log^{2.5}(m)}{\sqrt{m}} \bigg).
\end{align}
We next turn to bounding $\big| \EE\{\theta(\bu_{2})\} - \thetaXdet_{2} \big| $.  Proceeding similarly, we decompose $\big| \EE\{\theta(\bu_{2})\} - \thetaXdet_{2} \big| \leq \frac{\parcompZ_{\sharp}}{\widetilde{L}_{\sharp}^{2}} \frac{\perpcompX_{\sharp}}{L_{\sharp}} \cdot (T_3 + T_4)$, where $T_3 =  \Big| \EE\Big\{ \frac{  M_{11} }{(M_{11}+\lambda) - \frac{ M_{12}^{2} }{M_{22}+\lambda} }  \Big\} -  \frac{V_{1} }{V_{1}+\lambda}  \Big|$ and $T_4 = \Big| \EE\Big\{ \frac{\frac{M_{12}^{2}}{M_{22}+\lambda} }{(M_{11}+\lambda) - \frac{ M_{12}^{2} }{M_{22}+\lambda} }  \Big\} \Big \vert$.  Towards bounding $T_3$, we note that 
\begin{align*}
	M_{12}^{2} &= \frac{1}{m^{2}}\Big( (\bWtil\bX\bu_{2})^{\top} \bP_{\bu_{2},\bv_{2}} \bW \bZ\bv_{2} \Big)^{2} 
	= \frac{1}{n^{2}} \Big \langle \bP_{\bu_{2},\bv_{2}}^{\frac{1}{2}} \bWtil\bX\bu_{2} ,\; \bP_{\bu_{2},\bv_{2}}^{\frac{1}{2}} \bW\bZ\bv_{2}  \Big \rangle^{2} 
	\\& \leq \frac{1}{m^{2}} \big\| \bP_{\bu_{2},\bv_{2}}^{\frac{1}{2}} \bWtil\bX\bu_{2} \big\|_{2}^{2} \cdot 
	\big\| \bP_{\bu_{2},\bv_{2}}^{\frac{1}{2}} \bW\bZ\bv_{2}  \big\|_{2}^{2} = M_{11} \cdot M_{22},
\end{align*}
where the inequality follows from the Cauchy--Schwarz inequality. We thus deduce the inequalities $M_{11} \geq M_{12}^{2}/M_{22} \geq M_{12}^{2}/(\lambda+M_{22})$. Consequently,
\begin{align}\label{ineq:T3-T4-intermediate-bound}
	\hspace{-0.5cm}
	T_{3} = \bigg| \EE\bigg\{ \frac{\lambda (M_{11}-V_{1}) + \frac{V_{1}M_{12}^{2}}{M_{22}+\lambda}}{\Big(\lambda + M_{11} - \frac{M_{12}^{2}}{M_{22} + \lambda} \Big)(\lambda + V_1)}\bigg\} \bigg| \leq \frac{\EE\big\{|M_{11}-V_{1}|\big\}}{\lambda} + \frac{\EE\big\{M_{12}^{2}\big\}}{\lambda^{2}},
\end{align}
and $T_4 \leq \EE\{M_{12}^2\}/\lambda^2$.  Combining with Lemma~\ref{concentration-of-M11-M22-M12-M1-M2}(a.),(b.) yields the bound $T_{3}+T_{4} \lesssim \frac{1}{\lambda} \Big(\frac{\log^{6}(d)}{\sqrt{d}} + \frac{1}{\sqrt{m}} \Big)$, which in turn implies the bound
\begin{align}\label{bound-theta2-bias}
 \big| \EE\big\{\theta(\bu_{2})\big\} - \thetaXdet_{2} \big| \lesssim \frac{\big|\parcompX_{\sharp}\parcompZ_{\sharp} - L_{\sharp}^{2} \LZ_{\sharp}^{2} \big| + \perpcompX_{\sharp} + \perpcompZ_{\sharp}}{\lambda} \cdot \bigg( \frac{\log^{6}(d)}{\sqrt{d}} + \frac{\log^{2.5}(m)}{\sqrt{m}} \bigg).
\end{align}
The desired inequality~\eqref{upper-bound-expectation-u-det} follows directly by noting
\begin{align*}
	\big|\parcompX_{\sharp}\parcompZ_{\sharp} - L_{\sharp}^{2} \LZ_{\sharp}^{2} \big| + \perpcompX_{\sharp} + \perpcompZ_{\sharp} &= \big|\parcompX_{\sharp}\parcompZ_{\sharp}(1-\parcompX_{\sharp}\parcompZ_{\sharp}) - \parcompX_{\sharp}^{2} \perpcompZ_{\sharp}^{2} - \parcompZ_{\sharp}^{2}\perpcompX_{\sharp}^{2} - \perpcompX_{\sharp}^{2}\perpcompZ_{\sharp}^{2} \big| + \perpcompX_{\sharp} + \perpcompZ_{\sharp}
	\\&\lesssim |1-\parcompX_{\sharp}\parcompZ_{\sharp}| + \perpcompX_{\sharp} + \perpcompZ_{\sharp} \lesssim \sqrt{ \Err_{\sharp} },
\end{align*}
and combining with the inequalities~\eqref{bound-theta1-bias} and~\eqref{bound-theta2-bias}.  It remains to prove the characterizations~\eqref{eq:theta-u1-char},~\eqref{eq:theta-u2-char}, and~\eqref{claim-alpha-iota-concentration}.

\paragraph{Proof of the characterizations~\eqref{eq:theta-u1-char}.}
By definition of $\bu_1$ and $\bv_1$~\eqref{eq:shorthand-length-L-Ltil},
we obtain that $\bO_{\bu_1}^{\top} \bcoefX_{\sharp} = \bO_{\bv_1}^{\top}\bcoefZ_{\sharp} = \boldsymbol{0}$, $\langle \bcoefX_\sharp,\bu_1\rangle = \|\bcoefX_\sharp\|_{2} = L_\sharp$ and $\langle \bcoefZ_\sharp,\bv_1\rangle = \|\bcoefZ_\sharp\|_{2} = \LZ_\sharp$. Further, since $\diag(\bX\bu_{1}) = \bW/L_\sharp$ and $\diag(\bZ\bv_1)= \bWtil / \LZ_\sharp$, we note
\begin{align*}
(\bWtil \bX\bu_1)^{\top} \bP_{\bu_1,\bv_1} &\bWtil \bX\bu_1 = \frac{M_1}{L_{\sharp}^{2}},\qquad (\bW \bZ\bv_1)^{\top} \bP_{\bu_1,\bv_1} \bW \bZ\bv_1 = \frac{M_1}{\LZ_{\sharp}^{2}},\\
& \text{ and } (\bWtil \bX\bu_1)^{\top} \bP_{\bu_1,\bv_1} \bW \bZ\bv_1 = \frac{M_1}{L_\sharp \LZ_\sharp}.
\end{align*}
Consequently, Eq.~\eqref{eq-leave-one-out} becomes
\begin{align}\label{eq1-proof-lemma-eq-alpha-iota-component}
	\hspace{-0.5cm}
	\left[\begin{array}{cc}
		\frac{M_1}{L_\sharp^2} + \lambda & \frac{M_1}{L_\sharp\LZ_\sharp} \\
		\frac{M_1}{L_\sharp\LZ_\sharp} & \frac{M_1}{\LZ_\sharp^2} + \lambda
	\end{array}\right] \left[\begin{array}{c} \theta(\bu_1) \\ \thetatil(\bv_1) \end{array}\right] = \left[\begin{array}{c} \lambda L_\sharp \\ \lambda \LZ_\sharp \end{array}\right] + \frac{1}{m} \cdot \left[\begin{array}{c}
		(\widetilde{\bW}\bX \bu_1)^{\top}\\
		(\bW\bZ \bv_1)^{\top}
	\end{array}\right] \bP_{\bu_1,\bv_1} (\by + \diag(\bW\widetilde{\bW})).
\end{align}
Continuing, recall the definition of $\bu_2,\bv_2$ in Eq.~\eqref{definition-u1-u2-v1-v2} and note that
\begin{align}\label{Eq-decomposition-of-by}
	\by = \bX\bcoefX_{\star} \odot \bZ\bcoefZ_{\star} + \bepsilon = \Big( \frac{\parcompX_{\sharp}}{L_\sharp} \bX\bu_1 + \frac{\perpcompX_\sharp}{L_\sharp} \bX\bu_2\Big) \odot \Big( \frac{\parcompZ_{\sharp}}{\LZ_\sharp} \bZ\bv_1 + \frac{\perpcompZ_\sharp}{\LZ_\sharp} \bZ\bv_2\Big)  + \bepsilon.
\end{align}
Consequently, using $\diag(\bX\bu_1) = \bW/L_\sharp$ and $\diag(\bZ\bv_1) = \bWtil/\LZ_\sharp$, we deduce that
\begin{align*}
	\frac{1}{m}(\widetilde{\bW}\bX \bu_1)^{\top} \bP_{\bu_1,\bv_1} (\by + \diag(\bW\widetilde{\bW})) &= \frac{1}{L_\sharp}\Big(1 + \frac{\parcompX_{\sharp}\parcompZ_\sharp}{L_\sharp^2\LZ_\sharp^2} \Big) M_1 + \frac{1}{L_\sharp}M_2 \quad \text{and}\\
	\frac{1}{m}(\bW \bZ \bv_1)^{\top} \bP_{\bu_1,\bv_1} (\by + \diag(\bW\widetilde{\bW})) &= \frac{1}{\LZ_\sharp}\Big(1 + \frac{\parcompX_{\sharp}\parcompZ_\sharp}{L_\sharp^2\LZ_\sharp^2} \Big) M_1 + \frac{1}{\LZ_\sharp}M_2.
\end{align*}
 Substituting the equation in the previous display into Eq.~\eqref{eq1-proof-lemma-eq-alpha-iota-component} yields
\[
\left[\begin{array}{cc}
	\frac{M_1}{L_\sharp^2} + \lambda & \frac{M_1}{L_\sharp\LZ_\sharp} \\
	\frac{M_1}{L_\sharp\LZ_\sharp} & \frac{M_1}{\LZ_\sharp^2} + \lambda
\end{array}\right] \left[\begin{array}{c} \theta(\bu_1) \\ \thetatil(\bv_1) \end{array}\right] = \left[\begin{array}{c} \lambda L_\sharp + \frac{1}{L_\sharp}\Big( 1+ \frac{\parcompX_{\sharp}\parcompZ_\sharp}{L_\sharp^2\LZ_\sharp^2} \Big) M_1 + \frac{1}{L_\sharp}M_2 \\ \lambda \LZ_\sharp + \frac{1}{\LZ_\sharp}\Big(1 + \frac{\parcompX_{\sharp}\parcompZ_\sharp}{L_\sharp^2\LZ_\sharp^2} \Big) M_1 + \frac{1}{\LZ_\sharp}M_2 \end{array}\right].
\]
Note that the equation in the display above admits a unique solution as $M_1\geq 0$. Solving the equation yields
\begin{align*}
	\theta(\bu_1) &= \frac{(\lambda+\frac{M_1}{\LZ_\sharp^2})\Big(\lambda L_\sharp + \frac{1}{L_\sharp}\Big( 1+ \frac{\parcompX_{\sharp}\parcompZ_\sharp}{L_\sharp^2\LZ_\sharp^2} \Big) M_1 + \frac{1}{L_\sharp}M_2 \Big) - \frac{M_1}{L_\sharp\LZ_\sharp} \Big(\lambda \LZ_\sharp + \frac{1}{\LZ_\sharp}\Big(1 + \frac{\parcompX_{\sharp}\parcompZ_\sharp}{L_\sharp^2\LZ_\sharp^2} \Big) M_1 + \frac{1}{\LZ_\sharp}M_2 \Big) }{(\lambda+\frac{M_1}{L_\sharp^2})(\lambda+\frac{M_1}{\LZ_\sharp^2}) - \frac{M_1^2}{L_\sharp^2 \LZ_\sharp^2}}
	\\ &= \frac{\lambda L_\sharp + \frac{1}{L_\sharp}\Big( 1+ \frac{\parcompX_{\sharp}\parcompZ_\sharp}{L_\sharp^2\LZ_\sharp^2} \Big) M_1 + \frac{1}{L_\sharp}M_2 + \frac{M_1L_\sharp}{\LZ_\sharp^2} - \frac{M_1}{L_\sharp} }{\lambda + M_1(L_\sharp^{-2} + \LZ_\sharp^{-2})}
	\\&= L_\sharp + \frac{\frac{1}{L_\sharp}\Big(  \frac{\parcompX_{\sharp}\parcompZ_\sharp}{L_\sharp^2\LZ_\sharp^2} -1 \Big) M_1 + \frac{1}{L_\sharp}M_2}{\lambda + M_1(L_\sharp^{-2} + \LZ_\sharp^{-2})} = L_\sharp + \frac{L_\sharp\Big(  \frac{\parcompX_{\sharp}\parcompZ_\sharp}{L_\sharp^2} -\LZ_\sharp^2 \Big) M_1 + L_\sharp \LZ_\sharp^2 M_2}{\lambda L_\sharp^2 \LZ_\sharp^2 + M_1(L_\sharp^{2} + \LZ_\sharp^{2})}.
\end{align*}
This proves the equation for $\theta(\bu_1)$. Proceeding similarly proves the equation for $\widetilde{\theta}(\bv_1)$. 

\paragraph{Proof of the characterizations~\eqref{eq:theta-u2-char}.}
By definition of $(\bu_2,\bv_2)$~\eqref{definition-u1-u2-v1-v2}, $\bcoefX_{\sharp}^{\top} \bu_2 = \bcoefZ_{\sharp}^{\top} \bv_2 = 0$. Now, setting $\bu = \bu_2$ and $\bv = \bv_2$
in Eq.~\eqref{eq-leave-one-out} yields
\begin{align*}
	\left[\begin{array}{cc}
		M_{11} + \lambda & M_{12} \\
		M_{12} & M_{22} + \lambda
	\end{array}\right] \left[\begin{array}{c} \theta(\bu_2) \\ \thetatil(\bv_2) \end{array}\right] = \frac{1}{m} \cdot \left[\begin{array}{c}
		(\widetilde{\bW}\bX \bu_2)^{\top}\\
		(\bW\bZ \bv_2)^{\top}
	\end{array}\right] \bP_{\bu_2,\bv_2} (\by + \diag(\bW\widetilde{\bW})) - \left[\begin{array}{c} F_1 \\ F_2 \end{array}\right].
\end{align*}
Then, using the decomposition of $\by$~\eqref{Eq-decomposition-of-by}, we obtain
\begin{align*}
	&\frac{1}{m} (\widetilde{\bW}\bX \bu_2)^{\top} \bP_{\bu_2,\bv_2} (\by + \diag(\bW\widetilde{\bW}))
	= \frac{\parcompZ_\sharp\perpcompX_\sharp}{\LZ_\sharp^2 L_\sharp}  \frac{1}{m} (\widetilde{\bW}\bX \bu_2)^{\top} \bP_{\bu_2,\bv_2} (\widetilde{\bW}\bX \bu_2) +
	\\ &\frac{1}{m} (\widetilde{\bW}\bX \bu_2)^{\top} \bP_{\bu_2,\bv_2} \Big( (1+\frac{\parcompX_\sharp\parcompZ_\sharp}{L_\sharp^2 \LZ_\sharp^2}) \diag(\bW\widetilde{\bW}) + \frac{\parcompX_\sharp \perpcompZ_\sharp}{L_\sharp^2 \LZ_\sharp}\bW \bZ \bv_{2} + \frac{\perpcompX_\sharp\perpcompZ_\sharp}{L_\sharp\LZ_\sharp} \bX\bu_2 \odot \bZ\bv_2+ \bepsilon \Big) 
	= \frac{\parcompZ_\sharp\perpcompX_\sharp}{\LZ_\sharp^2 L_\sharp} M_{11} + E_1,
\end{align*}
where the last step follows by definition of $M_{11}$ and $E_{1}$. Similarly, we obtain
\[
\frac{1}{m} (\bW\bZ \bv_2)^{\top} \bP_{\bu_2,\bv_2} (\by + \diag(\bW\widetilde{\bW})) = \frac{\parcompX_\sharp\perpcompZ_\sharp}{L_\sharp^2 \LZ_\sharp} M_{22} + E_2.
\]
Putting the three pieces together yields
\begin{align*}
	\left[\begin{array}{cc}
		M_{11} + \lambda & M_{12} \\
		M_{12} & M_{22} + \lambda
	\end{array}\right] \left[\begin{array}{c} \theta(\bu_2) \\ \thetatil(\bv_2) \end{array}\right] = \left[\begin{array}{c} \frac{\parcompZ_\sharp\perpcompX_\sharp}{\LZ_\sharp^2 L_\sharp} M_{11} + E_1 - F_1 \\ \frac{\parcompX_\sharp\perpcompZ_\sharp}{L_\sharp^2 \LZ_\sharp} M_{22} + E_2 - F_2 \end{array}\right].
\end{align*}
Note that the equation in the display above admits a unique solution since $(M_{11} + \lambda)(M_{22} + \lambda) - M_{12}^{2} > M_{11}M_{22} - M_{12}^{2} \geq 0$. Solving the $2\times 2$ equation in the display yields the desired result.

\paragraph{Proof of the relation~\eqref{claim-alpha-iota-concentration}.} Starting with $\EE\{\theta(\bu_{1})\}$, it suffices to show that
\begin{align*}
	\EE\bigg\{ \frac{\widetilde{L}_{\sharp}\perpcompX_{\sharp}\perpcompZ_{\sharp} M_{3} }{\lambda L_{\sharp}^{2} \LZ_{\sharp}^{2} + M_{1}(L_{\sharp}^{2} + \LZ_{\sharp}^{2})}\bigg\} = \EE\bigg\{ \frac{L_{\sharp}\LZ_{\sharp}^{2} M_{2} }{\lambda L_{\sharp}^{2} \LZ_{\sharp}^{2} + M_{1}(L_{\sharp}^{2} + \LZ_{\sharp}^{2})}\bigg\},
\end{align*}
which upon expanding in terms of $M_2$ and $M_3$, further reduces to proving that $\EE\{R\} = 0$, where 
\begin{align*}
	R :=  \frac{ \frac{L_{\sharp} \LZ_{\sharp}^{2}}{m} \cdot \diag(\bW\bWtil)^{\top}\bP_{\bu_1,\bv_1}\Big( \frac{\parcompX_\sharp\perpcompZ_\sharp}{L_\sharp^{2}\LZ_{\sharp}} 
			\bW\bZ\bv_2 + \frac{\parcompZ_\sharp \perpcompX_\sharp}{\LZ_{\sharp}^2L_\sharp}\bWtil\bX\bu_2+ \boldsymbol{\epsilon} \Big) }{\lambda L_{\sharp}^{2} \LZ_{\sharp}^{2} + M_{1}(L_{\sharp}^{2} + \LZ_{\sharp}^{2})}.
\end{align*}
By our construction of $(\bu_1,\bv_1)$~\eqref{definition-u1-u2-v1-v2} and using the Gaussianity of $\bX,\bZ$, we deduce that the tuples of random variables $(\bW,\bWtil,\boldsymbol{\epsilon})$ and $(\bX\bO_{\bu_{1}},\bZ\bO_{\bv_{1}})$ are independent of each other. Moreover, note that conditionally on $(\bX\bO_{\bu_{1}},\bZ\bO_{\bv_{1}})$, $M_{1}$ is an even function of $(\bW,\bWtil,\boldsymbol{\epsilon})$, whence the denominator of $R$ is an even function of $(\bW,\bWtil,\boldsymbol{\epsilon})$. We also note that conditionally on $(\bX\bO_{\bu_{1}},\bZ\bO_{\bv_{1}})$, the numerator of $R$ is an odd function of $(\bW,\bWtil,\boldsymbol{\epsilon})$. Consequently, we obtain that $R \;|\;(\bX\bO_{\bu_{1}},\bZ\bO_{\bv_{1}})$ is an odd function of $(\bW,\bWtil,\boldsymbol{\epsilon})$. Since the distribution of $(\bW,\bWtil,\boldsymbol{\epsilon})$ is symmetric around zero, we obtain that
\[
\EE\{R\} = \EE \big\{ \EE\big\{ R \;|\; (\bX\bO_{\bu_{1}},\bZ\bO_{\bv_{1}}) \big\} \big\} = 0.
\]
In order to establish $\EE\{\theta(\bu_{2})\}$, it suffices to show that
\begin{align}\label{eq-proof-claim-alpha-iota-concentration}
	\EE\Bigg\{ \frac{E_{1}-F_{1} - \frac{M_{12}(\frac{\parcompX_{\sharp}}{L_{\sharp}^{2}} \frac{\perpcompZ_{\sharp}}{\LZ_{\sharp}} \cdot M_{22} + E_{2} - F_{2})}{M_{22}+\lambda}}{M_{11}+\lambda - \frac{M_{12}^{2}}{M_{22} + \lambda}} \Bigg\} = 
	\EE\Bigg\{ \frac{ - \frac{\parcompZ_{\sharp}\perpcompX_{\sharp} }{\LZ_{\sharp}^{2}L_{\sharp}} \cdot  \frac{M_{12}^{2} }{M_{22}+\lambda}}{M_{11} + \lambda - \frac{M_{12}^{2}}{M_{22}+\lambda} } \Bigg\}.
\end{align}
By definition of $E_2$ and $M_{12}$, $E_2 = R' +  \frac{\parcompZ_{\sharp}}{\LZ_{\sharp}^{2}} \frac{\perpcompX_{\sharp}}{L_{\sharp}}\cdot M_{12}$, where
\[
R' = \frac{1}{m} (\bW\bZ\bv_2)^{\top} \bP_{\bu_2,\bv_2} \bigg( \big(1+ \frac{\parcompX_{\sharp}}{L_{\sharp}^{2}} \cdot \frac{\parcompZ_{\sharp}}{\LZ_{\sharp}^{2}} \big) \cdot \diag(\bW\bWtil) + \frac{\perpcompX_{\sharp} \perpcompZ_{\sharp}}{L_{\sharp}\LZ_{\sharp}} \cdot \bX\bu_2 \odot \bZ\bv_2 + \boldsymbol{\epsilon} \bigg).
\]
Substituting this expression for $E_2$ into Eq.~\eqref{eq-proof-claim-alpha-iota-concentration} implies the equivalence of Eq.~\eqref{eq-proof-claim-alpha-iota-concentration} and
\[
\EE\Bigg\{ \frac{E_{1}-F_{1} - \frac{M_{12}\big(\frac{\parcompX_{\sharp}}{L_{\sharp}^{2}} \frac{\perpcompZ_{\sharp}}{\LZ_{\sharp}} \cdot M_{22} + R' - F_{2} \big)}{M_{22}+\lambda}}{M_{11}+\lambda - \frac{M_{12}^{2}}{M_{22} + \lambda}} \Bigg\} = 0.
\]
By our construction, $\langle \bcoefX_{\sharp}, \bu_2\rangle = \langle \bcoefZ_{\sharp},\bv_2\rangle = 0$, whence since $\bX$ and $\bZ$ are Gaussian, the tuples of random variables $(\bW,\bWtil,\bP_{\bu_{2},\bv_{2}},\bA_{\bu_{2},\bv_{2}})$, $(\bX\bu_{2})$ and $(\bZ\bv_{2})$ are independent of each other. Thus, conditionally on $(\bW,\bWtil,\bP_{\bu_{2},\bv_{2}},\bA_{\bu_{2},\bv_{2}})$, $E_{1}$ is a summation of several random variables each of which is either an odd function of $\bX\bu_{2}$ or an odd function of $\bZ\bv_{2}$, and $F_{1}$ is an odd function of $\bX\bu_{2}$. Further note that, conditionally on the tuple $(\bW,\bWtil,\bP_{\bu_{2},\bv_{2}},\bA_{\bu_{2},\bv_{2}})$, $M_{11},M_{22},M_{12}^{2}$ are even functions of the inputs $\bX\bu_{2}$ and $\bZ\bv_{2}$. Thus, conditioning on $(\bW,\bWtil,\bP_{\bu_{2},\bv_{2}},\bA_{\bu_{2},\bv_{2}})$ and taking expectation over $\bX\bu_{2}$ and $\bZ\bv_{2}$, we obtain that
\[
\EE\Bigg\{ \frac{E_{1}-F_{1}}{M_{11}+\lambda - \frac{M_{12}^{2}}{M_{22} + \lambda}} \Bigg\} = \EE\Bigg\{  \EE\Bigg\{ \frac{E_{1}-F_{1}}{M_{11}+\lambda - \frac{M_{12}^{2}}{M_{22} + \lambda}} \;\bigg|\; (\bW,\bWtil,\bP_{\bu_{2},\bv_{2}},\bA_{\bu_{2},\bv_{2}}) \Bigg\} \Bigg\} = 0.
\]
Similarly, condition on $(\bW,\bWtil,\bP_{\bu_{2},\bv_{2}},\bA_{\bu_{2},\bv_{2}})$, $M_{12}\cdot M_{22}$, $M_{12}\cdot R'$ and $M_{12}\cdot F_{2}$ are odd functions of input $(\bX\bu_{2},\bZ\bv_{2})$. Consequently, we obtain that
\[
\EE\Bigg\{ \frac{\frac{M_{12}(\frac{\parcompX_{t}}{L_{t}^{2}} \cdot \frac{\perpcompZ_{t}}{\widetilde{L}_{t}} \cdot M_{22} + R' - F_{2})}{M_{22}+\lambda}}{M_{11}+\lambda - \frac{M_{12}^{2}}{M_{22} + \lambda}}	 \;\bigg|\; (\bW,\bWtil,\bP_{\bu_{2},\bv_{2}}, \bA_{\bu_{2},\bv_{2}}) \Bigg\} = 0.
\]
Putting the pieces together yields the desired result. \qed

\subsection{Orthogonal component: Proof of Theorem~\ref{thm:one-step-prediction}(b)}\label{sec:concentration-eta-component}
We will focus on bounding $\big|(\perpcompX_{+})^{2} - (\perpcompdetX_{+})^{2}\big|$ since the proof for bounding $\big| (\perpcompZ_{+})^{2} - (\perpcompdetZ_{+})^{2} \big|$ is identical.  We begin by decomposing the perpendicular component into an intermediate component---contained in $\mathsf{span}\{\bcoefX_{\star}, \bcoefX_{\sharp}\}$---and a fully orthogonal component, setting
\begin{align} \label{eq:eta-iota-def}
	\iotaX_{+} &= \big\langle \bcoefX_{+}, \frac{\bP_{\bcoefX_{\star}}^{\perp} \bcoefX_{\sharp} }{\big\|\bP_{\bcoefX_{\star}}^{\perp} \bcoefX_{\sharp}\big\|_{2}} \big\rangle, \qquad &\etaX_{+}^{2} = \big \| \bP_{\mathsf{span}\{\bcoefX_{\star}, \bcoefX_{\sharp}\}}^{\perp} \bcoefX_{+} \big\|_{2}^{2}\nonumber\\
	\iotaZ_{+} &= \langle \bcoefZ_{+}, \frac{\bP_{\bcoefZ_{\star}}^{\perp} \bcoefZ_{\sharp} }{\big\|\bP_{\bcoefZ_{\star}}^{\perp} \bcoefZ_{\sharp}\big\|_{2}} \rangle, \qquad &\etaZ_{+}^{2} = \big \| \bP_{\mathsf{span}\{\bcoefZ_{\star},\bcoefZ_{\sharp}\}}^{\perp} \bcoefZ_{+} \big\|_{2}^{2}.
\end{align}
We then use the functions $\iotaXmap_{m,d,\sigma,\lambda},\; \iotaZmap_{m,d,\sigma,\lambda},\; \etaXmap_{m,d,\sigma,\lambda},\; \etaZmap_{m,d,\sigma,\lambda}$ in Eq.~\eqref{et_updates_eq} and Eq.~\eqref{eta_updates_eq} to define the corresponding deterministic predictions as
\begin{align}
	\iotadetX_{+} &= \iotaXmap_{m,d,\sigma,\lambda}\big( \parcompX_\sharp,\perpcompX_\sharp,\parcompZ_\sharp,\perpcompZ_\sharp \big),  &(\etadetX_{+})^{2} = \etaXmap_{m,d,\sigma,\lambda}(\parcompX_\sharp,\perpcompX_\sharp,\parcompZ_\sharp,\perpcompZ_\sharp) \nonumber\\
	\iotadetZ_{+} &= \iotaZmap_{m,d,\sigma,\lambda}\big( \parcompX_\sharp,\perpcompX_\sharp,\parcompZ_\sharp,\perpcompZ_\sharp \big),  \qquad &(\etadetZ_{+})^{2} = \etaZmap_{m,d,\sigma,\lambda}(\parcompX_\sharp,\perpcompX_\sharp,\parcompZ_\sharp,\perpcompZ_\sharp), \label{eta-det-predictions-in-proof}
\end{align}
Thus, by the definition of $\perpcompX_{+}$~\eqref{eq:generic-state-evolution} and $\perpcompdetX_{+}$~\eqref{alpha-beta-det-prediction}, we see that both $(\perpcompdetX_{+})^{2} = (\iotadetX_{+})^{2} + (\etadetX_{+})^{2}$ as well as $\perpcompX_{+}^{2} = \|\bP_{\bcoefX_{\star}}^{\perp} \bcoefX_{+} \|_{2}^{2} =  \iotaX_{+}^{2} + \etaX_{+}^{2}$.  Applying the triangle inequality yields
\begin{align}\label{beta-decompose-iota-eta}
	\big| \perpcompX_{+}^{2} - (\perpcompdetX_{+})^{2} \big| \leq \big|  \iotaX_{+}^{2} - (\iotadetX_{+})^{2}  \big|
	+ \big| \etaX_{+}^{2} - (\etadetX_{+})^{2} \big|.
\end{align}
We claim the following bounds on the error of the intermediate prediction and the error of the fully orthogonal prediction, respectively.
\begin{subequations}
	\begin{align}
		\big|  \iotaX_{+}^{2} - (\iotadetX_{+})^{2}  \big|  \lesssim \max\bigg\{ \frac{(\Err_{\sharp} + \sigma^{2})\log^{6}(d)}{\lambda \sqrt{m}}, d^{-30}\bigg\}, \qquad \text{ with probability } \geq 1 - d^{-25}, \label{ineq:intermediate-orthogonal}\\
		\big| \etaX_{+}^{2} - (\etadetX_{+})^{2} \big| \lesssim \max\bigg\{ \frac{(\Err_{\sharp} + \sigma^{2})\log^{6}(d)}{\lambda \sqrt{m}}, d^{-30}\bigg\}, \qquad \text{ with probability } \geq 1 - d^{-22}. \label{ineq:fully-orthogonal}
	\end{align}
\end{subequations}
The result follows upon combining the previous three inequalities.  It remains to prove inequalities~\eqref{ineq:intermediate-orthogonal} and~\eqref{ineq:fully-orthogonal}.

\paragraph{Bounding the error of the intermediate prediction: Proof of inequality~\eqref{ineq:intermediate-orthogonal}.}
The proof of this component largely follows the strategy of Section~\ref{sec:concentration-proof}.  Recall the pair of unit vectors $(\bu_1,\bu_2)$ in Eq.~\eqref{definition-u1-u2-v1-v2} and note that $\frac{\bP_{\bcoefX_{\star}}^{\perp} \bcoefX_{\sharp}}{\big\| \bP_{\bcoefX_{\star}}^{\perp} \bcoefX_{\sharp} \big\|_{2}} = \frac{\perpcompX_{\sharp}}{L_{\sharp}} \cdot \bu_1 - \frac{\parcompX_{\sharp}}{L_{\sharp}} \cdot \bu_2$.  Consequently, $\iotaX_{+} =  \frac{\perpcompX_{\sharp}}{L_{\sharp}} \cdot \theta(\bu_1) - \frac{\parcompX_{\sharp}}{L_{\sharp}} \cdot \theta(\bu_2)$.  Then, using the definition of $\thetaXdet_{1}$ and $\thetaXdet_{2}$ in Eq.~\eqref{definition-theta12-det}, write the corresponding deterministic prediction as $\iotadetX_{+} = \frac{\perpcompX_{\sharp}}{L_{\sharp}} \cdot \thetaXdet_{1} - \frac{\parcompX_{\sharp}}{L_{\sharp}} \cdot \thetaXdet_{2}$.  Thus, by the triangle inequality,
\[
\big| \iotaX_{+} - \iotadetX_{+} \big| \leq \frac{\perpcompX_{\sharp}}{L_{\sharp}} \cdot \big| \theta(\bu_1) - \thetaXdet_{1} \big| + \frac{\parcompX_{\sharp}}{L_{\sharp}} \cdot \big| \theta(\bu_2) - \thetaXdet_{2} \big|.
\] 
Noting that $\parcompX_{\sharp}/L_{\sharp}, \perpcompX_{\sharp}/L_{\sharp} \leq 1$ and applying the pair of inequalities~\eqref{ineq:bias-stochastic-error-terms-parallel} yields 
\begin{align} \label{ineq:iota-intermediate-deviation}
|\iotaX_{+} - \iotadetX_{+}| \lesssim \max\Big\{ \frac{(\sqrt{\Err_{\sharp}}+\sigma)\log^{6}(d)}{\lambda\sqrt{m}},d^{-30} \Big\}, \qquad \text{ with probability } \geq 1-d^{-25}.
\end{align}
The inequality~\eqref{ineq:intermediate-orthogonal} then follows upon bounding $|\iotaX_{+} + \iotadetX_{+}|$.  To this end, we note  that by the definitions of $\iotadetX_{+}$ and $\iotaXmap_{m,d,\sigma,\lambda}$~\eqref{et_updates_eq}, we obtain 
\begin{align*}
	|\iotadetX_{+}| = \big|\iotaXmap_{m,d,\sigma,\lambda}(\parcompX_\sharp,\perpcompX_\sharp,\parcompZ_\sharp,\perpcompZ_\sharp) \big| \leq \frac{V\big(\frac{\parcompX_\sharp \parcompZ_\sharp}{L_\sharp^2} + L_\sharp^2 \big) + \lambda L_\sharp^2\LZ_\sharp^2}{V(L_\sharp^2+\LZ_\sharp^2) + \lambda L_\sharp^2\LZ_\sharp^2} \cdot \perpcompX_\sharp  +  \frac{\parcompX_\sharp \parcompZ_\sharp}{L_\sharp^2\LZ_\sharp^2} \frac{V_1}{V_1 + \lambda} \cdot \perpcompX_\sharp \lesssim \perpcompX_\sharp,
\end{align*}
where in the last step we use $L_\sharp,\LZ_\sharp \asymp 1$, $V_{1}\geq 0$ and $V \lesssim 1$~\eqref{eq:V-V1-V2}.  We complete the proof upon bounding $\iota_{+}$, for which we consider two cases.

\medskip
\noindent\underline{Case 1: $(\sqrt{\Err_{\sharp}}+\sigma)\log^{6}(d)/(\lambda\sqrt{m}) \geq d^{-30}$.}  Working on the event in which the deviation bound~\eqref{ineq:iota-intermediate-deviation} holds, we apply the triangle inequality to obtain
\[
|\iotaX_{+}| \leq |\iotadetX_{+}| + |\iotaX_{+} - \iotadetX_{+}| \lesssim \perpcompX_{\sharp} + \frac{(\sqrt{\Err_{\sharp}}+\sigma)\log^{6}(d)}{\lambda\sqrt{m}} \lesssim \sqrt{\Err_\sharp} + \sigma,
\]
where in the last step we used the bounds $\perpcompX_{\sharp} \leq \sqrt{\Err_{\sharp}}$ as well as $\log^{6}(d)/(\lambda \sqrt{m}) \lesssim 1$, where the latter holds by assumption, since $m\leq d$ and $\lambda m\geq d$.  Putting the pieces together yields
\begin{align*}
\big|  \iotaX_{+}^{2} - (\iotadetX_{+})^{2}  \big| \leq (|\iotaX_{+}| + |\iotadetX_{+}|) \cdot |\iotaX_{+} - \iotadetX_{+}| &\lesssim \frac{(\sqrt{\Err_{\sharp}}+\sigma)^{2}\log^{6}(d)}{\lambda \sqrt{m}}\\
& \lesssim \max\Big\{ \frac{(\Err_{\sharp} + \sigma^{2})\log^{6}(d)}{\lambda \sqrt{m}}, d^{-30}\Big\},
\end{align*}
which concludes the first case. 

\medskip
\noindent \underline{Case 2: $(\sqrt{\Err_{\sharp}}+\sigma)\log^{6}(d)/(\lambda\sqrt{m}) \leq d^{-30}$.} In this case, $|\iotaX_{+}| \lesssim \perpcompX_{\sharp} + d^{-30}$.  Thus, 
\begin{align*}
\big|  \iotaX_{+}^{2} - (\iotadetX_{+})^{2}  \big| \leq (|\iotaX_{+}| + |\iotadetX_{+}|) \cdot |\iotaX_{+} - \iotadetX_{+}| \lesssim (\perpcompX_\sharp + d^{-30})d^{-30} &\lesssim d^{-30}\\
& \lesssim \max\Big\{ \frac{(\Err_{\sharp} + \sigma^{2})\log^{6}(d)}{\lambda \sqrt{m}}, d^{-30}\Big\},
\end{align*}
which concludes the second case.   Combining the two cases yields the desired result. 

\paragraph{Bounding the error of the fully orthogonal prediction: Proof of inequality~\eqref{ineq:fully-orthogonal}.}
As in the proof of Theorem~\ref{thm:one-step-prediction}(a), we decompose the error into the sum of a bias term and a fluctuation term and control each separately:
\begin{align}
	\big| \etaX_{+}^{2} - (\etadetX_{+})^{2} \big| \leq 
	\underbrace{ \big| \etaX_{+}^{2} - \EE\{\etaX_{+}^{2}\} \big|}_{\text{Stochastic error}} + 
	\underbrace{ \big| \EE\{\etaX_{+}^{2}\} - (\etadetX_{+})^{2} \big| }_{\text{Bias}}.
\end{align}
The following pair of lemmas controls each of the two errors, respectively.

\begin{lemma}[Stochastic error of the fully orthogonal component]\label{lem:stochastic-error-orthogonal-component}
	Consider the assumptions of Theorem~\ref{thm:one-step-prediction} and let $\etaX_{+}$ and $\etaX_{+}^{\det}$ be as in~\eqref{eq:eta-iota-def} and~\eqref{eta-det-predictions-in-proof}.  There exists a positive constant $C_3$, depending only on $K_1, K_2$ such that with probability at least $1 - d^{-22}$, 
	\begin{align}
		\label{ineq:stochastic-error-perpendicular-component}
		\big| \etaX_{+}^{2} - \EE\{\etaX_{+}^{2}\} \big| \;\vee\; \big| \etaZ_{+}^{2} - \EE\{\etaZ_{+}^{2}\} \big| \leq  C_3 \max\bigg\{ \frac{\log^{5.5}(d)\big(\Err_{\sharp} + \sigma^{2} \big)}{\lambda \sqrt{m}} , d^{-30}\bigg\}.
	\end{align}
\end{lemma}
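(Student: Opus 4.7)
I plan to follow the Warnke typical bounded differences strategy developed in Section~\ref{sec:stochastic-error-parallel-component}, now applied to the scalar functional
\[
g\bigl(\{\bx_k,\bz_k,\epsilon_k\}_{k=1}^m\bigr) \;=\; \etaX_+^2 \;=\; \bcoefX_+^{\top}\,\bP^\perp\,\bcoefX_+,
\]
where $\bP^\perp = \bP^\perp_{\mathsf{span}\{\bcoefX_\star,\bcoefX_\sharp\}}$ is a deterministic rank-$(d-2)$ projector (note that $\bcoefX_\sharp$ is independent of the fresh mini-batch $\{\bx_k,\bz_k,\epsilon_k\}_{k=1}^m$). The architecture carries over verbatim from the parallel-component proof: I reuse the regularity set $\mathcal{S}$ from~\eqref{definition-regularity-set-component-expectation} (mildly enlarged so that the swapped samples of the leave-two-out step inherit the same tail bounds), truncate $g$ to $g^\downarrow$ via the construction~\eqref{definition-f-down-u-component}, invoke the analogues of Lemmas~\ref{auxiliary-lemma1-component-expectation-concentration}--\ref{auxiliary-lemma2-component-expectation-concentration}, and apply~\citep[Theorem~2]{warnke2016method} with $\gamma_k = d^{-60}$ and $d_k \asymp d^{O(1)}$ (since $\|\bcoefX_+\|_2\lesssim d$ on $\mathcal{S}$ by Lemma~\ref{auxiliary-lemma1-component-expectation-concentration}(a) applied to a cover of the unit sphere). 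Matching the target fluctuation $\log^{5.5}(d)(\Err_\sharp+\sigma^2)/(\lambda\sqrt{m})$ then reduces to establishing the bounded-differences constant
\[
|g(\bM)-g(\bM')| \;\leq\; \Delta \;:=\; \frac{C\,\log^5(d)\,(\Err_\sharp+\sigma^2)}{\lambda m}
\]
for all $\bM,\bM'\in\mathcal{S}$ with Hamming distance $\rho(\bM,\bM')\leq 2$, since then $\log(d)\sqrt{\sum_k c_k^2}\asymp \sqrt{m}\log(d)\,\Delta$ is of the desired order.

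\textbf{Deriving $\Delta$.} The key algebraic identity is
\[
g(\bM)-g(\bM') \;=\; \bigl\langle \bP^\perp\bigl(\bcoefX_+(\bM)-\bcoefX_+(\bM')\bigr),\;\bP^\perp\bigl(\bcoefX_+(\bM)+\bcoefX_+(\bM')\bigr)\bigr\rangle,
\]
which Cauchy--Schwarz splits into a ``size'' factor at most $\etaX_+(\bM)+\etaX_+(\bM')$ and a ``perturbation'' factor $\|\bP^\perp(\bcoefX_+(\bM)-\bcoefX_+(\bM'))\|_2$. For the size factor I use that $\bP^\perp\bcoefX_\sharp=\boldsymbol{0}$, so $\etaX_+ = \|\bP^\perp(\bcoefX_+-\bcoefX_\sharp)\|_2$, and then the KKT characterization~\eqref{eq-leave-one-out} together with $\lambda_{\min}(\bA^{\top}\bA+\lambda m\bI)\geq \lambda m$ and the regularity estimates of $\mathcal{S}_2$ yield $\etaX_+\lesssim \sqrt{\Err_\sharp}+\sigma$ on $\mathcal{S}$. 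For the perturbation factor I apply a rank-two Sherman--Morrison/Woodbury expansion of $\bSig = \bA^{\top}\bA+\lambda m\bI$ and of the right-hand side $\bA^{\top}(\by+\diag(\bW\bWtil))$ under a single-sample swap, expressing the difference $\bcoefX_+(\bM)-\bcoefX_+(\bM')$ as an explicit sum of terms of the form $\bSig_i^{-1}\ba_i\cdot(\text{scalar residual})$ (and likewise with $\ba_i'$), where the scalar residuals are of size $\sqrt{\Err_\sharp}+\sigma$ up to polylogarithmic factors on $\mathcal{S}_2\cap\mathcal{S}_3$.

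\textbf{The hard part} is bounding the $\bP^\perp$-projection $\|\bP^\perp\bSig_i^{-1}\ba_i\|_2$ at the correct rate $\mathrm{polylog}(d)/(\lambda\sqrt{m})$: the naive spectral estimate $\|\bSig_i^{-1}\|_{\mathrm{op}}\cdot\|\ba_i\|_2 \lesssim \sqrt{d}/(\lambda m)$ carries a parasitic $\sqrt{d}$ factor that would ruin the final bound. I plan to absorb this factor by mirroring Step~3 of Section~\ref{sec:techniques-one-step-prediction}: block-decompose $\bSig_i$ with respect to $\bO_{\bu_2}$ and $\bO_{\bv_2}$ as in~\eqref{eq:block-matrix-formula}, invoke the trace concentration~\eqref{introduction-concentartion-tarce-inverse} and the Hanson--Wright inequality~\citep[Theorem~6.2.1]{vershynin2018high} on the quadratic forms $\ba_i^{\top}\bSig_i^{-1}\ba_i$ in order to show that the bulk of $\bSig_i^{-1}\ba_i$ concentrates, up to a $(\lambda\sqrt{m})^{-1}$-fluctuation, in the low-dimensional span $\{\bcoefX_\sharp,\bcoefX_\star,\bcoefZ_\sharp,\bcoefZ_\star\}$ on which $\bP^\perp$ (in the $\bcoefX$-block) annihilates the contributions along $\bcoefX_\sharp$ and $\bcoefX_\star$. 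Combined with the size factor this yields the target $\Delta$, and summing $c_k^2$ over the $m$ samples delivers the claim. The concentration bound for $\etaZ_+^2$ follows from an identical argument with the roles of $\bX$ and $\bZ$ interchanged.
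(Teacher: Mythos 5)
Your overall architecture (typical bounded differences on $g=\etaX_+^2$, truncation, Warnke) matches the paper's, and you correctly identify where the difficulty lies. But your resolution of that difficulty contains a genuine error. The claim that ``the bulk of $\bSig_i^{-1}\ba_i$ concentrates, up to a $(\lambda\sqrt{m})^{-1}$-fluctuation, in the low-dimensional span $\{\bcoefX_\sharp,\bcoefX_\star,\bcoefZ_\sharp,\bcoefZ_\star\}$'' is false, and in fact backwards: since $\sum_{j\neq i}\ba_j\ba_j^\top$ has rank at most $m-1\ll 2d$, the matrix $\bSig_i^{-1}$ acts as $(\lambda m)^{-1}\bI$ on a subspace of codimension $m$, so $\|\bSig_i^{-1}\ba_i\|_2\asymp\sqrt{d}/(\lambda m)$ while its projection onto any \emph{fixed} $O(1)$-dimensional subspace is only $O(\sqrt{\log d}/(\lambda m))$ by Hoeffding. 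Almost all of the mass of $\bSig_i^{-1}\ba_i$ therefore survives the projection $\bP^\perp$, and no amount of Hanson--Wright or trace concentration will give $\|\bP^\perp\bSig_i^{-1}\ba_i\|_2\lesssim\mathrm{polylog}(d)/(\lambda\sqrt{m})$ (note also that even this target rate is inconsistent with your own $\Delta\asymp(\Err_\sharp+\sigma^2)/(\lambda m)$ once multiplied by the size factor). The deeper problem is that your Cauchy--Schwarz split $|g(\bM)-g(\bM')|\le\|\bP^\perp(\bcoefX_+(\bM)-\bcoefX_+(\bM'))\|_2\cdot\|\bP^\perp(\bcoefX_+(\bM)+\bcoefX_+(\bM'))\|_2$ discards exactly the cancellation that makes the bound true: the perturbation direction $\bP^\perp\bSig_i^{-1}\ba_i$ is large in norm but nearly \emph{orthogonal} to the iterate, and once you have passed to the product of norms this orthogonality can no longer be used.

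The workable route, which is what the paper does, is to expand the square directly. Writing $V=\mathsf{span}(\bcoefX_\star,\bcoefX_\sharp)\times\mathbb{R}^d$ and using the leave-one-out relation~\eqref{claim1-auxiliary-lemma-1a}, one has $\etaX_+^2=\big\|\bP_V^\perp\big([\bcoefX^{-(i)};\bcoefZ^{-(i)}]+c_i\,\bSig_i^{-1}\ba_i\big)\big\|_2^2$ with $|c_i|\lesssim\log^2(d)(\sigma+\sqrt{\Err_\sharp})$ on the regularity set. The cross term is $2c_i\big\langle\bSig_i^{-1}\ba_i,\;\bP_V^\perp[\bcoefX^{-(i)};\bcoefZ^{-(i)}]\big\rangle$, an inner product of $\bSig_i^{-1}\ba_i$ against a \emph{fixed} vector independent of sample $i$; this is controlled at rate $\frac{\log d}{\lambda m}\|\bP_V^\perp[\bcoefX^{-(i)};\bcoefZ^{-(i)}]\|_2$ by Hoeffding, which is precisely the additional condition built into the regularity set~\eqref{definition-regularity-set-eta-component-expectation}. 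The quadratic term needs only the naive spectral bound, because it is squared: $c_i^2\|\bP_V^\perp\bSig_i^{-1}\ba_i\|_2^2\le c_i^2\|\bSig_i^{-1}\ba_i\|_2^2\lesssim\log^4(d)(\sigma^2+\Err_\sharp)\cdot\frac{d\log d}{(\lambda m)^2}\lesssim\frac{\log^5(d)(\sigma^2+\Err_\sharp)}{\lambda m}$ using $\lambda m\gtrsim d$. No refined bound on $\|\bP^\perp\bSig_i^{-1}\ba_i\|_2$ is ever required. I would recommend replacing your ``hard part'' with this expansion and augmenting your regularity set with the corresponding inner-product conditions (for both the leave-one and leave-two-out estimators).
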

The proof of Lemma~\ref{lem:stochastic-error-orthogonal-component} follows the strategy applied in Section~\ref{sec:stochastic-error-parallel-component}, so we defer it to Section~\ref{sec:proof-stochastic-error-orthogonal-component}.  

\begin{lemma}[Bias of the fully orthogonal component] \label{lem:bias-orthogonal-component}
		Consider the assumptions of Theorem~\ref{thm:one-step-prediction} and let $\etaX_{+}$ and $\etaX_{+}^{\det}$ be as in~\eqref{eq:eta-iota-def} and~\eqref{eta-det-predictions-in-proof}.  There exists a positive constant $C_4$, depending only on $K_1, K_2$ such that
		\begin{align}
			\label{ineq:bias-perpendicular-component}
			\big| \EE\{\etaX_{+}^{2}\} - (\etadetX_{+})^{2} \big|\; \vee \; \big| \EE\{\etaZ_{+}^{2}\} - (\etadetZ_{+})^{2} \big|  \leq  C_4 \max\bigg\{ \frac{\log^{6}(d)\big(\Err_{\sharp} + \sigma^{2} \big)}{\lambda \sqrt{m}}, d^{-30} \bigg\}.
		\end{align}
\end{lemma}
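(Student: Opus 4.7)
The plan is to mirror the leave-one-direction-out strategy from Section~\ref{sec:proof-bias-term-parallel-component}, but applied to unit vectors \emph{fully} orthogonal to $\mathsf{span}\{\bcoefX_\star, \bcoefX_\sharp\}$. First, I pick an arbitrary orthonormal basis $\bu_3, \ldots, \bu_d$ of $\mathsf{span}\{\bcoefX_\star, \bcoefX_\sharp\}^\perp$ and similarly $\bv_3, \ldots, \bv_d$ of $\mathsf{span}\{\bcoefZ_\star, \bcoefZ_\sharp\}^\perp$. Writing $\etaX_+^2 = \sum_{i=3}^d \theta(\bu_i)^2$ and invoking rotational invariance of the Gaussian data under rotations fixing the two signal spans, the distribution of $(\theta(\bu_i), \thetatil(\bv_i))$ is identical for every $i \geq 3$, so $\EE\{\etaX_+^2\} = (d-2)\, \EE\{\theta(\bu_3)^2\}$ and $\EE\{\etaZ_+^2\} = (d-2)\, \EE\{\thetatil(\bv_3)^2\}$.

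Next, I apply Lemma~\ref{lemma-leave-one-out} with $(\bu, \bv) = (\bu_3, \bv_3)$. Since $\bu_3 \perp \bcoefX_\sharp$ and $\bv_3 \perp \bcoefZ_\sharp$, the $\lambda \bigl[\langle \bcoefX_\sharp, \bu_3\rangle,\, \langle \bcoefZ_\sharp, \bv_3\rangle\bigr]^\top$ forcing in Eq.~\eqref{eq-leave-one-out} vanishes, while $\bO_{\bu_3}^\top \bcoefX_\sharp$ and $\bO_{\bv_3}^\top \bcoefZ_\sharp$ encode the full representations of $\bcoefX_\sharp, \bcoefZ_\sharp$ in the leave-one-out bases. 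Decomposing $\by$ as in Eq.~\eqref{Eq-decomposition-of-by} and using that $\bu_3, \bv_3$ are orthogonal to every signal direction, the vectors $\bX\bu_3$ and $\bZ\bv_3$ are independent of the tuple $(\bX\bcoefX_\star, \bZ\bcoefZ_\star, \bW, \bWtil, \bA_{\bu_3,\bv_3}, \bP_{\bu_3,\bv_3}, \bepsilon)$. Solving the resulting $2\times 2$ linear system yields closed-form expressions for $(\theta(\bu_3), \thetatil(\bv_3))$ analogous to~\eqref{eq:theta-u2-char}, but with the parallel-signal forcing terms $\parcompZ_\sharp\perpcompX_\sharp/(\LZ_\sharp^2 L_\sharp) \cdot M_{11}$ replaced by purely noise- and cross-term-driven analogues (because $\bu_3$ has zero overlap with $\bcoefX_\star$).

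Third, I compute $\EE\{\theta(\bu_3)^2\}$ by conditioning on everything other than $(\bX\bu_3, \bZ\bv_3)$. By independence, every contribution linear in $\bX\bu_3$ or $\bZ\bv_3$ vanishes in expectation, while the surviving quadratic contributions concentrate via the Hanson--Wright inequality applied to forms such as $\|\bP_{\bu_3,\bv_3}^{1/2}\bWtil\bX\bu_3\|_2^2/m$. The coefficient variables---analogues of $M_{11}, M_{22}, M_{12}$---concentrate to $V_1, V_2, 0$ via (a variant of) Lemma~\ref{concentration-of-M11-M22-M12-M1-M2}. Crucially, the cross term $(\bX\bu_3) \odot (\bZ\bv_3)$ in the response decomposition couples, through the $2\times 2$ inverse, to the perpendicular-to-signal energy of $\bcoefX_+$ and $\bcoefZ_+$; squaring and using isotropy of the remaining $d-2$ directions reproduces precisely the $\etaX^2$- and $\etaZ^2$-weighted integrals in Eqs.~\eqref{det_updates_eta1}--\eqref{det_updates_eta2}, while additive noise together with the parallel-bias drift $\parcompX_\sharp\parcompZ_\sharp/(L_\sharp^2\LZ_\sharp^2) - 1$ produces the sums collected in $V_3, V_4$ at~\eqref{definition-V3}--\eqref{definition-V4}. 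Multiplying by $d-2$, the pair $(\EE\{\etaX_+^2\}, \EE\{\etaZ_+^2\})$ solves the same $2\times 2$ fixed-point system~\eqref{fixed-point-eq-eta-updates} as $((\etadetX_+)^2, (\etadetZ_+)^2)$, modulo additive remainders $R_X, R_Z$ of size $O\big((\Err_\sharp + \sigma^2)\log^6(d)/(\lambda\sqrt{m})\big)$ inherited from the concentration bounds and the finite-sample fluctuations of ridge-regularized quadratic forms.

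The main obstacle is the final step: converting this per-equation residual into a bound on the differences $\EE\{\etaX_+^2\} - (\etadetX_+)^2$ and $\EE\{\etaZ_+^2\} - (\etadetZ_+)^2$. Since both pairs satisfy the same affine system up to their respective remainders, subtracting yields a linear system in the differences with coefficient matrix $\bI - \tfrac{(d-2)m}{d^2}\bA$, where $\bA$ is the $2\times 2$ matrix of expectations appearing in Eq.~\eqref{fixed-point-eq-eta-updates}. The task reduces to showing this operator is uniformly well-conditioned: combining the sandwich bounds $r_1, r_2 \asymp \lambda m / d$ from Lemma~\ref{fixed-point-equations-unique-solution} with the step-size requirement $\lambda m \gtrsim (1+\sigma) d$ should give $\tfrac{(d-2)m}{d^2} \|\bA\|_{\mathrm{op}} \leq 1 - c$ for a universal $c > 0$, from which~\eqref{ineq:bias-perpendicular-component} follows. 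A secondary challenge is tracking the $(\Err_\sharp + \sigma^2)$ dependence sharply in the remainders: every cross-term involving $\bcoefX_\sharp - \bcoefX_\star$ or $\bepsilon$ must be absorbed into the analogues of $V_3, V_4$ rather than treated as generic $O(1)$ contributions, which I handle by following the error-aware concentration strategy of Section~\ref{sec:proof-bias-term-parallel-component}.
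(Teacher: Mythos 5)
Your skeleton matches the paper's: reduce to $(d-2)\,\EE\{\theta(\bu_3)^2\}$ by rotational invariance, apply Lemma~\ref{lemma-leave-one-out} with $\bu = \bu_3$, $\bv = \bv_3$ to get a closed-form expression analogous to~\eqref{eq:theta-u2-char}, derive approximate fixed-point equations for $\bigl(\EE\{\etaX_+^2\}, \EE\{\etaZ_+^2\}\bigr)$, and close via a well-conditioned $2\times 2$ linear system using $c_1, c_3 \geq 0.5$, $|c_2|, |c_4| \leq 0.1$ (which relies on $r_1, r_2 \asymp \lambda m /d$). That final-step argument is exactly right.

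However, there is a genuine gap in the middle. You attribute the $\etaX^2$- and $\etaZ^2$-weighted integrals in the fixed-point system~\eqref{fixed-point-eq-eta-updates} to a cross term ``$(\bX\bu_3) \odot (\bZ\bv_3)$ in the response decomposition''. This term does not appear in the response: since $\bu_3 \perp \bcoefX_\star$ and $\bv_3 \perp \bcoefZ_\star$, the observations $y_i = (\bx_i^\top\bcoefX_\star)(\bz_i^\top\bcoefZ_\star) + \epsilon_i$ carry no component along $\bX\bu_3$ or $\bZ\bv_3$---the decomposition~\eqref{Eq-decomposition-of-by} lives entirely in $\mathsf{span}(\bu_1,\bu_2)\times\mathsf{span}(\bv_1,\bv_2)$. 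The $\eta^2$-dependence actually enters through a \emph{recursive} structure you have not identified: the residual $\bR_{\bu_3,\bv_3}$ depends on the leave-one-\emph{direction}-out estimator $(\bcoefX_{\bu_3,\bv_3}, \bcoefZ_{\bu_3,\bv_3})$ of~\eqref{defintion-estimator-u3-v3}, which must be further replaced by a leave-one-\emph{sample}-out version $(\bcoefX_{\bu_3,\bv_3}^{-(i)}, \bcoefZ_{\bu_3,\bv_3}^{-(i)})$ to break the dependence on $(\bx_i,\bz_i,y_i)$; this gives the key identity $R_i = \tau_i/(1 + \ba_i^\top\bSig_i^{-1}\ba_i)$ in~\eqref{claim-step2-eta-component}. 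The term $\tau_i$ contains $\bx_i^\top \boldsymbol{\omega}^{-i}$ and $\bz_i^\top \widetilde{\boldsymbol{\omega}}^{-i}$, where $\boldsymbol{\omega}^{-i} = \bP_{S_\sharp}^\perp \bO_{\bu_3} \bcoefX_{\bu_3,\bv_3}^{-(i)}$ is the \emph{fully orthogonal component of the leave-one-sample-out estimator}, and it is $\EE\{\|\boldsymbol{\omega}^{-i}\|_2^2\} \approx \EE\{\etaX_+^2\}$ (Lemma~\ref{lemma:eta-onesample-onecolumn-out-expectation-bound}(a)) that injects $\EE\{\etaX_+^2\}$ back into the right-hand side. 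Without the sample-leave-out construction and the approximation $\EE\{\|\boldsymbol{\omega}^{-i}\|_2^2\} \approx \EE\{\etaX_+^2\}$, there is no mechanism producing the self-consistent system, and the argument does not close. A secondary, related omission is the chain from $\EE\{\langle\bcoefX_+,\bu\rangle^2\}$ to $\EE\{\|\bWtil\bR_{\bu,\bv}\|_2^2\}/\bigl(m^2(\lambda+V_1)^2\bigr)$ (Lemma~\ref{lem:ineq-step1-eta-component-expectation}); ``conditioning on everything other than $(\bX\bu_3,\bZ\bv_3)$ and applying Hanson--Wright'' does not by itself remove the dependence of $\bR_{\bu,\bv}$ on individual samples, which is precisely what the leave-one-sample-out machinery is for.
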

We provide the proof of Lemma~\ref{lem:bias-orthogonal-component} in Section~\ref{sec:proof-bias-orthogonal-component}.  Note that the desired bound follows immediately upon applying Lemmas~\ref{lem:stochastic-error-orthogonal-component} and~\ref{lem:bias-orthogonal-component}.  \qed

\subsubsection{Proof of Lemma~\ref{lem:bias-orthogonal-component}}\label{sec:proof-bias-orthogonal-component}
Let $\{\bu_{i}\}_{i=3}^{d}$ and $\{\bv_{i}\}_{i=3}^{d}$ be an orthonormal basis of the complementary subspaces orthogonal to $\mathsf{span}\{\bcoefX_{\star},\bcoefX_{\sharp}\}$ and $\mathsf{span}\{\bcoefZ_{\star},\bcoefZ_{\sharp}\}$ respectively. Recall $G_i = \bx_i^{\top}\bcoefX_{\sharp},\widetilde{G}_i = \bz_i^{\top}\bcoefZ_{\sharp}$ and $\bO_{\bu_i}\in \mathbb{R}^{d \times (d-1)}$ whose columns are $\{\bu_j\}_{j\neq i}$ (similar define $\bO_{\bv_i}$). For each $3\leq i \leq d$, let 
\begin{align}\label{defintion-estimator-u3-v3}
	\left[\begin{array}{c}  \bcoefX_{\bu_i,\bv_i} \\ \bcoefZ_{\bu_i,\bv_i} \\ \end{array}\right] &= \argmin_{\bcoefX,\bcoefZ \in \mathbb{R}^{d-1}} \sum_{k=1}^{m}\big(y_{k}+G_{k}\widetilde{G}_{k} - \widetilde{G}_{k} \cdot \bx_{i}^{\top} \bO_{\bu_i} \bcoefX - G_{k} \cdot \bz_{k}^{\top} \bO_{\bv_i} \bcoefZ \big)^{2} + \lambda m \cdot \bigg\|
	\begin{bmatrix} \bcoefX - \bO_{\bu_i}^{\top}\bcoefX_{\sharp} \\ \bcoefZ - \bO_{\bv_i}^{\top}\bcoefZ_{\sharp} \end{bmatrix} \bigg\|_{2}^{2} \nonumber
	\\&= \Big( \bA_{\bu_i,\bv_i}^{\top}\bA_{\bu_i,\bv_i} + \lambda m \bI \Big)^{-1} \Big( \bA_{\bu_i,\bv_i}^{\top}\big(\by + \diag(\bW \widetilde{\bW})\big) + \lambda m \left[\begin{array}{c} \bO_{\bu_i}^{\top}\bcoefX_{\sharp}  \\ \bO_{\bv_i}^{\top}\bcoefZ_{\sharp} \\ \end{array}\right] \Big).
\end{align}
Applying Lemma~\ref{lemma-leave-one-out} yields the closed-form expression for $3\leq i\leq d$,
\begin{align*}
	\left[\begin{array}{cc} M_{11}+\lambda & M_{12} \\ M_{12} & M_{22}+\lambda \\ \end{array}\right] 
	\left[\begin{array}{c}  \langle \bcoefX_{+},\; \bu_i \rangle \\ \langle \bcoefZ_{+},\; \bv_i \rangle \\ \end{array}\right]
	= \frac{1}{m} \left[\begin{array}{c}  (\widetilde{\bW} \bX\bu_i)^{\top} \bR_{\bu_i,\bv_i} \\ (\bW \bZ\bv_i)^{\top} \bR_{\bu_i,\bv_i}\\ \end{array}\right],\quad \text{where}
\end{align*}
\begin{align}\label{definition-Ruv-M11-etc}
	&\bR_{\bu_i,\bv_i} = \by + \diag(\bW\widetilde{\bW}) - \bA_{\bu_i,\bv_i} \left[\begin{array}{c}  \bcoefX_{\bu_i,\bv_i} \\ \bcoefZ_{\bu_i,\bv_i} \\ \end{array}\right],\;M_{11} = \frac{(\widetilde{\bW} \bX\bu_i)^{\top} \bP_{\bu_i,\bv_i} (\widetilde{\bW} \bX\bu_i)}{m},\nonumber\\
	&M_{12} = \frac{(\widetilde{\bW} \bX\bu_i)^{\top} \bP_{\bu_i,\bv_i} (\bW \bZ\bv_i)}{m} \quad \text{and} \quad
	M_{22} = \frac{(\bW \bZ\bv_i)^{\top} \bP_{\bu_i,\bv_i} (\bW \bZ\bv_i)}{m}.
\end{align}
Solving the equation in the display above yields
\begin{align}\label{eq-closed-form-u3-component}	
	\langle \bcoefX_{+}, \bu_i \rangle = \frac{ \frac{1}{m} (\widetilde{\bW}\bX \bu_i )^{\top} \bR_{\bu_i,\bv_i} - \frac{M_{12}}{M_{22}+\lambda} \frac{1}{m} (\bW\bZ\bv_i)^{\top} \bR_{\bu_i,\bv_i} }{ M_{11}+\lambda - \frac{M_{12}^{2}}{M_{22}+\lambda} }.
\end{align}
Note that since $\{\langle \bcoefX_{+},\; \bu_{i} \rangle\}_{i=3}^{d}$ are identically distributed, we obtain
\begin{align}\label{eq:eta-expectation-(d-2)-sum}
	\EE\{ \etaX_{+}^{2} \} = \EE\Big\{ \sum_{k=3}^{d} \langle \bcoefX_{+},\; \bu_{k} \rangle^{2} \Big\} = (d-2) \cdot \EE\Big\{ \langle \bcoefX_{+},\; \bu_{3} \rangle^{2} \Big\}.
\end{align}
For the remainder of the proof, we omit the index and let $\bu = \bu_3$ and $\bv = \bv_3$.  We require the following sequence of lemmas to characterize $\EE\{ \langle \bcoefX_{+},\; \bu \rangle^{2}\}$.  The first, whose proof we provide in Section~\ref{sec:proof-ineq-step1-eta-component-expectation}, relates it to the expectation of $\|\widetilde{\bW} \bR_{\bu,\bv}\|_{2}^{2}$.

\begin{lemma}\label{lem:ineq-step1-eta-component-expectation}
	Consider the assumptions of Theorem~\ref{thm:one-step-prediction}.  Let $\bu = \bu_3$ denote a unit norm vector orthogonal to $\bcoefX_{\star}$ and $\bcoefX_{\sharp}$, $\bv$ denote a unit norm vector orthogonal to $\bcoefZ_{\star}$ and $\bcoefZ_{\sharp}$, and let $\bR_{\bu, \bv}$ be as in~\eqref{definition-Ruv-M11-etc}.  There exists a positive constant $C_5$, depending only on $K_1$ and $K_2$, such that the following holds. 
	\begin{align}\label{ineq-step1-eta-component-expectation}
		\Big| \EE\{\langle \bcoefX_{+}, \bu \rangle^{2}\} - \frac{\EE\big\{ \|\widetilde{\bW} \bR_{\bu,\bv}\|_{2}^{2} \big\}}{m^{2}(\lambda + V_{1})^{2}} \Big| \leq C_5 \cdot \Bigl[\frac{\sigma^{2} + \Err_{\sharp}}{\lambda m} \cdot  \frac{\log^{3}(d)}{\lambda \sqrt{m}}\Bigr].
	\end{align}
\end{lemma}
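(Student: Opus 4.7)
The plan is to exploit the fact that, by the construction of $\bu = \bu_3$ and $\bv = \bv_3$ as unit vectors orthogonal to $\mathsf{span}\{\bcoefX_\star, \bcoefX_{\sharp}\}$ and $\mathsf{span}\{\bcoefZ_\star, \bcoefZ_{\sharp}\}$ respectively, Gaussianity of $\bX, \bZ$ implies that $\xi := \bX\bu$ and $\zeta := \bZ\bv$ are independent standard normal vectors in $\mathbb{R}^m$ that are jointly independent of the ``environment'' $\mathcal{F}$ generated by $(\bW, \widetilde{\bW}, \bA_{\bu,\bv}, \by, \bP_{\bu,\bv}, \bR_{\bu,\bv})$. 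Starting from the closed-form~\eqref{eq-closed-form-u3-component}, I would write $\langle \bcoefX_{+}, \bu\rangle = N/D$ with $N = A - B\,M_{12}/(M_{22}+\lambda)$, where $A := \tfrac{1}{m}(\widetilde{\bW}\bR_{\bu,\bv})^{\top}\xi$ and $B := \tfrac{1}{m}(\bW\bR_{\bu,\bv})^{\top}\zeta$, and observe that $M_{11}M_{22} \geq M_{12}^2$ by Cauchy--Schwarz (as noted around Eq.~\eqref{ineq:T3-T4-intermediate-bound}), so $D \geq \lambda$ deterministically and $1/D$ is harmless to handle.

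The key decomposition is
\[
\EE\{\langle \bcoefX_{+}, \bu\rangle^{2}\} - \frac{\EE\{A^{2}\}}{(\lambda+V_{1})^{2}} = \EE\Big\{\frac{N^{2}-A^{2}}{D^{2}}\Big\} + \EE\Big\{A^{2}\Big(\frac{1}{D^{2}} - \frac{1}{(\lambda+V_{1})^{2}}\Big)\Big\}.
\]
Conditioning on $\mathcal{F}$ and using $\EE\{\xi\xi^{\top}\} = \bI_{m}$ gives the exact identity $\EE\{A^{2}\} = \EE\|\widetilde{\bW}\bR_{\bu,\bv}\|_{2}^{2}/m^{2}$, which produces precisely the claimed leading term. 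For the first remainder, expand $N^{2} - A^{2} = -2AB\,M_{12}/(M_{22}+\lambda) + B^{2}M_{12}^{2}/(M_{22}+\lambda)^{2}$, bound $1/D^{2} \leq 1/\lambda^{2}$ and $1/(M_{22}+\lambda) \leq 1/\lambda$, and apply Cauchy--Schwarz together with the conditional Gaussian computation $\EE\{A^{2}B^{2} \mid \mathcal{F}\} = \|\widetilde{\bW}\bR_{\bu,\bv}\|_{2}^{2}\|\bW\bR_{\bu,\bv}\|_{2}^{2}/m^{4}$ and Lemma~\ref{concentration-of-M11-M22-M12-M1-M2}(b), which supplies $\EE\{M_{12}^{2}\} \lesssim 1/m$. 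For the second remainder, use
\[
\Big|\frac{1}{D^{2}} - \frac{1}{(V_{1}+\lambda)^{2}}\Big| \leq \frac{|D - V_{1}-\lambda|(D+V_{1}+\lambda)}{\lambda^{2}(V_{1}+\lambda)^{2}} \lesssim \frac{|D - V_{1}-\lambda|}{\lambda^{3}},
\]
since $V_{1} \leq \widetilde{L}_{\sharp}^{2} \lesssim 1$ and $\lambda \gtrsim 1$ under the hypotheses of Theorem~\ref{thm:one-step-prediction}, then estimate $|D - V_{1}-\lambda| \leq |M_{11}-V_{1}| + M_{12}^{2}/\lambda$, so that Lemma~\ref{concentration-of-M11-M22-M12-M1-M2}(a),(b) yield $\EE|D - V_{1}-\lambda| \lesssim \log^{3/2}(d)/\sqrt{d} + 1/\sqrt{m} + 1/(m\lambda) \lesssim \log^{3}(d)/\sqrt{m}$ when $m\leq d$. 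One final application of Cauchy--Schwarz combines this with a fourth-moment bound on $A^{2}$, which by conditional Gaussianity reduces to bounding $\EE\|\widetilde{\bW}\bR_{\bu,\bv}\|_{2}^{4}/m^{4}$.

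The main technical obstacle will be supplying the moment bounds $\EE\|\widetilde{\bW}\bR_{\bu,\bv}\|_{2}^{2} \lesssim m(\sigma^{2}+\Err_{\sharp})$ and $\EE\|\widetilde{\bW}\bR_{\bu,\bv}\|_{2}^{4} \lesssim m^{2}(\sigma^{2}+\Err_{\sharp})^{2}$ at the correct scale. To get these, I would unpack $\bR_{\bu,\bv} = \bP_{\bu,\bv}(\by + \diag(\bW\widetilde{\bW})) - \lambda m \bA_{\bu,\bv}(\bA_{\bu,\bv}^{\top}\bA_{\bu,\bv} + \lambda m \bI)^{-1}[\bO_{\bu}^{\top}\bcoefX_{\sharp}; \bO_{\bv}^{\top}\bcoefZ_{\sharp}]$ and use the decomposition~\eqref{Eq-decomposition-of-by} of $\by$, which separates a piece that lies in the column span of $\bA_{\bu,\bv}$ (and is therefore annihilated by $\bP_{\bu,\bv}$) from a transverse piece $\tfrac{\perpcompX_{\sharp}\perpcompZ_{\sharp}}{L_{\sharp}\widetilde{L}_{\sharp}}\bX\bu_{2}\odot \bZ\bv_{2}$ of order $\sqrt{\Err_{\sharp}}$ per coordinate plus the noise $\bepsilon$. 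Combined with spectral bounds on $\widetilde{\bW}$, $\bP_{\bu,\bv}$, and the ridge map---valid on a high-probability event analogous to $\mathcal{S}$ in Section~\ref{sec:stochastic-error-parallel-component}---these coordinatewise bounds are what ultimately deliver the claimed rate $\log^{3}(d)/(\lambda\sqrt{m})$ relative to the leading term $(\sigma^{2}+\Err_{\sharp})/(\lambda m)$.
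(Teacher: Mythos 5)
Your overall architecture coincides with the paper's proof: the same closed form $\langle\bcoefX_+,\bu\rangle=N/D$ from Lemma~\ref{lemma-leave-one-out} and Eq.~\eqref{eq-closed-form-u3-component}, the same identification of the leading term via $\EE\{A^2\mid\mathcal{F}\}=\|\widetilde{\bW}\bR_{\bu,\bv}\|_2^2/m^2$, the same split into a cross term, an $M_{12}^2$ term, and a denominator-replacement term, and the same inputs ($D\geq\lambda$ via Cauchy--Schwarz on $M_{12}$, moment control of $M_{11}-V_1$ and $M_{12}$). However, there is one genuine gap.

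The gap is in your route to $\EE\|\widetilde{\bW}\bR_{\bu,\bv}\|_2^2\lesssim m(\sigma^2+\Err_\sharp)$ and its fourth-moment analogue, which you correctly identify as the crux. You propose to split $\by+\diag(\bW\widetilde{\bW})$ into a piece lying in the column span of $\bA_{\bu,\bv}$, claimed to be ``annihilated by $\bP_{\bu,\bv}$,'' plus a transverse piece of size $\sqrt{\Err_\sharp}+\sigma$. But $\bP_{\bu,\bv}=\bI-\bA(\bA^{\top}\bA+\lambda m\bI)^{-1}\bA^{\top}$ is a ridge quasi-projection (the paper's footnote stresses it is a projection only when $\lambda=0$): on the column span it acts as $\bA\bw\mapsto\lambda m\,\bA(\bA^{\top}\bA+\lambda m\bI)^{-1}\bw$, whose norm can be as large as $\tfrac12\sqrt{\lambda m}\,\|\bw\|_2$. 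Since the column-span coefficients (e.g., of the $\bigl(1+\parcompX_\sharp\parcompZ_\sharp/(L_\sharp^2\LZ_\sharp^2)\bigr)\diag(\bW\widetilde{\bW})$ term) are $O(1)$ rather than $O(\sqrt{\Err_\sharp}+\sigma)$, the surviving contribution is of order $\sqrt{\lambda m}$ in norm, which overwhelms the target scale $\sqrt{m(\sigma^2+\Err_\sharp)}$ in the relevant regime $\lambda\asymp d/m$ with small $\sigma$ and $\Err_\sharp$. What saves the day is the cancellation with the other piece of $\bR_{\bu,\bv}$, namely $\lambda m\,\bA_{\bu,\bv}(\bA_{\bu,\bv}^{\top}\bA_{\bu,\bv}+\lambda m\bI)^{-1}[\bO_{\bu}^{\top}\bcoefX_\sharp;\bO_{\bv}^{\top}\bcoefZ_\sharp]$: because $\bA_{\bu,\bv}[\bO_{\bu}^{\top}\bcoefX_\sharp;\bO_{\bv}^{\top}\bcoefZ_\sharp]=2\diag(\bW\widetilde{\bW})$, the residual is that of a ridge regression recentered at $(\bcoefX_\sharp,\bcoefZ_\sharp)$, and the paper obtains the needed bound in one line from optimality of $(\bcoefX_{\bu,\bv},\bcoefZ_{\bu,\bv})$: $\|\bR_{\bu,\bv}\|_2^2\leq\sum_i(y_i-G_i\widetilde{G}_i)^2$ (the claim~\eqref{claim1-Ruv-leq-Residue}), whose summands have the correct moments $\lesssim\sigma^2+\Err_\sharp$ up to logarithms. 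You need this optimality/recentering step, or an explicit tracking of the cancellation; the annihilation claim as stated is false and the naive operator-norm bound on what survives loses a factor of order $\lambda m/(m(\sigma^2+\Err_\sharp))$.

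A smaller imprecision: for the denominator-replacement term, Cauchy--Schwarz against $A^2$ requires $\EE\{|M_{11}-V_1|^2\}^{1/2}$, not the first moment supplied by Lemma~\ref{concentration-of-M11-M22-M12-M1-M2}(a) (which is moreover stated for the $\bu_2,\bv_2$ versions of $M_{11},M_{12}$, though the proofs transfer verbatim to $\bu_3,\bv_3$). The paper reruns the concentration argument to get $\EE\{|M_{11}-V_1|^2\}\lesssim\log^2(m)/m+\log^3(d)/d$; this is easily patched but is not a direct citation.
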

Expanding, we see that $\EE\big\{\|\widetilde{\bW} \bR_{\bu,\bv}\|_{2}^{2} \big\} = m\EE\{ \widetilde{G}_i^2 R_{i}^2\}$, where
\[
R_i =  y_{i}+G_{i}\widetilde{G}_{i} - \widetilde{G}_{i} \bx_{i}^{\top} \bO_{\bu} \bcoefX_{\bu,\bv} - G_{i} \bz_{i}^{\top} \bO_{\bv} \bcoefZ_{\bu,\bv}.
\]
Note that the estimator $(\bcoefX_{\bu,\bv},\bcoefZ_{\bu,\bv})$ depends on the data $(\bx_{i},\bz_{i},y_{i})$, which precludes direct computation of $\EE\{ \widetilde{G}_i^2 R_{i}^2\}$.  We thus consider the following estimator obtained by leaving out the data $\bx_{i},\bz_{i},y_{i}$
\begin{align}\label{defintion-estimator-u3-v3-i-sample-out}
	\left[\begin{array}{c} \bcoefX_{\bu,\bv}^{-(i)}\\ \bcoefZ_{\bu,\bv}^{-(i)}\\
	\end{array}\right] = \argmin_{\bcoefX,\bcoefZ \in \mathbb{R}^{d-1}} &\sum_{j\neq i}\big(y_{j}+G_{j}\widetilde{G}_{j} - \widetilde{G}_{j}\bx_{j}^{\top} \bO_{\bu} \bcoefX - G_{j}\bz_{j}^{\top} \bO_{\bv} \bcoefZ \big)^{2} + \lambda m \cdot \bigg\| \begin{bmatrix} \bcoefX - \bO_{\bu}^{\top}\bcoefX_{\sharp} \\
		\bcoefZ - \bO_{\bv}^{\top}\bcoefZ_{\sharp} \end{bmatrix} \bigg\|_{2}^{2}.
\end{align}
We emphasize that the estimator in the previous display is obtained by leaving both a direction $(\bu, \bv)$ out as well as a sample $(\bx_i, \bz_i, y_i)$ out.  
Let $\ba_{i}^{\top} = [\widetilde{G}_{i}\bx_{i}^{\top} \bO_{\bu} \; G_{i}\bz_{i}^{\top}\bO_{\bv}]$ and $\bSig_{i} = \sum_{j\neq i}\ba_{j}\ba_{j}^{\top} + \lambda m \bI$ for each $i\in [m]$. We claim that
\begin{align}\label{claim-step2-eta-component}
	R_{i} = \frac{\tau_{i}}{1 + \ba_{i}^{\top} \bSig_{i}^{-1} \ba_{i}}, \qquad \text{ where } \qquad \tau_{i} = y_{i}+G_{i}\widetilde{G}_{i} - \widetilde{G}_{i}\bx_{i}^{\top}\bO_{\bu} \bcoefX_{\bu,\bv}^{-(i)} -G_{i}\bz_{i}^{\top}\bO_{\bv} \bcoefZ_{\bu,\bv}^{-(i)},
\end{align}
deferring its proof to the end.
The next lemma relates the expectation of $\|\widetilde{\bW} \bR_{\bu,\bv}\|_{2}^{2}$ to the scalars $r_1$ and $r_2$, which are solutions to the fixed point equations in~\eqref{eq:fixed-point} and the quantity $\tau_i$ defined in the previous display.  We provide its proof in Section~\ref{sec:proof-ineq3-step2-eta-component}.
\begin{lemma}\label{lem:ineq3-step2-eta-component}
	Consider the assumptions of Lemma~\ref{lem:ineq-step1-eta-component-expectation}.  There exists a positive constant $C_6 > 0$, depending only on $K_1$ and $K_2$, such that
	\begin{align}\label{ineq3-step2-eta-component}
		\bigg| \frac{\EE\big\{ \big\| \widetilde{\bW}\bR_{\bu,\bv} \big\|_{2}^{2} \big\}}{m^{2}(\lambda + V_{1})^{2} } - \frac{1}{m(\lambda + V_1)^2} \EE\bigg\{ \frac{\widetilde{G}_{i}^{2} \tau_{i}^{2}}{\big(1+\widetilde{G}_{i}^{2}r_{1}^{-1} + G_{i}^{2}r_{2}^{-1} \big)^{2}} \bigg\}\bigg| \leq C_6 \cdot \big(  \sigma^{2} + \Err_{\sharp} \big) \frac{\log^{1.5}(d)}{\sqrt{d} m\lambda^{2}}.
	\end{align}
\end{lemma}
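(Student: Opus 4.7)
The plan is to reduce both sides of inequality~\eqref{ineq3-step2-eta-component} to single-sample expectations and then quantify the cost of replacing the random quadratic form $Q_i := \ba_i^{\top}\bSig_i^{-1}\ba_i$ by its deterministic surrogate $\widetilde{G}_i^2/r_1 + G_i^2/r_2$. Since the samples are i.i.d., $\EE\{\|\widetilde{\bW}\bR_{\bu,\bv}\|_2^2\} = m\,\EE\{\widetilde{G}_1^2 R_1^2\}$, and using the claim~\eqref{claim-step2-eta-component} we have $R_i = \tau_i/(1+Q_i)$. This claim is itself a direct consequence of applying the Sherman--Morrison formula to $\bSig = \bSig_i + \ba_i\ba_i^{\top}$, which I would verify as a first step. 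Combined with the elementary inequality $(\lambda+V_1)^{-2} \leq \lambda^{-2}$, the target reduces to bounding
\[
\biggl\lvert \EE\biggl\{ \widetilde{G}_i^2 \tau_i^2 \biggl[ \frac{1}{(1+Q_i)^2} - \frac{1}{(1+\widetilde{G}_i^2/r_1 + G_i^2/r_2)^2} \biggr] \biggr\} \biggr\rvert \;\lesssim\; (\sigma^2+\Err_\sharp)\,\frac{\log^{1.5}(d)}{\sqrt{d}}.
\]

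For the core approximation step, I would decompose $\bSig_i$ into the $2\times 2$ block form of~\eqref{eq:block-matrix-formula} (with summation ranging over $j \neq i$) and expand $Q_i$ via the block-inverse formula~\eqref{eq:block-matrix-proof-sketch}. Exploiting that $\ba_i$ is independent of the Schur complements $\bB - \bC\bD^{-1}\bC^{\top}$ and $\bD - \bC^{\top}\bB^{-1}\bC$, the Hanson--Wright inequality~\citep[Theorem 6.2.1]{vershynin2018high} allows me to replace each diagonal quadratic form in $Q_i$ by $\widetilde{G}_i^2$ or $G_i^2$ times the trace of the respective Schur-complement inverse, with residual of order $(G_i^2 + \widetilde{G}_i^2)\sqrt{\log(d)/d}$; the off-diagonal cross term, being a bilinear form in two independent Gaussian vectors, is mean-zero and concentrates at the same scale. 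Invoking the trace approximation~\eqref{introduction-concentartion-tarce-inverse} then supplies the replacement of those traces by $r_1^{-1}$ and $r_2^{-1}$, and combining these ingredients yields, on an event $\mathcal{E}$ of probability at least $1 - d^{-50}$,
\[
\bigl\lvert Q_i - \widetilde{G}_i^2/r_1 - G_i^2/r_2 \bigr\rvert \;\lesssim\; \bigl(G_i^2 + \widetilde{G}_i^2 + |G_i \widetilde{G}_i|\bigr) \sqrt{\log(d)/d}.
\]

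To pass from this pointwise bound on $\mathcal{E}$ to the integrated bound, I would use that the map $x \mapsto (1+x)^{-2}$ is $2$-Lipschitz on $[0,\infty)$, combined with Cauchy--Schwarz, to reduce the estimation to bounding a constant-order moment of $\widetilde{G}_i \tau_i (G_i + \widetilde{G}_i)$. Since $G_i, \widetilde{G}_i$ are Gaussian with $O(1)$ variance, this further reduces, via H\"older's inequality, to bounding a constant-order moment of $\tau_i$, which by definition requires a high-probability bound of order $\sqrt{\Err_\sharp} + \sigma$ on $\|\bcoefX^{-(i)}_{\bu,\bv} - \bO_{\bu}^{\top}\bcoefX_\sharp\|_2$ and its $\bv$ analogue. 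The contribution of the complementary event $\mathcal{E}^c$ is controlled using the trivial estimate $R_i^2 \leq \tau_i^2$ (since $Q_i \geq 0$), which together with a crude polynomial bound on $\tau_i^2$ yields a subdominant contribution at the scale $d^{-c}$.

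The main obstacle will be the moment bound on the leave-one-sample-out estimator $(\bcoefX^{-(i)}_{\bu,\bv}, \bcoefZ^{-(i)}_{\bu,\bv})$. To handle it, I would analyze the KKT conditions defining the optimization problem~\eqref{defintion-estimator-u3-v3-i-sample-out} in a manner analogous to Lemma~\ref{lemma-leave-one-out} and the derivation of~\eqref{claim-alpha-iota-concentration}, exploiting that the quadratic regularization of strength $\lambda m$, centered at $(\bO_{\bu}^{\top}\bcoefX_\sharp, \bO_{\bv}^{\top}\bcoefZ_\sharp)$, makes the minimizer stable under removal of any single sample. With this stability in hand, the target moment bound on $\tau_i$ follows from the triangle and Cauchy--Schwarz inequalities together with sub-Gaussian tail controls on the Gaussians $G_i, \widetilde{G}_i, \epsilon_i$ and on $\bx_i^{\top}\bO_{\bu} \bcoefX^{-(i)}_{\bu,\bv}$, which is Gaussian conditional on the leave-one-out estimator.
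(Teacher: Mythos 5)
Your proposal is correct and follows essentially the same route as the paper: reduce to a single-sample expectation via the rank-one update identity $R_i=\tau_i/(1+\ba_i^{\top}\bSig_i^{-1}\ba_i)$, replace the quadratic form by $\widetilde{G}_i^2/r_1+G_i^2/r_2$ using the block-inverse decomposition, Hanson--Wright, and the trace concentration of Lemma~\ref{lemma:probability-bound-trace-inverse}, and control $\tau_i$ through the regularization-induced stability of the leave-one-out estimator. The only (cosmetic) difference is that you package the quadratic-form approximation as a high-probability event plus a crude bad-event bound, whereas the paper applies Cauchy--Schwarz directly to split $\EE\{\tau_i^4\}^{1/2}$ from $\EE\{\widetilde{G}_i^4(\ba_i^{\top}\bSig_i^{-1}\ba_i-\widetilde{G}_i^2r_1^{-1}-G_i^2r_2^{-1})^2\}^{1/2}\lesssim \log^{1.5}(d)/\sqrt{d}$.
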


The final lemma removes the dependence on $\tau_i$~\eqref{claim-step2-eta-component} from the previous lemma.  We provide its proof in Section~\ref{sec:proof-ineq-step3-eta-component}.
\begin{lemma}\label{lem:ineq-step3-eta-component}
	Consider the assumptions of Lemma~\ref{lem:bias-orthogonal-component} and let $V_3$ be as in~\eqref{definition-V3}.  Let
	\[
	\psi(r_1, r_2, V_3) = V_{3}r_{1}^{2} + \EE\{\etaX_{+}^{2}\} \cdot  \EE\bigg\{ \frac{ r_{1}^{2}r_{2}^{2} \widetilde{G}_{i}^{4} }{(r_{1}r_{2} + r_{1}G_{i}^{2} + r_{2}\widetilde{G}_{i}^{2})^{2}}\bigg\} + \EE\{\etaZ_{+}^{2}\} \cdot \EE\bigg\{ \frac{r_{1}^{2}r_{2}^{2} G_{i}^{2}\widetilde{G}_{i}^{2} }{(r_{1}r_{2} + r_{1}G_{i}^{2} + r_{2}\widetilde{G}_{i}^{2})^{2}}\bigg\}.
	\]
	Then, there exists a positive constant $C_7$, which depends only on $K_1$ and $K_2$, such that 
\begin{align}\label{ineq-step3-eta-component}
	\bigg| \EE\bigg\{ \frac{\widetilde{G}_{i}^{2} \tau_{i}^{2}}{\big(1+\widetilde{G}_{i}^{2}r_{1}^{-1} + G_{i}^{2}r_{2}^{-1} \big)^{2}} \bigg\} - \psi(r_1, r_2, V_3) \bigg| \leq C_7 \cdot \max\bigg\{ \frac{C( \sigma^{2} + \Err_{\sharp})}{\lambda} \frac{\log^{6}(d)}{\sqrt{m}}, d^{-50} \bigg\}.
\end{align} 
\end{lemma}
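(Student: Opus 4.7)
The main strategy is to exploit the fact that the leave-one-sample-leave-one-direction estimator $(\bcoefX_{\bu,\bv}^{-(i)}, \bcoefZ_{\bu,\bv}^{-(i)})$ is independent of $(\bx_{i}, \bz_{i}, \epsilon_{i})$, and to compute the inner conditional expectation $\EE\{\widetilde{G}_{i}^{2} \tau_{i}^{2} \mid X_{1}, Z_{1}, \text{LOO}\}$ in closed form. First, I would expand $\tau_{i}$ by substituting the measurement model $y_{i} = \langle \bx_{i}, \bcoefX_{\star}\rangle \langle \bz_{i}, \bcoefZ_{\star}\rangle + \epsilon_{i}$ together with the decompositions $\bcoefX_{\star} = (\parcompX_{\sharp}/L_{\sharp}) \bu_{1} + (\perpcompX_{\sharp}/L_{\sharp}) \bu_{2}$ and $\bcoefX_{\sharp} = L_{\sharp} \bu_{1}$ (and analogues in the $\bcoefZ$ coordinate). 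Writing $\overline{\boldsymbol{p}} := \bO_{\bu} \bcoefX_{\bu,\bv}^{-(i)} \in \mathbb{R}^{d}$ and decomposing $\overline{\boldsymbol{p}} = p_{1} \bu_{1} + p_{2} \bu_{2} + \overline{\boldsymbol{p}}^{\perp}$ with $\overline{\boldsymbol{p}}^{\perp} \in \mathsf{span}\{\bu_{4}, \dots, \bu_{d}\}$ (and analogously $\overline{\boldsymbol{q}} := \bO_{\bv} \bcoefZ_{\bu,\bv}^{-(i)} = q_{1} \bv_{1} + q_{2} \bv_{2} + \overline{\boldsymbol{q}}^{\perp}$), the quantity $\tau_{i}$ becomes a low-degree polynomial in the i.i.d.\ standard Gaussian scores $X_{j} := \bx_{i}^{\top} \bu_{j}$, $Z_{j} := \bz_{i}^{\top} \bv_{j}$ for $j \in \{1,2\}$, plus the terms $-\widetilde{L}_{\sharp} Z_{1} \bx_{i}^{\top} \overline{\boldsymbol{p}}^{\perp}$, $-L_{\sharp} X_{1} \bz_{i}^{\top} \overline{\boldsymbol{q}}^{\perp}$, and $\epsilon_{i}$, all with coefficients deterministic given the LOO--LOD estimator.

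The second step is to condition on $(X_{1}, Z_{1})$ (which governs the denominator $(1 + \widetilde{G}_{i}^{2}/r_{1} + G_{i}^{2}/r_{2})^{2}$) together with the LOO--LOD estimator, and integrate out the remaining independent, centered Gaussian degrees of freedom. All cross-terms vanish by independence, yielding the explicit identity
\[
\EE\bigl\{ \tau_{i}^{2} \,\big|\, X_{1}, Z_{1}, \text{LOO}\bigr\} = A^{2} X_{1}^{2} Z_{1}^{2} + B^{2} X_{1}^{2} + C^{2} Z_{1}^{2} + D^{2} + \sigma^{2} + \widetilde{L}_{\sharp}^{2} \bigl\| \overline{\boldsymbol{p}}^{\perp} \bigr\|_{2}^{2} Z_{1}^{2} + L_{\sharp}^{2} \bigl\| \overline{\boldsymbol{q}}^{\perp} \bigr\|_{2}^{2} X_{1}^{2},
\]
where $A, B, C$ are affine in the coordinates $(p_{1}, p_{2}, q_{1}, q_{2})$ and $D = \perpcompX_{\sharp} \perpcompZ_{\sharp}/(L_{\sharp} \widetilde{L}_{\sharp})$ is deterministic. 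Substituting and using $G_{i}^{2} = L_{\sharp}^{2} X_{1}^{2}$, $\widetilde{G}_{i}^{2} = \widetilde{L}_{\sharp}^{2} Z_{1}^{2}$, the target expectation splits cleanly into six Gaussian integrals: the $(D^{2} + \sigma^{2}, A^{2}, B^{2}, C^{2})$ pieces reassemble (once $p_{j}, q_{j}$ are replaced by their typical values) into the four summands of $V_{3} r_{1}^{2}$, while the $\|\overline{\boldsymbol{p}}^{\perp}\|_{2}^{2}$ and $\|\overline{\boldsymbol{q}}^{\perp}\|_{2}^{2}$ pieces produce exactly the remaining terms of $\psi$ involving $\EE\{\etaX_{+}^{2}\}$ and $\EE\{\etaZ_{+}^{2}\}$.

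What remains---and is the main technical obstacle---is to control the LOO--LOD quantities: (i) show that $|\EE\{p_{j}\} - \thetaXdet_{j}|$ and the corresponding second-moment deviations are $\ordertil((\sqrt{\Err_{\sharp}} + \sigma)/(\lambda \sqrt{m}))$, which I would handle by a leave-one-sample perturbation argument relating $p_{j}$ to $\bu_{j}^{\top} \bcoefX_{+}$ combined with Theorem~\ref{thm:one-step-prediction}(a); and (ii) the more delicate task of showing $\bigl| \EE\{\|\overline{\boldsymbol{p}}^{\perp}\|_{2}^{2}\} - \EE\{\etaX_{+}^{2}\}\bigr|$ is $\ordertil((\Err_{\sharp} + \sigma^{2})/(\lambda \sqrt{m}))$, which is a genuine $L^{2}$-stability statement requiring a KKT-based comparison between the fully-orthogonal energy of $\bcoefX_{+}$ and that of the LOO--LOD estimator. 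Given these two ingredients, the inequality~\eqref{ineq-step3-eta-component} follows by mechanical algebraic verification---for instance, one checks that $A_{\mathrm{det}} := \parcompX_{\sharp} \parcompZ_{\sharp}/(L_{\sharp} \widetilde{L}_{\sharp}) + L_{\sharp} \widetilde{L}_{\sharp} - \widetilde{L}_{\sharp} \thetaXdet_{1} - L_{\sharp} \thetaZdet_{1}$ simplifies to $-\lambda L_{\sharp} \widetilde{L}_{\sharp}\bigl(1 - \parcompX_{\sharp} \parcompZ_{\sharp}/(L_{\sharp}^{2} \widetilde{L}_{\sharp}^{2})\bigr)/\bigl(\lambda + V(L_{\sharp}^{-2} + \widetilde{L}_{\sharp}^{-2})\bigr)$, so that $A_{\mathrm{det}}^{2}/(L_{\sharp}^{2}\widetilde{L}_{\sharp}^{2})$ matches the coefficient of $\EE\{r_{2}^{2} G_{1}^{2} G_{2}^{4}/(r_{1}r_{2} + r_{1} G_{1}^{2} + r_{2} G_{2}^{2})^{2}\}$ in the definition~\eqref{definition-V3} of $V_{3}$; the other three term-by-term matchings are analogous.
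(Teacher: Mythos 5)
Your proposal follows essentially the same route as the paper: the Gram--Schmidt decomposition of the leave-one-sample, leave-one-direction estimator into $(\bu_1,\bu_2)$-components plus a fully orthogonal remainder, the conditional second-moment computation exploiting the independence and symmetry of the Gaussian scores, the two technical ingredients you flag (second-moment concentration of the parallel coefficients around $\thetaXdet_j,\thetaZdet_j$ via a KKT perturbation plus the Theorem~\ref{thm:one-step-prediction}(a) machinery, and the KKT-based $L^2$ comparison of $\|\overline{\boldsymbol{p}}^{\perp}\|_2^2$ with $\EE\{\etaX_{+}^{2}\}$) are exactly the content of the paper's Lemma~\ref{lemma:eta-onesample-onecolumn-out-expectation-bound}, and your algebraic matching of $A_{\mathrm{det}}$ with the corresponding coefficient in $V_3$ is the identity the paper uses. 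The only detail worth noting is that converting the high-probability bounds of Theorem~\ref{thm:one-step-prediction}(a) into the needed second-moment bounds requires a crude a priori fourth-moment bound on the iterates, which is routine and which the paper supplies.
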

\noindent Applying the triangle inequality and Lemmas~\ref{lem:ineq-step1-eta-component-expectation},~\ref{lem:ineq3-step2-eta-component}, and~\ref{lem:ineq-step3-eta-component} in sequence yields the inequality
\[
\bigg| \EE \{ \langle \bcoefX_{+}, \bu \rangle^{2} \} - \frac{1}{m(\lambda + V_{1})^{2}} \psi(r_1, r_2, V_3) \bigg| \lesssim
\frac{1}{\lambda m} \max\bigg\{ \frac{( \sigma^{2} + \Err_{\sharp})}{\lambda} \frac{\log^{6}(d)}{\sqrt{m}}, d^{-50} \bigg\},
\]
where $\psi(r_1, r_2, V_3)$ is as in Lemma~\ref{lem:ineq-step3-eta-component}.  Now, applying the relation~\eqref{eq:eta-expectation-(d-2)-sum} in conjunction with the relation $\lambda + V_{1} = r_{1}d/m$, which holds by definition~\eqref{eq:fixed-point}, we deduce the inequality
\begin{align*}
	\bigg| \EE\{\eta_{+}^{2}\} -  \frac{(d-2)m}{d^{2}} \frac{\psi(r_1, r_2, V_3)}{r_1^2} \bigg| \lesssim \max\bigg\{ \frac{( \sigma^{2} + \Err_{\sharp})}{\lambda} \frac{\log^{6}(d)}{\sqrt{m}}, d^{-50} \bigg\}.
\end{align*}
Arguing in a parallel fashion establishes the analogous bound on $\EE\{\widetilde{\eta}_+^2\}$.  Re-arranging then yields the pair of upper bounds
\begin{subequations}\label{eq:expectation-eta-fixed-point-equation}
	\begin{align}
	\Big| c_1 \cdot \EE\{\etaX_{+}^{2}\} - c_2 \cdot \EE\{\etaZ_{+}^{2}\} - \frac{(d-2)m}{d^{2}} \cdot V_3 \Big| &\leq \Delta \\
	 \Big| c_3 \cdot \EE\{\etaZ_{+}^{2}\} - c_4 \cdot  \EE\{\etaX_{+}^{2}\} - \frac{(d-2)m}{d^{2}} \cdot V_4 \Big| &\leq \Delta,
\end{align}
\end{subequations}
where $\Delta = C_{2}\max\bigg\{ \frac{\log^{6}(d)\big(\Err_{\sharp} + \sigma^{2} \big)}{\lambda \sqrt{m}}, d^{-50} \bigg\}$ and
\begin{align*}
	&c_{1} = 1 - \frac{(d-2)m}{d^{2}} \EE\bigg\{\frac{r_2^2G_2^4}{(r_1r_2 + r_1G_1^2 + r_2G_2^2)^2}\bigg\},\;
	c_{2} = \frac{(d-2)m}{d^{2}} \EE\bigg\{\frac{r_2^2G_1^2G_2^2}{(r_1r_2 + r_1G_1^2 + r_2G_2^2)^2}\bigg\},\\
	&c_{3} = 1 - \frac{(d-2)m}{d^{2}} \EE\bigg\{\frac{r_1^2G_1^4}{(r_1r_2 + r_1G_1^2 + r_2G_2^2)^2}\bigg\},\;
	c_{4} = \frac{(d-2)m}{d^{2}} \EE\bigg\{\frac{r_1^2G_1^2G_2^2}{(r_1r_2 + r_1G_1^2 + r_2G_2^2)^2}\bigg\}.
\end{align*}
Moreover, by definitions of $\etadetX_{+}$ and $\etadetZ_{+}$ in~\eqref{fixed-point-eq-eta-updates} and~\eqref{eta-det-predictions-in-proof}, we have the following relations
\[
c_1 \cdot (\etadetX_{+})^2 - c_{2}\cdot (\etadetZ_{+})^{2} = \frac{(d-2)m}{d^{2}} \cdot V_3,\qquad \text{ and } \qquad 
c_3 \cdot (\etadetZ_{+})^2 - c_{4}\cdot (\etadetX_{+})^{2} = \frac{(d-2)m}{d^{2}} \cdot V_4.
\] 
Combing the relations in the previous display with the pair of inequalities~\eqref{eq:expectation-eta-fixed-point-equation} yields
\begin{align*}
	\Big| c_{1} \EE\{\etaX_{+}^{2}\} - c_{2}\EE\{\etaZ_{+}^{2}\} - \Big( c_{1} (\etadetX_{+})^{2} - c_{2}(\etadetZ_{+})^{2} \Big) \Big| &\leq \Delta\\
	\Big| c_{3} \EE\{\etaZ_{+}^{2}\} - c_{4}\EE\{\etaX_{+}^{2}\} - \Big( c_{3} (\etadetZ_{+})^{2} - c_{4}(\etadetX_{+})^{2} \Big) \Big| &\leq \Delta.
\end{align*}
Then, upon re-arranging terms and applying the triangle inequality, we find that
\begin{align*}
	&|c_{1}| \cdot  \Big| \EE\{\etaX_{+}^{2}\} - (\etadetX_{+})^{2} \Big| \leq |c_{2}| \cdot \Big| \EE\{\etaZ_{+}^{2}\} - (\etadetZ_{+})^{2} \Big| + \Delta \qquad \text{ and }\\
	&|c_{3}| \cdot  \Big| \EE\{\etaZ_{+}^{2}\} - (\etadetZ_{+})^{2} \Big| \leq |c_{4}| \cdot \Big| \EE\{\etaX_{+}^{2}\} - (\etadetX_{+})^{2} \Big| + \Delta.
\end{align*}
Combining the two inequalities in the previous display yields
\[
(|c_{1}| - |c_{2}| |c_{4}|/|c_{3}|) \cdot \Big| \EE\{\etaX_{t+1}^{2}\} - (\etadetX_{t+1})^{2} \Big| \leq (1+|c_{2}|/|c_{3}|) \cdot \Delta.
\]
Note that
\[
c_{1} \geq 1 - \frac{(d-2)m}{d^{2}} \frac{\EE\{G_{2}^{4}\}}{r_{1}^{2}} \overset{\1}{\geq} 1 - \frac{(d-2)m}{d^{2}} \frac{\EE\{G_{2}^{4}\}}{(\lambda m/d)^{2}} \overset{\2}{\geq} 0.5,
\]
where step $\1$ follows from $r_{1} \geq \lambda m/d$ (see item 1 in Lemma~\ref{fixed-point-equations-unique-solution}), and step $\2$ follows as $\lambda m \geq Cd$ for some constant $C$ large enough and $(d-2)m\EE\{G_{2}^{4}\}/d^{2} \lesssim 1$. Similarly, we see that $c_{3}\geq 0.5$ and $|c_{2}|,|c_{4}| \leq 0.1$. Putting together the pieces yields
\[
\Big| \EE\{\etaX_{+}^{2}\} - (\etadetX_{+})^{2} \Big| \leq 10 \Delta \lesssim \max\bigg\{ \frac{\log^{6}(d)\big(\Err_{\sharp} + \sigma^{2} \big)}{\lambda \sqrt{m}}, d^{-50} \bigg\},
\]
as desired.  It remains to establish the claim~\eqref{claim-step2-eta-component}.

\paragraph{Proof of the claim~\eqref{claim-step2-eta-component}.} Let $\ba_{j}^{\top} = [\widetilde{G}_{j} \bx_{j}^{\top}\bO_{\bu} \; G_{j}\bz_{j}^{\top}\bO_{\bv} ]$ for each $j\in[n]$. KKT condition of $(\bcoefX_{\bu,\bv},\bcoefZ_{\bu,\bv})$~\eqref{defintion-estimator-u3-v3} and $(\bcoefX_{\bu,\bv}^{-(i)}, \bcoefZ_{\bu,\bv}^{-(i)})$~\eqref{defintion-estimator-u3-v3-i-sample-out} yields
\begin{subequations}
	\begin{align}
		\sum_{j\in [n]}\Big(y_{j} + G_{j}\widetilde{G}_{j} - \ba_{j}^{\top} \left[\begin{array}{c} \bcoefX_{\bu,\bv}\\ \bcoefZ_{\bu,\bv} \\ \end{array}\right] \Big) \ba_{j} &= \lambda m \left[\begin{array}{c} \bcoefX_{\bu,\bv} - \bO_{\bu}^{\top} \bcoefX_{\sharp} \\ \bcoefZ_{\bu,\bv} - \bO_{\bv}^{\top} \bcoefZ_{\sharp} \\ \end{array}\right] \label{KKT-condition-estimator-u3-v3}
		\\ \sum_{j \neq i} \Big(y_{j} + G_{j}\widetilde{G}_{j} - \ba_{j}^{\top} \left[\begin{array}{c} \bcoefX_{\bu,\bv}^{-(i)}\\ \bcoefZ_{\bu,\bv}^{-(i)} \\ \end{array}\right] \Big) \ba_{j} &= \lambda m \left[\begin{array}{c} \bcoefX_{\bu,\bv}^{-(i)} - \bO_{\bu}^{\top} \bcoefX_{\sharp} \\ \bcoefZ_{\bu,\bv}^{-(i)} - \bO_{\bv}^{\top} \bcoefZ_{\sharp} \\ \end{array}\right].\label{KKT-condition-estimator-u3-v3-isample-out}
	\end{align}
\end{subequations}
Subtracting one inequality by the other inequality in the display above yields
\[
\left[\begin{array}{c} \bcoefX_{\bu,\bv} \\ \bcoefZ_{\bu,\bv} \\ \end{array}\right] - 
\left[\begin{array}{c} \bcoefX_{\bu,\bv}^{-(i)} \\ \bcoefZ_{\bu,\bv}^{-(i)} \\ \end{array}\right] = \Big( \sum_{j \neq i} \ba_{j} \ba_{j}^{\top} + \lambda m \bI \Big)^{-1} R_{i}\ba_{i} = \bSig_{i}^{-1}R_{i}\ba_{i}.
\]
Note that, by definition of $R_{i}$ and $\tau_{i}$,
$
\tau_{i} - R_{i} = \ba_{i}^{\top} \bigg( \left[\begin{array}{c} \bcoefX_{\bu,\bv} \\ \bcoefZ_{\bu,\bv} \\ \end{array}\right] - 
\left[\begin{array}{c} \bcoefX_{\bu,\bv}^{-(i)} \\ \bcoefZ_{\bu,\bv}^{-(i)} \\ \end{array}\right] \bigg).
$
Putting the two pieces together yields $\tau_{i} - R_{i} = \ba_{i}^{\top} \bSig_{i}^{-1}\ba_{i} \cdot R_{i}
$. Arranging the terms yields the desired result. \qed
\section{Proof of Theorem~\ref{thm:local-sharp-convergence-results}: Convergence guarantees}\label{sec:main-proof-convergence-result}
Our convergence result reposes on the following lemma, which provides a two-sided, one-step improvement bound.  We provide its proof---which relies on properties of the deterministic predictions---in Section~\ref{proof-lemma-one-step-empirical-convergence}.
\begin{lemma}\label{lemma:one-step-empirical-convergence} Consider $(\bcoefX_{\sharp}, \bcoefZ_{\sharp}) \in \mathbb{R}^d$ and let $\Err_{\sharp}, L_{\sharp}$, and $\widetilde{L}_{\sharp}$ be as in~\eqref{eq-definition-error} and~\eqref{eq:shorthand-length-L-Ltil}.  There exists a tuple of universal, positive constants $(c,c_{1},c_{2},C,C_{1},C_{2})$ such that if
	\[
	\Err_{t} \leq c,\qquad \lambda m \geq C(1+\sigma),\qquad 0.2 \leq L_{t}, \LZ_{t} \leq 3 \qquad \text{ and } \qquad m\geq C\log^{12}(d),
	\]
	then with probability at least $1 - d^{-18}$, both of the following hold.
	\begin{subequations} 
		\begin{align}\label{sharp-bounds-empirical-error}
			\hspace{-1cm}
			\Big( 1- \frac{c_{2}}{\lambda}\Big)\cdot \Err_{t} + \frac{C_{2}\sigma^{2}d}{\lambda^{2}m} - \frac{C_{1}\polylog\sigma^{2}}{\lambda \sqrt{m}} &\leq \Err_{t+1} \nonumber\\
			&\leq \Big( 1- \frac{c_{1}}{\lambda}\Big)\cdot \Err_{t} + C_{1} \Big( \frac{\sigma^{2}d}{\lambda^{2}m} + \frac{\polylog\sigma^{2}}{\lambda\sqrt{m}} \Big),
		\end{align}
		and
		\begin{align}
			|\parcompX_{t+1} - \parcompX_{t}|,\;|\parcompZ_{t+1} - \parcompZ_{t}| &\leq \frac{C_{1}\sqrt{\Err_{t}} }{\lambda} + \frac{C_{1}\polylog \sigma}{\lambda\sqrt{m}} \label{bound-alpha-t+1-t-epsilon}.
		\end{align}
\end{subequations}
\end{lemma}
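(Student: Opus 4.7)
The plan is to apply Theorem~\ref{thm:one-step-prediction} with $(\bcoefX_{\sharp}, \bcoefZ_{\sharp}) = (\bcoefX_{t}, \bcoefZ_{t})$ to couple the random update to its deterministic counterpart, and then to verify that the deterministic map contracts at rate $1 - \Theta(1/\lambda)$ down to the claimed noise floor. The hypotheses of the lemma place us inside the regime of Theorem~\ref{thm:one-step-prediction}, which yields with probability at least $1 - d^{-20}$ the fluctuation bounds
\[
\bigl|\parcompX_{t+1} - \parcompdetX_{t+1}\bigr| \vee \bigl|\parcompZ_{t+1} - \parcompdetZ_{t+1}\bigr| \lesssim \frac{(\sqrt{\Err_{t}} + \sigma)\polylog}{\lambda\sqrt{m}},
\]
and analogous bounds for $|\perpcompX_{t+1}^{2} - (\perpcompdetX_{t+1})^{2}|$ and $|\perpcompZ_{t+1}^{2} - (\perpcompdetZ_{t+1})^{2}|$ with $\sqrt{\Err_{t}} + \sigma$ replaced by $\Err_{t} + \sigma^{2}$, where the deterministic quantities are evaluated at $(\parcompX_{t}, \perpcompX_{t}, \parcompZ_{t}, \perpcompZ_{t})$.

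\textbf{From fluctuations to error.} Setting $\Err_{t+1}^{\det} := (\parcompdetX_{t+1}\parcompdetZ_{t+1}-1)^{2} + (\perpcompdetX_{t+1})^{2} + (\perpcompdetZ_{t+1})^{2}$, the identity $a^{2} - b^{2} = (a+b)(a-b)$ combined with the a priori bound that all parallel components are $O(1)$ yields
\[
\bigl|\Err_{t+1} - \Err_{t+1}^{\det}\bigr| \lesssim \frac{(\Err_{t} + \sigma^{2})\polylog}{\lambda\sqrt{m}}.
\]
Under $m \gtrsim \log^{12}(d)$ the $\Err_{t}$ piece is absorbed into the $(1 - c_{i}/\lambda)\Err_{t}$ term with room to spare, while the $\sigma^{2}$ piece becomes exactly the slack $\polylog \sigma^{2}/(\lambda\sqrt{m})$ in~\eqref{sharp-bounds-empirical-error}. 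It therefore suffices to establish the deterministic sandwich $(1-c_{2}/\lambda)\Err_{t} + C_{2}\sigma^{2}d/(\lambda^{2}m) \leq \Err_{t+1}^{\det} \leq (1 - c_{1}/\lambda)\Err_{t} + C_{1}\sigma^{2}d/(\lambda^{2}m)$.

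\textbf{Deterministic recursion.} I would prove this sandwich by Taylor-expanding the maps $\alphaXmap, \alphaZmap, \iotaXmap, \iotaZmap, \etaXmap, \etaZmap$ of Section~\ref{sec:expilicit-formulas-prediction} around the ground-truth state $(1,0,1,0)$. The key quantitative inputs are the sandwich $\lambda m/d \leq r_{1}, r_{2} \leq 2\lambda m/d$ from Lemma~\ref{fixed-point-equations-unique-solution} and the a priori bounds $V, V_{1}, V_{2} = O(1)$ from their defining Gaussian expectations~\eqref{eq:V-V1-V2}. Expanding $\alphaXmap \alphaZmap - 1$ to leading order shows that its correction to $\parcompX_{t}\parcompZ_{t} - 1$ scales like $V/(V(L^{2} + \widetilde{L}^{2}) + \lambda L^{2}\widetilde{L}^{2}) \asymp 1/\lambda$, producing the contraction on $(\parcompX_{t}\parcompZ_{t}-1)^{2}$. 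The maps $\iotaXmap, \iotaZmap$ multiply $\perpcompX_{t}, \perpcompZ_{t}$ by factors of the form $1 - \Theta(V_{1}/(V_{1}+\lambda)) = 1 - \Theta(1/\lambda)$, yielding the perpendicular-component contraction. Finally, the noise floor $\sigma^{2}d/(\lambda^{2}m)$ emerges by solving the linear system~\eqref{fixed-point-eq-eta-updates} for $\etaX^{2},\etaZ^{2}$: the inhomogeneous terms $V_{3}, V_{4}$ carry $\sigma^{2}$ contributions $\sigma^{2}\EE\{r_{i}^{2}G_{j}^{2}/(r_{1}r_{2}+r_{1}G_{1}^{2}+r_{2}G_{2}^{2})^{2}\}$ which, on substituting $r_{1}, r_{2} \asymp \lambda m/d$, produce precisely this scaling with matching constants in both directions.

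\textbf{Second bound and main obstacle.} For~\eqref{bound-alpha-t+1-t-epsilon}, rewriting $\alphaXmap - \parcompX_{t}$ using~\eqref{et_updates_eq} exposes two terms, each carrying either an explicit $V_{1}/(V_{1}+\lambda)$ factor or the quantity $\parcompX_{t}\parcompZ_{t}/L_{t}^{2} - \widetilde{L}_{t}^{2}$ divided by $\lambda L_t^{2}\widetilde{L}_t^{2}$; both are bounded by $\sqrt{\Err_{t}}/\lambda$. Adding the $\polylog\sigma/(\lambda\sqrt{m})$ fluctuation from Theorem~\ref{thm:one-step-prediction}(a) and applying the triangle inequality yields~\eqref{bound-alpha-t+1-t-epsilon}. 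The main technical obstacle is extracting \emph{matching} upper and lower bounds with explicit contraction constants $c_{1}, c_{2}$ from the coupled nonlinear recursion on $(\parcompX_{t}\parcompZ_{t}-1, \perpcompX_{t}, \perpcompZ_{t})$: the off-diagonal couplings in~\eqref{fixed-point-eq-eta-updates} and between parallel and perpendicular components in $\detstatemap$ mean one cannot bound each coordinate's contraction in isolation but must instead verify the joint contraction of the Lyapunov functional $\Err$, controlling cross-terms of the form $\EE\{r_{i}^{2}G_{1}^{2}G_{2}^{2}/(r_{1}r_{2}+r_{1}G_{1}^{2}+r_{2}G_{2}^{2})^{2}\}$ via the $r_{i} \asymp \lambda m/d$ sandwich.
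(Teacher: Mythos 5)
Your proposal is correct and follows essentially the same route as the paper: the paper isolates your "deterministic sandwich" as a standalone lemma (Lemma~\ref{lemma:one-step-deterministic-convergence}), proves it by the same algebraic decomposition of the maps in Section~\ref{sec:expilicit-formulas-prediction} using the same inputs ($r_1,r_2\asymp \lambda m/d$, $V,V_1,V_2\asymp 1$, $\mathsf{D}\asymp\lambda$, and the two-sided bounds on $V_3,V_4$ feeding the linear system for $\etaX^2,\etaZ^2$), and then combines it with Theorem~\ref{thm:one-step-prediction} exactly as you describe, absorbing the $\Err_t$ part of the fluctuation into the contraction via $m\gtrsim\log^{12}(d)$. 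Your identification of the main obstacle (joint contraction of $\Err$ under the parallel/perpendicular couplings) matches where the paper spends its effort.
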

\noindent Equipped with this lemma, we prove Theorem~\ref{thm:local-sharp-convergence-results} by induction on $T$. Let $T_{\star} = \frac{\lambda}{c_{1}} \log \Big(\min\Big\{ \frac{\lambda m}{\sigma^{2}d}, \frac{\sqrt{m}}{\polylog \sigma^{2}} \Big\}\Big)$. 
We first state our induction hypothesis for $0 \leq T \leq T_{\star}$.
\paragraph{Induction Hypothesis:} With probability at least $1-(T+1)d^{-18}$, the iterates $(\bcoefX_{t}, \bcoefZ_{t})_{t=0}^{T}$ defined in~\eqref{eq:prox-linear-updates} satisfy the sandwich relation~\eqref{sharp-bounds-empirical-error} as well as the bound~\eqref{bound-alpha-t+1-t-epsilon}.  Further, $\Err_{t} \leq c$, $0.3 \leq L_t ,\LZ_{t} \leq 1.7$ for all $0 \leq t\leq T$, where $c$ is the same constant as in Lemma~\ref{lemma:one-step-empirical-convergence}.

\paragraph{Base case: $T=0$.} Note $\Err_{0} \leq K_{0} \leq c$ as long as $K_{0} \leq c$ and $0.5 \leq L_{0},\LZ_{0} \leq 1.5$ by assumption. Then applying Lemma~\ref{lemma:one-step-empirical-convergence} we obtain that with probability at least $1-d^{-18}$, both inequalities~\eqref{sharp-bounds-empirical-error} and inequality~\eqref{bound-alpha-t+1-t-epsilon} hold, which concludes the base case.

\paragraph{Induction step:} Suppose the induction hypothesis holds for some $T \leq T_{\star}-1$.  That is,  inequalities~\eqref{sharp-bounds-empirical-error} and~\eqref{bound-alpha-t+1-t-epsilon} holds for all $0\leq t\leq T$ with probability at least $1-(T+1)d^{-18}$. We need to show the induction hypothesis holds at iterate $T+1$.   Applying inequality~\eqref{sharp-bounds-empirical-error} recursively for all $0\leq t\leq T$ yields
\begin{align}\label{ineq1-convergence-proof-error}
	\Err_{t+1} \leq \rho_{1}^{t+1} \Err_{0} + \sum_{k=0}^{t} \rho_{1}^{k} \cdot C_{1} \Big( \frac{\sigma^{2}d}{\lambda^{2}m} + \frac{\polylog\sigma^{2}}{\lambda\sqrt{m}} \Big) \leq \rho_{1}^{t+1} \Err_{0} + \frac{C_{1}}{c_{1}} \Big(\frac{\sigma^{2}d}{\lambda m} + \frac{\polylog\sigma^{2}}{\sqrt{m}}  \Big),
\end{align}
 where $\rho_{1} = 1-\frac{c_{1}}{\lambda}$ and in the last step we used the bound $\sum_{k=0}^{t} \rho_{1}^{k} \leq \lambda/c_{1}$.
 Consequently, 
\begin{align}\label{ineq2-convergence-proof-error}
	\Err_{T+1} \leq \Err_{0} + \frac{C_{1}}{c_{1}} \Big(\frac{\sigma^{2}d}{\lambda m} + \frac{\polylog\sigma^{2}}{\sqrt{m}}  \Big) \leq c,
\end{align}
where the final step follows since $\Err_{0} \leq K_{0}$, $\lambda m \geq C(1+\sigma^{2})d$ and $m\geq C(1+\sigma^{4})\log^{12}(d)$ and by taking $K_{0}$ small enough and $C$ large enough. Continuing, applying inequality~\eqref{bound-alpha-t+1-t-epsilon} recursively for all $0\leq t\leq T$ yields
\begin{align*}
	|\parcompX_{T+1} - \parcompX_{0}| \leq \sum_{t=0}^{T} |\parcompX_{t+1} - \parcompX_{t}| \leq  \frac{C_{1}}{\lambda} \sum_{t=0}^{T} \sqrt{\Err_{t}} + \frac{C_{1}(T+1)\polylog \sigma}{\lambda\sqrt{m}}.
\end{align*}
Then, using inequality~\eqref{ineq1-convergence-proof-error} to upper boound $\Err_{t}$ yields
\begin{align*}
	|\parcompX_{T+1} - \parcompX_{0}| &\leq \frac{C_{1}}{\lambda} \sum_{t=0}^{T} \rho_{1}^{t/2}\sqrt{K_{0}} + \frac{C_{1}}{\lambda} \frac{\sqrt{C_{1}}}{\sqrt{c_{1}}} \Big(\frac{\sigma^{2}d}{\lambda m} + \frac{\polylog\sigma^{2}}{\sqrt{m}} \Big)^{1/2}(T+1) + \frac{C_{1}(T+1)\polylog \sigma}{\lambda\sqrt{m}} \\&\leq
	\frac{2C_{1}}{c_{1}} \sqrt{K_{0}} + \frac{C_{1}^{1.5}}{\lambda \sqrt{ c_{1}}} \Big( \frac{\sigma^{2}d}{\lambda m}  + \frac{\polylog\sigma^{2}}{\sqrt{m}} \Big)^{1/2}T_{\star} + \frac{C_{1}T_{\star}\polylog \sigma}{\lambda\sqrt{m}},
\end{align*}
where in the last step we use $T+1 \leq T_{\star}$ and $\sum_{t=0}^{T} \rho_{1}^{t/2} \leq \sum_{t=0}^{T} \Big(1-\frac{c_{1}}{2\lambda}\Big)^{t} \leq \frac{2\lambda}{c_{1}}$.  Thus, using the numeric inequality $\sqrt{a+b} \leq \sqrt{a} +\sqrt{b}$ for all $a,b>0$ and substituting the definition of $T_{\star}$, we obtain the inequality
\begin{align*}
	\frac{C_{1}^{1.5}}{\lambda \sqrt{ c_{1}}} \Big( \frac{\sigma^{2}d}{\lambda m}  + \frac{\polylog\sigma^{2}}{\sqrt{m}} \Big)^{1/2}T_{\star}  \leq  \frac{C_{1}^{1.5}}{  c_{1}^{1.5}} \bigg( \sqrt{\frac{\sigma^{2}d}{\lambda m}} \log\Big( \frac{\lambda m}{\sigma^{2}d} \Big) + \sqrt{\frac{\polylog \sigma^{2}}{\sqrt{m}}} \log\Big( \frac{\sqrt{m}}{\sigma^{2} \polylog} \Big)  \bigg).
\end{align*}
Continuing, since $\lambda m \geq C(1+\sigma^2)d$ and $m\geq C(1+\sigma^{4})\log^{12}(d)$ then
\[
	\frac{\sigma^{2}d}{\lambda m} \leq \frac{1}{C},\qquad  \frac{\polylog \sigma^{2}}{\sqrt{m}} \leq \frac{1}{\sqrt{C}}, \qquad \text{ and } \qquad \sqrt{x} \log(1/x) \rightarrow 0 \text{ as } x\rightarrow 0.
\]
Thus, if we let $C$ be a large enough constant, we obtain the estimate
\[
	\sqrt{\frac{\sigma^{2}d}{\lambda m}} \log\Big( \frac{\lambda m}{\sigma^{2}d} \Big) + \sqrt{\frac{\polylog \sigma^{2}}{\sqrt{m}}} \log\Big( \frac{\sqrt{m}}{\sigma^{2} \polylog} \Big)   \leq 0.01\frac{c_{1}^{1.5}}{  C_{1}^{1.5}}.
\]
Similarly, 
\[
	\frac{C_{1}T_{\star}\polylog \sigma}{\lambda\sqrt{m}} \leq \frac{C_{1}}{c_{1}} \frac{\polylog\sigma}{\sqrt{m}} \log \Big( \frac{\sqrt{m}}{\polylog\sigma^{2}} \Big) \leq 0.01.
\]
Putting all the pieces together yields
\[
	|\parcompX_{T+1} - \parcompX_{0}| \leq \frac{2C_{1}}{c_{1}} \sqrt{K_{0}} + 0.02 \leq 0.03,
\]
where the last step we take $K_{0}$ small enough. By assumption, we have $\parcompX_{0} \leq L_{0} \leq 1.5$ and $ \perpcompX_{0} \leq \Err_{0} = K_{0} \leq 0.01$ so that $\parcompX_{0} \geq L_{0} - \perpcompX_{0} \geq 0.5-0.01 \geq 0.4$. Combining this bound with the inequality in the display above, we deduce that $\parcompX_{T+1} \leq 1.5 + 0.03 \leq 1.6$ and $\parcompX_{T+1} \geq 0.4 - 0.03 \geq 0.3$. Further, by inequality~\eqref{ineq2-convergence-proof-error}, we obtain that $\perpcompX_{T+1} \leq \sqrt{\Err_{T+1}} \leq \sqrt{c} \leq 0.1$. Putting the pieces together yields $L_{T+1} \geq \parcompX_{T+1} \geq 0.3$ and $L_{T+1} \leq \parcompX_{T+1} + \perpcompX_{T+1} \leq 1.7$.  Proceeding similarly yields $0.3 \leq \LZ_{T+1} \leq 1.7$.  Summarizing, we have shown that at iteration $T+1$, both of the following hold
\[
	\Err_{T+1} \leq c \qquad \text{ and } \qquad 0.3 \leq L_{T+1}, \LZ_{T+1} \leq 1.7.
\]
Then applying Lemma~\ref{lemma:one-step-empirical-convergence}, we obtain that inequality~\eqref{sharp-bounds-empirical-error} and inequality~\eqref{bound-alpha-t+1-t-epsilon} holds for $t = T+1$ with probability at least $1-d^{-18}$. Combining with the induction hypothesis, we obtain that for $0 \leq t\leq T+1$, inequality~\eqref{sharp-bounds-empirical-error} and inequality~\eqref{bound-alpha-t+1-t-epsilon} holds with probability at least $1-(T+2)d^{-18}$, which establishes the inductive step. 

Theorem~\ref{thm:local-sharp-convergence-results} directly follows from inequality~\eqref{sharp-bounds-empirical-error}. This concludes the proof. \qed

\medskip
\noindent We next justify several consequences of Theorem~\ref{thm:local-sharp-convergence-results}, i.e., inequalities~\eqref{ineq-local-sharp-convergence-standard-regime},~\eqref{interation-complexity-standard-regime} and~\eqref{interation-complexity-all-lambda-regime}. 
\paragraph{Proof of inequality~\eqref{ineq-local-sharp-convergence-standard-regime}:} Note that by setting $\lambda = C(1+\sigma^2)d/m$ and $m \gg \log^{12}(d)(1+\sigma^{4})$, we obtain that
\[
	\frac{\log^{6}(d)\sigma^{2}}{\lambda\sqrt{m} } \ll \frac{\sigma^{2}}{\lambda(1+\sigma^{2})} = \frac{C\sigma^{2}d}{\lambda^{2}m}.
\]
It means that in inequality~\eqref{ineq-local-sharp-convergence}, the term $\frac{\log^{6}(d)\sigma^{2}}{\lambda\sqrt{m} }$ is a small order term compared with the term $\frac{\sigma^{2}d}{\lambda^{2}m}$. Consequently, the desired result follows immediately from inequality~\eqref{ineq-local-sharp-convergence} by letting in $\lambda  = C(1+\sigma^{2})d/m$. \qed

\paragraph{Proof of inequality~\eqref{interation-complexity-standard-regime}:} To reduce the notational burden, we use the shorthand $\rho_1 = 1 - \frac{c_1 m}{C(1+\sigma^{2})d}$, $ \rho_2 = 1 - \frac{c_2 m}{C(1+\sigma^{2})d}$, and $T_{\star} = \frac{\lambda}{c_1} \log\Big( \min\Big\{\frac{\lambda m}{\sigma^{2}d}, \frac{\sqrt{m}}{\sigma^{2}\log^{6}(d)} \Big\} \Big) = \frac{\lambda}{c_1} \log\Big(\frac{C_{0}(1+\sigma^{2})}{\sigma^{2}}\Big)$.
Applying inequality~\eqref{ineq-local-sharp-convergence-standard-regime} recursively yields, for all $1 \leq t\leq T_{\star}$,
\begin{align*}
	 \rho_{2}^{t} \cdot \Err_{0} + \sum_{k=0}^{t-1} \rho_{2}^{k} \cdot \frac{C_2\sigma^{2}}{2\lambda^2 m} \leq \Err_{t} \leq \rho_{1}^{t} \cdot \Err_{0} + \sum_{k=0}^{t-1} \rho_{1}^{k} \cdot \frac{2C_1\sigma^{2}}{\lambda^2 m}.
\end{align*}
Consequently, for $t \geq \frac{\lambda}{c_{1}}\log(\Err_{0}/\sigma^{2}) = \frac{C(1+\sigma^{2})d}{c_1m}\log(\Err_{0}/\sigma^{2})$, we obtain
\begin{align}\label{iteration-complexity-upper-bound-standard-regime}
	\Err_{t} \leq \exp\Big\{ -\frac{c_1 m}{C_0(1+\sigma^{2}d)}  \cdot t \Big\} \cdot \Err_{0} + \frac{2C_1\sigma^{2}}{c_1 \lambda m} \lesssim \sigma^{2}
\end{align}
where in the first step, we used the bounds
\begin{align*}
\rho_{1}^{t} &= \Big( 1 - \frac{c_1 m}{C(1+\sigma^{2})d} \Big)^{t} \leq \exp\Big\{ -\frac{c_1 m}{C(1+\sigma^{2})d}  \cdot t \Big\}, \quad \text{ and } \quad \sum_{k = 0}^{t-1} \rho_{1}^{k} \leq \sum_{k = 0}^{+\infty} \rho_{1}^{k} = \frac{C(1+\sigma^{2})d}{c_1 m} = \frac{\lambda}{c_1},
\end{align*}
and in the second step, we use the setting $\lambda = C(1+\sigma^2)d/m$. Continuing, we obtain that for $t \leq 0.5 \cdot \frac{\lambda}{c_{2}}\log(\Err_{0}/\sigma^{2}) = \frac{0.5C(1+\sigma^{2})d}{c_2m}\log(\Err_{0}/\sigma^{2})$,
\begin{align}\label{iteration-complexity-lower-bound-standard-regime}
	\Err_{t} \geq \rho_{2}^{t} \cdot \Err_{0} + \sum_{k=0}^{t-1} \rho_{2}^{k} \cdot \frac{C_2\sigma^{2}}{2\lambda^2 m} \geq \rho_{2}^{t} \cdot \Err_{0} \overset{\1}{\geq} \exp\Big\{ - 2 \frac{c_2 m}{C(1+\sigma^{2})d} \cdot t \Big\} \geq \sigma^{2},
\end{align}
where in step $\1$ we used the numerical inequality $1-x \geq \exp\{-2x\}$ for $0 \leq x \leq 1/2$ so that
\[
	\rho_{2}^{t} = \Big( 1 -  \frac{c_2 m}{C(1+\sigma^{2})d} \Big)^{t} \geq \exp\Big\{ -  2\frac{c_2 m}{C(1+\sigma^{2})d} \cdot t \Big\}.
\]
Now, putting together inequalities~\eqref{iteration-complexity-upper-bound-standard-regime},~\eqref{iteration-complexity-lower-bound-standard-regime} together, we see that it takes
\[
	\tau = \Theta\Big( \lambda \log\Big(\frac{\Err_{0}}{\sigma^{2}}\Big) \Big) = \Theta\Big( \frac{d(1+\sigma^2)}{m} \log\Big( \frac{\Err_{0}}{\sigma^{2}} \Big) \Big) \quad \text{iterations to guarantee} \quad \Err_{\tau} \lesssim \sigma^{2}.
\]
\qed

\paragraph{Proof of inequality~\eqref{interation-complexity-all-lambda-regime}:} We first prove the upper bound of the iteration complexity. Applying inequality~\eqref{ineq-local-sharp-convergence} recursively yields
\begin{align*}
	\Err_t &\leq \Big(1-\frac{c_1}{\lambda} \Big)^{t} \cdot \Err_{0} + \sum_{k=0}^{t-1} \Big(1-\frac{c_1}{\lambda} \Big)^{k} \cdot \Big( \frac{C_1 \sigma^2 d}{\lambda^2 m} + \frac{C_1\log^{6}(d)\sigma^2}{\lambda \sqrt{m}} \Big)
	\\& \lesssim e^{-\frac{c_1 t}{\lambda} } \cdot \Err_{0} + \frac{d\sigma^{2}}{\lambda m} + \frac{\log^{6}(d) \sigma^{2}}{\sqrt{m}},
\end{align*}
where in the last step we used $\sum_{k=0}^{t-1} (1-c_1/\lambda)^{k} \lesssim \lambda$. Consequently, we obtain 
\begin{align}\label{upper-bound-iteration-complexity-general}
	\Err_{t} \lesssim \frac{d\sigma^{2}}{\lambda m} + \frac{\log^{6}(d) \sigma^{2}}{\sqrt{m}} \quad \text{for } t\geq \frac{\lambda}{c_1} \log\Big( \Err_{0} \cdot \min\Big\{ \frac{\lambda m}{d\sigma^{2}}, \frac{\sqrt{m}}{ \log^{6}(d) \sigma^{2}} \Big\} \Big).
\end{align}
We then prove the lower bound of iteration complexity. Applying inequality~\eqref{ineq-local-sharp-convergence} recursively yields
\begin{align*}
	 \Err_t &\geq \Big(1-\frac{c_2}{\lambda} \Big)^{t} \cdot \Err_{0} + \sum_{k=0}^{t-1} \Big(1-\frac{c_2}{\lambda} \Big)^{k} \cdot \Big( \frac{C_2 \sigma^2 d}{\lambda^2 m} - \frac{C_1\log^{6}(d)\sigma^2}{\lambda \sqrt{m}} \Big)
	\\& \geq e^{- \frac{2c_2 t}{\lambda} } \cdot \Err_{0}  -  \frac{C_1 \log^{6}(d) \sigma^{2}}{c_2 \sqrt{m}},
\end{align*}
where in the last step we used the numerical inequality $1-x\geq \exp\{-2x\}$ for $0\leq x\leq 1/2$ so that
$\Big(1-\frac{c_2}{\lambda} \Big)^{t} \geq e^{ -\frac{2c_2 t }{\lambda}}$, and $\sum_{k=0}^{t-1} \Big(1-\frac{c_2}{\lambda} \Big)^{k} \leq \frac{\lambda}{c_2}$.
Consequently, we obtain 
\[
	\Err_{t} \geq  \frac{C_1}{2c_2} \Big( \frac{\sigma^{2}d}{\lambda m} + \frac{\sigma^{2} \log^{6}(d)}{\sqrt{m}} \Big), \quad \text{for } t \leq \frac{0.5\lambda}{c_{2}} \bigg(  \log\bigg( \Err_{0} \cdot \min\bigg\{ \frac{\lambda m}{d\sigma^{2}}, \frac{\sqrt{m}}{ \log^{6}(d) \sigma^{2}} \bigg\} \bigg) - \log\Big( \frac{2C_1}{ c_2} \Big) \bigg).
\]
Combining the inequality in the display above with inequality~\eqref{upper-bound-iteration-complexity-general} together yields the desired result. \qed

\subsection{Proof of Lemma~\ref{lemma:one-step-empirical-convergence}}\label{proof-lemma-one-step-empirical-convergence}
Our proof relies crucially on properties of the deterministic predictions, summarized in the following lemma, whose proof we provide in Section~\ref{proof-lemma-one-step-deterministic-convergence}.
\begin{lemma}\label{lemma:one-step-deterministic-convergence}
Let the functions $\alphaXmap_{m,d,\sigma,\lambda},\alphaZmap_{m,d,\sigma,\lambda},\iotaXmap_{m,d,\sigma,\lambda},\iotaZmap_{m,d,\sigma,\lambda},\etaXmap_{m,d,\sigma,\lambda}, \etaZmap_{m,d,\sigma,\lambda}:\mathbb{R}^{4} \rightarrow \mathbb{R}$ be as defined in Section~\ref{sec:expilicit-formulas-prediction}. Given $\parcompX,\parcompZ,\perpcompX,\perpcompZ \in \mathbb{R}$, let
\begin{align*}
	\parcompdetX &= \alphaXmap_{m,d,\sigma,\lambda}(\parcompX,\perpcompX,\parcompZ,\perpcompZ), 
	&&(\perpcompdetX)^{2} =  \big(\iotaXmap_{m,d,\sigma,\lambda}(\parcompX,\perpcompX,\parcompZ,\perpcompZ) \big)^{2} + \etaXmap_{m,d,\sigma,\lambda}(\parcompX,\perpcompX,\parcompZ,\perpcompZ),
	\\ \parcompdetZ &= \alphaZmap_{m,d,\sigma,\lambda}(\parcompX,\perpcompX,\parcompZ,\perpcompZ),
	&&(\perpcompdetZ)^{2} =  \big(\iotaZmap_{m,d,\sigma,\lambda}(\parcompX,\perpcompX,\parcompZ,\perpcompZ) \big)^{2} + \etaZmap_{m,d,\sigma,\lambda}(\parcompX,\perpcompX,\parcompZ,\perpcompZ),
	\\ \Err &= (\parcompX\parcompZ-1)^{2} + \perpcompX^{2} + \perpcompZ^{2},  &&\Errdet = (\parcompdetX \parcompdetZ -1)^{2} + (\perpcompdetX)^{2} + (\perpcompdetZ)^{2}.
\end{align*}
There exists a tuple of universal, positive constants $(c,c_1,c_2,C_1,C_2)$ such that if $\Err \leq c$ and $0.2 \leq (\parcompX^{2} + \perpcompX^{2})^{\frac{1}{2}}, (\parcompZ^{2} + \perpcompZ^{2})^{\frac{1}{2}} \leq 3$ then the following hold.
\begin{subequations}
\begin{align}\label{sharp-bounds-det-error}
	\Big( 1 - \frac{c_{2}}{\lambda} \Big) \cdot \Err + \frac{C_{2}\sigma^{2}d}{\lambda^{2}m} \leq \Errdet &\leq \Big( 1 - \frac{c_{1}}{\lambda} \Big) \cdot \Err + \frac{C_{1}\sigma^{2}d}{\lambda^{2}m},
	\\ (\parcompdetX\parcompdetZ-1)^{2} + (\iotadetX)^{2} + (\iotadetZ)^{2} &\leq C_{1} \Err, \qquad |\parcompdetX|,|\parcompdetZ| \leq 4
	\label{upper-bound-det-error-alpha-iota}
	\\ \text{and} \quad |\parcompdetX - \parcompX|, |\parcompdetZ - \parcompZ|  &\leq  \frac{C_{1}\sqrt{\Err}}{\lambda}. \label{alpha-change-after-det-map}
\end{align} 
\end{subequations}
\end{lemma}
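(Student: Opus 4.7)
The plan is to view Lemma~\ref{lemma:one-step-deterministic-convergence} as a careful perturbation analysis of the deterministic one-step map around its fixed point at $(\parcompX,\perpcompX,\parcompZ,\perpcompZ) = (1,0,1,0)$ and $\sigma=0$, where the map $\detstatemap_{m,d,\sigma,\lambda}$ equals the identity. Under the hypotheses $\Err \leq c$ and $L, \LZ \asymp 1$, I would work in a small neighborhood of this fixed point and track the leading-order dependence of each of the six prediction functions on the four input coordinates as well as on $\sigma$.

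The first block of work concerns the auxiliary scalars. I would first invoke Lemma~\ref{fixed-point-equations-unique-solution} to conclude that $r_1, r_2 \asymp \lambda m/d$ in the hypothesized regime; by differentiating the defining fixed-point equation~\eqref{eq:fixed-point} through the implicit function theorem, I would also extract Lipschitz bounds for $(r_1, r_2)$ as functions of $(L,\LZ)$. Using these, I would compute the leading behavior of the expectations $V, V_1, V_2$ in~\eqref{eq:V-V1-V2}: since $\lambda m/d$ is large, the integrands concentrate around $G_1^2 G_2^2$, $G_2^2$, $G_1^2$ respectively, yielding $V = L^2\LZ^2 + O(d/(\lambda m))$ and $V_i = \LZ^2$ or $L^2$ up to the same correction. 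Parallel estimates for the second-moment expectations appearing in $V_3, V_4$ yield $V_3, V_4 \lesssim \sigma^2 d^2/(\lambda m)^2 + \Err\cdot d^2/(\lambda m)^2$, which will drive the noise floor.

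The second block is the perturbation of the components themselves. Substituting the asymptotics from Step 1 into~\eqref{et_updates_eq}, I would prove: (i) $|\alphaXmap - \parcompX| + |\alphaZmap - \parcompZ| \lesssim \sqrt{\Err}/\lambda$, which is exactly~\eqref{alpha-change-after-det-map} and immediately yields $|\parcompdetX|, |\parcompdetZ| \leq 4$; (ii) a quantitative version of the identity $\parcompdetX \parcompdetZ - 1 = (1-c/\lambda)(\parcompX\parcompZ - 1) + O(\perpcompX \perpcompZ/\lambda) + O(\sigma^2 d/(\lambda^2 m))$ that comes from expanding the product $\alphaXmap \cdot \alphaZmap - 1$ using the leading form of $V$ and the dominant coefficients $(V + \lambda L^2\LZ^2)/(V(L^2+\LZ^2)+\lambda L^2\LZ^2) \approx 1 - O(1/\lambda)$; (iii) $(\iotadetX)^2 \leq (1-c/\lambda)\perpcompX^2$ and likewise for $\iotadetZ$, by identifying the contractive prefactor in $\iotaXmap$ as $V\parcompX\parcompZ/(L^2\LZ^2(V_1+\lambda))$ times the corresponding $\perpcompX$ term; (iv) for the fully orthogonal contributions, solving the $2\times 2$ linear system~\eqref{fixed-point-eq-eta-updates}, whose $2\times 2$ coefficient matrix is diagonally dominant because the diagonal entries are $1 - O(d/(\lambda m))$ while the off-diagonal entries are $O(d/(\lambda m))$, gives $\etaX^2 + \etaZ^2 \lesssim (m/d)(V_3 + V_4) \lesssim \sigma^2 d/(\lambda^2 m) + \Err/\lambda$. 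Combining (ii)-(iv) yields the upper bound on $(\parcompdetX\parcompdetZ - 1)^2 + (\iotadetX)^2 + (\iotadetZ)^2$ asserted in~\eqref{upper-bound-det-error-alpha-iota} and the upper direction of~\eqref{sharp-bounds-det-error}.

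The final and most delicate block is the matching lower bound in the sandwich relation~\eqref{sharp-bounds-det-error}. Here I cannot just bound the contraction factors from above; I need two-sided control. I would argue that each of the leading prefactors identified above lies in an interval $[1 - c_2/\lambda,\, 1 - c_1/\lambda]$ by exhibiting explicit universal constants $c_1, c_2$ coming from the expectations $\EE\{G_1^2 G_2^2\}$, $\EE\{G_i^2\}$ under the normalization $L, \LZ \asymp 1$, and similarly show the noise contribution from $V_3 + V_4$ is bounded below by $c \sigma^2 d^2/(\lambda m)^2$, which after propagation through the diagonally dominant system gives the lower bound $\geq C_2 \sigma^2 d/(\lambda^2 m)$. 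I expect the main obstacle to be bookkeeping: the coupled nonlinear dependence of every quantity on $(r_1, r_2)$, which themselves depend implicitly on $(L, \LZ)$ through~\eqref{eq:fixed-point}, means that each perturbation has to be propagated carefully, and the cross-terms $O(\perpcompX \perpcompZ/\lambda)$ and $O(\Err/\lambda)$ must be absorbed into the error $\Err$ rather than creating spurious higher-order improvements that would break the sharp lower bound. Choosing $c$ small enough to make the quadratic cross-terms negligible compared to the linear-in-$\Err$ contraction, then collecting all contributions, completes the proof.
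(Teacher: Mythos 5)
Your proposal follows essentially the same route as the paper's proof: two-sided control of $r_1,r_2\asymp \lambda m/d$ and $V,V_1,V_2\asymp 1$ via Lemma~\ref{fixed-point-equations-unique-solution}, identification of the $1-\Theta(1/\lambda)$ contraction factors in both the product $\parcompdetX\parcompdetZ-1$ and the $\iota$ components, the two-sided bounds $\sigma^2 d^2/(\lambda m)^2\lesssim V_3,V_4\lesssim(\Err+\sigma^2)d^2/(\lambda m)^2$, and diagonal dominance of the $2\times2$ system for $\etaX^2,\etaZ^2$ to pin down the noise floor. The only cosmetic slips (a spurious $O(\sigma^2 d/(\lambda^2 m))$ term in $\parcompdetX\parcompdetZ-1$, which is noise-independent, and an unnecessary implicit-function-theorem step) do not affect the argument.
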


\noindent We now make use of Lemma~\ref{lemma:one-step-deterministic-convergence} to prove Lemma~\ref{lemma:one-step-empirical-convergence}.  We now prove inequalities~\eqref{sharp-bounds-empirical-error} and~\eqref{bound-alpha-t+1-t-epsilon} in turn, starting with inequality~\eqref{sharp-bounds-empirical-error}.
\paragraph{Proof of inequality~\eqref{sharp-bounds-empirical-error}:} We start with bounding $(\parcompX_{t+1}\parcompZ_{t+1}-1)^{2}$. Recall the deterministic predictions $\parcompdetX_{t+1} = \alphaXmap_{m,d,\sigma,\lambda}(\parcompX_{t},\perpcompX_{t},\parcompZ_{t},\perpcompZ_{t})$ and $\parcompdetZ_{t+1} = \alphaZmap_{m,d,\sigma,\lambda}(\parcompX_{t},\perpcompX_{t},\parcompZ_{t},\perpcompZ_{t})$. Applying Theorem~\ref{thm:one-step-prediction}, we obtain that with probability at least $1-d^{-20}$
\begin{align}\label{ineq:deviation-alpha'-alphadet}
	| \parcompX_{t+1} - \parcompdetX_{t+1}|, | \parcompZ_{t+1} - \parcompdetZ_{t+1}| \leq C'\polylog \big( \sqrt{\Err_{t}} + \sigma \big) /(\lambda\sqrt{m}),
\end{align}
where $C'$ is some universal constant. Using inequality~\eqref{upper-bound-det-error-alpha-iota} in Lemma~\ref{lemma:one-step-deterministic-convergence} yields $|\parcompdetX_{t+1}|,|\parcompdetZ_{t+1}| \leq 4$. Putting the two pieces together yields the pair of upper bounds
\begin{align}\label{ineq:|alpha|-constant-bound}
	|\parcompX_{t+1}| \leq |\parcompdetX_{t+1}| + C'\polylog \big(\sqrt{\Err_{t}} +\sigma \big) /(\lambda\sqrt{m}) \overset{\1}{\leq} 5 \qquad \text{ and } \qquad |\parcompZ_{t+1}| \leq 5,
\end{align}
where step $\1$ follows since $m \geq C\log^{12}(d)$, $\lambda \geq C(1+\sigma)d/m$ and $\Err_{t} \leq c$. Then, we apply the triangle inequality to obtain
\begin{align*}
	\big| |\parcompX_{t+1}\parcompZ_{t+1}-1| - |\parcompdetX_{t+1}\parcompdetZ_{t+1}-1| \big|  
	& \leq |\parcompZ_{t+1}|\cdot |\parcompX_{t+1} - \parcompdetX_{t+1}| +  |\parcompdetX_{t+1}|\cdot |\parcompZ_{t+1} - \parcompdetZ_{t+1}|
	\\& \leq  10C'\polylog \big(\sqrt{\Err_{t}} + \sigma \big)/ (\lambda\sqrt{m}),
\end{align*}
where the last step follows upon invoking inequalities~\eqref{ineq:deviation-alpha'-alphadet} and~\eqref{ineq:|alpha|-constant-bound}. Using inequality~\eqref{upper-bound-det-error-alpha-iota} in Lemma~\ref{lemma:one-step-deterministic-convergence} yields $|\parcompdetX_{t+1}\parcompdetZ_{t+1}-1| \lesssim \sqrt{\Err_{t}}$. Consequently, we obtain 
\begin{align}\label{ineq:alpha'-Errt-bound}
	|\parcompX_{t+1}\parcompZ_{t+1}'-1| \leq |\parcompdetX_{t+1}\parcompdetZ_{t+1}-1|  + 10C'\polylog \big( \sqrt{\Err_{t}} + \sigma \big)/(\lambda\sqrt{m}) \lesssim \sqrt{\Err_{t}} + \sigma,
\end{align}
where the last inequality follows as $m \geq C\log^{12}(d)$ and $\lambda \geq C(1+\sigma)d/m \geq C$. Putting all the pieces together yields 
\begin{align}\label{ineq:empirical-det-error-alpha'}	
|(\parcompX_{t+1}&\parcompZ_{t+1}-1)^{2} - (\parcompdetX_{t+1}\parcompdetZ_{t+1}-1)^{2}| \leq \Big( |\parcompX_{t+1}\parcompZ_{t+1}-1| + |\parcompdetX_{t+1}\parcompdetZ_{t+1}-1|\Big)  \cdot |\parcompX_{t+1}\parcompZ_{t+1} - \parcompdetX_{t+1}\parcompdetZ_{t+1} | \nonumber \\ &\lesssim \big(\sqrt{\Err_{t}} + \sigma\big) \cdot \polylog \big(\sqrt{\Err_{t}}+\sigma\big)/(\lambda\sqrt{m}) \lesssim \polylog (\Err_{t} + \sigma^{2})/(\lambda\sqrt{m}).
\end{align}
We then turn to bound $(\perpcompX_{t+1})^{2} + (\perpcompZ_{t+1})^{2}$. We let 
\begin{align*}
(\perpcompdetX_{t+1})^{2} &= \big(\iotaXmap_{m,d,\sigma,\lambda}(\parcompX_t,\perpcompX_t,\parcompZ_t,\perpcompZ_t)\big)^{2} + \etaXmap_{m,d,\sigma,\lambda}(\parcompX_t,\perpcompX_t,\parcompZ_t,\perpcompZ_t)^{2},\\
(\perpcompdetZ_{t+1})^{2} &= \big(\iotaZmap_{m,d,\sigma,\lambda}(\parcompX_t,\perpcompX_t,\parcompZ_t,\perpcompZ_t)\big)^{2} + \etaZmap_{m,d,\sigma,\lambda}(\parcompX_t,\perpcompX_t,\parcompZ_t,\perpcompZ_t)^{2}.
\end{align*}
Applying Theorem~\ref{thm:one-step-prediction}, we obtain that with probability at least $1-d^{-20}$, 
\begin{align}\label{ineq:empirical-det-error-beta}
	\big| (\perpcompdetX_{t+1})^{2} + (\perpcompZ_{t+1})^{2} - \big((\perpcompdetX_{t+1})^{2} + (\perpcompdetZ_{t+1})^{2} \big) \big| \lesssim \polylog(\Err_{t} + \sigma^{2})/(\lambda \sqrt{m}).
\end{align}
Let $\Errdet_{t+1} = (\parcompdetX_{t+1}\parcompdetZ_{t+1}-1)^{2} + (\perpcompdetX_{t+1})^{2} + (\perpcompdetZ_{t+1})^{2}$. Combining the inequalities~\eqref{ineq:empirical-det-error-alpha'} and~\eqref{ineq:empirical-det-error-beta} yields
\begin{align}\label{ineq:Err-Errdet-bound}
	| \Err_{t+1} - \Errdet_{t+1} | \leq C'\polylog(\Err_{t} + \sigma^{2})/(\lambda \sqrt{m}), 
\end{align}
where $C'$ is a universal constant. Consequently, we obtain that 
\begin{align*}
	\Err_{t+1} \leq \Errdet_{t+1} + \frac{C'\polylog(\Err_{t} + \sigma^{2})}{\lambda \sqrt{m}} 
	 &\leq \Big(1-\frac{c_{1}}{\lambda} \Big) \Err_{t} + \frac{C_{1}\sigma^{2}d}{\lambda^{2}m} + \frac{C'\polylog(\Err_{t} + \sigma^{2})}{\lambda \sqrt{m}} \\&\leq \Big(1-\frac{c_{1}}{2\lambda} \Big) \Err_{t} + \frac{C_{1}\sigma^{2}d}{\lambda^{2}m} + \frac{C'\polylog \sigma^{2}}{\lambda \sqrt{m}},
\end{align*}
where the second inequality follows by invoking~\eqref{sharp-bounds-det-error} to upper bound $\Errdet_{t+1}$ and the last step follows as $m\geq C\log^{12}(d)$, so that by letting $C$ large enough,
\[
	\frac{C'\polylog \Err_{t} }{\lambda \sqrt{m}} \leq \frac{C'\Err_{t}}{\lambda \sqrt{C}} \leq \frac{c'_{1}\Err_{t}}{2\lambda}.
\]
This proves the desired upper bound of $\Err_{t+1}$ in inquality~\eqref{sharp-bounds-empirical-error} by noting $c_{1},C_{1}$ and $C'$ are universal, positive constants. Turning to the lower bound of $\Err_{t+1}$, using inequalities~\eqref{sharp-bounds-det-error},~\eqref{ineq:Err-Errdet-bound} and letting $m\geq C\log^{12}(d)$ for $C$ large enough yields
\begin{align*}
	\Err_{t+1} \geq \Big(1-\frac{2c_{2}}{\lambda} \Big) \Err_{t} + \frac{C_{2}\sigma^{2}d}{\lambda^{2}m} - \frac{C'\polylog \sigma^{2}}{\lambda \sqrt{m}}.
\end{align*}
This proves the desired lower bound of $\Err_{t+1}$ in inquality~\eqref{sharp-bounds-empirical-error} upon adjusting constants.

\paragraph{Proof of inequality~\eqref{bound-alpha-t+1-t-epsilon}:} Applying inequality~\eqref{alpha-change-after-det-map}, we obtain that
\[
	|\parcompdetX_{t+1} - \parcompX_{t}| \; \vee \; |\parcompdetZ_{t+1} - \parcompZ_{t}| \leq C_{1}'\sqrt{\Err_{t}}/\lambda. 
\]
Combining the inequality in the display above with inequality~\eqref{ineq:deviation-alpha'-alphadet}, we obtain
\begin{align*}
	|\parcompX_{t+1} - \parcompX_{t}| \leq |\parcompX_{t+1} - \parcompdetX_{t+1}| + |\parcompdetX_{t+1} - \parcompX_{t}| 
	&\leq 
	 \frac{C'\polylog \big( \sqrt{\Err_{t}} + \sigma \big) }{\lambda\sqrt{m}} + \frac{C_{1}'\sqrt{\Err_{t}}}{\lambda}\\
	&\leq
	 \frac{C_{1} \polylog \sigma }{\lambda\sqrt{m}} +\frac{C_{1}\sqrt{\Err_{t}}}{\lambda},
\end{align*}
where the last step follows from the assumption $m\geq C\log^{12}(d)$. This proves the desired inequality~\eqref{bound-alpha-t+1-t-epsilon}. Proceeding in identical fashion yields the bound on $|\parcompZ_{t+1} - \parcompZ_{t}|$, which completes the proof. \qed

\section{Discussion}
In this paper, we derived deterministic trajectory predictions for the stochastic prox-linear method when applied to rank one matrix sensing with Gaussian data.  We then used these predictions to derive concrete, two-sided convergence guarantees elucidating the trade-off between batch-size and step-size as well as the sensitivity of the convergence guarantees to the inherent noise in the problem.  Several interesting open questions remain and we detail a few here.  

While our one-step guarantees in Theorem~\ref{thm:one-step-prediction} hold for all batch-sizes $1 \leq m \leq d$, our convergence guarantees in Theorem~\ref{thm:local-sharp-convergence-results} require batch-sizes which scale poly-logarithmically in the dimension $d$.  This deficiency stems from bounds on the deviation around the deterministic predictions which incur a $\text{polylog}(d)$ factor.  Removing these logarithmic factors in Theorem~\ref{thm:one-step-prediction} would immediately extend our convergence guarantees to constant batch-size settings.  In a similar spirit, while our deterministic trajectory predictions---which we illustrate in Section~\ref{sec:experimental-results}---demonstrate excellent adherence to the empirical trajectory, our results only show that the empirical trajectory and deterministic trajectory enjoy the same convergence rate.  It would be interesting to show exact convergence of the empirical trajectory to the deterministic trajectory, for instance via a so-called envelope guarantee~\citep[Theorem 3(b)]{chandrasekher2023sharp}. 

More broadly, our guarantees required the mini-batches to be obtained in an online fashion.  While this incurs a nearly optimal sample complexity of $O(d (1 + \sigma^2) \log(1/\sigma^2))$ to reach error $\sigma^2$, it does not allow for samples to be re-used.  An interesting open question in this direction would be to understand the effect of sample re-use and develop trajectory predictions in this setting.  

\subsection*{Acknowledgments} This work was supported in part by the NSF under grants CCF-2107455 and DMS-2210734, by research awards/gifts from Adobe, Amazon, and Mathworks, and by European Research Council Advanced Grant 101019498. KAV would like to thank Aaron Mishkin and Karan Chadha for helpful conversations and useful pointers to the literature.  

\bibliographystyle{abbrvnat}
\bibliography{refs-intro,refs-blind-deconvolution}

\appendix
\section{Deferred proofs for the parallel component}
This appendix is organized as follows.  Sections~\ref{proof-auxiliary-lemma1-component-expectation-concentration} and~\ref{proof-auxiliary-lemma2-component-expectation-concentration} are dedicated to the proofs of Lemmas~\ref{auxiliary-lemma1-component-expectation-concentration} and~\ref{auxiliary-lemma2-component-expectation-concentration}, respectively, each of which is required to bound the stochastic error of the parallel component.  Then, in Sections~\ref{proof-lemma-leave-one-direction-out} and~\ref{subsec:proof-concentration-M11-M22-M12-M1-M2}, we provide the proofs of Lemmas~\ref{lemma-leave-one-out} and~\ref{concentration-of-M11-M22-M12-M1-M2}, respectively, each of which is required to bound the bias of the parallel component. 

\subsection{Proof of Lemma~\ref{auxiliary-lemma1-component-expectation-concentration}}\label{proof-auxiliary-lemma1-component-expectation-concentration} We prove each part in turn, starting with part (a).

\paragraph{Proof of Lemma~\ref{auxiliary-lemma1-component-expectation-concentration}(a):} By definition of $f_{\bu}$~\eqref{definition-f-component-innerproduct} and the assumption $\|\bu\|_{2}=1$, we bound $| f_{\bu} |$ as  $\big|f_{\bu}\big| \leq \big\|[\bcoefX_{+}^{\top}\;\vert\; \bcoefX_{+}^{\top}] \big\|_{2}$. By the definition of $(\bcoefX_{+},\bcoefZ_{+})$~\eqref{eq:closed-form-update} and $\bSig$, we consider the equivalent characterization
\begin{align}\label{eq:closed-form-mu-nu-plus}
	\begin{bmatrix} \bcoefX_{+} \\ \bcoefZ_{+} \\ \end{bmatrix} = \bSig^{-1} \bigg( \sum_{k = 1}^{n}(y_{k}+G_{k}\widetilde{G}_{k})\cdot \ba_{k} + \lambda m \begin{bmatrix} \bcoefX_{\sharp} \\ \bcoefZ_{\sharp} \\ \end{bmatrix} \bigg).
\end{align}
Applying the triangle inequality and using the three bounds $\|\bSig^{-1} \|_{\mathrm{op}} \leq (\lambda m)^{-1}$, $\big \lvert y_{k}+G_k\widetilde{G}_{k} \big \rvert  \leq C(\sigma + 1)\log(d)$, and $\|\bx_{k}\|_{2},\|\bz_{k}\|_{2} \lesssim \sqrt{d}$,
we obtain the bound
\begin{align*}
	\bigg \| \begin{bmatrix} \bcoefX_{+}\\ \bcoefZ_{+} \\ \end{bmatrix} \bigg\|_{2} &\leq \frac{C(\sigma + 1)\log^{2}(d) \sqrt{d} m}{\lambda m} + L_{\sharp} + \LZ_{\sharp} \leq d,
\end{align*}
where in the last step we invoked the assumptions $\lambda m \geq C_{0}d(1+\sigma)$ and $m\leq d$.  We turn now to the proof of part (b).

\paragraph{Proof of Lemma~\ref{auxiliary-lemma1-component-expectation-concentration}(b):}
Since $\rho\big(\{\bx_{k},\bz_{k},\epsilon_{k}\}_{k=1}^{n},\{\bx'_{k},\bz'_{k},\epsilon'_{k}\}_{k=1}^{n} \big) \leq 2$, without loss of generality, we suppose $(\bx_{k},\bz_{k},\epsilon_{k}) = (\bx'_{k},\bz'_{k},\epsilon'_{k})$ for all $k \in [m] \setminus \{i,j\}$. We begin by claiming the following pair of structural relations
\begin{subequations}\label{claim1-auxiliary-lemma-1}
\begin{align}
	\label{claim1-auxiliary-lemma-1a}
	\begin{bmatrix} \bcoefX_{+} \\ \bcoefZ_{+} \\ \end{bmatrix} &= \begin{bmatrix} \bcoefX^{-(i)} \\ \bcoefZ^{-(i)} \\ \end{bmatrix} + \frac{y_{i}+G_{i}\widetilde{G}_{i}-\widetilde{G}_{i}\bx_{i}^{\top}\bcoefX^{-(i)} - G_{i}\bz_{i}^{\top}\bcoefZ^{-(i)}}{1+\ba_{i}^{\top}\bSig_{i}^{-1} \ba_{i}} \cdot \bSig_{i}^{-1}\ba_{i},\\
	\label{claim1-auxiliary-lemma-1b}
	\begin{bmatrix} \bcoefX^{-(i)} \\ \bcoefZ^{-(i)} \\ \end{bmatrix} &= \begin{bmatrix} \bcoefX^{-(i,j)} \\ \bcoefZ^{-(i,j)} \\ \end{bmatrix} + \frac{y_{j}+G_{j}\widetilde{G}_{j}-\widetilde{G}_{j}\bx_{j}^{\top}\bcoefX^{-(i,j)} - G_{j}\bz_{j}^{\top}\bcoefZ^{-(i,j)}}{1+\ba_{j}^{\top}\bSig_{i,j}^{-1} \ba_{j}} \cdot \bSig_{i,j}^{-1}\ba_{j},
\end{align}
\end{subequations}
deferring their proofs to the end. Taking these relations as given, and noting that $\bSig_{i}^{-1},\bSig_{i,j}^{-1}\succeq \boldsymbol{0}$, we deduce the inequality
\begin{align*}
 	\bigg| f_{\bu}\big( \{\bx_{k},\bz_{k},\epsilon_{k}\}_{k=1}^{m} \big) - \Big \langle \bu, \begin{bmatrix} \bcoefX^{-(i,j)} \\ \bcoefZ^{-(i,j)} \end{bmatrix} \Big \rangle \bigg| &\leq \big \lvert y_{i}+G_{i}\widetilde{G}_{i}-\widetilde{G}_{i}\bx_{i}^{\top}\bcoefX^{-(i)} - G_{i}\bz_{i}^{\top}\bcoefZ^{-(i)} \big \rvert  \cdot \big \lvert \bu^{\top}\bSig_{i}^{-1}\ba_{i} \big \rvert \\& + \big \lvert y_{j}+G_{j}\widetilde{G}_{j}-\widetilde{G}_{j}\bx_{j}^{\top}\bcoefX^{-(i,j)} - G_{j}\bz_{j}^{\top}\bcoefZ^{-(i,j)} \big \rvert \cdot \big \lvert \bu^{\top} \bSig_{i,j}^{-1}\ba_{j} \big\rvert.
\end{align*}
Recalling that $y_i = \bx_i^{\top}\bcoefX_{\star} \cdot \bz_i^{\top}\bcoefZ_{\star} + \epsilon_i$ and applying triangle inequality yields
\begin{align}\label{ineq-auxiliary-lemma1-proof-u-component}
	\big \lvert y_{i}+G_{i}\widetilde{G}_{i}-\widetilde{G}_{i}\bx_{i}^{\top}\bcoefX^{-(i)} - & G_{i}\bz_{i}^{\top}\bcoefZ^{-(i)} \big \rvert \nonumber \leq
	\lvert \epsilon_{i} \rvert + \big \lvert \bx_{i}^{\top}\bcoefX_{\star} \cdot \bz_{i}^{\top}\bcoefZ_{\star} - \bx_{i}^{\top}\bcoefX_{\sharp} \cdot \bz_{i}^{\top}\bcoefZ_{\sharp} \big \rvert \\ 
	&\qquad \qquad + \big \lvert \bx_{i}^{\top}(\bcoefX_{\sharp} - \bcoefX^{-(i)}) \cdot \bz_{i}^{\top}\bcoefZ_{\sharp}  \big\rvert  + \big \lvert \bx_{i}^{\top} \bcoefX_{\sharp} \cdot \bz_{i}^{\top}(\bcoefZ_{\sharp} - \bcoefZ^{-(i)} ) \big \rvert.
\end{align}
We then decompose $\bcoefX_{\sharp}$ into two orthogonal directions $\bcoefX_{\star},\bcoefX^{\perp}$(similarly for $\bcoefZ_{\sharp}$) to bound
\begin{align}\label{ineq1-auxiliary-lemma1-proof-u-component}
	&\big \lvert \bx_{i}^{\top}\bcoefX_{\star}  \bz_{i}^{\top}\bcoefZ_{\star} - \bx_{i}^{\top}\bcoefX_{\sharp} \bz_{i}^{\top}\bcoefZ_{\sharp} \big\rvert \nonumber \\ &= \big\lvert (\parcompX_{\sharp}\parcompZ_{\sharp}-1)\bx_{i}^{\top}\bcoefX_{\star} \cdot \bz_{i}^{\top}\bcoefZ_{\star} - \perpcompX_{\sharp}\parcompZ_{\sharp} \bx_{i}^{\top}\bcoefX^{\perp} \bz_{i}^{\top}\bcoefZ_{\star} - \perpcompZ_{\sharp}\parcompX_{\sharp} \bx_{i}^{\top}\bcoefX_{\star} \bz_{i}^{\top}\bcoefZ^{\perp} - \perpcompX_{\sharp}\perpcompZ_{\sharp} \bx_{i}^{\top}\bcoefX^{\perp} \bz_{i}^{\top}\bcoefZ^{\perp} \big \rvert \nonumber \\&\lesssim \big(\big \lvert \parcompX_{\sharp}\parcompZ_{\sharp}-1 \big \rvert + \perpcompX_{\sharp} + \perpcompZ_{\sharp} \big) \cdot \log(d),
\end{align}
where in the last step we used $\{\bx_i,\bz_i,\epsilon_i\}_{i=1}^{m} \in \mathcal{S}$ and $L_{\sharp},\LZ_{\sharp} \lesssim 1$.  Proceeding similarly yields the pair of upper bounds
\begin{align*}
	\big\lvert \bx_{i}^{\top}(\bcoefX_{\sharp} - \bcoefX^{-(i)}) \cdot \bz_{i}^{\top}\bcoefZ_{\sharp}  \big \rvert &\lesssim \log(d) \big\|\bcoefX_{\sharp} - \bcoefX^{-(i)} \big\|_{2}, \qquad \text{ and }\\
	 \big\lvert \bx_{i}^{\top} \bcoefX_{\sharp} \cdot \bz_{i}^{\top}(\bcoefZ_{\sharp} - \bcoefZ^{-(i)} ) \big\rvert &\lesssim \log(d) \big\|\bcoefZ_{\sharp} - \bcoefZ^{-(i)} \big\|_{2}.
\end{align*}
Then, using the definition of $(\bcoefX^{-(i)},\bcoefZ^{-(i)})$ as minimizers of~\eqref{estimators-leave-i-sample-out}, we obtain the inequality
\begin{align*}
	&\sum_{k \neq i}\big(y_{k} + G_{k}\widetilde{G}_{k} - \widetilde{G}_{k} \bx_{k}^{\top}\bcoefX^{-(i)} - G_{k} \bz_{k}^{\top}\bcoefZ^{-(i)} \big)^{2} + \lambda m\cdot \big( \big\|\bcoefX^{-(i)} - \bcoefX_{\sharp} \big\|_{2}^{2} + \big\|\bcoefZ^{-(i)} - \bcoefZ_{\sharp} \big\|_{2}^{2} \big)
	\\&\leq \sum_{k \neq i}\big(y_{k} + G_{k}\widetilde{G}_{k} - \widetilde{G}_{k} \bx_{k}^{\top}\bcoefX_{\sharp} - G_{k} \bz_{k}^{\top}\bcoefZ_{\sharp} \big)^{2} = \sum_{k \neq i}\big(y_{k} - \bx_{k}^{\top}\bcoefX_{\sharp} \cdot \bz_{k}^{\top}\bcoefZ_{\sharp} \big)^{2}.
\end{align*}
Re-arranging yields
\begin{align*} 
	\big\|\bcoefX^{-(i)} - \bcoefX_{\sharp} \big\|_{2}^{2} + \big\|\bcoefZ^{-(i)} - \bcoefZ_{\sharp} \big\|_{2}^{2} &\leq \frac{1}{\lambda m} \sum_{k \neq i}\big(y_{k} - \bx_{k}^{\top}\bcoefX_{\sharp} \cdot \bz_{k}^{\top}\bcoefZ_{\sharp} \big)^{2} \nonumber \\
	&\leq \frac{2}{\lambda m} \sum_{k \neq i} \epsilon_{k}^{2} + \big( \bx_{k}^{\top}\bcoefX_{\star} \cdot \bz_{k}^{\top}\bcoefZ_{\star} - \bx_{k}^{\top}\bcoefX_{\sharp} \cdot \bz_{k}^{\top}\bcoefZ_{\sharp} \big)^{2}.
\end{align*}
Then, applying the inequality~\eqref{ineq1-auxiliary-lemma1-proof-u-component} in conjunction with the bound $\epsilon_{k}\lesssim \sigma \log(d)$, we further obtain the bound
\begin{align}\label{ineq1.5-auxiliary-lemma1-proof-u-component}
	\big\|\bcoefX^{-(i)} - \bcoefX_{\sharp} \big\|_{2}^{2} + \big\|\bcoefZ^{-(i)} - \bcoefZ_{\sharp} \big\|_{2}^{2} \lesssim  \frac{m\log^{2}(d)}{\lambda m} \big(\sigma^{2} + (\parcompX_{\sharp}\parcompZ_{\sharp}-1)^{2} + \perpcompX_{\sharp}^{2} + \perpcompZ_{\sharp}^{2}.
\end{align}
Putting the two pieces together and invoking the assumption $\lambda m\geq d\geq m$ yields the bound
\begin{align}\label{ineq2-auxiliary-lemma1-proof-u-component}
	\big \lvert \bx_{i}^{\top}(\bcoefX_{\sharp} - \bcoefX^{-(i)}) \cdot \bz_{i}^{\top}\bcoefZ_{\sharp}  \big\rvert + \big \lvert \bx_{i}^{\top} \bcoefX_{\sharp} \cdot \bz_{i}^{\top}(\bcoefZ_{\sharp} - \bcoefZ^{-(i)} ) \big \rvert \lesssim \log^{2}(d) \big( \sigma + \big \lvert \parcompX_{\sharp}\parcompZ_{\sharp}-1\big \rvert + \perpcompX_{\sharp} + \perpcompZ_{\sharp} \big).
\end{align}
Substituting inequalities~\eqref{ineq1-auxiliary-lemma1-proof-u-component} and~\eqref{ineq2-auxiliary-lemma1-proof-u-component} into inequality~\eqref{ineq-auxiliary-lemma1-proof-u-component} yields 
\begin{align}\label{ineq2.5-auxiliary-lemma1-proof-u-component}
	\big \lvert y_{i}+G_{i}\widetilde{G}_{i}-\widetilde{G}_{i}\bx_{i}^{\top}\bcoefX^{-(i)} - G_{i}\bz_{i}^{\top}\bcoefZ^{-(i)} \big \rvert \lesssim \log^{2}(d) \big(\sigma + \big\lvert \parcompX_{\sharp}\parcompZ_{\sharp}-1 \big \rvert+ \perpcompX_{\sharp} + \perpcompZ_{\sharp} \big).
\end{align}
Following identical steps, we obtain the inequality
\[
	\big\lvert y_{j}+G_{j}\widetilde{G}_{j}-\widetilde{G}_{j}\bx_{j}^{\top}\bcoefX^{-(i,j)} - G_{j}\bz_{j}^{\top}\bcoefZ^{-(i,j)} \big \rvert \lesssim \log^{2}(d) \big(\sigma + \big\lvert \parcompX_{\sharp}\parcompZ_{\sharp}-1 \big \rvert + \perpcompX_{\sharp} + \perpcompZ_{\sharp} \big).
\]
Further, note that since $\{\bx_{k},\bz_{k},\epsilon_{k}\}_{k=1}^{n} \in \mathcal{S}$, we have the bounds
$|\bu^{\top}\bSig_{i}^{-1}\ba_{i}|,|\bu^{\top}\bSig_{i,j}^{-1}\ba_{j}| \lesssim \log(d)/(\lambda m)$.
Putting the pieces together yields
\begin{align}\label{ineq3-auxiliary-lemma1-proof-u-component}
 	\bigg| f_{\bu}\big( \{\bx_{k},\bz_{k},\epsilon_{k}\}_{k=1}^{n} \big) - \Big \langle \bu,\begin{bmatrix} \bcoefX^{-(i,j)} \\ \bcoefZ^{-(i,j)}  \end{bmatrix} \Big \rangle \bigg| \lesssim \frac{\log^{3}(d)}{\lambda m} \cdot \big(\sigma + \big \lvert \parcompX_{\sharp}\parcompZ_{\sharp}-1 \big \rvert + \perpcompX_{\sharp} + \perpcompZ_{\sharp} \big).
\end{align}
Proceding similarly, since $\{\bx'_{k},\bz'_{k},\epsilon'_{k}\}_{k=1}^{n} \in \mathcal{S}$, we obtain that
\begin{align}\label{ineq4-auxiliary-lemma1-proof-u-component}
	\bigg| f_{\bu}\big( \{\bx'_{k},\bz'_{k},\epsilon'_{k}\}_{k=1}^{n} \big) - \Big \langle \bu,\begin{bmatrix} \bcoefX^{-(i,j)} \\ \bcoefZ^{-(i,j)} \end{bmatrix} \Big \rangle \bigg| \lesssim \frac{\log^{3}(d)}{\lambda n} \cdot \big(\sigma + \big| \parcompX_{\sharp}\parcompZ_{\sharp}-1 \big| + \perpcompX_{\sharp} + \perpcompZ_{\sharp} \big).
\end{align}
Putting inequalities~\eqref{ineq3-auxiliary-lemma1-proof-u-component} and~\eqref{ineq4-auxiliary-lemma1-proof-u-component} together and noting $|\parcompX_{\sharp}\parcompZ_{\sharp}-1| + \perpcompX_{\sharp} + \perpcompZ_{\sharp} \lesssim \sqrt{\Err_{\sharp}}$ yields the desired result.
It remains to establish the claim~\eqref{claim1-auxiliary-lemma-1}.\\

\medskip
\noindent \underline{Proof of the structural relations~\eqref{claim1-auxiliary-lemma-1}:} By the definition of $[\bcoefX_{+}\;\vert\;\bcoefZ_{+}]^{\top} = \prox([\bcoefX_{\sharp} \;\vert \; \bcoefZ_{\sharp}])$~\eqref{eq:closed-form-update}, we obtain that
\begin{align*}
	\begin{bmatrix} \bcoefX_{+} \\ \bcoefZ_{+} \\ \end{bmatrix} = \bSig^{-1} \bigg( \sum_{k \neq i}(y_{k}+G_{k}\widetilde{G}_{k})\cdot \ba_{k} + \lambda m \begin{bmatrix} \bcoefX_{\sharp} \\ \bcoefZ_{\sharp} \\ \end{bmatrix} \bigg) + 
	\big(y_{i}+G_{i}\widetilde{G}_{i} \big) \cdot \bSig^{-1} \ba_{i}.
\end{align*}
Applying the Sherman--Morrison formula yields $\bSig^{-1} = \bSig_{i}^{-1} - \bSig_{i}^{-1} \ba_{i} \ba_{i}^{\top} \bSig_{i}^{-1}/(1+\ba_{i}^{\top} \bSig_{i}^{-1} \ba_{i})$.  We substitute this expression into the preceding display to obtain
\begin{align}\label{eq1-proof-claim1-auxiliary-lemma-1}
\begin{bmatrix} \bcoefX_{+} \\ \bcoefZ_{+} \end{bmatrix} = \bigg( \bSig_{i}^{-1} - \frac{\bSig_{i}^{-1} \ba_{i} \ba_{i}^{\top} \bSig_{i}^{-1}}{1+\ba_{i}^{\top} \bSig_{i}^{-1} \ba_{i}} \bigg) \bigg( \sum_{k \neq i}(y_{k}+G_{k}\widetilde{G}_{k})\cdot \ba_{k} + \lambda n \begin{bmatrix}\bcoefX_{\sharp} \\ \bcoefZ_{\sharp}\end{bmatrix} \bigg)
  + \frac{\big(y_{i} + G_{i} \widetilde{G}_{i} \big) \bSig_{i}^{-1}\ba_{i} }{1+\ba_{i}^{\top} \bSig_{i}^{-1} \ba_{i}}.
\end{align}
By the definition of $(\bcoefX^{-(i)},\bcoefZ^{-(i)})$~\eqref{estimators-leave-i-sample-out}, we obtain that
\[
	\begin{bmatrix} \bcoefX^{-(i)} \\ \bcoefZ^{-(i)} \\ \end{bmatrix} = \bSig_{i}^{-1} \bigg( \sum_{k \neq i}(y_{k}+G_{k}\widetilde{G}_{k})\cdot \ba_{k} + \lambda n \begin{bmatrix} \bcoefX_{\sharp} \\ \bcoefZ_{\sharp} \\ \end{bmatrix} \bigg).
\]
Substituting this into equation~\eqref{eq1-proof-claim1-auxiliary-lemma-1} yields
\begin{align*}
	\begin{bmatrix} \bcoefX_{t+1} \\ \bcoefZ_{t+1} \\ \end{bmatrix} &= \begin{bmatrix} \bcoefX^{-(i)} \\ \bcoefZ^{-(i)} \\ \end{bmatrix} - \frac{\bSig_{i}^{-1} \ba_{i} \ba_{i}^{\top} \begin{bmatrix} \bcoefX^{-(i)} \\ \bcoefZ^{-(i)} \\ \end{bmatrix} }{1+\ba_{i}^{\top} \bSig_{i}^{-1} \ba_{i}} + \frac{(y_{i} + G_{i} \widetilde{G}_{i}) \bSig_{i}^{-1}\ba_{i} }{1+\ba_{i}^{\top} \bSig_{i}^{-1} \ba_{i}}  \\&= 
	\begin{bmatrix} \bcoefX^{-(i)} \\ \bcoefZ^{-(i)} \\ \end{bmatrix} + \frac{\Big(y_{i} + G_{i} \widetilde{G}_{i} - \widetilde{G}_{i}\bx_{i}^{\top} \bcoefX^{-(i)} - G_{i}\bz_{i}^{\top} \bcoefZ^{-(i)}  \Big) \bSig_{i}^{-1}\ba_{i} }{1+\ba_{i}^{\top} \bSig_{i}^{-1} \ba_{i}},
\end{align*}
where in the last step we re-arranged terms and used the notation $\ba_i^{\top} = \big[\widetilde{G}_i \bx_i^{\top}\; G_i\bz_i^{\top} \big]$. This proves the relation~\eqref{claim1-auxiliary-lemma-1a}. Identical steps yield the claim~\eqref{claim1-auxiliary-lemma-1b}. \qed

\subsection{Proof of Lemma~\ref{auxiliary-lemma2-component-expectation-concentration}}\label{proof-auxiliary-lemma2-component-expectation-concentration} 
We prove each part in turn, beginning with part (a).

\paragraph{Proof of Lemma~\ref{auxiliary-lemma2-component-expectation-concentration}(a):}
Note that several of the defining constraints of the regularity set $\mathcal{S}$ in~\eqref{definition-regularity-set-component-expectation} are standard (e.g. the high probability bound $\| \bx_i \|_2 \lesssim \sqrt{d}$).  We thus only provide probability bounds for the non-standard constraints.

Let $\bw_{1},\bw_{2} \in \mathbb{R}^{d}$ such that $\bSig_{i}^{-1}\bu = [\bw_1^{\top} \; \vert \;\bw_2^{\top}]^\top$. Note that $(\bx_{i},\bz_{i})$ are independent of $(\bw_1,\bw_2)$ and $\bu^{\top} \bSig_{i}^{-1} \ba_{i} = \widetilde{G_{i}} \bx_{i}^{\top} \bw_{1} + G_{i} \bz_{i}^{\top} \bw_{2}.$ Applying Hoeffding’s inequality~\citep[Theorem 2.2.6]{vershynin2018high} yields that with probability at least $1-d^{-200}$, all three of the following inequalities hold
\[
	\big|G_{i} \big|,\big|\widetilde{G_{i}}\big| \leq C\sqrt{\log(d)},\qquad \big|\bx_{i}^{\top} \bw_{1} \big| \leq C\sqrt{\log(d)}\|\bw_{1}\|_{2}, \qquad \text{ and } \qquad 
	\big|\bz_{i}^{\top} \bw_{2} \big| \leq C\sqrt{\log(d)}\|\bw_{2}\|_{2}.
\]
Consequently, with probability at least $1-d^{-200}$, 
\[
	\big|\bu^{\top} \bSig_{i}^{-1} \ba_{i} \big| \lesssim \log(d)\cdot (\|\bw_{1}\|_{2} + \|\bw_{2}\|_{2}) \lesssim \log(d) \cdot \big\|\bSig_{i}^{-1}\bu \big\|_{2} \leq \frac{\log(d)}{\lambda m},
\]
where the final inequality follows from the pair of bounds $\big\|\bSig_{i}^{-1} \big\|_{2} \leq (\lambda m)^{-1}$ and $\|\bu\|_{2} = 1$. Following the identical step yields an identical bound for the probability of the event $\big\{ \big|\bu^{\top} \bSig_{i,j}^{-1} \ba_{j} \big| \geq C\log(d)/(\lambda m) \big\}$.
Next, noting that $\bx_{i}$ is independent of $\bcoefX_{\sharp} - \bcoefX^{-(i)}$ and applying Hoeffding’s inequality yields the probabilistic inequality
\[
	\Pr\Big\{ \big|\bx_{i}^{\top} (\bcoefX_{\sharp} - \bcoefX^{-(i)})\big| \geq C \sqrt{\log(d)} \cdot \big\|\bcoefX_{\sharp} - \bcoefX^{-(i)} \big\|_{2} \Big\} \leq 2d^{-200}.
\]
Identical steps yield a similar bound on the probability of the event $\big\{\big|\bx_{i}^{\top} (\bcoefX_{\sharp} - \bcoefX^{-(i,j)}) \big| \geq C \sqrt{\log(d)} \cdot \big\|\bcoefX_{\sharp} - \bcoefX^{-(i,j)} \big\|_{2} \big\}$. Putting the pieces together and applying the union bound yields $\Pr\big\{ \{\bx_i,\bz_i,\epsilon_i\}_{i=1}^{n} \notin \mathcal{S} \big\} \leq d^{-180}$, as desired.

\paragraph{Proof of Lemmas~\ref{auxiliary-lemma2-component-expectation-concentration}(b) and (c):}
Next we reduce a tail bound on $f_{\bu}$ to a tail bound on the truncated function $f_{\bu}^{\downarrow}$. Recall the shorthand $\bM = \{\bx_i,\bz_i,\epsilon_i\}_{i=1}^{m}$ and $\bM' = \{\bx'_i,\bz'_i,\epsilon'_i\}_{i=1}^{m}$. Note that
\begin{align}\label{proof-aucillary-lemma2-u-component}
	\Pr\Big\{ \big|f_{\bu} - \EE\{f_{\bu}\} \big| \geq t \Big\} &\leq \Pr\Big\{ \big|f_{\bu}(\bM) - \EE\{f_{\bu}(\bM)\} \big| \geq t, \bM \in \mathcal{S} \Big\} + \Pr\Big\{ \bM \notin \mathcal{S} \Big\} \nonumber
	\\& \overset{\1}{\leq} \Pr\Big\{ \big|f_{\bu}^{\downarrow}(\bM) - \EE\{f_{\bu}(\bM)\} \big| \geq t, \bM \in \mathcal{S} \Big\} + d^{-180} \nonumber\\
	& \leq \Pr\Big\{ \big|f_{\bu}^{\downarrow}(\bM) - \EE\{f_{\bu}(\bM)\} \big| \geq t\Big\} + d^{-180} 
\end{align}
where step $\1$ follows by combining Eq.~\eqref{properties-truncate-f-u-component}---which ensures that $f_{\bu}(\bM)=f_{\bu}^{\downarrow}(\bM)$ for $\bM\in\mathcal{S}$---and Lemma~\ref{auxiliary-lemma2-component-expectation-concentration}(a). We next turn to bound $\big|\EE\{f_{\bu}\} - \EE\{f_{\bu}^{\downarrow}\} \big|$. Note that since $f_{\bu}$ and $f_{\bu}^{\downarrow}$ agree on $\mathcal{S}$, 
\begin{align}
  \big|\EE\{f_{\bu}\} - \EE\{f_{\bu}^{\downarrow}\} \big| &= \big| \EE\big\{f_{\bu}(\bM) \cdot \mathbbm{1}\{\bM \notin \mathcal{S}\} \big\} - \EE\big\{f_{\bu}^{\downarrow}(\bM) \cdot \mathbbm{1}\{\bM \notin \mathcal{S}\} \big\} \big| \nonumber \\ &\leq  \big|\EE\big\{f_{\bu}(\bM) \cdot \mathbbm{1}\{\bM \notin \mathcal{S}\} \big\} \big| +\big|\EE\big\{f_{\bu}^{\downarrow}(\bM) \cdot \mathbbm{1}\{\bM \notin \mathcal{S}\}\big\}\big|. \label{ineq-EEf-EEf-down}
\end{align}
We next bound the two terms in the RHS of the inequality in the display above. By definition of $f_{\bu}^{\downarrow}$~\eqref{definition-f-down-u-component}, we obtain that for any $\bM' \in \mathbb{R}^{(2d+1) \times m}$ and $\bM \in \mathcal{S} $
\begin{align*}
	f_{\bu}^{\downarrow}(\bM') \leq f_{\bu}(\bM) + \Delta \cdot \rho(\bM,\bM') + 2D \leq f_{\bu}(\bM) + m\Delta + 2d \overset{\1}{\lesssim} d,
\end{align*}
where step $\1$ follows from Lemma~\ref{auxiliary-lemma1-component-expectation-concentration}(a).  Applying Lemma~\ref{auxiliary-lemma1-component-expectation-concentration}(a) once more additionally yields the lower bound $	f_{\bu}^{\downarrow}(\bM') \geq \inf_{\bM \in \mathcal{S}} f_{\bu}(\bM) \geq -d$.
Consequently, we obtain that $|f_{\bu}^{\downarrow}| \lesssim d$, which completes the proof of part (b).  This additionally implies that
\begin{align}\label{ineq1-EEf-EEf-down}
	\big| \EE\big\{f_{\bu}^{\downarrow}(\bM) \cdot \mathbbm{1}\{ \bM \notin \mathcal{S} \} \big\} \big| \lesssim d \cdot \Pr\big\{ \bM \notin \mathcal{S} \big\}
	\leq d^{-160}.
\end{align}
Next we turn to bound $\big|\EE \big\{f_{\bu}(\bM) \cdot \mathbbm{1}\{\bM \notin \mathcal{S}\} \big\} \big|$. By definition of $f_{\bu}$, we note that
$|f_{\bu}| \leq \big\|\bcoefX_{+}\big\|_{2} + \big\|\bcoefZ_{+}\big\|_{2}$. Using the expression of $(\bcoefX_{+},\bcoefZ_{+})$~\eqref{eq:prox-linear-updates} and the bound $\|\bSig^{-1}\|_{2} \leq (\lambda m)^{-1}$, we obtain the inequality
\[
	\big\|\bcoefX_{+}\big\|_{2} + \big\|\bcoefZ_{+}\big\|_{2} \lesssim \frac{1}{\lambda m} \bigg\| \sum_{k = 1}^{m}\big(y_{k}+G_{k}\widetilde{G}_{k}\big)\cdot \ba_{k} \bigg\|_{2} + L_{\sharp} + \LZ_{\sharp}.
\]
Putting the pieces together yields the bound
\[
	\big|\EE\big\{ f_{\bu}(\bM) \cdot \mathbbm{1}\{\bM \notin \mathcal{S}\} \big\} \big| \leq \underbrace{\frac{1}{\lambda m} \EE\bigg\{ \bigg\| \sum_{k = 1}^{m}(y_{k}+G_{k}\widetilde{G}_{k})\cdot \ba_{k} \bigg\|_{2} \cdot \mathbbm{1}\big\{\bM \notin \mathcal{S}\big\} \bigg\}}_{T} + C\Pr\big\{\bM \notin \mathcal{S}\big\}.
\]
Then, we apply the Cauchy--Schwartz inequality to obtain the bound
\begin{align*}
 T \leq \EE\bigg\{ \bigg\| \sum_{k = 1}^{m}\big(y_{k}+G_{k}\widetilde{G}_{k}\big)\cdot \ba_{k} \bigg\|_{2}^{2} \bigg\}^{1/2} \cdot \sqrt{\Pr\big\{\bM \notin \mathcal{S} \big\}} \lesssim md(1+\sigma) \cdot d^{-90}.
\end{align*}
Putting the pieces together yields
\begin{align}\label{ineq2-EEf-EEf-down}
	\big|\EE\big\{f_{\bu}(\bM) \cdot \mathbbm{1}\{\bM \notin \mathcal{S}\} \big\} \big| \lesssim \frac{md(1+\sigma)}{\lambda m} d^{-90} + d^{-180} \leq 2d^{-85},
\end{align}
where the last step follows from the assumptions $\lambda m \geq Cd(1+\sigma)$ and $m\leq d$. Substituing inequalities~\eqref{ineq1-EEf-EEf-down} and~\eqref{ineq2-EEf-EEf-down} into inequality~\eqref{ineq-EEf-EEf-down} yields $\big|\EE\{f_{\bu}\} - \EE\{f_{\bu}^{\downarrow}\} \big| \lesssim d^{-81}$. Consequently, 
\begin{align*}
	\Pr\Big\{ \big|f_{\bu}^{\downarrow} - \EE\{f_{\bu}\} \big| \geq t \Big\}
	&\leq \Pr\Big\{ \big|f_{\bu}^{\downarrow} - \EE\{f_{\bu}^{\downarrow}\} \big| \geq t/2\Big\} + \Pr\Big\{ \big|\EE\{f_{\bu}^{\downarrow}\} - \EE\{f_{\bu}\} \big| \geq  t/2\Big\}
	\\&\leq \Pr\Big\{ \big|f_{\bu}^{\downarrow} - \EE\{f_{\bu}^{\downarrow}\} \big| \geq t/2\Big\},
\end{align*}
where the last step follows by setting $t>d^{-80}$. Combining the inequality in the display above with inequality~\eqref{proof-aucillary-lemma2-u-component} yields the desired result. \qed

\subsection{Proof of Lemma~\ref{lemma-leave-one-out}}\label{proof-lemma-leave-one-direction-out}
We will use the shorthand $G_{i} = \bx_{i}^{\top} \bcoefX_{\sharp}$ and $\GZ_{i} = \bz_{i}^{\top} \bcoefZ_{\sharp}$ for $i \in [m]$. Note that Eq.~\eqref{eq:closed-form-update} is equivalent to 
\[
	\big(\bcoefX_{+},\;\bcoefZ_{+} \big) = \argmin_{\bcoefX,\bcoefZ \in \mathbb{R}^{d}} \frac{1}{m} \sum_{i=1}^{m}\big(y_{i} + G_i\GZ_i - \GZ_i \bx_{i}^{\top} \bcoefX - G_i \bz_{i}^{\top} \bcoefZ \big)^{2} 
	+ \lambda \big\|\bcoefX - \bcoefX_t\big\|_2^2 + \lambda \big\|\bcoefZ - \bcoefZ_{t}\big\|_2^2.
\]
The optimization problem in the display admits the KKT condition 
\begin{subequations}
\begin{align}
	\label{KKT-1}
	\frac{1}{m} \sum_{i=1}^{m} 	\big(  \GZ_i \cdot \bx_{i}^{\top} \bcoefX_{+} + G_i \cdot \bz_{i}^{\top} \bcoefZ_{+}  - y_{i} - G_i\GZ_i \big)\GZ_i \bx_{i} + \lambda \big(\bcoefX_{+} - \bcoefX_\sharp \big) &= \boldsymbol{0},\\
	\label{KKT-2}
	\frac{1}{m} \sum_{i=1}^{m} 	\big(  \GZ_i \cdot \bx_{i}^{\top} \bcoefX_{+} + G_i \cdot \bz_{i}^{\top} \bcoefZ_{+}  - y_{i} - G_i\GZ_i \big)G_i \bz_{i} + \lambda\big(\bcoefZ_{+} - \bcoefZ_\sharp\big) &= \boldsymbol{0}.
\end{align}
\end{subequations}
By definition, we decompose $\bcoefX_{+} = \theta(\bu) \cdot \bu + \bO_{\bu} \bO_{\bu}^{\top} \bcoefX_{+},\; \bcoefZ_{+} = \thetatil(\bv) \cdot \bv + \bO_{\bv} \bO_{\bv}^{\top} \bcoefZ_{+}$. Consequently,
\begin{align}
	\bx_i^{\top} \bcoefX_{+} = \theta(\bu) \cdot \bx_i^{\top} \bu +  \bx_i^{\top} \bO_{\bu} \bO_{\bu}^{\top} \bcoefX_{+}, \qquad \text{ and } \qquad \bz_i^{\top} \bcoefZ_{+} = \thetatil(\bv) \cdot \bz_i^{\top} \bv +  \bz_i^{\top} \bO_{\bv} \bO_{\bv}^{\top} \bcoefZ_{+}. \label{eq-xtop-bcoefX-ztop-bcoefZ}
\end{align}
Taking the inner product between $\bu$ and both sides of Eq.~\eqref{KKT-1}, using the decomposition in the display above and rearranging the terms yields
\begin{align*}
	\Big(\frac{1}{m}\sum_{i=1}^{m} \GZ_{i}^2 (\bx_{i}^{\top} \bu)^{2} + \lambda \Big) \cdot \theta(\bu) &+ \Big( \frac{1}{m} \sum_{i=1}^{m}G_i\GZ_i(\bx_{i}^{\top} \bu)(\bz_{i}^{\top} \bv) \Big) \cdot \thetatil(\bv) + 
	\frac{1}{m}\sum_{i=1}^{m} \GZ_i^2 (\bx_i^{\top} \bu) (\bx_i^{\top} \bO_{\bu} \bO_{\bu}^{\top} \bcoefX_{+}) \\
	& + \frac{1}{m}\sum_{i=1}^{m}G_i \GZ_i (\bz_i^{\top} \bu) (\bz_i^{\top} \bO_{\bv} \bO_{\bv}^{\top} \bcoefZ_{+}) = \lambda \bu^{\top} \bcoefX_\sharp + \frac{1}{m} \sum_{i=1}^{m}(y_i + G_i\GZ_i)\GZ_i(\bx_i^{\top}\bu).
\end{align*}
Now writing the equation in the display above in matrix form yields
\begin{align}\label{eq1-separate-u-bOu}
	&\Big( \frac{1}{m} \| \bWtil \bX \bu \|_{2}^{2} + \lambda \Big) \cdot \theta(\bu) +  \frac{1}{m} \langle \bWtil \bX\bu, \bW\bZ\bv \rangle \cdot \thetatil(\bv) + \frac{1}{m} \langle \bWtil \bX\bu, \bWtil \bX\bO_{\bu} \bO_{\bu}^{\top} \bcoefX_{+} \rangle \nonumber \\
	& \qquad \qquad \qquad+ \frac{1}{m} \langle \bWtil \bX \bu, \bW \bZ \bO_{\bv} \bO_{\bv}^{\top} \bcoefZ_{+} \rangle = \lambda \langle \bu, \bcoefX_\sharp \rangle + \frac{1}{m} \langle \by + \diag(\bW\bWtil) , \bWtil \bX \bu \rangle.
\end{align}
Similarly, taking the inner product between $\bv$ and both sides of Eq.~\eqref{KKT-2} yields
\begin{align}\label{eq1-separate-v-bOv}
	&\Big( \frac{1}{m} \| \bW\bZ \bv \|_{2}^{2} + \lambda \Big) \cdot \thetatil(\bv) +  \frac{1}{m} \langle \bWtil \bX\bu, \bW\bZ\bv \rangle \cdot \theta(\bu) + \frac{1}{m} \langle \bW \bZ\bv, \bWtil \bX\bO_{\bu} \bO_{\bu}^{\top} \bcoefX_{+} \rangle \nonumber \\
	& \qquad \qquad \qquad + \frac{1}{m} \langle \bW\bZ \bv, \bW \bZ \bO_{\bv} \bO_{\bv}^{\top} \bcoefZ_{+} \rangle = \lambda \langle \bv, \bcoefZ_\sharp \rangle + \frac{1}{m} \langle \by + \diag(\bW\bWtil) , \bW \bZ \bv \rangle.
\end{align}
Recall the notation $\bA_{\bu,\bv} = [\bWtil \bX \bO_{\bu} \; \bW \bZ \bO_{\bv}]$, we collect Eq.~\eqref{eq1-separate-u-bOu} and Eq.~\eqref{eq1-separate-v-bOv} into the following system
\begin{align}\label{eq-separate-uv-bOu-bOv}
	&\bigg( \frac{1}{m} \begin{bmatrix} (\bWtil \bX \bu)^{\top} \\  (\bW \bZ \bv)^{\top} \end{bmatrix}  \begin{bmatrix} \bWtil \bX \bu \;  \bW \bZ \bv \end{bmatrix} + \lambda \bI \bigg) 
	\begin{bmatrix} \theta(\bu) \\ \thetatil(\bv) \end{bmatrix}  + \frac{1}{m} \begin{bmatrix} (\bWtil \bX \bu)^{\top} \\  (\bW \bZ \bv)^{\top} \end{bmatrix} \bA_{\bu,\bv} 
	\left[ \begin{array}{c} \bO_{\bu}^{\top} \bcoefX_{+} \\ \bO_{\bv}^{\top} \bcoefZ_{+} \end{array} \right] \nonumber \\ 
	&=
	\lambda \cdot  \begin{bmatrix} \langle \bcoefX_{\sharp}, \bu \rangle \\ \langle \bcoefZ_{\sharp}, \bv \rangle \end{bmatrix} + \frac{1}{m} \begin{bmatrix} (\bWtil \bX \bu)^{\top} \\  (\bW \bZ \bv)^{\top} \end{bmatrix} \Big( \by + \diag(\bW\bWtil) \Big).
\end{align}
Likewise, we multiply $\bO_{\bu}^{\top}$ on both sides of~\eqref{KKT-1}.  As before,  using the decomposition~\eqref{eq-xtop-bcoefX-ztop-bcoefZ} and writing the result in matrix form yields
\begin{align*}
&\frac{\theta(\bu)}{m} (\bWtil \bX \bO_{\bu})^{\top} (\bWtil \bX \bu) + \bigg( \frac{1}{m} (\bWtil \bX \bO_{\bu})^{\top} (\bWtil \bX \bO_{\bu}) + \lambda \bI \bigg) \bO_{\bu}^{\top} \bcoefX_{+} 
+ \frac{\thetatil(\bv)}{m} (\bWtil \bX \bO_{\bu})^{\top} (\bW \bZ \bv) 
  \\ & + \frac{1}{m} (\bWtil \bX \bO_{\bu})^{\top} (\bW \bZ \bO_{\bv}) \bO_{\bv}^{\top} \bcoefZ_{+} = \frac{1}{m} (\bWtil \bX \bO_{\bu})^{\top} (\by + \diag(\bW \bWtil) )  + \lambda \bO_{\bu}^{\top} \bcoefX_{\sharp}.
\end{align*}
Similarly, multiplying $\bO_{\bv}^{\top}$ on both sides of~\eqref{KKT-2} and writing the result in matrix form yields
\begin{align*}
&\frac{\theta(\bu)}{m} (\bW \bZ \bO_{\bv})^{\top} (\bWtil \bX \bu) + \frac{1}{m} (\bW \bZ \bO_{\bv})^{\top} (\bWtil \bX \bO_{\bu})   \bO_{\bu}^{\top} \bcoefX_{+} 
 + \frac{\thetatil(\bv)}{m} (\bW \bZ \bO_{\bv})^{\top} (\bW \bZ \bv) \\ 
 & \qquad \qquad+ \bigg( \frac{1}{m} (\bW \bZ \bO_{\bv})^{\top} (\bW \bZ \bO_{\bv})  + \lambda \bI \bigg) \bO_{\bv}^{\top} \bcoefZ_{+} =  \frac{1}{m} (\bW \bZ \bO_{\bv})^{\top} (\by + \diag(\bW \bWtil) ) +  \lambda \bO_{\bv}^{\top} \bcoefZ_{\sharp}.
\end{align*}
Combining the previous two displays yields
\begin{align*}
	&\Big(\frac{1}{m}\bA_{\bu,\bv}^{\top} \bA_{\bu,\bv} + \lambda \bI \Big) \left[ \begin{array}{c} \bO_{\bu}^{\top} \bcoefX_{+} \\  \bO_{\bv}^{\top} \bcoefZ_{+} \end{array} \right]  
	+ \frac{1}{m} \bA_{\bu,\bv}^{\top} \left[ \begin{array}{c} \bWtil \bX\bu \;  \bW \bZ \bv \end{array} \right] \left[ \begin{array}{c} \theta(\bu) \\  \thetatil(\bv) \end{array} \right] \\
	&\qquad \qquad \qquad \qquad \qquad \qquad \qquad \qquad = \frac{1}{m} \bA_{\bu,\bv}^{\top}  (\by + \diag(\bW \bWtil)) + \lambda \left[ \begin{array}{c} \bO_{\bu}^{\top} \bcoefX_{\sharp} \\  \bO_{\bv}^{\top}\bcoefZ_{\sharp}  \end{array} \right],
\end{align*}
and re-arranging yields
\begin{align*}
	&\left[ \begin{array}{c} \bO_{\bu}^{\top} \bcoefX_{+} \\  \bO_{\bv}^{\top} \bcoefZ_{+} \end{array} \right] = \Big(\bA_{\bu,\bv}^{\top} \bA_{\bu,\bv} + \lambda m\bI \Big)^{-1} \\
	& \qquad \qquad \qquad \cdot \bigg(  \bA_{\bu,\bv}^{\top}  (\by + \diag(\bW \bWtil)) + \lambda m\left[ \begin{array}{c} \bO_{\bu}^{\top} \bcoefX_{\sharp} \\  \bO_{\bv}^{\top}\bcoefZ_{\sharp}  \end{array} \right] -   
	 \bA_{\bu,\bv}^{\top} \left[ \begin{array}{c} \bWtil \bX\bu \;  \bW \bZ \bv \end{array} \right] \left[ \begin{array}{c} \theta(\bu) \\  \thetatil(\bv) \end{array} \right] \bigg)
\end{align*}
Now, substituing the equation in the display above into Eq.~\eqref{eq-separate-uv-bOu-bOv}, re-arranging the terms and using $\bP_{\bu,\bv} = \bI - \bA_{\bu,\bv} \Big(\bA_{\bu,\bv}^{\top} \bA_{\bu,\bv} + \lambda m\bI \Big)^{-1}\bA_{\bu,\bv}^{\top}$ yields the desired result. \qed

\subsection{Proof of Lemma~\ref{concentration-of-M11-M22-M12-M1-M2}}\label{subsec:proof-concentration-M11-M22-M12-M1-M2}
In Section~\ref{sec:proof-lem-M11-a}, we provide the proof of Lemma~\ref{concentration-of-M11-M22-M12-M1-M2}(a), in Section~\ref{sec:proof-concentration-M11-b}, we provide the proof of Lemma~\ref{concentration-of-M11-M22-M12-M1-M2}(b), in Section~\ref{sec:proof-concentration-M11-c}, we provide the proof of Lemma~\ref{concentration-of-M11-M22-M12-M1-M2}(c), and finally, in Section~\ref{sec:proof-concentration-M11-d}, we provide the proof of Lemma~\ref{concentration-of-M11-M22-M12-M1-M2}(d).

\subsubsection{Proof of Lemma~\ref{concentration-of-M11-M22-M12-M1-M2}(a)}\label{sec:proof-lem-M11-a}
We bound $\EE\big\{\big|M_{11}-V_{1} \big| \big\}$, noting that identical steps yield the same bound on $\EE\big\{\big|M_{22} - V_{2}\big|\big\}$.

Consider the shorthand $G_{i} = \bx_{i}^{\top} \bcoefX_{\sharp}$ and $\widetilde{G}_{i} = \bz_{i}^{\top} \bcoefZ_{\sharp}$ for all $i \in [m]$.   We then apply the triangle inequality to decompose $| M_{11}  - V_1$ as
\[
\lvert M_{11} - V_1 \rvert \leq T_1 + T_2 + T_3,
\]
where 
\begin{align*}
	T_1 &= \bigg|M_{11} - \frac{ \trace( \bP_{\bu_2,\bv_2} \bWtil^{2})}{m} \bigg|, \qquad T_2 = \bigg| \frac{ \trace(\bP_{\bu_2,\bv_2} \bWtil^{2})}{m} - \frac{1}{m} \sum_{i=1}^{m} \frac{\widetilde{G}_{i}^{2}}{1 + \frac{\widetilde{G}_{i}^{2}}{r_1} + \frac{G_{i}^{2}}{r_2}} \bigg|\\
	& \qquad \qquad \text{ and } \qquad T_3 = \bigg| \frac{1}{m} \sum_{i=1}^{m} \frac{\widetilde{G}_{i}^{2}}{1 + \frac{ \widetilde{G}_{i}^{2}}{r_1} + \frac{G_{i}^{2}}{r_2}} - V_1\bigg|.
\end{align*}
We bound each of $T_1, T_2$, and $T_3$ in sequence. 

\medskip
\noindent \underline{Bounding $T_1$:}
Since $\bX \bu_2$ is gaussian and independent of $\bWtil$ and $\bP_{\bu_2,\bv_2}$, applying the Hanson--Wright inequality\citep[Theorem 6.2.1]{vershynin2018high} yields that for all $t\geq 0$
\begin{align*}
	\Pr \big \{ T_{1} \geq t \;\big \vert \; \bP_{\bu_2,\bv_{2}}, \bWtil \big \} \leq 2\exp\bigg\{ -c\min\bigg( \frac{m^{2}t^{2}}{\|\bWtil^{2}\|_{F}^{2}}, \frac{mt}{\|\bWtil^{2}\|_{2}} \bigg) \bigg\},
\end{align*}
where we used the pair of bounds $\big\|\bWtil \bP_{\bu_2,\bv_2} \bWtil\big\|_{F}^{2} \leq \big\|\bWtil^{2}\big\|_{F}^{2}$ and $\big\|\bWtil \bP_{\bu_2,\bv_2} \bWtil\big\|_{2} \leq \big\|\bWtil^{2}\big\|_{2}$, each of which holds since $\boldsymbol{0}\preceq\bP_{\bu_2,\bv_2} \preceq \bI$. Consequently, since $T_{1}\geq 0$, we integrate the tail to obtain
\begin{align*}
	 \EE\big\{ T_{1} \; \big \vert \; \bP_{\bu_2,\bv_{2}}, \bWtil \big\} &= \int_{0}^{\infty} \Pr \big\{ T_{1} \geq t \; \big \vert \; \bP_{\bu_2,\bv_{2}}, \bWtil \big\} \mathrm{d}t \lesssim \frac{\big\|\bWtil^{2}\big\|_{F}}{m} + \frac{\big\|\bWtil^{2}\big\|_{2}}{m} \lesssim \frac{\big\|\bWtil^{2}\big\|_{F}}{m}
\end{align*}
Consequently, 
\begin{align}\label{T1-expectation-bound-M11}
	\EE\big\{T_{1} \big\} = \EE \big\{ \EE\big\{ T_{1} \; \big \vert \; \bP_{\bu_2,\bv_{2}}, \bWtil \big\} \big\} \lesssim \frac{1}{m}\EE\big\{ \big\|\bWtil^{2} \big\|_{F} \big\} \leq \frac{1}{m} \EE \Big\{ \big\|\bWtil^{2}\big\|_{F}^{2} \Big\}^{1/2} \lesssim \frac{1}{\sqrt{m}}.
\end{align}

\medskip
\noindent \underline{Bounding $T_2$:} We use the shorthand $\bar{\bx}_{i} = \bO_{\bu_2}^{\top} \bx_{i}$ and $\bar{\bz}_{i} = \bO_{\bv_2}^{\top} \bz_{i}$. We let $\ba_{i}^{\top} = \big[\widetilde{G}_{i} \cdot \bar{\bx}_{i}^{\top} \;\vert\; G_{i} \cdot \bar{\bz}_{i}^{\top} \big]$. Note that $\ba_{i}^{\top}$ is the $i$-th row of $\bA_{\bu_2,\bv_2}$. Moreover, we let 
$\bSig =  \sum_{j=1}^{m} \ba_{j} \ba_{j}^{\top} + \lambda m \bI$ and $\bSig_{i} = \sum_{j \neq i} \ba_{j} \ba_{j}^{\top} + \lambda m \bI$.  Applying the Sherman--Morrison formula, we then re-write
\begin{align*}
	\trace\big(\bWtil \bP_{\bu_2,\bv_2} \bWtil\big) = \sum_{i=1}^{m} \widetilde{G}_{i}^{2} \bP_{\bu_2,\bv_2}(i,i) = \sum_{i=1}^{m} \widetilde{G}_{i}^{2} \big(1-\ba_{i}^{\top} \bSig^{-1} \ba_{i} \big) = \sum_{i=1}^{m} \frac{\widetilde{G}_{i}^{2}}{1 + \ba_{i}^{\top} \bSig_{i}^{-1} \ba_{i}}.
\end{align*} Consequently, we bound $\EE\{T_2\}$ as
\begin{align}\label{upper-bound-T2-concentration-M11}
	\EE\big\{T_2\big\} &= \EE \bigg\{\bigg| \frac{1}{m} \sum_{i=1}^{m} \bigg( \frac{\widetilde{G}_{i}^{2}}{1 + \ba_{i}^{\top} \bSig_{i}^{-1} \ba_{i}} - \frac{\widetilde{G}_{i}^{2}}{1 + \frac{\widetilde{G}_{i}^{2}}{r_1} + \frac{G_{i}^{2}}{r_2}} \bigg) \bigg| \bigg\} \nonumber
	\\&\leq \EE\Big\{\Big|\widetilde{G}_{i}^{2} \big( \ba_{i}^{\top} \bSig_{i}^{-1} \ba_{i} -\widetilde{G}_{i}^{2}/r_1 - G_{i}^{2}/r_2 \big)  \Big| \Big\}.
\end{align}
Using the definition of $\bB,\bC,\bD$ and the block matrix inverse formula in Eq.~\eqref{eq:block-matrix-formula},  we obtain that $\ba_{i}^{\top} \bSig_{i}^{-1} \ba_{i} = \widetilde{G}_{i}^{2} H_{1} + G_{i}^{2}H_{2} - 2G_{i}\widetilde{G}_{i} H_{3} $, where we set $ H_{1} =  \bar{\bx}_{i}^{\top} \big(\bB-\bC\bD^{-1}\bC^{\top} \big)^{-1} \bar{\bx}_{i}$, $H_{2} = \bar{\bz}_{i}^{\top} \big(\bD-\bC^{\top}\bB^{-1}\bC \big)^{-1} \bar{\bz}_{i}$ and $H_{3} =  \bar{\bx}_{i}^{\top} \big(\bB-\bC\bD^{-1}\bC^{\top} \big)^{-1} \bC\bD^{-1} \bar{\bz}_{i}$.
Applying the triangle inequality yields
\begin{align}\label{ineq-decomposition-T2}
	\Big| \ba_{i}^{\top} \bSig_{i}^{-1} \ba_{i} - (\widetilde{G}_{i}^{2}/r_1 + G_{i}^{2}/r_2) \Big| \leq \widetilde{G}_{i}^{2} \cdot \big|H_{1} - r_{1}^{-1} \big| + G_{i}^{2} \cdot \big|H_{2} - r_{2}^{-1} \big| + 2\big| G_{i}\widetilde{G}_{i} \big| \cdot \big|H_{3} \big|.
\end{align}
Combining inequalities~\eqref{upper-bound-T2-concentration-M11} and~\eqref{ineq-decomposition-T2} then yields
\begin{align}\label{decomposition-T2-M11}
	\EE\big\{T_{2}\big\} \leq \EE\big\{\widetilde{G}_{i}^{4} \cdot \big|H_{1} - r_{1}^{-1}\big|\big\} + \EE\big\{ G_{i}^{2}\widetilde{G}_{i}^{2} \cdot \big|H_{2} - r_{2}^{-1} \big|\big\} + 2\EE\big\{\big| G_{i}\widetilde{G}_{i}^{3}\big| \cdot \big|H_{3}\big| \big\}.
\end{align}
We then bound each term in the RHS of the inequality in the display above. We will make use of the shorthand
\[
	\bP_{11} = \big(\bB-\bC\bD^{-1}\bC^{\top} \big)^{-1},\; \bP_{12} = -\big(\bB-\bC\bD^{-1}\bC^{\top}\big)^{-1} \bC\bD^{-1} ,\; \bP_{22} = \big(\bD-\bC^{\top}\bB^{-1}\bC\big)^{-1}.
\]
Applying Hanson-Wright Inequality yields 
\begin{align*}
	\Pr\Big\{ \big|H_1 - \trace\big(\bP_{11} \big) \big| \geq t \Big\} \overset{\1}{\leq} 2\exp \Big\{ -c \min\Big\{ \frac{(\lambda m)^{2} t^{2}}{d}, \lambda m t\Big\} \Big\} \overset{\2}{\leq} 2\exp\big\{-cd\min\{t^{2},t\}\big\},
\end{align*}
where step $\1$ follows from the bound $\big\|\bP_{11}\big\|_{2} \leq \big\|\bSig_{i}^{-1}\big\|_{2} \leq (\lambda m)^{-1}$ since $\bP_{11}$ is a submatrix of $\bSig_{i}^{-1}$ and step $\2$ follows by invoking the assumption $\lambda m \geq d $. Applying Lemma~\ref{lemma:probability-bound-trace-inverse} yields that for $t \gtrsim \log^{1.5}(d)/\sqrt{d}$,
\[
	\Pr\Big\{ \big|\trace\big( \bP_{11} \big) - r_{1}^{-1}\big| \geq t \Big\} \leq 2\exp \bigg\{   \frac{-cd^{2}t^{2}}{m\log^{2}(d)}\bigg\} + d^{-15}.
\]
Combining the two pieces yields that for $t \gtrsim \log^{1.5}(d)/\sqrt{d}$,
\begin{align}\label{ineq-bound-term1-T2}
	\Pr\Big\{ \big|H_1 - r_{1}^{-1} \big| \geq t \Big\} \leq 4\exp \big\{- c d\log^{-2}(d) \min\{t^{2},t\} \big\} + d^{-15}.
\end{align}
Consequently, we obtain that
\begin{align*}
	\EE\Big\{ \big(H_1 - r_{1}^{-1} \big)^{2} \Big\} &\leq \frac{C\log^{3}(d)}{d} + \EE\Big\{\big(H_1 - r_{1}^{-1} \big)^{2} \cdot \mathbbm{1}\big\{\big|H_1 - r_{1}^{-1} \big| \geq C \log^{1.5}(d)/\sqrt{d} \big\} \Big\} \\&\leq
	\frac{C\log^{3}(d)}{d} + \EE\Big\{\big|H_1 - r_{1}^{-1} \big|^{4} \Big\}^{1/2} \cdot \Pr\Big\{ \big|H_1 - r_{1}^{-1} \big| \geq C\log^{1.5}(d)/\sqrt{d}  \Big\}^{1/2} 
	\\&\leq \frac{C\log^{3}(d)}{d} + \EE \Big\{\big|H_1 - r_{1}^{-1} \big|^{4} \Big\}^{1/2} \cdot d^{-7}
\end{align*}
where the second step follows from the Cauchy--Schwarz inequality. Continuing, we obtain that
\[
	\EE\Big\{ \big|H_1 - r_{1}^{-1} \big|^{4} \Big\} \lesssim \EE \big\{H_{1}^{4}\big\} + r_{1}^{-4} \lesssim 1, 
\]
where we use $r_{1} \geq \lambda m/d \geq 1$ by definition of $r_{1}$~\eqref{eq:fixed-point} and $\lambda m\geq d$, and
\[
	\EE\big\{ \big|H_1\big|^{4} \big\} = \EE\Big\{ \big(\bar{\bx}_{i}^{\top} \big(\bB-\bC\bD^{-1}\bC^{\top} \big)^{-1} \bar{\bx}_{i} \big)^{4} \Big\} \leq \frac{\EE\Big\{ \big\|\bar{\bx}_{i}\big\|_{2}^{8}\Big\}}{(\lambda m)^{4}} \lesssim 1.
\]
Consequently, we obtain that $\EE\big\{(H_1 - r_{1}^{-1} )^{2}\big\} \lesssim \log^{3}(d)/d$.  Thus, applying the Cauchy--Schwarz inequality once more yields
\begin{align}\label{H_1-r_1-bound}
	\EE\Big\{\widetilde{G}_{i}^{4} \cdot \big|H_{1} - r_{1}^{-1}\big| \Big\} \leq \EE\Big\{\widetilde{G}_{i}^{8}\Big\}^{1/2} \cdot \EE \Big\{\big(H_1 - r_{1}^{-1} \big)^{2}\Big\}^{1/2} \lesssim \log^{1.5}(d)/\sqrt{d}.
\end{align}
Following the identical steps, we obtain 
\begin{align}\label{H_2-r_2-bound}
	\EE\Big \{ G_{i}^{2}\widetilde{G}_{i}^{2} \cdot \big|H_{2} - r_{2}^{-1}\big| \Big\} \leq \EE\Big\{G_{i}^{4}\widetilde{G}_{i}^{4}\Big\}^{1/2} \cdot \EE \Big\{\big(H_2 - r_{2}^{-1} \big)^{2}\Big\}^{1/2} \lesssim \log^{1.5}(d)/\sqrt{d}.
\end{align}
We turn now to bounding $H_3$.  We first apply Hoeffding’s inequality to obtain, for all $t \geq 0$,
\[
	\Pr\big \{\big|H_{3}\big| \geq t \; \big \vert \; \bP_{12},\bar{\bz}_{i} \big\} \leq 2 \exp\bigg\{ \frac{-t^{2}}{2\|\bP_{12} \bar{\bz}_{i}\|_{2}^{2}}\bigg\} \leq 2 \exp\bigg\{ \frac{-t^{2}}{2\|\bP_{12}\|_{2}^{2}\| \bar{\bz}_{i}\|_{2}^{2}}\bigg\} \leq 2 \exp\bigg\{ \frac{-(\lambda m)^{2}t^{2}}{2\| \bar{\bz}_{i}\|_{2}^{2}}\bigg\},
\]
where the last step follows from the bound $\big\| \bP_{12} \big\|_{2} \leq \big\| \bSig_{i}^{-1} \big\|_{2} \leq (\lambda m)^{-1} $.  Integrating the tail yields the bound 
\[
	\EE\big\{ H_{3}^{2}\; \big \vert \; \bP_{12},\bar{\bz}_{i} \big\} \leq \int_{0}^{\infty} \Pr\Big\{ H_{3}^{2} \geq t \;|\; \bP_{12},\bar{\bz}_{i} \Big\}\mathrm{d}t
	\leq \int_{0}^{\infty} 2 \exp\bigg\{ \frac{-(\lambda m)^{2}t}{2 \big\| \bar{\bz}_{i} \big\|_{2}^{2}}\bigg\} \mathrm{d}t \lesssim \frac{\big\| \bar{\bz}_{i} \big\|_{2}^{2}}{(\lambda m)^{2}}.
\]
Thus,
\[
 \EE\big\{ H_{3}^{2} \big\} \leq \EE\big\{ \EE\big\{ H_{3}^{2}\; \big \vert \; \bP_{12},\bar{\bz}_{i} \big\} \big\} \lesssim \frac{1}{(\lambda m)^{2}}\EE\Big\{ \big \| \bar{\bz}_{i} \big \|_{2}^{2}\Big\} \leq \frac{d}{(\lambda m)^{2}} \leq 1/d,
\]
where the last step follows by invoking the assumption $\lambda m\geq d$. Consequently, applying the Cauchy--Schwarz inequality yields
\begin{align}\label{H_3-r_2-bound}
	\EE\big\{ \big| G_{i}\widetilde{G}_{i} \big| \cdot \big|H_{3} \big| \big\} \leq \EE\big\{ G_{i}^{2}\widetilde{G}_{i}^{2} \big\}^{1/2} \cdot \EE\big\{ H_{3}^{2} \big\}^{1/2} \lesssim 1/\sqrt{d}.
\end{align}
Substituting inequalities~\eqref{H_1-r_1-bound}, ~\eqref{H_2-r_2-bound} and~\eqref{H_3-r_2-bound} into the RHS of inequality~\eqref{decomposition-T2-M11} yields
\begin{align}\label{T2-expectation-bound-M11}
	\EE\big\{T_{2}\big\} \lesssim \log^{1.5}(d)/\sqrt{d},
\end{align}
which completes the bound on $T_2$.

\medskip
\noindent \underline{Bounding $T_3$:} Note that $\big\{ \widetilde{G}_{i}^{2}/ \big(1 + \widetilde{G}_{i}^{2}/r_1 + G_{i}^{2}/r_2 \big) \big\}_{i=1}^{m} $ form a collection of i.i.d. sub-exponential random variables whose expectations are $V_1$~\eqref{eq:V-V1-V2}. Integrating the tail and applying Bernstein’s inequality~\citep[Theorem 2.8.2]{vershynin2018high} thus yields the inequality
\begin{align}\label{T3-expectation-bound-M11}
	\EE\big\{T_{3} \big\} \leq \int_{0}^{\infty} \Pr\big\{ T_{3} \geq t\big\} \mathrm{d}t \leq \int_{0}^{\infty} 2\exp\big\{-cm\min\{t^{2},t\}\big\} \mathrm{d}t \lesssim \frac{1}{\sqrt{m}}.
\end{align}

\medskip
\noindent \underline{Putting the pieces together:} Combining inequalities~\eqref{T1-expectation-bound-M11}, ~\eqref{T2-expectation-bound-M11} and~\eqref{T3-expectation-bound-M11} yields
\begin{align}\label{ineq:final-bound-M11-V1}
	\EE\big\{ \big|M_{11} - V_1\big| \big\} \lesssim \frac{1}{m} + \frac{\log^{1.5}(d)}{\sqrt{d}} + \frac{1}{\sqrt{m}} \lesssim \frac{\log^{1.5}(d)}{\sqrt{d}} + \frac{1}{\sqrt{m}}.
\end{align}
where the last step follows from the assumption $m\leq d$.  This yields the desired bound on $\EE\big\{ \big|M_{11} - V_1\big| \big\}$.  Turning to $\vert M_{22} - V_{2} \vert$, we note that 
\begin{align*}
	\big|M_{22} - V_{2}\big| &\leq \bigg|M_{22} - \frac{ \trace\big( \bP_{\bu_2,\bv_2} \bW^{2}\big)}{m} \bigg| + \bigg| \frac{ \trace\big(\bP_{\bu_2,\bv_2} \bW^{2}\big)}{m} - \frac{1}{m} \sum_{i=1}^{m} \frac{G_{i}^{2}}{1 + \frac{\widetilde{G}_{i}^{2}}{r_1} + \frac{G_{i}^{2}}{r_2}} \bigg| \\
	&\qquad\qquad+ 
	\bigg| \frac{1}{m} \sum_{i=1}^{m} \frac{G_{i}^{2}}{1 + \frac{ \widetilde{G}_{i}^{2}}{r_1} + \frac{G_{i}^{2}}{r_2}} - V_2\bigg|.
\end{align*}
The rest of the proof is identical to the bound on $\EE\big\{\big|M_{11} - V_{1}\big|\big\}$ and is omitted for brevity. \qed

\subsubsection{Proof of Lemma~\ref{concentration-of-M11-M22-M12-M1-M2}(b.)}\label{sec:proof-concentration-M11-b} Note that, by construction, $\bX\bu_{2}$ is independent of the tuple of random variables $\big(\bWtil,\bW,\bP_{\bu_{2},\bv_{2}}, \bZ\bv_{2}\big)$ by definition. Consequently, applying Hoeffding’s inequality yields
\begin{align*}
	\EE\big\{M_{12}^{2}\big\} &= \EE\Big\{ \EE\Big\{M_{12}^{2} \;|\; \big(\bWtil,\bW,\bP_{\bu_{2},\bv_{2}},\bZ\bv_{2} \big)\Big\} \Big\}
	\lesssim \frac{1}{m^{2}} \EE\Big\{ \big\| \bWtil\bP_{\bu_{2},\bv_{2}} \bW \bZ\bv_{2} \big\|_{2}^{2} \Big\}
	\\ &= \frac{1}{m^2}\EE\Big\{ \big\| \bWtil\bP_{\bu_{2},\bv_{2}} \bW \big\|_{F}^{2} \Big\} = \frac{1}{m^2}\EE\Big\{ \trace\big(\bW \bP_{\bu_2,\bv_2} \bWtil^{2} \bP_{\bu_2,\bv_2} \bW \big) \Big\}\\
	& \leq \frac{1}{m^2}\EE\Big\{ \trace\big(\bW  \bWtil^{2} \bW \big) \Big\} \lesssim \frac{1}{m},
\end{align*}
as desired. \qed

\subsubsection{Proof of Lemma~\ref{concentration-of-M11-M22-M12-M1-M2}(c)}\label{sec:proof-concentration-M11-c}
Recall the shorthand $
	G_{i} = \bx_{i}^{\top}\bcoefX_{\sharp}$, $\widetilde{G}_{i} = \bz_{i}^{\top} \bcoefZ_{\sharp}$, $\bar{\bx}_{i} = \bO_{\bu_{1}}^{\top} \bx_{i}$, $\bar{\bz}_{i} = \bO_{\bv_{1}}^{\top} \bz_{i}$, $\ba_{i}^{\top} = \big[ \widetilde{G}_{i} \bar{\bx}_{i}^{\top}\; \vert\; G_{i}\bar{\bz}_{i}^{\top} \big]$ and $\bSig = \sum_{i=1}^{n}\ba_{i}\ba_{i}^{\top} +\lambda m \bI.$
Noting $\ba_{i}^{\top}$ is the $i$-th row of $\bA_{\bu_{1},\bv_{1}}$ and expanding $M_{1}$ yields
\begin{align*}
M_{1} &= \frac{1}{m}\sum_{i=1}^{m}G_{i}^{2}\widetilde{G}_{i}^{2} \big(1-\ba_{i}^{\top} \bSig^{-1} \ba_{i} \big)  - \frac{1}{m}\sum_{i\neq j} G_{i}G_{j}\widetilde{G}_{i}\widetilde{G}_{j}\ba_{i}^{\top}\bSig^{-1}\ba_{j} \\&= 
\frac{1}{m}\sum_{i=1}^{m} \frac{ G_{i}^{2}\widetilde{G}_{i}^{2} }{1+\ba_{i}^{\top} \bSig_{i}^{-1} \ba_{i}} - \frac{1}{m}\sum_{i\neq j} G_{i}G_{j}\widetilde{G}_{i}\widetilde{G}_{j}\ba_{i}^{\top}\bSig^{-1}\ba_{j},
\end{align*}
where the last step follows by noting $\bSig_{i} = \bSig - \ba_{i}\ba_{i}^{\top}$ and applying the Sherman--Morrison formula to $\bSig^{-1}$. Applying the triangle inequality, we obtain the decomposition
\begin{align}\label{ineq:decomposition-M1}
	\big|M_{1} - V\big| \leq T_1 + T_2 + T_3,
\end{align}
where
\begin{align*}
		T_1 &= \bigg| \frac{1}{m}\sum_{i=1}^{m} \frac{ G_{i}^{2}\widetilde{G}_{i}^{2} }{1+\ba_{i}^{\top}\bSig_{i}^{-1} \ba_{i} } - \frac{1}{m}\sum_{i=1}^{n}\frac{G_{i}^{2} \widetilde{G}_{i}^{2}}{1+\frac{G_{i}^{2}}{r_{1}} + \frac{\widetilde{G}_{i}^{2}}{r_{2}}}\bigg|, \qquad T_2 = \bigg| \frac{1}{m}\sum_{i=1}^{n}\frac{G_{i}^{2} \widetilde{G}_{i}^{2}}{1+\frac{G_{i}^{2}}{r_{1}} + \frac{\widetilde{G}_{i}^{2}}{r_{2}}} - V \bigg|\\
		 &\qquad \qquad \text{ and } \qquad T_3 =  \bigg| \frac{1}{m} \sum_{i\neq j} G_{i}G_{j}\widetilde{G}_{i}\widetilde{G}_{j}\ba_{i}^{\top}\bSig^{-1}\ba_{j} \bigg|.
\end{align*}
Note that proceeding similarly in the proof of $\big|M_{11} -V_1\big|$ yields the bound $\EE\big\{T_1 + T_2\big\} \lesssim \log^{1.5}(d)/\sqrt{d} + \log^{3}(m)/\sqrt{m}$ so we omit the details of bounding $T_1$ and $T_2$.  In order to bound $\EE\big\{T_3\big\} \lesssim \log^{6}(d)/\sqrt{d}$, we apply the method of typical bounded differences with regularity set $\mathcal{S} \subseteq \mathbb{R}^{2d \times m}$ defined as
\begin{align}\label{definition-S-T3-M1}
	\mathcal{S} = \bigg\{ & \{\bx_{k},\bz_{k}\}_{k=1}^{m}: |G_{i}|,|\widetilde{G}_{i}| \leq C \sqrt{\log(d)},\; \|\bx_{i}\|_{2}, \|\bz_{i}\|_{2} \leq C\sqrt{d},\; \forall\; i\in[m], \nonumber
	\\& \Big|\ba_{i}^{\top} \bSig_{i}^{-1}\ba_{j} \Big|, \Big|\ba_{k}^{\top} \bSig_{ij}^{-1}\ba_{j} \Big| \leq \frac{C\log^{2}(d)\sqrt{d}}{\lambda m},\;\forall\; i\neq j \neq k \in [m], 
	\\&\Big| \ba_{i}^{\top} \bSig_{i}^{-1} \sum_{k\neq i}G_{k}\widetilde{G}_{k}\ba_{k}\Big|, \Big| \ba_{j}^{\top} \bSig_{ij}^{-1} \sum_{k\neq i,j}G_{k}\widetilde{G}_{k}\ba_{k}\Big| \leq \frac{C\log^{2.5}(d) \sqrt{md}}{\lambda m},\;\forall \;i\neq j \in [m]. \nonumber
	\bigg\},
\end{align}
where $\bSig_{i} = \sum_{k\neq i} \ba_{k}\ba_{k}^{\top} + \lambda m \bI$ and $\bSig_{ij} = \sum_{k\neq i,j} \ba_{k}\ba_{k}^{\top} + \lambda m \bI$ for each $i,j \in [m]$.  As the typical bounded differences argument has been carried through at several points in the manuscript, we omit the details for brevity. 

\subsubsection{Proof of Lemma~\ref{concentration-of-M11-M22-M12-M1-M2}(d)}\label{sec:proof-concentration-M11-d} By definition of $M_{3}$~\eqref{claim-alpha-iota-concentration}, we expand and decompose $M_{3}$ as
\begin{align*}
	M_{3} = \frac{1}{m}\diag\big(\bW\bWtil\big)^{\top} \bP_{\bu_1,\bv_1} \bX\bu_{2}\odot \bZ\bv_{2} = T_1 - T_2 - T_3,
\end{align*}
where 
\begin{align*}
	&T_1 = \frac{1}{m}\diag\big(\bW\bWtil \big)^{\top}\bX\bu_{2} \odot \bZ\bv_{2}, \qquad T_2 = \frac{1}{m} \sum_{i=1}^{m} G_{i}\widetilde{G}_{i} \bx_{i}^{\top}\bu_{2} \bz_{i}^{\top}\bv_{2} \ba_{i}^{\top}\bSig^{-1} \ba_{i}\\
	& \qquad \text{ and } \qquad  T_3 = \frac{1}{m} \sum_{i\neq j} G_{i}\widetilde{G}_{i} \bx_{j}^{\top}\bu_{2} \bz_{j}^{\top}\bv_{2} \ba_{i}^{\top}\bSig^{-1} \ba_{j}.
\end{align*}
The proof consists of bounding each of the absolute first moments $\EE\{\vert T_1 \vert\}, \EE\{\vert T_2 \vert\}$, and $\EE\{\vert T_3 \vert\}$ in turn.

\paragraph{Bounding $\EE\{|T_{1}|\}$:} Note that, by construction, $\bX\bu_{2}$ is independent of the tuple of random variables $\big(\bW,\bWtil,\bZ\bv_{2}\big)$. We thus compute
\begin{align}\label{T1-bound-last-term}
	\EE\big\{\big|T_{1}\big|\big\} &= \EE\Big\{ \EE\Big\{ \big|T_{1}\big| \;|\; \bW,\bWtil,\bZ\bv_{2} \Big\} \Big\} \lesssim \frac{1}{m}\EE \Big\{\big\|\bW\bWtil \bZ\bv_{2}\big\|_{2} \Big\}  \nonumber
	\\&\leq \frac{1}{m} \EE \Big\{ \big\|\bW\bWtil \bZ\bv_{2}\big\|_{2}^{2} \Big\}^{1/2} \leq \frac{1}{\sqrt{m}}.
\end{align}

\paragraph{Bounding $\EE\{|T_{2}|\}$:} Applying the rank-one update formula to compute $\bSig^{-1}$, we re-write
\[
	T_2 = \frac{1}{m} \sum_{i=1}^{m} \frac{ G_{i}\widetilde{G}_{i} \bx_{i}^{\top}\bu_{2} \bz_{i}^{\top}\bv_{2} \cdot \ba_{i}^{\top}\bSig_{i}^{-1} \ba_{i}}{1+\ba_{i}^{\top} \bSig_{i}^{-1} \ba_{i}}.
\]
We then apply the triangle inequality to bound $\vert T_2 \vert$ as
\begin{align*}
	|T_{2}| &\leq \bigg| \frac{1}{m} \sum_{i=1}^{m}  \frac{ G_{i}\widetilde{G}_{i} \bx_{i}^{\top}\bu_{2} \bz_{i}^{\top}\bv_{2} \cdot \ba_{i}^{\top}\bSig_{i}^{-1} \ba_{i}}{1+\ba_{i}^{\top} \bSig_{i}^{-1} \ba_{i}} - \frac{ G_{i}\widetilde{G}_{i} \bx_{i}^{\top}\bu_{2} \bz_{i}^{\top}\bv_{2} \big( \frac{\widetilde{G}_{i}^{2}}{r_{1}} + \frac{G_{i}^{2}}{r_{2}} \big)}{1+ G_{i}^{2}/r_{2} + \widetilde{G}_{i}^{2}/r_{1}}  \bigg| \\
	& \qquad \qquad \qquad \qquad \qquad \qquad \qquad \qquad+ \bigg| \frac{1}{n} \sum_{i=1}^{m} \frac{ G_{i}\widetilde{G}_{i} \bx_{i}^{\top}\bu_{2} \bz_{i}^{\top}\bv_{2} \big(\frac{G_{i}^{2}}{r_{2}} + \frac{\widetilde{G}_{i}^{2}}{r_{1}} \big)}{1+ G_{i}^{2}/r_{2} + \widetilde{G}_{i}^{2}/r_{1}} \bigg| 
	\\&\leq\frac{1}{m} \sum_{i=1}^{m} \big|G_{i}\widetilde{G}_{i} \bx_{i}^{\top}\bu_{2} \bz_{i}^{\top}\bv_{2} \big| \cdot \Big|\ba_{i}^{\top} \bSig_{i}^{-1} \ba_{i}  - \widetilde{G}_{i}^{2}/r_{1} - G_{i}^{2}/r_{2}) \Big| + \Big| \frac{1}{m}\sum_{i=1}^{m}\bx_{i}^{\top}\bu_{2} \delta_{i} \Big|,
\end{align*}
where $\delta_{i} = G_{i}\widetilde{G}_{i}\bz_{i}^{\top}\bv_{2} \big(G_{i}^{2}/r_{2} + \widetilde{G}_{i}^{2}/r_{1} \big)/\big(1+G_{i}^{2}/r_{2} + \widetilde{G}_{i}^{2}/r_{1}\big)$. Taking expectation on both sides of the inequality in the display above yields
\[
	\EE\big\{\big|T_{2}\big|\big\} \leq \underbrace{\EE\Big\{ \big|G_{i}\widetilde{G}_{i} \bx_{i}^{\top}\bu_{2} \bz_{i}^{\top}\bv_{2} \big| \cdot \Big|\ba_{i}^{\top} \bSig_{i}^{-1} \ba_{i} - G_{i}^{2}/r_{2} - \widetilde{G}_{i}^{2}/r_{1}) \Big| \Big\}}_{T'_{2}} + \underbrace{\EE\Big\{\Big| \frac{1}{m}\sum_{i=1}^{m}\bx_{i}^{\top}\bu_{2} \delta_{i} \Big| \Big\}}_{T''_{2}}.
\]
Following the identical steps of proof of inequality~\eqref{T2-expectation-bound-M11} yields the bound $T'_{2} \lesssim \log^{1.5}(d)/\sqrt{d}$. Note that $\{\delta_{i}\}_{i=1}^{n}$ is independent of $\{\bx_{i}^{\top} \bu_{2}\}_{i=1}^{n}$. Consequently, applying Hoeffding’s inequality yields
\[
	T''_{2} = \EE\bigg\{ \EE\bigg\{ \Big| \frac{1}{m}\sum_{i=1}^{m}\bx_{i}^{\top}\bu_{2} \delta_{i} \; \Big| \; \{\delta_{i}\}_{i=1}^{m} \bigg\} \bigg\} \lesssim \EE\bigg\{ \frac{1}{m} \Big( \sum_{i=1}^{m}\delta_{i}^{2} \Big)^{1/2}\bigg\} 
	\leq \frac{1}{m} \EE\bigg\{ \sum_{i=1}^{m}\delta_{i}^{2} \bigg\}^{1/2} \lesssim \frac{1}{\sqrt{m}},
\]
where the last step follows from $\EE \big\{\delta_{i}^{2} \big\} \lesssim 1$. Putting the pieces together, we obtain the bound
\begin{align}\label{T2-bound-last-term}
	\EE\big\{\big|T_{2}\big|\big\}  \lesssim \frac{\log^{1.5}(d)}{\sqrt{d}} +  \frac{1}{\sqrt{m}}.
\end{align}

\paragraph{Bounding $\EE\{|T_{3}|\}$:} Proceeding in a parallel fashion to the proof of the step~\eqref{definition-S-T3-M1} yields the bound $\EE\{|T_{3}|\} \lesssim \log^{6}(d)/\sqrt{d}$.

\paragraph{Putting the pieces together:} Applying inequalities~\eqref{T1-bound-last-term},~\eqref{T2-bound-last-term} and the bound of $\EE\big\{\big|T_3\big|\big\}$ yields $\EE\big\{\big|M_3\big|\big\} \lesssim \log^{6}(d)/\sqrt{d} + 1/\sqrt{m}$. \qed

\section{Deferred proofs for the orthogonal component}
This section is organized as follows.  In Section~\ref{sec:proof-stochastic-error-orthogonal-component}, we provide the proof of Lemma~\ref{lem:stochastic-error-orthogonal-component}.  Then, in Section~\ref{sec:proof-ineq-step1-eta-component-expectation}, we provide the proof of Lemma~\ref{lem:ineq-step1-eta-component-expectation}.  In Section~\ref{sec:proof-ineq3-step2-eta-component}, we provide the proof of Lemma~\ref{lem:ineq3-step2-eta-component}.  Finally, in Section~\ref{sec:proof-ineq-step3-eta-component}, we provide the proof of Lemma~\ref{lem:ineq-step3-eta-component}.  

\subsection{Proof of Lemma~\ref{lem:stochastic-error-orthogonal-component}}\label{sec:proof-stochastic-error-orthogonal-component}
We only prove the concentration of $\etaX_{+}^{2}$ since the proof for $\etaZ_{+}^2$ is identical.  The proof employs the method of typical bounded differences with the function $f:\mathbb{R}^{(2d+1)\times m} \rightarrow \mathbb{R}$ as
\begin{align}\label{definition-f-rho-eta}
	f\big( \{\bx_{i},\bz_{i},\epsilon_{i}\}_{i=1}^{m} \big) = \etaX_{+}^{2} = \big\|\bP_{\mathsf{span}\{\bcoefX_{\star},\bcoefX_{\sharp}\}}^{\perp} \bcoefX_{+} \big\|_2^{2},
\end{align}
 and the regularity set $\mathcal{S} \subseteq \mathbb{R}^{(2d+1) \times m}$ defined as
 \begin{align}\label{definition-regularity-set-eta-component-expectation}
 	\mathcal{S} = \bigg\{ &\{\bx_{i},\bz_{i},\epsilon_{i}\}_{i=1}^{m} \in \mathbb{R}^{(2d+1) \times m}: \quad \forall \; i,j \in [m], \nonumber \\ &\|\bx_{i}\|_{2},\; \|\bz_{i}\|_{2} \leq C\sqrt{d}, \quad |\bx_{i}^{\top} \bcoefX_{\star}|,\; |\bx_{i}^{\top} \bcoefX^{\perp}|,\; |\bz_{i}^{\top} \bcoefZ_{\star}|,\; |\bz_{i}^{\top} \bcoefZ^{\perp}| \leq C\sqrt{\log d},\quad |\epsilon_{i}| \leq C \sigma \sqrt{\log d}, \nonumber
 	\\& |\bx_{i}^{\top} (\bcoefX_{\sharp} - \bcoefX^{-(i)})| \leq C \sqrt{\log d} \cdot  \| \bcoefX_{\sharp} - \bcoefX^{-(i)} \|_{2}, \quad |\bx_{i}^{\top} (\bcoefX_{\sharp} - \bcoefX^{-(i,j)})| \leq C \sqrt{\log d} \cdot \| \bcoefX_{\sharp} - \bcoefX^{-(i,j)} \|_{2},  \nonumber
 	\\& |\bz_{i}^{\top} (\bcoefZ_{\sharp} - \bcoefZ^{-(i)})| \leq C \sqrt{\log d} \cdot \| \bcoefZ_{\sharp} - \bcoefZ^{-(i)} \|_{2},\quad |\bz_{i}^{\top} (\bcoefZ_{\sharp} - \bcoefZ^{-(i,j)})| \leq C \sqrt{\log d} \cdot \| \bcoefZ_{\sharp} - \bcoefZ^{-(i,j)} \|_{2}, \nonumber
 	\\& \Big| \Big\langle \bSig_{i}^{-1} \ba_{i}, \bP_{V}^{\perp} \begin{bmatrix} \bcoefX^{-(i)} \\ \bcoefZ^{-(i)} \\ \end{bmatrix}   \Big\rangle \Big| \leq \frac{C\log(d)}{\lambda m} \Big\| \bP_{V}^{\perp} \begin{bmatrix} \bcoefX^{-(i)} \\ \bcoefZ^{-(i)} \\ \end{bmatrix} \Big\|_{2} \quad \text{and} 
 	\\& \Big| \Big\langle \bSig_{ij}^{-1} \ba_{j}, \bP_{V}^{\perp} \begin{bmatrix} \bcoefX^{-(i,j)} \\ \bcoefZ^{-(i,j)} \\ \end{bmatrix}   \Big\rangle \Big| \leq \frac{C\log(d)}{\lambda m} \Big\| \bP_{V}^{\perp} \begin{bmatrix} \bcoefX^{-(i,j)} \\ \bcoefZ^{-(i,j)} \\ \end{bmatrix} \Big\|_{2}  \quad \text{for}\quad V = \mathsf{span}(\bcoefX_{\star},\bcoefX_\sharp) \times \mathbb{R}^{d}  \bigg\}, \nonumber
 \end{align}
 where $C$ denotes a large enough positive universal constant.  The remainder of the proof follows similar steps to Section~\ref{sec:stochastic-error-parallel-component}, whence we omit the details. \qed

\subsection{Proof of Lemma~\ref{lem:ineq-step1-eta-component-expectation}}\label{sec:proof-ineq-step1-eta-component-expectation}
We first recall the definition of $V_{1}$ as in Eq~\eqref{eq:V-V1-V2} as well as the characterization of $\langle \bcoefX_{+}, \bu \rangle$ as in~\eqref{eq-closed-form-u3-component} and apply the triangle inequality to obtain the decomposition
\begin{align*}
	&\Big| \EE\{\langle \bcoefX_{+}, \bu \rangle^{2}\} - \frac{\EE\big\{ \|\widetilde{\bW} \bR_{\bu,\bv}\|_{2}^{2} \big\}}{m^{2}(\lambda + V_{1})^{2}} \Big| \leq T_{1} + T_{2} + T_{3}, \text{ where }
\end{align*}
\begin{align*}
	&T_{1} = \EE\Bigg\{ \frac{ 2\big| (\widetilde{\bW}\bX \bu )^{\top} \bR_{\bu,\bv} \cdot \frac{M_{12}}{M_{22}+\lambda} (\bW\bZ\bv)^{\top} \bR_{\bu,\bv} \big| }{ m^{2}\Big(M_{11}+\lambda - \frac{M_{12}^{2}}{M_{22}+\lambda} \Big)^{2} } \Bigg\}, \qquad T_{2} = \EE\Bigg\{ \frac{  \frac{M_{12}^{2}}{(M_{22}+\lambda)^{2}} \Big( (\bW\bZ\bv)^{\top} \bR_{\bu,\bv} \Big)^{2} }{ m^{2}\Big(M_{11}+\lambda - \frac{M_{12}^{2}}{M_{22}+\lambda} \Big)^{2} } \Bigg\}\\
	&\qquad \qquad \text{ and } \qquad
	T_{3} = \Bigg|\EE\Bigg\{ \frac{ \Big( (\widetilde{\bW}\bX \bu )^{\top} \bR_{\bu,\bv} \Big)^{2} }{ m^{2}\Big(M_{11}+\lambda - \frac{M_{12}^{2}}{M_{22}+\lambda} \Big)^{2} } \Bigg\} - \frac{\EE\big\{ \|\widetilde{\bW} \bR_{\bu,\bv}\|_{2}^{2} \big\}}{m^{2}(\lambda + V_{1})^{2}} \Bigg|.
\end{align*}
The main bulk of the proof consists of establishing each of the following three inequalities
\begin{subequations}\label{ineq:bound-T1-bias-orth-Ruv-term}
	\begin{align}
		T_1 &\leq \frac{\Big(\sigma^{2} + \Err_{\sharp} \Big) \log^{2}(m)}{(\lambda m)\cdot \lambda \sqrt{m}}, \label{ineq:bound-T1-bias-orth-Ruv-term-a}\\
		T_2 &\leq \frac{\big(\sigma^{2} + \Err_{\sharp} \big)\log^{3}(m)}{(\lambda m)^{2}}, \qquad \text{ and } \label{ineq:bound-T1-bias-orth-Ruv-term-b}\\
		T_3 &\leq \frac{\sigma^{2} + \Err_{\sharp} }{\lambda m} \cdot \Big( \frac{\log(m)}{\lambda} \max\Big(\frac{\log(m)}{\sqrt{m}},\frac{\log^{1.5}(d)}{\sqrt{d}} \Big)+ \frac{\log^{3}(m)}{\lambda m}\Big) \label{ineq:bound-T1-bias-orth-Ruv-term-c}
	\end{align}
\end{subequations}
Combining these three inequalities with the decomposition at the beginning of the section yields the bound
\begin{align}
	\Big| \EE\{\langle \bcoefX_{+}, \bu \rangle^{2}\} - \frac{\EE\big\{ \|\widetilde{\bW} \bR_{\bu,\bv}\|_{2}^{2} \big\}}{m^{2}(\lambda + V_{1})^{2}} \Big| \lesssim \frac{\sigma^{2} + \Err_{\sharp}}{\lambda m} \cdot  \frac{\log^{3}(d)}{\lambda \sqrt{m}},
\end{align} 
as desired.  We turn now to establishing each of the three inequalities~\eqref{ineq:bound-T1-bias-orth-Ruv-term}.

\paragraph{Proof of the inequality~\eqref{ineq:bound-T1-bias-orth-Ruv-term-a}:} Recall the quantities $M_{11}, M_{22},$ and $M_{12}$ as defined in~\eqref{def:M11M12M22} and apply the Cauchy--Schwarz inequality to obtain the bound $M_{11}\cdot M_{22} \geq M_{12}^{2}$. Consequently, since $\lambda \geq 0$, $M_{11} \geq M_{12}^{2}/(\lambda + M_{22})$.  Note further that since $\bP_{\bu,\bv} \succeq \boldsymbol{0}$, we have the lower bound $M_{22}\geq 0$.  We thus upper bound $T_1$ as
\begin{align}\label{ineq-T1-eta-expectation}
	T_{1} &\leq \frac{2}{(\lambda n)^{3}} \EE\Big\{ \Big| (\widetilde{\bW}\bX\bu)^{\top} \bP_{\bu,\bv} \bW\bZ\bv \cdot 
	(\widetilde{\bW}\bX\bu)^{\top} \bR_{\bu,\bv} (\bW\bZ\bv)^{\top} \bR_{\bu,\bv} \Big| \Big\} \nonumber
	\\&\leq \frac{2}{(\lambda n)^{3}} \EE\Big\{ \Big((\widetilde{\bW}\bX\bu)^{\top} \bP_{\bu,\bv} \bW\bZ\bv\Big)^{2} \Big\}^{1/2} \cdot \EE\Big\{ \Big( (\widetilde{\bW}\bX\bu)^{\top} \bR_{\bu,\bv} (\bW\bZ\bv)^{\top} \bR_{\bu,\bv}\Big)^{2} \Big\}^{1/2},
\end{align}
where the last step follows from the Cauchy--Schwarz inequality. Continuing, we obtain the bound
\begin{align}\label{ineq1-T1-eta-expectation}
	\EE\big\{ \big((\widetilde{\bW}\bX\bu)^{\top} \bP_{\bu,\bv} \bW\bZ\bv\big)^{2} \big\} \overset{\1}{=} \EE\big\{ \big\| \widetilde{\bW} \bP_{\bu,\bv} \bW \big\|_{F}^{2} \big\} &\lesssim m \EE\Big\{ \max_{i\in[m]}G_i^2 \cdot \max_{i\in[m]} \widetilde{G}_i^2 \Big\} \\
	&\leq m\log^2(m),
\end{align}
where step $\1$ follows since $(\bX\bu,\bZ\bv)$ is independent of $(\widetilde{\bW},\bP_{\bu,\bv},\bW)$ (as $\bu,\bv$ are orthogonal to $\mathsf{span}(\bcoefX_{\star},\bcoefX_{t}),\mathsf{span}(\bcoefZ_{\star},\bcoefZ_{t})$, respectively), and in the last step follows from the i.i.d. Gaussian nature of the random variables $\{G_i,\widetilde{G}_i\}_{i\in[m]}$. 
Next, exploiting the independence between $(\bX\bu,\bZ\bv)$ and $(\bW,\widetilde{\bW},\bR_{\bu,\bv})$, we deduce
\begin{align*}
	\EE\big\{ \big( (\widetilde{\bW}\bX\bu)^{\top} \bR_{\bu,\bv} (\bW\bZ\bv)^{\top} \bR_{\bu,\bv}\big)^{2} \big\} &= 
	\EE\big\{ \big\| \widetilde{\bW} \bR_{\bu,\bv} \big\|_{2}^{2} \cdot  \big\| \bW \bR_{\bu,\bv} \big\|_{2}^{2} \big\}\\
	 &\leq \EE\Big\{ \max_{i \in [m]} G_i^2 \cdot \max_{i \in [m]} \widetilde{G}_i^2 \cdot \big\|\bR_{\bu,\bv} \big\|_{2}^{4} \Big\}.
\end{align*}
We then claim the following inequality, 
\begin{align}\label{claim1-Ruv-leq-Residue}
	\big\|\bR_{\bu,\bv} \big\|_{2}^{2} \leq \big\| \by - \bX\bcoefX_{t} \odot \bZ \bcoefZ_{t} \big\|_{2}^{2} = \sum_{i=1}^{m}(y_{i} - G_{i} \widetilde{G}_{i})^{2}.
\end{align}
Taking this as given---and providing its proof at the end---we put the pieces together and use the shorthand $a = \max_{i} G_i^2$ and $b =  \max_{i} \widetilde{G}_i^2$ to obtain the bound
\begin{align*}
	\EE\Big\{ \max_{i \in [m]} G_i^2 \cdot \max_{i \in [m]} \widetilde{G}_i^2 \cdot \big\|\bR_{\bu,\bv} \big\|_{2}^{4} \Big\} &\leq \EE\Big\{ ab \Big( \sum_{i=1}^{m} (y_{i} - G_{i}\widetilde{G}_{i} )^{2} \Big)^{2} \Big\} \lesssim m^{2} \EE\big\{ab (y_{i} - G_{i} \widetilde{G}_{i})^{4} \big\}.
\end{align*}
We further bound the RHS by applying the Cauchy--Schwarz inequality to obtain
\[
\EE\big\{ab (y_{i} - G_{i} \widetilde{G}_{i})^{4} \big\} \leq \EE\{a^2b^2\}^{1/2} \cdot \EE\{(y_{i} - G_{i} \widetilde{G}_{i})^{8} \}^{1/2} \lesssim \log^{2}(m) \cdot \big( \Err_{\sharp}^{2} + \sigma^4 \big),
\]
where in the last step we use the same decomposition as in Eq.~\eqref{ineq1-auxiliary-lemma1-proof-u-component}.  Assembling the various bounds yields 
\begin{align}\label{ineq3-T1-eta-expectation}
	\EE\big\{ \big( (\widetilde{\bW}\bX\bu)^{\top} \bR_{\bu,\bv} (\bW\bZ\bv)^{\top} \bR_{\bu,\bv}\big)^{2} \big\} \lesssim 
	\big(\sigma^{4} + \Err_{\sharp}^{2} \big) m^{2}\log^{2}(m).
\end{align}
Finally, we combine inequalities~\eqref{ineq-T1-eta-expectation},~\eqref{ineq1-T1-eta-expectation} and~\eqref{ineq3-T1-eta-expectation} to obtain
\begin{align}\label{ineq4-T1-eta-expectation}
	\hspace{-1cm}
	T_{1} \lesssim \frac{\sigma^{2} + \Err_{\sharp} }{(\lambda m)^{3}} \cdot \log(m) \sqrt{m} \cdot m\log(m) \leq  \frac{\Big(\sigma^{2} + \Err_{\sharp} \Big) \log^{2}(m)}{(\lambda m)\cdot \lambda \sqrt{m}},
\end{align}
as desired.  Before turning to bounding $T_2$, we provide the proof of the remaining inequality~\eqref{claim1-Ruv-leq-Residue}. 

\medskip
\noindent \underline{Proof of the inequality~\eqref{claim1-Ruv-leq-Residue}:} By the definition of $\bR_{\bu,\bv}$~\eqref{definition-Ruv-M11-etc}, we re-write
\[
\|\bR_{\bu,\bv}\|_{2}^{2} = \sum_{i=1}^{m}\big(y_{i} + G_{i}\widetilde{G}_{i} - \widetilde{G}_{i}\bx_{i}^{\top}\bO_{\bu}\bcoefX_{\bu,\bv} - G_{i}\bz_{i}^{\top} \bcoefZ_{\bu,\bv} \big)^{2}.
\]
We then substitute the definitions of $(\bcoefX_{\bu,\bv},\bcoefZ_{\bu,\bv})$ as in ~\eqref{defintion-estimator-u3-v3} to obtain the bound
\begin{align*}
	\|\bR_{\bu,\bv}\|_{2}^{2} \leq
	\sum_{i=1}^{m}\big(y_{i} + G_{i}\widetilde{G}_{i} - \widetilde{G}_{i}\bx_{i}^{\top}\bO_{\bu} \bO_{\bu}^{\top}\bcoefX_{\sharp} - G_{i}\bz_{i}^{\top} \bO_{\bv} \bO_{\bv}^{\top}\bcoefZ_{\sharp} \big)^{2} = \sum_{i=1}^{m}\big(y_{i} - G_{i}\widetilde{G}_{i} \big)^{2},
\end{align*}
where in the last step we used the facts that $\bO_{\bu}\bO_{\bu}^{\top} \bcoefX_{\sharp} = \bcoefX_{\sharp}$, $\bO_{\bv}\bO_{\bv}^{\top} \bcoefZ_{\sharp} = \bcoefZ_{\sharp}$, $G_{i} = \bx_{i}^{\top}\bcoefX_{\sharp}$, and $\widetilde{G}_{i} = \bz_{i}^{\top}\bcoefZ_{\sharp}$.

\paragraph{Proof of the inequality~\eqref{ineq:bound-T1-bias-orth-Ruv-term-b}:} Applying the pair of bounds $M_{11}\geq M_{12}^{2}/(\lambda + M_{22})$ and $M_{22}\geq 0$. we obtain the bound
\begin{align*}
	T_{2} &\leq \frac{1}{(\lambda m)^{4}} \EE\Big\{ \Big( (\widetilde{\bW}\bX\bu)^{\top} \bP_{\bu,\bv} \bW\bZ\bv \Big)^{2} \Big(
	(\bW\bZ\bv)^{\top} \bR_{\bu,\bv} \Big)^{2} \Big\} \nonumber
	\\& \leq \frac{1}{(\lambda m)^{4}}\EE\Big\{ \big\| \widetilde{\bW} \bP_{\bu,\bv} \bW\bZ\bv \big\|_{2}^{4} \Big\}^{1/2} \cdot \EE\Big\{ \Big((\bW\bZ\bv)^{\top} \bR_{\bu,\bv} \Big)^{4}\Big\}^{1/2},
\end{align*}
where in the final inequality we have first taken the expectation over $\bX \bu$ and subsequently applied the Cauchy--Schwarz inequality.  We then apply the following pair of inequalities 
\begin{align*}
	&\EE\Big\{ \big\| \widetilde{\bW} \bP_{\bu,\bv} \bW\bZ\bv \big\|_{2}^{4} \Big\}  \lesssim m^{2} \log^{4}(m) \qquad \text{ and }\\
	& \EE\Big\{ \Big((\bW\bZ\bv)^{\top} \bR_{\bu,\bv} \Big)^{4}\Big\} \lesssim \EE\Big\{ \big\| \bW \bR_{\bu,\bv}\big\|_{2}^{4}\Big\} \lesssim \big(\sigma^{4} + \Err_{\sharp}^{2} \big)m^{2}\log^{2}(m),
\end{align*}
to deduce the ultimate bound
\begin{align}\label{ineq-T2-eta-component}
	T_{2} \lesssim \frac{\big(\sigma^{2} + \Err_{\sharp} \big)\log^{3}(m)}{(\lambda m)^{2}},
\end{align}
as desired.

\paragraph{Proof of the inequality~\eqref{ineq:bound-T1-bias-orth-Ruv-term-c}:} Note that $\EE\big\{ \big( (\widetilde{\bW}\bX\bu)^{\top}\bR_{\bu,\bv} \big)^{2} \big\} = \EE\big\{ \big\| \widetilde{\bW}\bR_{\bu,\bv} \big\|_{2}^{2}\big\}$ and
\[
\Big| \Big(\lambda + M_{11} - M_{12}^{2}/(M_{22}+\lambda)\Big)^{-2} - (\lambda+V_{1})^{-2} \Big| \leq \lambda^{-2}\Big(|V_{1}-M_{11}|+ M_{12}^{2}/\lambda \Big).
\]
Consequently, we upper bound $T_3$ as
\begin{align*}
	T_{3} \leq \frac{\EE\Big\{ \big( (\widetilde{\bW}\bX\bu)^{\top}\bR_{\bu,\bv} \big)^{2} \cdot |V_{1}-M_{11}| \Big\}}{(\lambda m)^{2}} + \frac{\EE\Big\{ \big( (\widetilde{\bW}\bX\bu)^{\top}\bR_{\bu,\bv} \big)^{2} \cdot \big((\widetilde{\bW}\bX\bu)^{\top}\bP_{\bu,\bv} \bW\bZ\bv \big)^{2} \Big\}}{(\lambda m)^{2}\lambda m^{2}}.
\end{align*}
Applying the Cauchy--Schwarz inequality then yields
\begin{align*}
	\EE\Big\{ \big( (\widetilde{\bW}\bX\bu)^{\top}\bR_{\bu,\bv} \big)^{2} \cdot |V_{1}-M_{11}| \Big\} &\leq
	\EE\Big\{ \big( (\widetilde{\bW}\bX\bu)^{\top}\bR_{\bu,\bv} \big)^{4} \Big\}^{1/2} \cdot \EE\Big\{ |V_{1}-M_{11}|^{2} \Big\}^{1/2}.
	\\&\lesssim \big(\sigma^{2} + \Err_{\sharp}^{2}\big)m\log(m)\Big(\frac{\log(m)}{\sqrt{m}} + \frac{\log^{1.5}(d)}{\sqrt{d}} \Big),
\end{align*}
Next, we proceed similarly to the proofs of~\eqref{ineq:bound-T1-bias-orth-Ruv-term-a} and~\eqref{ineq:final-bound-M11-V1} to obtain the pair of inequalities
\begin{align*}
\EE\Big\{ \big( (\widetilde{\bW}\bX\bu)^{\top}\bR_{\bu,\bv} \big)^{4} \Big\} &\lesssim  \big(\sigma^{4} +\Err_{\sharp}^{2} \big)m^{2}\log^{2}(m), \qquad \text{ and } \\
 \EE\{|M_{11}-V_{1}|^{2}\} &\lesssim \frac{\log^{2}(m)}{m} + \frac{\log^{3}(d)}{d},
\end{align*}
so that 
\begin{align*}
	\EE\Big\{ \big( (\widetilde{\bW}\bX\bu)^{\top}\bR_{\bu,\bv} \big)^{2} \cdot |V_{1}-M_{11}| \Big\} \lesssim \big(\sigma^{2} + \Err_{\sharp}^{2}\big)m\log(m)\Big(\frac{\log(m)}{\sqrt{m}} + \frac{\log^{1.5}(d)}{\sqrt{d}} \Big).
\end{align*}
We then mimic the steps to prove the inequality~\eqref{ineq:bound-T1-bias-orth-Ruv-term-b} to deduce the bound
\[
\EE\big\{ \big( (\widetilde{\bW}\bX\bu)^{\top}\bR_{\bu,\bv} \big)^{2} \cdot \big((\widetilde{\bW}\bX\bu)^{\top}\bP_{\bu,\bv} \bW\bZ\bv \big)^{2} \big\} \lesssim \big(\sigma^{2} + \Err_{\sharp} \big)m^{2}\log^{3}(m).
\]
Putting the pieces together yields
\begin{align}\label{ineq-T3-eta-component}
	\EE\{T_{3}\} \lesssim \frac{\sigma^{2} + \Err_{\sharp} }{\lambda m} \cdot \Big( \frac{\log(m)}{\lambda} \max\Big(\frac{\log(m)}{\sqrt{m}},\frac{\log^{1.5}(d)}{\sqrt{d}} \Big)+ \frac{\log^{3}(m)}{\lambda m}\Big),
\end{align}
as desired.  \qed

\subsection{Proof of Lemma~\ref{lem:ineq3-step2-eta-component}}\label{sec:proof-ineq3-step2-eta-component}
Following the discussion preceding the statement of Lemma~\ref{lem:ineq3-step2-eta-component}, we deduce that
\begin{align}\label{ineq-step2-eta-component}
	\EE\big\{\|\widetilde{\bW} \bR_{\bu,\bv}\|_{2}^{2} \big\} = n\cdot \EE\bigg\{ \frac{\widetilde{G}_{i}^{2} \tau_{i}^{2}}{\big(1+\ba_{i}^{\top}\bSig_{i}^{-1} \ba_{i} \big)^{2}}\bigg\}.
\end{align}
We now replace the denominator $1+\ba_{i}^{\top}\bSig_{i}^{-1}\ba_{i}$ by $1+\widetilde{G}_{i}^{2}r_{1}^{-1} + G_{i}^{2}r_{2}^{-1}$.  To this end, we apply the triangle inequality and use the three bounds $\bSig_{i}^{-1} \succeq \boldsymbol{0}$ and $r_{1},r_{2} \geq 0$~\eqref{eq:fixed-point} to obtain the inequality
\begin{align}\label{ineq1-step2-eta-component}
	&\bigg| \EE\bigg\{ \frac{\widetilde{G}_{i}^{2} \tau_{i}^{2}}{\big(1+\ba_{i}^{\top}\bSig_{i}^{-1} \ba_{i} \big)^{2}} \bigg\} - \EE\bigg\{ \frac{\widetilde{G}_{i}^{2} \tau_{i}^{2}}{\big(1+\widetilde{G}_{i}^{2}r_{1}^{-1} + G_{i}^{2}r_{2}^{-1} \big)^{2}} \bigg\} \bigg|
	\leq \EE\Big\{ \widetilde{G}_{i}^{2} \tau_{i}^{2} \big|\ba_{i}^{\top} \bSig_{i}^{-1} \ba_{i} - \widetilde{G}_{i}^{2}r_{1}^{-1} - G_{i}^{2}r_{2}^{-1}\big| \Big\} \nonumber\\
	&\qquad \qquad \qquad \qquad \qquad \qquad \leq \EE\{\tau_{i}^{4}\}^{1/2} \cdot  \EE\Big\{\widetilde{G}_{i}^{4}  \big(\ba_{i}^{\top} \bSig_{i}^{-1} \ba_{i} - \widetilde{G}_{i}^{2}r_{1}^{-1} - G_{i}^{2}r_{2}^{-1}\big)^{2} \Big\}^{1/2}
\end{align}
Proceeding in an identical fashion to the argument between inequality~\eqref{ineq-decomposition-T2} to inequality~\eqref{T2-expectation-bound-M11} yields the bound
\[
\EE\Big\{\widetilde{G}_{i}^{4}  \big(\ba_{i}^{\top} \bSig_{i}^{-1} \ba_{i} - \widetilde{G}_{i}^{2}r_{1}^{-1} - G_{i}^{2}r_{2}^{-1}\big)^{2} \Big\} \lesssim \log^{3}(d)/d.
\]
Towards bounding $\EE\{\tau_{i}^{4}\}$, we note that by triangle inequality
\[
|\tau_{i}| \leq |y_{i} - G_{i}\widetilde{G}_{i}| + \big|G_{i} \cdot \bz_{i}^{\top} \big( \bcoefZ_{t} - \bO_{\bv} \bcoefZ_{\bu,\bv}^{-(i)} \big) \big| + \big| \widetilde{G}_{i} \cdot \bx_{i}^{\top} \big( \bcoefX_{t} - \bO_{\bu} \bcoefX_{\bu,\bv}^{-(i)} \big) \big|.
\]
Consequently, we obtain that
\begin{align}\label{ineq-tau-i-4th-moment}
	\EE\{\tau_{i}^{4}\} &\lesssim \EE\big\{ (y_{i} - G_{i}\widetilde{G}_{i})^{4} \big\} + \EE\big\{ \big|G_{i} \cdot \bz_{i}^{\top} \big( \bcoefZ_{t} - \bO_{\bv} \bcoefZ_{\bu,\bv}^{-(i)} \big) \big|^{4}\big\} + \EE\big\{ \big| \widetilde{G}_{i} \cdot \bx_{i}^{\top} \big( \bcoefX_{t} - \bO_{\bu} \bcoefX_{\bu,\bv}^{-(i)} \big) \big|^{4} \big\} \nonumber
	\\&\lesssim \EE\big\{ (y_{i} - G_{i}\widetilde{G}_{i})^{4} \big\} + \EE\big\{ \big\| \bcoefZ_{t} - \bO_{\bv} \bcoefZ_{\bu,\bv}^{-(i)}  \big\|_{2}^{4}\big\} + \EE\big\{ \big\| \bcoefX_{t} - \bO_{\bu} \bcoefX_{\bu,\bv}^{-(i)}  \big\|_{2}^{4}\big\},
\end{align}
where the last step follows since $(\bcoefX_{\bu,\bv}^{-(i)}, \bcoefZ_{\bu,\bv}^{-(i)})$ is independent of $(\bx_{i},\bz_{i})$. Continuing, since $(\bcoefX_{\bu,\bv}^{-(i)}, \bcoefZ_{\bu,\bv}^{-(i)})$ is the minimizer of~\eqref{defintion-estimator-u3-v3-i-sample-out}, we obtain that
\begin{align*}
	\lambda m \cdot (\|\bcoefX_{\bu,\bv}^{-(i)} - \bO_{\bu}^{\top}\bcoefX_{\sharp}\|_{2}^{2} + \|\bcoefZ_{\bu,\bv}^{-(i)} - \bO_{\bv}^{\top}\bcoefZ_{\sharp}\|_{2}^{2}) 
	\leq \sum_{j\neq i} \big(y_{j} - G_{j} \widetilde{G}_{j} \big)^{2}.
\end{align*}
The inequality in the display above further implies
\begin{align}\label{ineq-l2-distance-looestimator-estimatort}
	\|\bcoefX_{\bu,\bv}^{-(i)} - \bO_{\bu}^{\top}\bcoefX_{\sharp}\|_{2}^{2} + \|\bcoefZ_{\bu,\bv}^{-(i)} - \bO_{\bv}^{\top}\bcoefZ_{\sharp}\|_{2}^{2} \leq \frac{1}{\lambda m} \sum_{j\neq i} \big(y_{j} - G_{j} \widetilde{G}_{j} \big)^{2}.
\end{align}
Taking the square and expectation in sequence of both sides of the inequality in the display above yields 
\[
\EE\big\{ \|\bcoefX_{\bu,\bv}^{-(i)} - \bO_{\bu}^{\top}\bcoefX_{\sharp}\|_{2}^{4} \big\} 
+ \EE\big\{ \|\bcoefZ_{\bu,\bv}^{-(i)} - \bO_{\bv}^{\top}\bcoefZ_{\sharp}\|_{2}^{4} \big\} \lesssim 
\frac{m^{2}}{(\lambda m)^{2}} \EE\big\{ \big(y_{j} - G_{j} \widetilde{G}_{j} \big)^{4} \big\}.
\]
Combining the inequality in the display above with inequality~\eqref{ineq-tau-i-4th-moment} and invoking the assumption $\lambda m \geq d\geq m$, we obtain the bound
\begin{align}\label{ineq-taui-4th-moment}
	\EE\{\tau_{i}^{4}\} \lesssim \EE\big\{ \big(y_{j} - G_{j} \widetilde{G}_{j} \big)^{4} \big\} \lesssim \sigma^{4} + \Err_{\sharp}^{2},
\end{align}
where in the final inequality, we used the decomposition~\eqref{ineq1-auxiliary-lemma1-proof-u-component}. 
Substituting these bounds into the inequality~\eqref{ineq1-step2-eta-component} yields
\[
\bigg| \EE\bigg\{ \frac{\widetilde{G}_{i}^{2} \tau_{i}^{2}}{\big(1+\ba_{i}^{\top}\bSig_{i}^{-1} \ba_{i} \big)^{2}} \bigg\} - \EE\bigg\{ \frac{\widetilde{G}_{i}^{2} \tau_{i}^{2}}{\big(1+\widetilde{G}_{i}^{2}r_{1}^{-1} + G_{i}^{2}r_{2}^{-1} \big)^{2}} \bigg\} \bigg| \lesssim \big(  \sigma^{2} + \Err_{\sharp} \big) \frac{\log^{1.5}(d)}{\sqrt{d}}.
\]
Putting together the inequality in the display above and inequality~\eqref{ineq-step2-eta-component} yields
\begin{align}
	\bigg| \frac{\EE\big\{ \big\| \widetilde{\bW}\bR_{\bu,\bv} \big\|_{2}^{2} \big\}}{m^{2}(\lambda + V_{1})^{2} } - \frac{\EE\bigg\{ \frac{\widetilde{G}_{i}^{2} \tau_{i}^{2}}{\big(1+\widetilde{G}_{i}^{2}r_{1}^{-1} + G_{i}^{2}r_{2}^{-1} \big)^{2}} \bigg\}}{m(\lambda+V_{1})^{2}} \bigg| \lesssim \big(  \sigma^{2} + \Err_{\sharp} \big) \frac{\log^{1.5}(d)}{\sqrt{d} m\lambda^{2}},
\end{align}
as desired. \qed

\subsection{Proof of Lemma~\ref{lem:ineq-step3-eta-component}}\label{sec:proof-ineq-step3-eta-component}
\noindent Recall the definition of $(\bu_{1},\bu_{2},\bv_{1},\bv_{2})$~\eqref{definition-u1-u2-v1-v2} and let $S_{\sharp} = \mathsf{span}(\bcoefX_{\star},\bcoefX_{\sharp})$ and $\widetilde{S}_{\sharp} = \mathsf{span}(\bcoefZ_{\star},\bcoefZ_{\sharp})$. Decomposing by Gram–-Schmidt yields
\begin{align*}
	\bO_{\bu} \bcoefX_{\bu,\bv}^{-(i)} &= \underbrace{\langle \bO_{\bu} \bcoefX_{\bu,\bv}^{-(i)}, \bu_{1} \rangle}_{\vartheta_{1}^{-i}} \bu_{1} 
	+ \underbrace{ \langle \bO_{\bu} \bcoefX_{\bu,\bv}^{-(i)}, \bu_{2} \rangle}_{\vartheta_{2}^{-i}} \bu_{2}+ \underbrace{\bP_{S_{\sharp}}^{\perp} \bO_{\bu} \bcoefX_{\bu,\bv}^{-(i)}}_{\boldsymbol{\omega}^{-i}}, \qquad \text{ and }\\
	\bO_{\bv} \bcoefZ_{\bu,\bv}^{-(i)} &= \underbrace{\langle \bO_{\bv} \bcoefZ_{\bu,\bv}^{-(i)}, \bv_{1} \rangle}_{\widetilde{\vartheta}_{1}^{-i}} \bv_{1} 
	+ \underbrace{ \langle \bO_{\bv} \bcoefZ_{\bu,\bv}^{-(i)}, \bv_{2} \rangle}_{\widetilde{\vartheta}_{2}^{-i}} \bv_{2}  + \underbrace{\bP_{\widetilde{S}_{\sharp}}^{\perp} \bO_{\bv} \bcoefZ_{\bu,\bv}^{-(i)}}_{\widetilde{\boldsymbol{\omega}}^{-i}}.
\end{align*}
We further define $g_{i} = \bx_{i}^{\top} \bu_{2},\;g_{i}^{\perp} = \bx_{i}^{\top} \boldsymbol{\omega}^{-i},\;\widetilde{g}_{i} = \bz_{i}^{\top} \bv_{2}$ and $\widetilde{g}_{i}^{\perp} = \bz_{i}^{\top} \widetilde{\boldsymbol{\omega}}^{-i}$. By definition, $G_{i} = L_{\sharp}\bx_{i}^{\top}\bu_{1}$ and $\widetilde{G}_{i} = \LZ_{\sharp} \bz_{i}^{\top} \bv_{1}$. Consequently, $G_{i},g_{i},g_{i}^{\perp},\widetilde{G}_{i}, \widetilde{g}_{i}, \widetilde{g}_{i}^{\perp}$ are independent since $\bu_{1},\bu_{2},\boldsymbol{\omega}^{-i}$ are orthogonal to each other (similarly for $\bv_{1},\bv_{2},\widetilde{\boldsymbol{\omega}}^{-i}$ ), and $(\boldsymbol{\omega}^{-i},\widetilde{\boldsymbol{\omega}}^{-i})$ are independent of $(\bx_{i},\bz_{i})$. Using the orthogonal decomposition above, we thus write 
\begin{align*}
	\bx_{i}^{\top} \bO_{\bu}\bcoefX_{\bu,\bv}^{-(i)} = \frac{\vartheta_{1}^{-i} G_{i}}{L_{\sharp}} + \vartheta_{2}^{-i} g_{i} + g_{i}^{\perp} \qquad \text{ and } \qquad
	\bz_{i}^{\top} \bO_{\bv} \bcoefZ_{\bu,\bv}^{-(i)} = \frac{\widetilde{\vartheta}_{1}^{-i} \widetilde{G}_{i}}{\widetilde{L}_{\sharp}} + \widetilde{\vartheta}_{2}^{-i} \widetilde{g}_{i} + \widetilde{g}_{i}^{\perp}.
\end{align*}
Combining this with the representation $y_i = \epsilon_{i} + \Big(\frac{\parcompX_{\sharp}}{L_{\sharp}^{2}} G_{i} + \frac{\perpcompX_{\sharp}}{L_{\sharp}}g_{i}  \Big)\Big(\frac{\parcompZ_{\sharp}}{\widetilde{L}_{\sharp}^{2}} \widetilde{G}_{i} + \frac{\perpcompZ_{\sharp}}{\widetilde{L}_{\sharp}}\widetilde{g}_{i}  \Big)$, we deduce that
\begin{align*}
	&\tau_{i} 
	= \epsilon_{i} + \Big(\frac{\parcompX_{\sharp}\parcompZ_{\sharp}}{L_{\sharp}^{2} \widetilde{L}_{\sharp}^{2}} + 1- \frac{\vartheta_{1}^{-i}}{L_{\sharp}} - \frac{\widetilde{\vartheta}_{1}^{-i}}{\widetilde{L}_{\sharp}} \Big)G_{i} \widetilde{G}_{i} 
	\\& \qquad \qquad \qquad \qquad+ 
	\Big( \frac{\parcompX_{\sharp} \perpcompZ_{\sharp}}{L_{\sharp}^{2} \widetilde{L}_{\sharp}} - \widetilde{\vartheta}_{2}^{-i}\Big) G_{i}\widetilde{g}_{i} + \Big( \frac{\parcompZ_{\sharp} \perpcompX_{\sharp}}{\widetilde{L}_{\sharp}^{2} L_{\sharp}} - \vartheta_{2}^{-i}\Big) \widetilde{G}_{i}g_{i} + \frac{\perpcompX_{\sharp} \perpcompZ_{\sharp}}{L_{\sharp} \widetilde{L}_{\sharp}}g_{i}\widetilde{g}_{i} - \widetilde{G}_{i}g_{i}^{\perp}
	- G_{i}\widetilde{g}_{i}^{\perp}.
\end{align*}
Then, exploiting the fact that $G_{i},g_{i},g_{i}^{\perp},\widetilde{G}_{i}, \widetilde{g}_{i}, \widetilde{g}_{i}^{\perp}$ are independent and zero-mean, as well the relations $\EE\{g_{i}^{2}\} = \EE\{\widetilde{g}_{i}^{2}\} = 1,\; \EE\{(g_{i}^{\perp})^{2}\} = \EE\{\|\boldsymbol{\omega}^{-i}\|_{2}^{2}\},\; \EE\{(\widetilde{g}_{i}^{\perp})^{2}\} = \EE\{\|\widetilde{\boldsymbol{\omega}}^{-i}\|_{2}^{2}\}$, we obtain the decomposition
\begin{align}\label{eq-step3-eta-component}
	\hspace{-1cm}
	&\EE\bigg\{ \frac{\widetilde{G}_{i}^{2} \tau_{i}^{2}}{\big(1+\widetilde{G}_{i}^{2}r_{1}^{-1} + G_{i}^{2}r_{2}^{-1} \big)^{2}} \bigg\} =  \EE\bigg\{ \frac{ \Big( \sigma^{2} + \frac{\perpcompX_{\sharp}^{2} \perpcompZ_{\sharp}^{2} }{L_{\sharp}^{2} \widetilde{L}_{\sharp}^{2} } \Big) \widetilde{G}_{i}^{2} }{\big(1+\widetilde{G}_{i}^{2}r_{1}^{-1} + G_{i}^{2}r_{2}^{-1} \big)^{2}} \bigg\} + 
	\EE\bigg\{ \frac{ \EE\{\|\boldsymbol{\omega}^{-i}\|_{2}^{2}\} \widetilde{G}_{i}^{4} + \EE\{\|\widetilde{\boldsymbol{\omega}}^{-i}\|_{2}^{2}\} G_{i}^{2}\widetilde{G}_{i}^{2} }{\big(1+\widetilde{G}_{i}^{2}r_{1}^{-1} + G_{i}^{2}r_{2}^{-1} \big)^{2}} \bigg\} \nonumber
	\\&+ \EE\bigg\{ \Big(\frac{\parcompX_{\sharp}\parcompZ_{\sharp}}{L_{\sharp}^{2} \widetilde{L}_{\sharp}^{2}} +1 - \frac{\vartheta_{1}^{-i}}{L_{\sharp}} - \frac{\widetilde{\vartheta}_{1}^{-i}}{\widetilde{L}_{\sharp}} \Big)^{2} \bigg\} \EE\bigg\{ \frac{ G_{i}^{2}\widetilde{G}_{i}^{4} }{\big(1+\widetilde{G}_{i}^{2}r_{1}^{-1} + G_{i}^{2}r_{2}^{-1} \big)^{2}} \bigg\} \\&+ \EE\bigg\{ \Big( \frac{\parcompX_{\sharp} \perpcompZ_{\sharp}}{L_{\sharp}^{2} \widetilde{L}_{\sharp}} - \widetilde{\vartheta}_{2}^{-i}\Big)^{2} \bigg\} \EE\bigg\{ \frac{\widetilde{G}_{i}^{2} G_{i}^{2}}{\big(1+\widetilde{G}_{i}^{2}r_{1}^{-1} + G_{i}^{2}r_{2}^{-1} \big)^{2}} \bigg\} +
	\EE\bigg\{ \Big( \frac{\parcompZ_{\sharp} \perpcompX_{\sharp}}{\widetilde{L}_{\sharp}^{2} L_{\sharp}} - \vartheta_{2}^{-i}\Big)^{2} \bigg\} \EE\bigg\{ \frac{\widetilde{G}_{i}^{4} }{\big(1+\widetilde{G}_{i}^{2}r_{1}^{-1} + G_{i}^{2}r_{2}^{-1} \big)^{2}} \bigg\} \nonumber
\end{align}
We will use the following auxiliary lemma to approximately compute some terms of the equation in the display above. 
\begin{lemma}\label{lemma:eta-onesample-onecolumn-out-expectation-bound}
	Recall the definition of $\thetaXdet_{1},\thetaXdet_{2}$~\eqref{definition-theta12-det} and $\thetaZdet_{1},\thetaZdet_{2}$~\eqref{definition-tilde-theta12-det}. There exists a universal constant $C>0$ such that for $\delta = \max\bigg\{ \frac{C( \sigma^{2} + \Err_{\sharp})}{\lambda} \frac{\log^{6}(d)}{\sqrt{d}}, d^{-50} \bigg\}$, each of the following hold.
	\begin{subequations}
		\begin{align}\label{expectation-eta-omega-norm-square}
			&(a.)\;\; \big| \EE\big\{\etaX_{+}^{2} \big\} - \EE\big\{ \|\boldsymbol{\omega}^{-i}\|_{2}^{2} \big\} \big| \vee \big| \EE\big\{\etaZ_{+}^{2} \big\} - \EE\big\{ \| \widetilde{\boldsymbol{\omega}}^{-i}\|_{2}^{2} \big\} \big| \leq \frac{C(\sigma^{2} +\Err_{\sharp})}{\lambda \sqrt{m}},\\
			\label{expectation-bound-coefficient1}
			& (b.) \;\;\bigg| \EE\bigg\{ \Big(\frac{\parcompX_{\sharp}\parcompZ_{\sharp}}{L_{\sharp}^{2} \widetilde{L}_{\sharp}^{2}} +1 - \frac{\vartheta_{1}^{-i}}{L_{\sharp}} - \frac{\widetilde{\vartheta}_{1}^{-i}}{\widetilde{L}_{\sharp}} \Big)^{2} \bigg\} -  \Big(\frac{\parcompX_{\sharp}\parcompZ_{\sharp}}{L_{\sharp}^{2} \widetilde{L}_{\sharp}^{2}} +1 - \frac{\thetaXdet_{1}}{L_{\sharp}} - \frac{\thetaZdet_{1}}{\widetilde{L}_{\sharp}} \Big)^{2}  \bigg| \leq \delta,\\\label{expectation-bound-coefficient2}
			&(c.) \;\; \bigg| \EE\bigg\{ \Big( \frac{\parcompX_{\sharp} \perpcompZ_{\sharp}}{L_{\sharp}^{2} \widetilde{L}_{\sharp}} - \widetilde{\vartheta}_{2}^{-i}\Big)^{2} \bigg\} - \Big( \frac{\parcompX_{\sharp} \perpcompZ_{\sharp}}{L_{\sharp}^{2} \widetilde{L}_{\sharp}} - \thetaZdet_{2}\Big)^{2}\bigg| \vee \bigg| \EE\bigg\{ \Big( \frac{\parcompZ_{\sharp} \perpcompX_{\sharp}}{\widetilde{L}_{\sharp}^{2} L_{\sharp}} - \vartheta_{2}^{-i}\Big)^{2} \bigg\} - \Big( \frac{\parcompZ_{\sharp} \perpcompX_{\sharp}}{\widetilde{L}_{\sharp}^{2} L_{\sharp}} - \thetaXdet_{2}\Big)^{2} \bigg| \leq \delta.
		\end{align}
	\end{subequations}
\end{lemma}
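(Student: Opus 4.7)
The three parts share a common template: express each leave-one-sample-and-direction quantity $\boldsymbol{\omega}^{-i}, \vartheta_j^{-i}, \widetilde{\vartheta}_j^{-i}$ as a small $L_2$-perturbation of its full-sample, full-direction counterpart, then pay the one-step bias bounds derived in Section~\ref{sec:proof-bias-term-parallel-component}.

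The plan is to decouple the two leave-one-out operations. For the leave-one-sample step, a Sherman--Morrison calculation mirroring the derivation of~\eqref{claim1-auxiliary-lemma-1a} together with the minimizer bound~\eqref{ineq1.5-auxiliary-lemma1-proof-u-component} should yield
\[
\EE\bigg\{ \bigg\| \begin{bmatrix} \bcoefX_{\bu,\bv}^{-(i)} - \bcoefX_{\bu,\bv} \\ \bcoefZ_{\bu,\bv}^{-(i)} - \bcoefZ_{\bu,\bv} \end{bmatrix} \bigg\|_2^2 \bigg\} \lesssim \frac{\Err_{\sharp} + \sigma^2}{\lambda^2 m}.
\]
For the leave-one-direction step, applying Lemma~\ref{lemma-leave-one-out} at $(\bu,\bv) = (\bu_3, \bv_3)$ yields $\bcoefX_{+} = \bO_{\bu} \bcoefX_{\bu,\bv} + \theta(\bu)\bu$ and $\bcoefZ_{+} = \bO_{\bv} \bcoefZ_{\bu,\bv} + \widetilde{\theta}(\bv)\bv$. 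Since $\bu$ and $\bv$ are orthogonal to $S_\sharp = \mathsf{span}(\bcoefX_{\star},\bcoefX_{\sharp})$ and $\widetilde{S}_\sharp$ respectively, this gives the \emph{exact} identities $\bP_{S_\sharp}^{\perp}(\bO_{\bu}\bcoefX_{\bu,\bv}) = \bP_{S_\sharp}^{\perp}\bcoefX_{+}$ and $\langle \bO_{\bu}\bcoefX_{\bu,\bv}, \bu_j\rangle = \theta(\bu_j)$ for $j=1,2$ (and similarly in the tilde variables). Combining the two steps, I obtain $\boldsymbol{\omega}^{-i} = \bP_{S_\sharp}^{\perp}\bcoefX_{+} + \Delta_{\mathrm{sample}}$ and $\vartheta_j^{-i} = \theta(\bu_j) + \widetilde{\Delta}_{\mathrm{sample},j}$ where the perturbations have $L_2$-norm of order $\sqrt{(\Err_\sharp + \sigma^2)/(\lambda^2 m)}$.

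For part (a), I would then use the expansion $|a^2 - b^2| \leq (|a|+|b|)|a - b|$ and the Cauchy--Schwarz inequality, combined with the a priori bound $\EE\{\etaX_{+}^{2}\} \lesssim \Err_{\sharp} + \sigma^{2}$ (obtained by evaluating the closed form~\eqref{eq:closed-form-mu-nu-plus} on a high-probability event), to deliver the advertised rate. For parts (b) and (c), I would write the squared differences via $a^{2} - b^{2} = (a-b)(a+b)$ with $\Delta = (\thetaXdet_{1} - \vartheta_{1}^{-i})/L_{\sharp} + (\thetaZdet_{1} - \widetilde{\vartheta}_{1}^{-i})/\widetilde{L}_{\sharp}$ for (b) and the analogous single-term expressions for (c). The second moment $\EE\{\Delta^{2}\}$ is controlled by the triangle split
\[
\vartheta_{1}^{-i} - \thetaXdet_{1} = \big( \vartheta_{1}^{-i} - \theta(\bu_{1}) \big) + \big( \theta(\bu_{1}) - \EE\{\theta(\bu_{1})\} \big) + \big( \EE\{\theta(\bu_{1})\} - \thetaXdet_{1} \big),
\]
whose three summands are handled respectively by the coupling above, the typical-bounded-differences bound~\eqref{ineq:stochastic-error-concentration}, and the bias bound~\eqref{upper-bound-expectation-u-det}.

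The main obstacle I anticipate is upgrading the first-moment deviation bounds of Theorem~\ref{thm:one-step-prediction}(a) to second moments at the required scale: the typical-bounded-differences tail from~\eqref{ineq:stochastic-error-concentration} is only sub-Gaussian on the regularity event $\mathcal{S}$ and merely polynomial in $d$ off it, so careful truncation---analogous to the $f_{\bu}$ versus $f_{\bu}^{\downarrow}$ surgery of Section~\ref{proof-auxiliary-lemma2-component-expectation-concentration}---is needed to trade moments against the $d^{-180}$ failure probability. A secondary technical point is that the leave-one-sample Sherman--Morrison coupling between $\bcoefX_{\bu,\bv}^{-(i)}$ and $\bcoefX_{\bu,\bv}$ must retain an $\Err_{\sharp} + \sigma^{2}$ dependence rather than a trivial $O(1/m)$; this requires recycling the residual bound~\eqref{ineq2.5-auxiliary-lemma1-proof-u-component} inside the leave-one-direction-out problem, which should transfer verbatim because $(\bu_{3},\bv_{3})$ is orthogonal to both the truth and the current iterate.
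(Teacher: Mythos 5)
Your overall architecture—couple the leave-one-out quantities to $(\bcoefX_{+},\bcoefZ_{+})$, invoke the stochastic-error and bias bounds from the parallel-component analysis, and upgrade the resulting high-probability bounds to second moments by truncation—is the right one, and your identification of the moment-upgrade issue matches exactly how the paper handles it (Cauchy--Schwarz against the polynomial tail, trading $\EE\{|\theta(\bu_1)-\thetaXdet_1|^4\}^{1/2}\lesssim (\sigma^2+1)d^2$ against $\Pr\{\cdot\}^{1/2}\lesssim d^{-55}$). The second-step Sherman--Morrison coupling and its rate $(\Err_{\sharp}+\sigma^2)/(\lambda^2 m)$ are also sound, using $\|\bSig_i^{-1}R_i\ba_i\|_2^2\leq R_i^2\|\ba_i\|_2^2/(\lambda m)^2$ together with the residual bound.

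However, there is a genuine gap in your leave-one-direction step: the identities you call \emph{exact} are false. By the proof of Lemma~\ref{lemma-leave-one-out}, the restricted ridge minimizer $\bcoefX_{\bu,\bv}$ of~\eqref{defintion-estimator-u3-v3} satisfies
\begin{align*}
\begin{bmatrix} \bO_{\bu}^{\top}\bcoefX_{+}\\ \bO_{\bv}^{\top}\bcoefZ_{+}\end{bmatrix}
= \begin{bmatrix} \bcoefX_{\bu,\bv}\\ \bcoefZ_{\bu,\bv}\end{bmatrix}
- \big(\bA_{\bu,\bv}^{\top}\bA_{\bu,\bv}+\lambda m\bI\big)^{-1}\bA_{\bu,\bv}^{\top}\bigl[\bWtil\bX\bu \;\vert\; \bW\bZ\bv\bigr]\begin{bmatrix}\theta(\bu)\\ \thetatil(\bv)\end{bmatrix},
\end{align*}
so $\bP_{S_{\sharp}}^{\perp}(\bO_{\bu}\bcoefX_{\bu,\bv})\neq\bP_{S_{\sharp}}^{\perp}\bcoefX_{+}$ and $\langle\bO_{\bu}\bcoefX_{\bu,\bv},\bu_j\rangle\neq\theta(\bu_j)$: the restricted optimization re-fits the coordinates orthogonal to $\bu$ after deleting the $\bu$-direction, rather than simply projecting the full solution. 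The discrepancy term is of order $\sqrt{\log m/\lambda}\cdot|\theta(\bu_3)|$ in norm and is therefore controllable, but it must be identified and bounded, and as written your composition of the two couplings silently drops it in all three parts. The paper sidesteps this entirely by subtracting the two KKT systems to obtain a single exact relation, Eq.~\eqref{claim-KKT-relation-estimator(t+1)-u-v-isample-out}, directly between $[\bcoefX_{+}\,\vert\,\bcoefZ_{+}]$ and $[\bO_{\bu}\bcoefX_{\bu,\bv}^{-(i)}\,\vert\,\bO_{\bv}\bcoefZ_{\bu,\bv}^{-(i)}]$, whose correction $\boldsymbol{b}=\bSig_i^{-1}(\sum_{j\neq i}\tau_j[\widetilde{G}_j\bx_j^{\top}\bu\,\bu;\,G_j\bz_j^{\top}\bv\,\bv]+(\,\cdot\,)\ba_i)$ packages the direction correction and the sample correction together; projecting this single identity by $\bP_{V}^{\perp}$ (for part (a)) or against $[\bu_1;\boldsymbol{0}]$ (for parts (b) and (c)) then yields the stated rates. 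You should either adopt that one-shot identity or explicitly carry and bound the extra term above through both of your coupling steps.
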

We provide the proof of Lemma~\ref{lemma:eta-onesample-onecolumn-out-expectation-bound} in Section~\ref{proof-lemma-eta-onesample-onecolumn-out-expectation-bound}.  Continuing, note that by definition of $\thetaXdet_{1},\thetaXdet_{2}$~\eqref{definition-theta12-det} and $\thetaZdet_{1},\thetaZdet_{2}$~\eqref{definition-tilde-theta12-det}, we have
\begin{align*}
	&\Big(\frac{\parcompX_{\sharp}\parcompZ_{\sharp}}{L_{\sharp}^{2} \widetilde{L}_{\sharp}^{2}} +1 - \frac{\thetaXdet_{1}}{L_{\sharp}} - \frac{\thetaZdet_{\sharp}}{\widetilde{L}_{\sharp}} \Big)^{2}  = \frac{\lambda^{2}(\parcompX_{\sharp}\parcompZ_{\sharp}/(L_{\sharp}^{2}\widetilde{L}_{\sharp}^{2}) -1 )^{2}}{\big(\lambda + V(L_{\sharp}^{-2} + \widetilde{L}_{\sharp}^{-2}) \big)^{2}},  \qquad \Big( \frac{\parcompX_{\sharp} \perpcompZ_{\sharp}}{L_{\sharp}^{2} \widetilde{L}_{\sharp}} - \thetaZdet_{2}\Big)^{2} =  \Big(\frac{\parcompX_{\sharp} \perpcompZ_{\sharp}}{L_{\sharp}^{2} \widetilde{L}_{\sharp}} \frac{\lambda}{\lambda + V_{2}}\Big)^{2}, \\
	 & \qquad \qquad \qquad \qquad  \text{ and } \qquad
	\Big( \frac{\parcompZ_{\sharp} \perpcompX_{\sharp}}{\widetilde{L}_{\sharp}^{2} L_{\sharp}} - \thetaXdet_{2}\Big)^{2} = \Big( \frac{\parcompZ_{\sharp} \perpcompX_{\sharp}}{\widetilde{L}_{\sharp}^{2} L_{\sharp}} \frac{\lambda}{\lambda + V_{1}}\Big)^{2}.
\end{align*}
Substituting the approximations in Lemma~\ref{lemma:eta-onesample-onecolumn-out-expectation-bound} into Eq.~\eqref{eq-step3-eta-component} and recalling the definition of $V_{3}$~\eqref{definition-V3} yields the bound
\begin{align}
	&\bigg| \EE\bigg\{ \frac{\widetilde{G}_{i}^{2} \tau_{i}^{2}}{\big(1+\widetilde{G}_{i}^{2}r_{1}^{-1} + G_{i}^{2}r_{2}^{-1} \big)^{2}} \bigg\} -\bigg( V_{3}r_{1}^{2} + \EE\{\etaX_{+}^{2}\} \cdot  \EE\bigg\{ \frac{ r_{1}^{2}r_{2}^{2} \widetilde{G}_{i}^{4} }{(r_{1}r_{2} + r_{1}G_{i}^{2} + r_{2}\widetilde{G}_{i}^{2})^{2}}\bigg\} \nonumber\\
	&\qquad \;\; + \EE\{\etaZ_{+}^{2}\} \cdot \EE\bigg\{ \frac{r_{1}^{2}r_{2}^{2} G_{i}^{2}\widetilde{G}_{i}^{2} }{(r_{1}r_{2} + r_{1}G_{i}^{2} + r_{2}\widetilde{G}_{i}^{2})^{2}}\bigg\} \bigg) \bigg| \lesssim \max\bigg\{ \frac{C( \sigma^{2} + \Err_{\sharp})}{\lambda} \frac{\log^{6}(d)}{\sqrt{m}}, d^{-50} \bigg\},
\end{align} 
as desired. \qed

\subsubsection{Proof of Lemma~\ref{lemma:eta-onesample-onecolumn-out-expectation-bound}} \label{proof-lemma-eta-onesample-onecolumn-out-expectation-bound} 
We prove each part in turn.

\paragraph{Proof of Lemma~\ref{lemma:eta-onesample-onecolumn-out-expectation-bound}(a.):}
Let $\ba_{j}^{\top} = [\widetilde{G}_{j} \; \vert \; \bx_{j}^{\top}\; G_{j}\bz_{j}^{\top}]$ and recall $\tau_{j}$ as defined in Eq.~\eqref{claim-step2-eta-component}. We claim the following hold
\begin{align}\label{claim-KKT-relation-estimator(t+1)-u-v-isample-out}
	\hspace{-1cm}
	\begin{bmatrix} \bcoefX_{+}\\ \bcoefZ_{+} \\ \end{bmatrix} - 
	\begin{bmatrix} \bO_{\bu}\bcoefX_{\bu,\bv}^{-(i)}\\ \bO_{\bv}\bcoefZ_{\bu,\bv}^{-(i)} \\ \end{bmatrix}
	= \bSig_{i}^{-1} \bigg( \sum_{j \neq i} \tau_{j} \begin{bmatrix} \widetilde{G}_{j} \bx_{j}^{\top} \bu \bu \\ G_{j}\bx_{j}^{\top} \bv \bv \\ \end{bmatrix}  + \Big( y_{i} + G_{i} \widetilde{G}_{i} - \ba_{i}^{\top}\begin{bmatrix} \bcoefX_{+}\\ \bcoefZ_{+} \\ \end{bmatrix}  \Big) \ba_{i} \bigg) =: \boldsymbol{b},
\end{align} 
where $\bSig_{i} = \lambda m \bI + \sum_{j \neq i} \ba_{j}\ba_{j}^{\top}$. Define the linear subspace $V = S_{\sharp} \times \mathbb{R}^{d}$. Multiplying both sides of Eq.~\eqref{claim-KKT-relation-estimator(t+1)-u-v-isample-out} by $\bP_{V}^{\perp}$ and taking the norm yields
\[
\| \bP_{S_{\sharp}}^{\perp} \bcoefX_{+} \|_{2}^{2} = \| \bP_{S_{\sharp}}^{\perp} \bO_{\bu} \bcoefX_{\bu,\bv}^{-(i)}\|_{2}^{2} 
+ \|\bP_{V}^{\perp} \boldsymbol{b}\|_{2}^{2} + 2\Big \langle \bP_{V}^{\perp} \begin{bmatrix} \bO_{\bu}\bcoefX_{\bu,\bv}^{-(i)}\\ \bO_{\bv}\bcoefZ_{\bu,\bv}^{-(i)} \\ \end{bmatrix}  , \bP_{V}^{\perp} \boldsymbol{b} \Big\rangle.
\]
Note that $\etaX_{+}^{2} = \| \bP_{S_{\sharp}}^{\perp} \bcoefX_{+} \|_{2}^{2}$ and $\boldsymbol{\omega}^{-i} = \bP_{S_{\sharp}}^{\perp} \bO_{\bu} \bcoefX_{\bu,\bv}^{-(i)}$ by definition. Consequently, we obtain 
\begin{align}\label{ineq1-eta-onesample-onecolumn-out-expectation-bound}
	\big| \EE\big\{\etaX_{+}^{2} \big\} - \EE\big\{ \|\boldsymbol{\omega}^{-i}\|_{2}^{2} \big\} \big| 
	\leq \EE\big\{ \|\bP_{V}^{\perp} \boldsymbol{b} \|_{2}^{2}\big\} + 2\EE\big\{  \| \bP_{S_{\sharp}}^{\perp} \bO_{\bu} \bcoefX_{\bu,\bv}^{-(i)} \|_{2}^{2} \big\}^{1/2} \cdot \EE\big\{ \|\bP_{V}^{\perp} \boldsymbol{b} \|_{2}^{2} \big\}^{1/2}.
\end{align}
Applying the triangle inequality and using $\bSig_{i}^{-1} \preceq (\lambda n)^{-1} \bI$, we obtain the bound
\[
\EE\big\{ \|\bP_{V}^{\perp}\boldsymbol{b}\|_{2}^{2}\big\} \lesssim \frac{\EE\Big\{ \Big( \sum_{j \neq i} \tau_{j} \widetilde{G}_{j} \bx_{j}^{\top}\bu  \Big)^{2} + \Big( \sum_{j \neq i} \tau_{j} \widetilde{G}_{j} \bx_{j}^{\top}\bu  \Big)^{2} \Big\}}{(\lambda m)^{2}} + \frac{\EE\Big\{ \Big(y_{i} + G_{i} \widetilde{G}_{i} - \ba_{i}^{\top} \begin{bmatrix} \bcoefX_{+} \\ \bcoefZ_{+} \end{bmatrix} \Big)^{2} \|\ba_{i}\|_{2}^{2} \Big\}}{(\lambda m)^{2}}.
\]
Note that $\bx_{j}^{\top}\bu$ is independent of $\tau_{j}$~\eqref{claim-step2-eta-component} as $\bu^{\top} \bcoefX_{\star} = \bu^{\top} \bcoefX_{t} = 0$. Consequently, we obtain that
\begin{align}\label{ineq1.5-eta-onesample-onecolumn-out-expectation-bound}
	\EE\Big\{ \Big( \sum_{j \neq i} \tau_{j} \widetilde{G}_{j} \bx_{j}^{\top}\bu  \Big)^{2} \Big\} = (m-1)\EE\big\{\widetilde{G}_{j}^{2}\tau_{j}^{2} \big\} \leq m \EE\{\widetilde{G}_{j}^{4}\}^{1/2} \cdot \EE\{\tau_{j}^{4}\}^{1/2} \lesssim 
	m\big( \sigma^{2} + \Err_{\sharp} \big),
\end{align}
where the last step follows from the inequality~\eqref{ineq-taui-4th-moment}. Applying the Cauchy--Schwarz inequality then yields
\begin{align}\label{ineq1.8-eta-onesample-onecolumn-out-expectation-bound}
	\EE\Big\{ \Big(y_{i} + G_{i} \widetilde{G}_{i} - \ba_i^{\top} \begin{bmatrix} \bcoefX_{+} \\ \bcoefZ_{+} \end{bmatrix}  \Big)^{2} \|\ba_{i}\|_{2}^{2}\Big\} &\lesssim 
	\EE\Big\{ \Big(y_{i} + G_{i} \widetilde{G}_{i} - \ba_i^{\top} \begin{bmatrix} \bcoefX_{+} \\ \bcoefZ_{+} \end{bmatrix}  \Big)^{4} \Big\}^{\frac{1}{2}} \EE\big\{ \|\ba_{i}\|_{2}^{4}\big\}^{\frac{1}{2}} \nonumber \\
	 &\lesssim d\big( \sigma^{2} + \Err_{\sharp} \big),
\end{align}
where to obtain the last inequality, we used the same bound as inequality~\eqref{ineq2.5-auxiliary-lemma1-proof-u-component}. Putting the pieces together and invoking the assumption $\lambda m\geq d\geq m$ yields
$\EE\big\{ \|\bP_{V}^{\perp}\boldsymbol{b}\|_{2}^{2}\big\} \lesssim (\sigma^{2} + \Err_{\sharp})/(\lambda m)$. Then, since $\bP_{S_{\sharp}}^{\perp} \bcoefX_{\sharp} = \boldsymbol{0}$, we deduce the inequality
\[
\|\bP_{S_{\sharp}}^{\perp} \bO_{\bu}\bcoefX_{\bu,\bv}^{-(i)} \|_{2}^{2} = \|\bP_{S_{\sharp}}^{\perp} \bO_{\bu} \bcoefX_{\bu,\bv}^{-(i)} - \bP_{S_{\sharp}}^{\perp} \bcoefX_{\sharp} \|_{2}^{2} \leq \| \bcoefX_{\bu,\bv}^{-(i)} - \bO_{\bu}^{\top} \bcoefX_{\sharp} \|_{2}^{2} \leq  \frac{1}{\lambda m} \sum_{j \neq i} \big(y_{j} - G_{j}\widetilde{G}_{j} \big)^{2}, 
\]
where the last step follows from the inequality~\eqref{ineq-l2-distance-looestimator-estimatort}. Consequently, we obtain the bound
\[
\EE\Big\{ \|\bP_{S_{\sharp}}^{\perp} \bO_{\bu}\bcoefX_{\bu,\bv}^{-(i)} \|_{2}^{2} \Big\} \leq \EE\{(y_{j} - G_{j}\widetilde{G}_{j})^{2}\}/\lambda \leq (\sigma^{2} + \Err_{\sharp}) /\lambda.
\]
Assembling the bounds and substituting them into the inequality~\eqref{ineq1-eta-onesample-onecolumn-out-expectation-bound} yields $\big| \EE\big\{\etaX_{+}^{2} \big\} - \EE\big\{ \|\boldsymbol{\omega}^{-i}\|_{2}^{2} \big\} \big| \lesssim (\sigma^{2} +\Err_{\sharp})/(\lambda \sqrt{m})$. Proceeding similarly yields the analogous bound $\big| \EE\big\{\etaZ_{+}^{2} \big\} - \EE\big\{ \| \widetilde{\boldsymbol{\omega}}^{-i}\|_{2}^{2} \big\} \big| \lesssim (\sigma^{2} + \Err_{\sharp})/(\lambda \sqrt{m})$. This concludes the proof of inequality~\eqref{expectation-eta-omega-norm-square}.  Before proceeding to the proofs of parts (b.) and (c.), we first provide the proof of the deferred claim~\eqref{claim-KKT-relation-estimator(t+1)-u-v-isample-out}.

\medskip
\noindent \underline{Proof of the claim~\eqref{claim-KKT-relation-estimator(t+1)-u-v-isample-out}:} The KKT conditions of the optimization problems defining $[\bcoefX_{+},\bcoefZ_{+}] = G([\bcoefX_{\sharp}\;\vert\;\bcoefZ_{\sharp} ])$~\eqref{eq:closed-form-update} and $(\bcoefX_{\bu,\bv}^{-(i)}, \bcoefZ_{\bu,\bv}^{-(i)})$~\eqref{defintion-estimator-u3-v3-i-sample-out} yield
\begin{subequations}
	\begin{align}\label{KKT-condition-estimator-t+1}
		&\lambda m \begin{bmatrix} \bcoefX_{+} - \bcoefX_{\sharp}\\ \bcoefZ_{+}-\bcoefZ_{\sharp} \\ \end{bmatrix} = \sum_{j\neq i}\Big( y_{j} + G_{j} \widetilde{G}_{j} -\ba_{j}^{\top} \begin{bmatrix} \bcoefX_{+}\\ \bcoefZ_{+} \\ \end{bmatrix} \Big) \ba_{j} + \Big( y_{i} + G_{i} \widetilde{G}_{i} - \ba_{i}^{\top}\begin{bmatrix} \bcoefX_{+}\\ \bcoefZ_{+} \\ \end{bmatrix}  \Big) \ba_{i} \\
		\label{KKT-condition-estimator-u3-v3-isample-out-proof}
		&\lambda m \begin{bmatrix} \bcoefX_{\bu,\bv}^{-(i)} - \bO_{\bu}^{\top}\bcoefX_{\sharp}\\ \bcoefZ_{\bu,\bv}^{-(i)}-\bO_{\bv}^{\top}\bcoefZ_{\sharp} \\ \end{bmatrix} = \sum_{j\neq i}\Big( y_{j} + G_{j} \widetilde{G}_{j} -\ba_{j}^{\top} \begin{bmatrix} \bO_{\bu}\bcoefX_{\bu,\bv}^{-(i)}\\ \bO_{\bv}\bcoefZ_{\bu,\bv}^{-(i)} \\ \end{bmatrix} \Big) \begin{bmatrix}  \widetilde{G}_{j}\bO_{\bu}^{\top} \bx_{j} \\ G_{j}\bO_{\bv}^{\top} \bz_{j} \\ \end{bmatrix}.
	\end{align}
\end{subequations}
Multiplying $\bO_{\bu}$ or $\bO_{\bv}$ on both sides of Eq.~\eqref{KKT-condition-estimator-u3-v3-isample-out-proof} and using the facts that $\bO_{\bu}\bO_{\bu}^{\top} \bcoefX_{\sharp} = \bcoefX_{\sharp}$ and $\bO_{\bv}\bO_{\bv}^{\top} \bcoefZ_{\sharp} = \bcoefZ_{\sharp}$ then yields
\begin{align*}
	\lambda m \begin{bmatrix} \bO_{\bu}\bcoefX_{\bu,\bv}^{-(i)} - \bcoefX_{\sharp}\\ \bO_{\bv}\bcoefZ_{\bu,\bv}^{-(i)}-\bcoefZ_{\sharp} \\ \end{bmatrix} &= \sum_{j\neq i}\Big( y_{j} + G_{j} \widetilde{G}_{j} -\ba_{j}^{\top} \begin{bmatrix} \bO_{\bu} \bcoefX_{\bu,\bv}^{-(i)}\\ \bO_{\bv} \bcoefZ_{\bu,\bv}^{-(i)} \\ \end{bmatrix} \Big) 
	\begin{bmatrix}  \widetilde{G}_{j}\bO_{\bu}\bO_{\bu}^{\top} \bx_{j} \\ G_{j}\bO_{\bv}\bO_{\bv}^{\top} \bz_{j} \\ \end{bmatrix} \\&= \sum_{j\neq i}\Big( y_{j} + G_{j} \widetilde{G}_{j} -\ba_{j}^{\top} \begin{bmatrix} \bO_{\bu} \bcoefX_{\bu,\bv}^{-(i)}\\ \bO_{\bv} \bcoefZ_{\bu,\bv}^{-(i)} \\ \end{bmatrix} \Big)  \Big(\ba_{j} - \begin{bmatrix}  \widetilde{G}_{j} \bx_{j}^{\top}\bu\bu \\ G_{j} \bz_{j}^{\top}\bv\bv \\ \end{bmatrix} \Big),
\end{align*}
where the last step follows by $\bO_{\bu}\bO_{\bu}^{\top} = \bI - \bu\bu^{\top}$ and $\bO_{\bv}\bO_{\bv}^{\top} = \bI - \bv\bv^{\top}$.  Subtracting the equation in the preceding display from Eq.~\eqref{KKT-condition-estimator-t+1} and re-arranging the terms yields the desired result.

\paragraph{Proof of Lemma~\ref{lemma:eta-onesample-onecolumn-out-expectation-bound}(b.),(c.):}
We first claim that with $\Delta := \max\big\{ \frac{ \sigma + \Err_{\sharp}}{\lambda} \frac{\log^{6}(d)}{\sqrt{d}} , d^{-50} \big\}$,
\begin{align}\label{claim-expectation-bound-component-u3v3sampleout-det}
	\EE\{( \vartheta_{1}^{-i} - \thetaXdet_{1} )^{2}\}, \EE\{(\vartheta_{2}^{-i} - \thetaXdet_{2})^{2}\}, 
	\EE\{( \widetilde{\vartheta}_{1}^{-i} - \thetaZdet_{1} )^{2}\}, \EE\{ (\widetilde{\vartheta}_{2}^{-i} - \thetaZdet_{2})^{2}\} \lesssim \Delta^{2}.
\end{align}
We take this inequality as given for now, deferring its proof to the end.  We then have
\begin{align*}
	|\EE\{e^{2}\} - f^{2}| \leq &\EE\{|(e+f)(e-f)|\} = \EE\{|(e-f + 2f)(e-f)|\} \leq \EE\{|e-f|^{2}\} + 2|f|\cdot \EE\{|e-f|\}, \\ 
	\text{ with }\qquad &e = \frac{\parcompX_{\sharp}\parcompZ_{\sharp}}{L_{\sharp}^{2} \widetilde{L}_{\sharp}^{2}} + 1 - \frac{\vartheta_{1}^{-i}}{L_{\sharp}} - \frac{\widetilde{\vartheta}_{1}^{-i}}{\widetilde{L}_{\sharp}},\qquad \text{ and } f = \frac{\parcompX_{\sharp}\parcompZ_{\sharp}}{L_{\sharp}^{2} \widetilde{L}_{\sharp}^{2}} + 1 - \frac{\thetaXdet_{1}}{L_{\sharp}} - \frac{\thetaZdet_{1}}{\widetilde{L}_{\sharp}}.
\end{align*}
We next apply the inequalities~\eqref{claim-expectation-bound-component-u3v3sampleout-det} to obtain the bound
\[	
\EE\{|e-f|^{2}\} \lesssim \EE\{ ( \vartheta_{1}^{-i} - \thetaXdet_{1} )^{2} \} + \EE\{ ( \widetilde{\vartheta}_{1}^{-i} - \thetaZdet_{1} )^{2} \} \lesssim \Delta^{2},
\]
which, upon applying Jensen's inequality, yields the further bound $\EE\{|e-f|\} \leq \EE\{|e-f|^{2}\}^{1/2} \lesssim \Delta$.
Note that by definition of $\thetaXdet_{1}$~\eqref{definition-theta12-det} and $\thetaZdet_{1}$~\eqref{definition-tilde-theta12-det}, we obtain 
\[
|f| = \frac{\lambda | \parcompX_{\sharp} \parcompZ_{\sharp} - L_{\sharp}^{2}\widetilde{L}_{\sharp}^{2}|}{\lambda L_{\sharp}^{2}\widetilde{L}_{\sharp}^{2} + V(L_{\sharp}^{2} + \widetilde{L}_{\sharp}^{2})} \lesssim |\parcompX_{\sharp}\parcompZ_{\sharp} - 1| + \perpcompX_{\sharp} + \perpcompZ_{\sharp} \lesssim \sqrt{\Err_{\sharp}}.
\]
Putting together the pieces yields the inequality
\[
|\EE\{e^{2}\} - f^{2}| \lesssim \Delta^{2} +\sqrt{\Err_{\sharp}}\Delta \lesssim \max\bigg\{ \frac{ (\sigma^{2} + \Err_{\sharp})}{\lambda} \frac{\log^{6}(d)}{\sqrt{d}} , d^{-50} \bigg\}.
\]
This proves inequality~\eqref{expectation-bound-coefficient1}. Proceeding similarly proves inequality~\eqref{expectation-bound-coefficient2}.  It remains to establish the deferred inequalities~\eqref{claim-expectation-bound-component-u3v3sampleout-det}.

\noindent \underline{Proof of Claim~\eqref{claim-expectation-bound-component-u3v3sampleout-det}:} Recall that $\theta(\bu_{1}) = \langle\bu_{1},\bcoefX_{+}\rangle$ and $\vartheta_{1}^{-i} = \langle\bu_{1},\bO_{\bu}\bcoefX_{\bu,\bv}^{-(i)}\rangle$. Applying the triangle inequality yields the bound
\begin{align}\label{ineq-proof-claim-component-u3v3sampleout-det}
	\EE\big\{ \big(\vartheta_{1}^{-i} - \thetaXdet_{1} \big)^{2} \big\} \leq 
	2\EE \big\{ \big(\vartheta_{1}^{-i} - \theta(\bu_{1}) \big)^{2} \big\} + 2\EE \big\{ \big(\theta(\bu_{1}) - \thetaXdet_{1} \big)^{2} \big\}.
\end{align}
We proceed to bound each term on the RHS of the preceding display. Invoking the relation~\eqref{claim-KKT-relation-estimator(t+1)-u-v-isample-out}, we obtain
\begin{align*}
	|\vartheta_{1}^{-i} - \theta(\bu_{1})|^{2} \lesssim \underbrace{ \Big \langle \begin{bmatrix} \bu_{1}\\ \boldsymbol{0}\\ \end{bmatrix} , \bSig_{i}^{-1} \sum_{j\neq i} \tau_{j}  \begin{bmatrix} \widetilde{G}_{j}\bx_{j}^{\top}\bu \bu \\ G_{j}\bz_{j}^{\top}\bv \bv \\ \end{bmatrix} \Big \rangle^{2} }_{T_1} + \underbrace{ \Big(y_{i} + G_{i}\widetilde{G}_{i} - \ba_i^{\top} \begin{bmatrix} \bcoefX_{+} \\ \bcoefZ_{+} \end{bmatrix} \Big)^{2} \Big\langle \ba_{i}, \bSig_{i}^{-1} \begin{bmatrix} \bu_{1}\\ \boldsymbol{0}\\ \end{bmatrix} \Big\rangle^{2} }_{T_2}.
\end{align*}
Then, applying the Cauchy--Schwarz inequality and the inequality $\bSig_{i}^{-1} \preceq (\lambda m)^{-1}\bI$, we upper bound the expectation of $T_1$ as
\begin{align*}
	\EE\{T_1\} &\leq \frac{1}{(\lambda m)^{2}} \bigg( \EE\Big\{ \Big( \sum_{j \neq i} \tau_{j} \widetilde{G}_{j}\bx_{j}^{\top}\bu \Big)^{2} \Big\} + \EE\Big\{ \Big( \sum_{j \neq i} \tau_{j} G_{j}\bz_{j}^{\top}\bv \Big)^{2} \Big\} \bigg) \lesssim \frac{\sigma^{2} + \Err_{\sharp} }{\lambda^{2}m},
\end{align*}
where the last step follows from the inequality~\eqref{ineq1.5-eta-onesample-onecolumn-out-expectation-bound}. Let $\boldsymbol{e},\boldsymbol{f} \in \mathbb{R}^{d}$ such that $\bSig_{i}^{-1} \begin{bmatrix} \bu_{1}\\ \boldsymbol{0}\\ \end{bmatrix} = \begin{bmatrix} \boldsymbol{e}\\ \boldsymbol{f}\\ \end{bmatrix}$. Since both $\boldsymbol{e},\boldsymbol{f}$ are independent of $\ba_{i}$, we compute
\begin{align*}
	\EE\bigg\{ \Big\langle \ba_{i}, \bSig_{i}^{-1} \begin{bmatrix} \bu_{1}\\ \boldsymbol{0}\\ \end{bmatrix} \Big\rangle^{4}  \bigg\} = \EE\Big\{ \big(\widetilde{G}_{i} \bx_{i}^{\top} \boldsymbol{e} + G_{i} \bz_{i}^{\top} \boldsymbol{f} \big)^{4} \Big\} \lesssim \EE\Big\{ \big(\|\boldsymbol{e}\|_{2} + \|\boldsymbol{f}\|_{2}\big)^{4} \Big\} 
	\leq \frac{1}{(\lambda m)^{4}},
\end{align*}
where in the last step we used the inequality $\|\boldsymbol{e}\|_{2} + \|\boldsymbol{f}\|_{2} \leq (\lambda m)^{-1} \|\bu_{1}\|_{2} = (\lambda m)^{-1}$. Consequently, applying the Cauchy--Schwarz inequality in conjunction with inequality~\eqref{ineq1.8-eta-onesample-onecolumn-out-expectation-bound}, we obtain the bound
\begin{align*}
	\EE\bigg\{ \Big(y_{i} + G_{i}\widetilde{G}_{i} - \ba_i^{\top} \begin{bmatrix} \bcoefX_{+} \\ \bcoefZ_{+} \end{bmatrix} \Big)^{2} \Big\langle \ba_{i}, \bSig_{i}^{-1} \begin{bmatrix} \bu_{1}\\ \boldsymbol{0}\\ \end{bmatrix} \Big\rangle^{2} \bigg\} \leq \frac{\sigma^{2} +\Err_{\sharp}}{(\lambda m)^{2}}.
\end{align*}
Putting the pieces together then yields
\begin{align}\label{ineq1-proof-claim-component-u3v3sampleout-det}
	\EE\big\{ |\vartheta_{1}^{-i} - \theta(\bu_{1})|^{2} \big\} \lesssim (\sigma^{2} + \Err_{\sharp})/(\lambda^{2} m).
\end{align}
We now turn to bound $\EE\{|\theta(\bu_{1}) - \thetaXdet_{1}|^{2}\}$. Applying inequalities~\eqref{bound-u-expectation-u} and~\eqref{upper-bound-expectation-u-det} yields
\[
\Pr\big\{ |\theta(\bu_{1}) - \thetaXdet_{1}| \geq \Delta \big\} \leq 2d^{-110} \qquad  \text{ and } \qquad
\Delta = \max\bigg\{ \frac{ C(\sigma + \sqrt{\Err_{\sharp}}) }{\lambda} \frac{\log^{6}(d)}{\sqrt{d}}, d^{-50} \bigg\}.
\]
Consequently, we obtain that
\begin{align*}
	\EE\{|\theta(\bu_{1}) - \thetaXdet_{1}|^{2}\} &\leq \Delta^{2} + \EE\big\{|\theta(\bu_{1}) - \thetaXdet_{1}|^{2} \cdot \mathbbm{1}\big\{ |\theta(\bu_{1}) - \thetaXdet_{1}| \geq \Delta \big\}  \big\}
	\\&\leq \Delta^{2} + \EE\big\{ |\theta(\bu_{1}) - \thetaXdet_{1}|^{4} \big\}^{1/2} \cdot  \Pr\big\{ |\theta(\bu_{1}) - \thetaXdet_{1}| \geq \Delta \big\}^{1/2}
	\\& \overset{\1}{\leq} \Delta^{2} + (\sigma^{2} + 1)d^{2} \cdot  \sqrt{2} d^{-55} \lesssim \Delta^{2},
\end{align*}
where step $\1$ follows since $\EE\big\{ |\theta(\bu_{1}) - \thetaXdet_{1}|^{4} \big\} \lesssim \EE\{\|\bcoefX_{+}\|_{2}^{4}\} + C\leq (\sigma^{4} + 1)d^{4}$.  Combining the inequality in the display above with inequalities~\eqref{ineq-proof-claim-component-u3v3sampleout-det} and~\eqref{ineq1-proof-claim-component-u3v3sampleout-det} yields the bound
$\EE\big\{ \big(\vartheta_{1}^{-i} - \thetaXdet_{1} \big)^{2} \big\} \leq \Delta^{2}$.  The bounds on the remaining terms follow identically, hence we omit them here. \qed
\section{Deterministic convergence: Proof of Lemma~\ref{lemma:one-step-deterministic-convergence}}\label{proof-lemma-one-step-deterministic-convergence}
Note that $\Errdet = \big(\parcompdetX\parcompdetZ-1 \big)^{2} + (\perpcompdetX)^{2} + (\perpcompdetZ)^{2}
$. We control each of these three terms in turn. Throughout, we use the shorthand $L = \sqrt{\parcompX^{2} + \perpcompX^{2}}$ and $\LZ = \sqrt{\parcompZ^{2} + \perpcompZ^{2}}$.

\paragraph{Controlling $(\parcompdetX \parcompdetZ-1)^{2}$:}
Recall the $\alphaXmap_{m,d,\sigma,\lambda}$ and $\alphaZmap_{m,d,\sigma,\lambda}$---which characterize $\alpha^{\det}$ and $\widetilde{\alpha}^{\det}$, respectively---as defined in Eq.~\eqref{et_updates_eq}. We use the shorthand $\mathsf{D} = V(L^{2} + \LZ^{2}) + \lambda L^{2} \LZ^{2}$, where $V$ as defined in Eq~\eqref{eq:V-V1-V2}.  Expanding, we decompose
\begin{align*}
	\parcompdetX &= \parcompX + \frac{V(\parcompX\parcompZ -L^{2}\LZ^{2})}{L^{2}\mathsf{D}} \parcompX + \frac{V_{1}}{\lambda + V_{1}} \frac{\parcompZ\perpcompX^{2} }{L^{2} \LZ^{2} } \\
	&= 
	\underbrace{ \parcompX + \frac{V\parcompX^{2}\parcompZ(1-\parcompX \parcompZ) }{L^{2}\mathsf{D}} }_{T_{1}} + \underbrace{- \frac{V\parcompX(\perpcompX^{2} \parcompZ^{2} + \perpcompZ^{2} \parcompX^{2} + \perpcompX^{2}\perpcompZ^{2} )}{L^{2} \mathsf{D}} + \frac{V_{1}}{\lambda + V_{1}} \frac{\parcompZ\perpcompX^{2} }{L^{2} \LZ^{2} }}_{T_{2}},
\end{align*}
where in the last step we used the relation $\parcompX\parcompZ - L^{2} \LZ^{2} = \parcompX\parcompZ(1-\parcompX\parcompZ) - (\parcompX^{2}\perpcompZ^{2} + \parcompZ^{2}\perpcompX^{2} + \perpcompX^{2}\perpcompZ^{2})$.  We similarly decompose
\begin{align*}
	\parcompdetZ = 
	\underbrace{ \parcompZ + \frac{V\parcompZ^{2}\parcompX(1-\parcompX \parcompZ) }{\LZ^{2}\mathsf{D}} }_{T_{3}} + \underbrace{- \frac{V\parcompZ(\perpcompX^{2} \parcompZ^{2} + \perpcompZ^{2} \parcompX^{2} + \perpcompX^{2}\perpcompZ^{2} )}{\LZ^{2} \mathsf{D}} + \frac{V_{2}}{\lambda + V_{2}} \frac{\parcompX\perpcompZ^{2} }{L^{2} \LZ^{2} }}_{T_{4}}.
\end{align*}
Before continuing, we claim the following bounds
\begin{align}\label{claim1-constant-of-V-alpha-D}
	&(i.) \;\; 0.9 \leq \parcompX \parcompZ \leq 1.1,\qquad (ii.) \qquad \sqrt{0.03} \leq |\parcompX|, |\parcompZ| \leq 3,\qquad (iii.) \;\; \perpcompX, \perpcompZ \leq 0.1,\nonumber\\
	& \qquad \qquad \qquad \qquad (iv.) \;\;  V,V_{1},V_{2} \asymp 1, \qquad  \text{ and } \qquad  (v.) \;\;\mathsf{D} \asymp \lambda,
\end{align}
deferring its proof to the end.   Combining the decompositions of $\alpha^{\det}$ and $\widetilde{\alpha}^{\det}$ with the bounds in~\eqref{claim1-constant-of-V-alpha-D}, we obtain
\[
	|\parcompdetX - \parcompX|,\;|\parcompdetZ - \parcompZ| \lesssim \sqrt{\Err}/\lambda,
\]
which proves inequality~\eqref{alpha-change-after-det-map}. Consequently, we obtain that
\[
	|\parcompdetX| \leq |\parcompX| + C_{1}\sqrt{\Err}/\lambda \leq 3 + C_{1}\sqrt{c}/C \leq 4,
\]
where we invoked the assumptions $|\parcompX|\leq 3$, $\Err \leq c$ and $\lambda \geq C$. Similarly, $|\parcompdetZ| \leq 4$. This proves the second part of inequality~\eqref{upper-bound-det-error-alpha-iota}. Continuing, we re-write
\begin{align}\label{eq1-alpha-det-error}
	&(\parcompdetX\parcompdetZ - 1)^{2} = \big( (T_{1} + T_{2})(T_{3} + T_{4}) - 1 \big)^{2} = (T_{1}T_{3}-1 + T_{1}T_{4} + T_{2} T_{3} + T_{2}T_{4} )^{2} \nonumber\\
	& \qquad\qquad = (T_{1}T_{3}-1)^{2} + 2(T_{1}T_{3}-1)(T_{1}T_{4} + T_{2} T_{3} + T_{2}T_{4}) + (T_{1}T_{4} + T_{2} T_{3} + T_{2}T_{4})^{2}.
\end{align}
We first bound $(T_{1}T_{3}-1)^{2}$. Straightforward calculation yields
\begin{align*}
	T_{1}T_{3}-1 = (\parcompX \parcompZ - 1) \bigg( 1 - \frac{V\parcompX^{2} \parcompZ^{2} }{\mathsf{D}} (L^{-2} + \LZ^{-2}) + \frac{V^{2}\parcompX^{3} \parcompZ^{3} (\parcompX \parcompZ - 1)}{L^{2}\LZ^{2} \mathsf{D}^{2} } \bigg)
\end{align*}
Then, by the claim~\eqref{claim1-constant-of-V-alpha-D}, there exist universal, positive constants $c_{1},c_{2}$ such that both
\[
	\frac{c_{1}}{\lambda} \leq \frac{V\parcompX^{2} \parcompZ^{2} }{\mathsf{D}} (L^{-2} + \LZ^{-2}) \leq \frac{c_{2}}{\lambda} \qquad \text{ and } \qquad \frac{V^{2}|\parcompX^{3} \parcompZ^{3} (\parcompX \parcompZ - 1)|}{L^{2}\LZ^{2} \mathsf{D}^{2}} \leq \frac{c_{2}}{\lambda^{2}}.
\]
By letting $\lambda \geq C(1+\sigma)d/m \geq C$ so that $\frac{c_{2}}{\lambda^{2}} \leq \frac{c_{1}}{2\lambda}$, we obtain 
\[
	(\parcompX\parcompZ-1)^{2} \cdot \Big(1-\frac{2c_{2}}{\lambda}\Big)^{2}  \leq (T_{1}T_{3}-1)^{2} \leq (\parcompX\parcompZ-1)^{2} \cdot \Big( 1-\frac{c_{1}}{2\lambda}\Big)^{2}.
\]
Since $\lambda \geq C$ for a large enough $C$, we note that $(1-2c_{2}/\lambda)^{2} \geq 1 - 4c_{2}/\lambda$ and $(1-c_{1}/(2\lambda))^{2} \leq 1-c_{1}/(4\lambda)$. Putting the pieces together yields
\begin{align}\label{ineq1-alpha-part1-det-error}
(\parcompX\parcompZ-1)^{2} \cdot \Big(1-\frac{4c_{2}}{\lambda}\Big)  \leq (T_{1}T_{3}-1)^{2} \leq (\parcompX\parcompZ-1)^{2} \cdot \Big( 1-\frac{c_{1}}{4\lambda}\Big).
\end{align}
Next, we apply the claim~\eqref{claim1-constant-of-V-alpha-D} to obtain the pair of bounds
\[
	|T_{1}|, |T_{3}| \lesssim 1 \qquad \text{ and } \qquad |T_{2}|\;\vee\;|T_{4}| \lesssim (\perpcompX^{2} + \perpcompZ^{2}) / \lambda.
\]
Combining with the inequality~\eqref{ineq1-alpha-part1-det-error}, we conclude that both
\begin{align*}
	2|T_{1}T_{3}-1| \cdot |T_{1}T_{4} + T_{2} T_{3} + T_{2}T_{4}| &\leq \frac{C'|\parcompX\parcompZ-1| \cdot (\perpcompX^{2} + \perpcompZ^{2})}{\lambda},\qquad \text{ and } \\
	 (T_{1}T_{4} + T_{2} T_{3} + T_{2}T_{4})^{2} &\leq \frac{C'(\perpcompX^{2} + \perpcompZ^{2})^{2}}{\lambda^{2}}.
\end{align*}
Combining the inequalities in the preceding display with the inequality~\eqref{ineq1-alpha-part1-det-error} and Eq.~\eqref{eq1-alpha-det-error}, we obtain
\begin{subequations}\label{one-step-det-error-alpha-part}
\begin{align}
\big(\parcompdetX\parcompdetZ - 1\big)^{2} &\leq (\parcompX\parcompZ-1)^{2} \cdot \Big( 1-\frac{c_{1}}{4\lambda}\Big) + \frac{C'|\parcompX\parcompZ-1| \cdot (\perpcompX^{2} + \perpcompZ^{2})}{\lambda} + 
\frac{C'(\perpcompX^{2} + \perpcompZ^{2})^{2}}{\lambda^{2}},\label{det-error-alpha-upper} \\
\big(\parcompdetX\parcompdetZ - 1\big)^{2} &\geq (\parcompX\parcompZ-1)^{2} \cdot \Big( 1-\frac{2c_{2}}{\lambda}\Big) - \frac{C'|\parcompX\parcompZ-1| \cdot (\perpcompX^{2} + \perpcompZ^{2})}{\lambda} - 
\frac{C'(\perpcompX^{2} + \perpcompZ^{2})^{2}}{\lambda^{2}}.\label{det-error-alpha-lower}
\end{align}
\end{subequations}
Before turning to control the orthogonal component, we establish the deferred claims~\eqref{claim1-constant-of-V-alpha-D}.

\medskip
\noindent \underline{Proof of the claims~\eqref{claim1-constant-of-V-alpha-D}:} Using $\Err \leq c$, we obtain that $(\parcompX \parcompZ-1)^{2} \leq c$. When $c\leq 0.01$, we obtain that $0.9 \leq \parcompX \parcompZ \leq 1.1$, which establishes part (i.). 

By $L,\LZ \in [0.2,3]$, we obtain $|\parcompX|\leq L \leq 3$ and $|\parcompZ|\leq \LZ \leq 3$. Continuing, since $\Err \leq c$, we obtain $\perpcompX^{2} + \perpcompZ^{2} \leq 0.01$ for $c\leq 0.01$. Then, $\parcompX^{2} = L^{2} - \perpcompX^{2} \geq 0.04 - 0.01 = 0.03$ and similarly $\parcompZ^{2} \geq 0.03$. So we conclude that $|\parcompX|,|\parcompZ| \asymp 1$ and $\perpcompX,\perpcompZ \leq 0.1$, which establishes parts (ii.) and (iii.). 

Continuing, by definition of $V,V_{1},V_{2}$ in Eq~\eqref{eq:V-V1-V2}, we find that 
\[
	V \leq \EE\{G_{1}^{2}G_{2}^{2}\} = L^{2}\LZ^{2} \lesssim 1,\qquad V_{1} \leq \EE\{G_{2}^{2}\} = \LZ^{2} \lesssim 1,\qquad \text{ and } \qquad
	V_{2} \leq \EE\{G_{1}^{2}\} = L^{2} \lesssim 1.
\]
Towards lower bounding each of the quantities $V,V_{1},V_{2}$, we note that, by definition,
\[
	V = \EE\bigg\{ \frac{G_{1}^{2}G_{2}^{2} }{ 1 + G_{1}^{2}/r_{2} + G_{2}^{2}/r_{1} }\bigg\},\; V_{1} = \EE \bigg\{ \frac{G_{2}^{2}}{1 + G_{1}^{2}/r_{2} + G_{2}^{2}/r_{1}}\bigg\} \text{ and } V_{2} = \EE \bigg\{ \frac{G_{1}^{2}}{1 + G_{1}^{2}/r_{2} + G_{2}^{2}/r_{1}}\bigg\}.
\]
Further, by Eq.~\eqref{eq:fixed-point}, we obtain $r_1,r_2 \geq \lambda \oversamp = \lambda m/d \geq C$, 
where in the last step we have applied the assumption $\lambda m \geq Cd$. Putting the two pieces together yields the lower bound
\[
	V \geq  \EE\bigg\{ \frac{G_{1}^{2}G_{2}^{2} }{ 1 + G_{1}^{2}/C + G_{2}^{2}/C }\bigg\} \gtrsim 1,
\]
where the last step follows since $G_1 \sim \mathsf{N}(0,L^{2})$, $G_2 \sim \mathsf{N}(0,\LZ^{2})$ and $C>0$ is a universal constant. Similarly, we obtain $V_1,V_2 \gtrsim 1$. Consequently, we obtain $V,V_1,V_2 \asymp 1$, which establishes part (iv.).

Towards bounding $\mathsf{D}$, by $0.2 \leq L,\LZ \leq 3$, we obtain $\mathsf{D} = V(L^{2} + \LZ^{2}) + \lambda L^{2} \LZ^{2} \geq  0.2^{4} \cdot \lambda$. By $V \leq L^{2}\LZ^{2}$, we obtain $
	\mathsf{D} \leq  (L^{2} + \LZ^{2} + \lambda ) L^{2} \LZ^{2} \leq (\lambda + 18) \cdot 3^{4} \leq 2\cdot 3^{4} \cdot \lambda$, where the last step holds for $\lambda \geq 18$. Consequently, $\mathsf{D} \asymp \lambda$, establishing part (v.). This concludes the proof of the claims in~\eqref{claim1-constant-of-V-alpha-D}.

\paragraph{Controlling $(\perpcompdetX)^{2} + (\perpcompdetZ)^{2}$.} Let the functions $\iotaXmap_{m,d,\sigma,\lambda},\iotaZmap_{m,d,\sigma,\lambda},\etaXmap_{m,d,\sigma,\lambda},\etaZmap_{m,d,\sigma,\lambda}$---which characterize the orthogonal components $\beta^{\det}$ and $\widetilde{\beta}^{\det}$---be as defined in equations~\eqref{et_updates_eq},~\eqref{eta_updates_eq} and further let
\begin{align*}
	\iotadetX &= \iotaXmap_{m,d,\sigma,\lambda}(\parcompX,\perpcompX,\parcompZ,\perpcompZ),\qquad &(\etadetX)^{2} = \etaXmap_{m,d,\sigma,\lambda}(\parcompX,\perpcompX,\parcompZ,\perpcompZ),\\
	 \iotadetZ &= \iotaZmap_{m,d,\sigma,\lambda}(\parcompX,\perpcompX,\parcompZ,\perpcompZ),\; &(\etadetZ)^{2} = \etaZmap_{m,d,\sigma,\lambda}(\parcompX,\perpcompX,\parcompZ,\perpcompZ).
\end{align*}
Note that by definition, we have the decompositions $(\perpcompdetX)^{2} = (\iotadetX)^{2} + (\etadetX)^{2}$ and $(\perpcompdetZ)^{2} = (\iotadetZ)^{2} + (\etadetZ)^{2}$. We first bound $(\iotadetX)^{2} + (\iotadetZ)^{2}$ and then turn to bound $(\etadetX)^{2} + (\etadetZ)^{2}$. Recall the shorthand $\mathsf{D} = V(L^{2} + \LZ^{2}) + \lambda L^{2} \LZ^{2}$.  Then, we expand $\iotadetX$ according its definition to obtain 
\[	
	\iotadetX = \bigg(1+\frac{V(\parcompX\parcompZ - L^{2} \LZ^{2})}{\mathsf{D}L^{2}} - \frac{\parcompX\parcompZ}{L^{2}\LZ^{2}} \frac{V_{1}}{\lambda + V_{1}}\bigg) \cdot \perpcompX.
\]
Recall that $|\parcompX\parcompZ| \leq 1.1$, $|\parcompX|,|\parcompZ| \leq 3$ by the claim~\eqref{claim1-constant-of-V-alpha-D}, and $|\parcompX\parcompZ-1| \leq \sqrt{\Err} \leq \sqrt{c}, \perpcompX^{2} + \perpcompZ^{2} \leq \Err \leq c$ by assumption. Thus, applying the triangle inequality, we obtain
\[
   |\parcompX\parcompZ - L^{2} \LZ^{2}| =  |\parcompX\parcompZ(1-\parcompX\parcompZ) - (\parcompX^{2}\perpcompZ^{2} + \parcompZ^{2}\perpcompX^{2} + \perpcompX^{2}\perpcompZ^{2})|  \leq 1.1\sqrt{c} + 10c.
\]
Combining this with the claim~\eqref{claim1-constant-of-V-alpha-D}, we find that there exists universal constants $c'_1,c'_{2}$ and $c'_3$ such that both
\[
	\bigg|\frac{V(\parcompX\parcompZ - L^{2} \LZ^{2})}{\mathsf{D}L^{2}}\bigg| \leq \frac{c'_1}{\lambda}(\sqrt{c} + c) \qquad \text{ and } \qquad \frac{c'_{2}}{\lambda} \leq \frac{\parcompX\parcompZ}{L^{2}\LZ^{2}} \frac{V_{1}}{\lambda + V_{1}} \leq \frac{c'_{3}}{\lambda}.
\]
Consequently, 
\[
	\iotadetX \leq \bigg( 1 + \frac{c'_1}{\lambda}(\sqrt{c} + c) - \frac{c'_{2}}{\lambda} \bigg) \perpcompX \leq \bigg( 1 - \frac{c'_{2}}{2\lambda}\bigg) \perpcompX,
\]
where the last step follows by letting $c$ be a small enough constant to ensure the bound $c'_1(\sqrt{c} + c) \leq c'_{2}/2$.  Similarly, as long as $c\leq 0.01$, we lower bound $\iotadetX$ as
\[
	\iotadetX \geq \bigg( 1 - \frac{c'_1}{\lambda}(\sqrt{c} + c) - \frac{c'_{3}}{\lambda} \bigg) \perpcompX \geq \bigg( 1-\frac{c'_1+c'_3}{\lambda} \bigg) \perpcompX.
\]
Putting the upper and lower bound together and proceeding similarly for $\iotadetZ$, we obtain---for $\lambda \geq C$---the pair of inequalities
\begin{align}\label{ineq2-iotaX-det-upper-lower-bound}
	\bigg(1-\frac{2(c'_{1}+c'_{3})}{\lambda}\bigg)\perpcompX^{2} \leq (\iotadetX)^{2} &\leq \bigg(1-\frac{c'_{2}}{2\lambda}\bigg) \perpcompX^{2}, \qquad \text{ and } \nonumber\\
	\bigg(1-\frac{2(c'_{1}+c'_{3})}{\lambda}\bigg)\perpcompZ^{2} \leq (\iotadetZ)^{2} &\leq \bigg(1-\frac{c'_{2}}{2\lambda}\bigg) \perpcompZ^{2}.
\end{align}
We next turn to bound $(\etadetX)^{2} + (\etadetZ)^{2}$. Recall the quantities $V_{3}$ and $V_{4}$ as defined in Eq.~\eqref{definition-V3} and Eq.~\eqref{definition-V4}. We make the following claim
\begin{align}\label{claim2-bound-V3-V4}
	 \frac{C_{2} \sigma^{2} d^{2}}{(\lambda m)^{2}}\leq V_{3}, V_{4} \leq \frac{C_{1}(\Err + \sigma^{2}) d^2}{(\lambda m)^{2}},
\end{align}
postponing its proof until the end.  Taking the inequalities above as given and further using Eqs.~\eqref{det_updates_eta1} and~\eqref{det_updates_eta2}, we obtain the lower bounds
\begin{align}\label{ineq3-lower-bound-eta-det}
	(\etadetX)^{2} \geq \frac{m}{2d}V_{3} \geq \frac{m}{2d} \frac{C_{2}\sigma^{2}d^{2}}{(\lambda m)^{2}} \geq \frac{C_{2}d\sigma^{2}}{2\lambda^{2}m} \qquad\text{ and similarly } \qquad (\etadetZ)^{2} \geq \frac{C_{2}d\sigma^{2}}{2\lambda^{2}m}.
\end{align}
Turning to upper bounds on $\etadetX$ and $\etadetZ$, we apply Eqs.~\eqref{det_updates_eta1} and~\eqref{det_updates_eta2} to obtain the inequality
\begin{align}\label{ineq4-auxilary-upper-bound-eta-det}
	(\etadetX)^{2} &\leq \frac{m}{d} \Big( \frac{(\etadetX)^{2} \EE\{G_{2}^{4}\}}{r_{1}^{2}} + \frac{(\etadetZ)^{2} \EE\{ G_{1}^{2}G_{2}^{2}\}}{r_{1}^{2}} + V_{3} \Big)\nonumber\\
	 &\leq \frac{C' d}{\lambda^{2} m} \Big( (\etadetX)^{2} + (\etadetZ)^{2} + \Err +\sigma^{2} \Big),
\end{align}
where the last step follows from the bound $r_{1} \geq \lambda \oversamp = \lambda m/d$ (see Lemma~\ref{fixed-point-equations-unique-solution}), the upper bound of $V_{3}$ in claim~\eqref{claim2-bound-V3-V4} and by taking $C'$ large enough to ensure $C_{1},\EE\{G_{2}^{4}\},\EE\{G_{1}^{2} G_{2}^{2}\} \leq C'$.  Note that $C'd/(\lambda^{2} m) \leq C'm/(C^{2} d) \leq 1/2$ since $\lambda m \geq Cd$ and $C$ is a large enough constant. Consequently, rearranging the terms of inequality~\eqref{ineq4-auxilary-upper-bound-eta-det}, we deduce
\begin{align}\label{ineq5-auxilary-upper-bound-eta-det}
	(\etadetX)^{2} \leq \frac{2C' d}{\lambda^{2} m} \Big( (\etadetZ)^{2} + \Err +\sigma^{2} \Big).
\end{align}
Identical steps yield the analogous bound
\begin{align}\label{ineq6-auxilary-upper-bound-eta-det}
	(\etadetZ)^{2} \leq \frac{2C' d}{\lambda^{2} m} \Big( (\etadetX)^{2} + \Err +\sigma^{2} \Big).
\end{align}
Combining inequalities~\eqref{ineq5-auxilary-upper-bound-eta-det} and~\eqref{ineq6-auxilary-upper-bound-eta-det} yields
\begin{align}\label{ineq7-ineq3-upper-bound-eta-det}
	(\etadetX)^{2}, (\etadetZ)^{2} \leq \frac{10C'd}{\lambda^{2}m}  (\Err + \sigma^{2}).
\end{align}
This concludes the proof for $\etadetX$ and $\etadetZ$.   Before concluding, we provide the proof of the deferred claim~\eqref{claim2-bound-V3-V4}.

\medskip
\noindent\underline{Proof of Claim~\eqref{claim2-bound-V3-V4}:} Starting with the lower bound for $V_{3}$, by Eq.~\eqref{definition-V3}, we obtain that
\[
	V_{3} \geq \frac{\sigma^{2}}{r_{1}^{2}} \EE\bigg\{ \frac{G_{2}^{2}}{(1+G_{1}^{2}/r_{2} + G_{2}^{2}/r_1)^{2}} \bigg\} \overset{\1}{\gtrsim} \frac{\sigma^{2}}{r_{1}^{2}} \gtrsim \frac{\sigma^{2} d^{2}}{(\lambda m)^{2}},
\]
where in step $\1$ we use $r_1,r_2 \geq \lambda m/d \geq C$ (see Lemma~\ref{fixed-point-equations-unique-solution}) so that 
\[
	\EE\bigg\{ \frac{G_{2}^{2}}{(1+G_{1}^{2}/r_{2} + G_{2}^{2}/r_1)^{2}} \bigg\} \geq \EE\bigg\{ \frac{G_{2}^{2}}{(1+G_{1}^{2}/C + G_{2}^{2}/C)^{2}} \bigg\} \gtrsim 1,
\]
and in the last step we use $r_1 \leq 2\lambda m/d$ (see Lemma~\ref{fixed-point-equations-unique-solution}).
Turning to the upper bound for $V_{3}$, by Eq.~\eqref{definition-V3} and $0.2\leq L,\LZ \leq 3$, we obtain that
\begin{align*}	
	V_{3} &\lesssim (\sigma^{2} + \perpcompX^{2}\perpcompZ^{2})r_{2}^{-2} + (\parcompX \parcompZ - L^{2}\LZ^{2})^{2}  r_{2}^{-2} + \perpcompZ^{2} r_{2}^{-2} + \perpcompZ^{2} r_{2}^{-2} \lesssim \frac{(\sigma^{2} + \Err)d^2}{(\lambda m)^{2}},
\end{align*}
where the last step follows since $r_{2}\geq \lambda m/d$ (see Lemma~\ref{fixed-point-equations-unique-solution}) and 
$\perpcompX^{2}\perpcompZ^{2},\perpcompX^{2},\perpcompZ^{2},(\parcompX \parcompZ - L^{2}\LZ^{2})^{2}  \lesssim \Err$. The proof for bounding $V_{4}$ is identical so we omit it here. 

\paragraph{Putting together the pieces:} Combining inequalities~\eqref{det-error-alpha-upper},~\eqref{ineq2-iotaX-det-upper-lower-bound} and~\eqref{ineq7-ineq3-upper-bound-eta-det}, we obtain that
\begin{align*}
	\Errdet &= (\parcompdetX\parcompdetZ-1)^{2} + (\iotadetX)^{2} + (\iotadetZ)^{2} + (\etadetX)^{2} + (\etadetZ)^{2} 
	\\&\leq \Big(1-\frac{c_{1}}{4\lambda}\Big)(\parcompX \parcompZ-1)^{2} + \frac{C'|\parcompX\parcompZ-1| \cdot (\perpcompX^{2} + \perpcompZ^{2})}{\lambda} + 
\frac{C'(\perpcompX^{2} + \perpcompZ^{2})^{2}}{\lambda^{2}} \\& \qquad \qquad\qquad \qquad \qquad \qquad\qquad + \Big(1-\frac{c'_{2}}{2\lambda} \Big)(\perpcompX^{2} + \perpcompZ^{2}) + \frac{10C'd(\Err + \sigma^{2})}{\lambda^{2}m}.
\end{align*}
Then, using $|\parcompX\parcompZ-1|\leq \sqrt{\Err} \leq \sqrt{c}$ and $\perpcompX^{2}+\perpcompZ^{2} \leq \Err \leq c$, we upper bound the deterministic error as
\begin{align*}
\Errdet &\leq \Big( 1 - \frac{\min(c_{1}/4,c'_{2}/2)}{\lambda}  + \frac{C'\sqrt{c}}{\lambda} + \frac{C'\cdot c}{\lambda^{2}} + \frac{10C'd}{\lambda^{2}m}\Big) \cdot \Err + \frac{10C'd\sigma^{2}}{\lambda^{2}m} \\
&\leq \Big( 1 - \frac{\min(c_{1}/4,c'_{2}/2)}{2\lambda}\Big) \Err + \frac{10C'd\sigma^{2}}{\lambda^{2}m},
\end{align*}
where the last step follows by taking $c$ small enough and $\lambda \geq Cd/m$ with $C$ large enough to ensure
\begin{align*}
	 \frac{C'\sqrt{c}}{\lambda} + \frac{C'\cdot c}{\lambda^{2}} + \frac{10C'd}{\lambda^{2}m} \leq \frac{C'\sqrt{c}}{\lambda} + \frac{C'\cdot c}{\lambda^{2}} + \frac{10C'}{\lambda C} \leq \frac{\min(c_{1}/4,c'_{2}/2)}{2\lambda}.
\end{align*}
Towards lower bounding the deterministic error, we combine the inequalities~\eqref{det-error-alpha-lower},~\eqref{ineq2-iotaX-det-upper-lower-bound} and~\eqref{ineq3-lower-bound-eta-det} to obtain
\begin{align*}
	\Errdet &\geq (\parcompX\parcompZ-1)^{2} \cdot \Big( 1-\frac{2c_{2}}{\lambda}\Big) - \frac{C'|\parcompX\parcompZ-1| \cdot (\perpcompX^{2} + \perpcompZ^{2})}{\lambda} - 
	\frac{C'(\perpcompX^{2} + \perpcompZ^{2})^{2}}{\lambda^{2}} 
	\\& \qquad \qquad \qquad \qquad \qquad \qquad\qquad\quad + \bigg(1-\frac{2(c'_{1}+c'_{3})}{\lambda}\bigg)(\perpcompX^{2}+\perpcompZ^{2}) + \frac{C_{2}d\sigma^{2}}{2\lambda^{2}n} 
	\\&\geq \Big( 1 - \frac{2(c'_1 + c'_{3} + c_{2}) + C'(\sqrt{c}+c)}{\lambda} \Big) \Err + \frac{C_{2}d\sigma^{2}}{2\lambda^{2}n}.
\end{align*}
The lower bound follows upon taking $c\leq 0.01$. This proves inequality~\eqref{sharp-bounds-det-error} by noting that $c_{1},c_{2},c'_{1},c'_{2},c'_{3},C',C_{2}$ are universal, positive constants. Putting together inequalities~\eqref{det-error-alpha-upper} and~\eqref{ineq2-iotaX-det-upper-lower-bound} proves the first part of inequality~\eqref{upper-bound-det-error-alpha-iota}. This concludes the proof of Lemma~\ref{lemma:one-step-deterministic-convergence}. \qed

\section{Concentration of the trace inverse}\label{concentration-trace-inverse}
This section is dedicated to the following lemma---which establishes the concentration properties alluded to by Eq.~\eqref{introduction-concentartion-tarce-inverse}---as well as its proof.

\begin{lemma}\label{lemma:probability-bound-trace-inverse}
Consider $\bu, \bv \in \mathbb{S}^{d-1}$ and let the columns of $\bO_{\bu}, \bO_{\bv} \in \mathbb{R}^{d \times (d-1)}$ form orthonormal bases of the subspaces orthogonal to $\bu, \bv$, respectively.  
Suppose $\{\bx_{i},\bz_{i}\}_{i=1}^{m} \overset{\mathsf{i.i.d.}}{\sim} \mathsf{N}(0,\bI_{d})$.  Further, let
\begin{align}\label{notation-B-C-D-Sigma}
	&G_{i} = \bx_{i}^{\top}\bcoefX_{\sharp}, \widetilde{G}_{i} = \bz_{i}^{\top} \bcoefZ_{\sharp}\;\forall\;i\in[m],  &&\bB = \sum_{i=1}^{m} \widetilde{G}_{i}^{2} \bO_{\bu}^{\top} \bx_i (\bO_{\bu}^{\top} \bx_i)^{\top}  + \lambda m\cdot \bI_{d-1}, \nonumber \\
	&\bC = \sum_{i=1}^{m} G_{i}\widetilde{G}_{i} \cdot \bO_{\bu}^{\top}\bx_i (\bO_{\bv}^{\top}\bz_{i})^{\top}, &&\bD = \sum_{i=1}^{m} G_{i}^{2}\bO_{\bv}^{\top}\bz_i (\bO_{\bv}^{\top}\bz_{i})^{\top} + \lambda m\cdot \bI_{d-1},
\end{align}
and let $r_1,r_2$ be the solution of the fixed point equation~\eqref{eq:fixed-point}. Suppose $L_{\sharp},\widetilde{L}_{\sharp} \leq C$ for some universal constant $K > 0$. Then, there exist universal constants $d_{0},c_{1},C_{1}>0$ depending only on $K$ such that for $1\leq m\leq d, d\geq d_{0},\lambda m \geq C_{1}d$ and $t\geq \frac{C_{1}\log^{1.5}(d)}{\sqrt{d}}$, the following hold
\begin{align*}
	&\Pr \big\{ \big | \trace\big( (\bB-\bC \bD^{-1} \bC^{\top})^{-1} \big) - r_1^{-1}   \big| \geq t \big\} \leq 2\exp\bigg\{ \frac{-c_{1}d^2t^{2}}{m\log^{2}(d)}\bigg\} + d^{-15}, \\
	&\Pr \big\{ \big | \trace\big( (\bD-\bC^{\top} \bB^{-1} \bC)^{-1} \big) - r_2^{-1}  \big| \geq t \big\} \leq 2\exp\bigg\{   \frac{-c_{1}d^2t^{2}}{m\log^{2}(d)}\bigg\} + d^{-15}.
\end{align*}
\end{lemma}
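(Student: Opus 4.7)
The plan is to split the argument into two complementary steps: concentration of $\trace((\bB - \bC\bD^{-1}\bC^\top)^{-1})$ around its expectation via Warnke's typical bounded differences inequality, followed by a self-consistency argument showing that the expectation is within $\log^{1.5}(d)/\sqrt{d}$ of $r_1^{-1}$ by matching the fixed point equations~\eqref{eq:fixed-point}. The bound on $\trace((\bD - \bC^\top\bB^{-1}\bC)^{-1})$ then follows by a symmetric argument. The key structural observation is that if we collect $\ba_i^\top = [\widetilde{G}_i (\bO_\bu^\top \bx_i)^\top\,\vert\, G_i (\bO_\bv^\top \bz_i)^\top] \in \mathbb{R}^{2(d-1)}$ and set $\bSig = \sum_{i=1}^m \ba_i \ba_i^\top + \lambda m \bI_{2(d-1)}$, then block matrix inversion identifies $\trace((\bB - \bC\bD^{-1}\bC^\top)^{-1})$ with $\trace(\bP_{11}\bSig^{-1})$, where $\bP_{11}$ projects onto the first $d-1$ coordinates.

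For the concentration step, define the regularity set $\mathcal{S}$ on which $\|\bx_i\|_2,\|\bz_i\|_2 \lesssim \sqrt{d}$ and $|G_i|,|\widetilde{G}_i| \lesssim \sqrt{\log d}$ for all $i \in [m]$, so that $\Pr\{\mathcal{S}^{c}\} \lesssim d^{-20}$ by Gaussian tails and a union bound. On $\mathcal{S}$, we have $\|\ba_i\|_2^2 \lesssim d\log d$, and by Sherman--Morrison (applied to $\bSig = \bSig_i + \ba_i\ba_i^\top$),
\[
\Big|\trace(\bP_{11}\bSig^{-1}) - \trace(\bP_{11}\bSig_i^{-1})\Big| = \frac{\ba_i^\top \bSig_i^{-1}\bP_{11}\bSig_i^{-1}\ba_i}{1 + \ba_i^\top \bSig_i^{-1}\ba_i} \leq \frac{\|\ba_i\|_2^2}{(\lambda m)^2} \lesssim \frac{\log d}{d},
\]
using $\lambda m \geq C_1 d$. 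Thus the function $f := \trace(\bP_{11}\bSig^{-1})$ enjoys the bounded differences property on $\mathcal{S}$ with per-coordinate constant $c_i \lesssim \log(d)/d$. After extending $f$ by truncation, as in Section~\ref{sec:stochastic-error-parallel-component}, the typical bounded differences inequality~\citep[Theorem 2]{warnke2016method} gives $\Pr\{|f - \EE f| \geq t\} \lesssim \exp(-c d^2 t^2/(m\log^2 d)) + d^{-15}$, which matches the target tail.

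The harder step is showing $|\EE\{f\} - r_1^{-1}| \lesssim \log^{1.5}(d)/\sqrt{d}$; the lemma's conclusion then follows by the triangle inequality and the assumption $t \gtrsim \log^{1.5}(d)/\sqrt{d}$. By the rotational invariance of $\{\bO_\bu^\top\bx_i\}_i$ (an isotropic Gaussian in $\mathbb{R}^{d-1}$ conditionally on the $\widetilde{G}_i$), the diagonal entries $\EE\{(\bSig^{-1})_{kk}\}$ are equal across $k \in [d-1]$; denote this common value by $\widehat s_1$, and let $\widehat s_2$ denote the analogous quantity for $k \in [d, 2(d-1)]$. The plan is to show that the pair $(t_1, t_2) := ((d-1)\widehat s_1, (d-1)\widehat s_2)$ approximately solves the fixed point equations~\eqref{eq:fixed-point} under the identification $t_i \leftrightarrow r_i^{-1}$, and then invoke Lemma~\ref{fixed-point-equations-unique-solution} for uniqueness and quantitative stability. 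To derive the self-consistency, fix $k \in [d-1]$ and use block inversion to write $(\bSig^{-1})_{kk} = 1/(\bSig_{kk} - \bSig_{k,-k}\bSig_{-k}^{-1}\bSig_{-k,k})$. The coordinate $g_{k,i} := (\bO_\bu \be_k)^\top \bx_i$ is a standard Gaussian independent of $\bSig_{-k}$ and of $\widetilde{G}_i$; conditioning on everything else and applying Hanson--Wright~\citep[Theorem 6.2.1]{vershynin2018high} to the quadratic form in $(g_{k,i})_i$ yields
\[
\bSig_{k,-k}\bSig_{-k}^{-1}\bSig_{-k,k} \approx \sum_{i=1}^{m} \widetilde{G}_i^2\, \bc_i^\top \bSig_{-k}^{-1} \bc_i,
\]
where $\bc_i$ is the $i$-th row of $\bA_{\bu,\bv}$ with the $k$-th coordinate removed. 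A sample-level Sherman--Morrison step turns $\bc_i^\top \bSig_{-k}^{-1}\bc_i$ into $1 - 1/(1 + \bc_i^\top \bSig_{-k,-i}^{-1}\bc_i)$, and a second Hanson--Wright step---applied to the Gaussian coordinates $\bO_\bu'^\top \bx_i$ and $\bO_\bv^\top \bz_i$, which are independent of $\bSig_{-k,-i}$---together with the exchangeability identity $\EE\{(\bSig_{-k,-i}^{-1})_{\ell\ell}\} \approx \widehat s_1$ (respectively $\widehat s_2$) gives $\bc_i^\top \bSig_{-k,-i}^{-1}\bc_i \approx \widetilde{G}_i^2(d-2)\widehat s_1 + G_i^2(d-1)\widehat s_2$. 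Averaging over $i$ yields the approximate self-consistency
\[
\widehat s_1^{-1} \approx \lambda m + m\,\EE\Big\{\tfrac{\widetilde{G}^2}{1 + \widetilde{G}^2 t_1 + G^2 t_2}\Big\},
\]
which, together with its symmetric analogue for $\widehat s_2$, matches~\eqref{eq:fixed-point} after identifying $t_i^{-1}\leftrightarrow r_i$, up to $O(1/d)$ boundary corrections.

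The main obstacle will be executing this self-consistency argument with explicit $\log^{1.5}(d)/\sqrt{d}$ error bars. Each Hanson--Wright step contributes concentration errors of order $\log^{1/2}(d)/\sqrt{d}$; the leave-one-out perturbations contribute $\log(d)/d$; and the nested Sherman--Morrison chain (removing direction $k$, then sample $i$) must be controlled without accumulating polynomial factors in $d$. A convenient way to avoid a bootstrap is to define the empirical values $\widehat r_i := t_i^{-1}$, verify that they approximately satisfy~\eqref{eq:fixed-point} with error $\lesssim \log^{1.5}(d)/\sqrt{d}$, and then invoke the quantitative stability of the fixed point given by Lemma~\ref{fixed-point-equations-unique-solution} to conclude $|\widehat r_i - r_i|$ is of the same order, whence $|(d-1)\widehat s_1 - r_1^{-1}| \lesssim \log^{1.5}(d)/\sqrt{d}$ as required.
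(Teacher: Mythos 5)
Your two-step architecture matches the paper's exactly, and the first half (concentration of the trace around its expectation) is essentially the paper's own argument: the same regularity set controlling $\|\bx_i\|_2,\|\bz_i\|_2,|G_i|,|\widetilde G_i|$, the same Sherman--Morrison bounded-difference computation giving per-sample oscillation $\lesssim \log(d)/(\lambda m)$, the same truncation-then-Warnke step. The expectation half, however, takes a genuinely different route. The paper starts from the resolvent identity $\bI = \EE\{\bSig^{-1}\bSig\}$, expands via a single leave-one-\emph{sample}-out Sherman--Morrison, and takes the trace of the upper-left block; this produces the self-consistency relation $\tfrac{d-1}{m} = \Omega_1 + \lambda\,\EE\{\trace((\bB-\bC\bD^{-1}\bC^{\top})^{-1})\}$ in one shot, with the expected trace appearing linearly and multiplied by $\lambda$ (which is what makes the subsequent stability argument painless). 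You instead compute $(\bSig^{-1})_{kk}$ entry-by-entry via the Schur complement, which costs an additional coordinate-level leave-one-out nested inside the sample-level one and an extra Hanson--Wright application over the coordinate direction, plus an exchangeability argument to aggregate over $k$. Both routes end at an approximate fixed point plus stability; note that Lemma~\ref{fixed-point-equations-unique-solution} as stated only gives existence, uniqueness, and the bounds $\lambda m/d \le r_1,r_2\le 2\lambda m/d$, so the "quantitative stability" you invoke has to be extracted from the contraction estimate $\|\nabla g\|_1 \le 1/2$ inside its proof (or redone by hand, as the paper does).

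One step in your expectation argument would fail as written: the claim that $\EE\{(\bSig^{-1})_{kk}\}$ is constant over $k\in[d-1]$ by rotational invariance. The matrix $\bSig$ involves both $\bO_{\bu}^{\top}\bx_i$ and $G_i = \bx_i^{\top}\bcoefX_{\sharp}$, and these are correlated whenever $\bO_{\bu}^{\top}\bcoefX_{\sharp}\neq \boldsymbol{0}$; the joint law is invariant only under rotations of the first block that fix the direction $\bO_{\bu}^{\top}\bcoefX_{\sharp}$, so the diagonal entries are exchangeable only on its orthogonal complement. This is not a corner case: in the paper's applications with $\bu = \bu_2$ (and with the $\bu_3$ used for the orthogonal component), $\bcoefX_{\sharp}$ is orthogonal to $\bu$ and hence lies entirely in the column span of $\bO_{\bu}$, singling out a preferred coordinate. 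For the same reason, your assertion that $g_{k,i}$ is independent of $\bSig_{-k}$ fails for the exceptional $k$, since $G_i$ appears inside $\bc_i$. The repair is short: the one exceptional direction contributes at most $\|\bSig^{-1}\|_{2}\le (\lambda m)^{-1}\lesssim d^{-1}$ to the trace, which is negligible against the target accuracy $\log^{1.5}(d)/\sqrt{d}$, and for any $k$ orthogonal to $\bO_{\bu}^{\top}\bcoefX_{\sharp}$ the independence you need does hold. You should make this restriction explicit; otherwise the self-consistency derivation is built on a false symmetry.
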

\begin{proof}
We need the following two auxiliary lemmas.  We provide the proof of Lemma~\ref{lemma:trace-expectation-concentration} in Section~\ref{sec:proof-lemma-trace-expectation-concentration} and the proof of Lemma~\ref{lemma:expectation-of-trace-fixed-dolution-distance} in Section~\ref{sec:proof-lemma-expectation-trace-fixed-solution-distance}.

\begin{lemma}\label{lemma:trace-expectation-concentration}
Under the setting of Lemma~\ref{lemma:probability-bound-trace-inverse}, there exist positive constants $d_{0},c_{1},C_{1}>0$, depending only on $K$, such that for any $1\leq m \leq d, d\geq d_{0},\lambda m \geq C_{1}d$ and $t\geq 2d^{-24}$ the following hold
\begin{align*}
\hspace{-1cm}
	&\Pr \big\{ \big | \trace\big( (\bB-\bC \bD^{-1} \bC^{\top})^{-1} \big) - \EE \big\{  \trace\big( (\bB-\bC \bD^{-1} \bC^{\top})^{-1} \big) \big\} \big| \geq t \big\}  \leq 2\exp\bigg\{ \frac{-c_{1}d^2t^{2}}{m\log^{2}(d)}\bigg\} + d^{-20}, \\
	&\Pr \big\{ \big | \trace\big( (\bD-\bC^{\top} \bB^{-1} \bC)^{-1} \big) - \EE \big\{   \trace\big( (\bD-\bC^{\top} \bB^{-1} \bC)^{-1} \big) \big\} \big| \geq t \big\} \leq 2\exp\bigg\{  \frac{-c_{1}d^2t^{2}}{m\log^{2}(d)}\bigg\} + d^{-20}.
\end{align*}
\end{lemma}

\begin{lemma}\label{lemma:expectation-of-trace-fixed-dolution-distance}
Under the setting of Lemma~\ref{lemma:probability-bound-trace-inverse}, there exist positive constants $d_{0},C_1,C_2>0$, depending only on $K$, such that for $1\leq m \leq d, d\geq d_{0},\lambda m\geq C_1d$ the following hold
\begin{align*}
	 &\Big| \EE \big\{ \trace \big( (\bB - \bC \bD^{-1} \bC^{\top})^{-1} \big) \big\} - r_1^{-1} \Big| \vee \Big| \EE \big\{ \trace \big( (\bD - \bC^{\top} \bB^{-1} \bC)^{-1} \big) \big\} - r_2^{-1} \Big| \leq \frac{C_2\log^{1.5}(d)}{\sqrt{d}}.
\end{align*}
\end{lemma}

\noindent The desired result follows immediately upon decomposing
\begin{align*}
	\big| \trace\big( (\bB-\bC \bD^{-1} \bC^{\top})^{-1} \big) - r_1^{-1} \big) \big\} \big| \leq & \; \big| \trace\big( (\bB-\bC \bD^{-1} \bC^{\top})^{-1} \big) - \EE \big\{  \trace\big( (\bB-\bC \bD^{-1} \bC^{\top})^{-1} \big) \big\} \big| \\&+ \big| \EE \big\{  \trace\big( (\bB-\bC \bD^{-1} \bC^{\top})^{-1} \big) \big\} - r_1^{-1} \big|,
\end{align*}
and subsequently applying Lemma~\ref{lemma:trace-expectation-concentration} to control the first term in the RHS and applying Lemma~\ref{lemma:expectation-of-trace-fixed-dolution-distance} to control the second term.
\end{proof}

\subsection{Proof of Lemma~\ref{lemma:trace-expectation-concentration}}\label{sec:proof-lemma-trace-expectation-concentration}
We follow the strategy considered in Section~\ref{sec:stochastic-error-parallel-component}.  To this end, 
define the function $f: \mathbb{R}^{2d \times m} \rightarrow \mathbb{R}$ and the Hamming metric $\rho$ as
\begin{align}\label{definition-f}
	f\big(\{\bx_i,\bz_i\}_{i=1}^{m}\big) = \trace\big( (\bB - \bC \bD^{-1} \bC^{\top})^{-1} \big),
\end{align}
where the matrices $\bB,\bC$ and $\bD$ are as in~\eqref{notation-B-C-D-Sigma}, and consider the Hamming metric $\rho: \mathbb{R}^{2d \times m} \rightarrow \mathbb{R}$ as $\rho\big( \{\bx_i,\bz_i\}_{i=1}^{m}, \{\bx'_i,\bz'_i\}_{i=1}^{m} \big) = \sum_{i=1}^{m} \mathbbm{1}\big\{ (\bx_{i},\bz_{i}) \neq (\bx'_{i},\bz'_{i}) \big\}$.  As in Section~\ref{sec:stochastic-error-parallel-component}, the proof consists of three steps: First, we define a regularity set $\mathcal{S}$ on which the bounded difference property holds, then we apply a truncation argument to restrict ourselves to the regularity set $\mathcal{S}$, and finally we conclude by applying the typical bounded differences inequality.  We now execute each of these steps in sequence. 

\paragraph{Step 1: Constructing the regularity set $\mathcal{S}$.} Define $\mathcal{S} \subseteq \mathbb{R}^{2d \times m}$ as
\begin{align}\label{definition-of-regularity-set}
	\mathcal{S} = \Big\{ \{\widetilde{\bx}_i,\widetilde{\bz}_i\}_{i=1}^{m} \in \mathbb{R}^{2d\times m}:\; &\max_{i\in[m]}\big(\|\widetilde{\bx}_i\|_{2}^{2} + \|\widetilde{\bz}_i\|_{2}^{2}\big) \leq Cd \nonumber \\
	&,\;\max_{i \in [m]} |\widetilde{\bx}_i^{\top} \bcoefX_{\sharp} |,\max_{i \in [m]}|\widetilde{\bz}_{i}^\top \bcoefZ_{\sharp}|  \leq C \sqrt{\log d} \Big\},
\end{align}
where $C$ is a large enough universal constant. Throughout, we will use the shorthand $\bM = \{\bx_i,\bz_i\}_{i=1}^{m}$ and $\bM' = \{\bx'_i,\bz'_i\}_{i=1}^{m}$.  The next lemma, whose proof we defer to Section~\ref{sec:proof-aucillary-lemma1-trace-expectation-concentration}, establishes the bounded differences property on the regularity set $\mathcal{S}$. 
\begin{lemma}\label{aucillary-lemma1-trace-expectation-concentration} Consider $f$~\eqref{definition-f} and $\mathcal{S}$~\eqref{definition-of-regularity-set}. The following holds.
\begin{subequations}
\begin{itemize}
	\item[(a)] $ 0 \leq f(\bM) \leq \frac{d}{\lambda m},\quad \forall \; \bM \in \mathbb{R}^{2d \times n}.$
    \item[(b)] There exists a universal, positive constant $C_{0}$ such that if $\lambda m \geq d$, then 
    \[
	\big| f(\bM) - f(\bM') \big| \leq C_0 \cdot \log(d)/(\lambda m) : = \Delta,\quad  \text{ for } \quad \bM,\bM' \in \mathcal{S} \text{ such that } \rho(\bM,\bM') \leq 2.
	\]
\end{itemize}
\end{subequations}
\end{lemma}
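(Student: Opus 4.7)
}
The plan is to reformulate $f(\bM)$ as the trace of a projection of an inverse of the block matrix $\bN = \begin{bmatrix} \bB & \bC \\ \bC^\top & \bD\end{bmatrix} = \sum_{i=1}^{m} \ba_i \ba_i^\top + \lambda m \bI_{2(d-1)}$, where $\ba_i^\top = \bigl[ \widetilde{G}_{i}\, (\bO_{\bu}^\top \bx_i)^\top \;\big|\; G_{i}\, (\bO_{\bv}^\top \bz_i)^\top\bigr]$, and then analyse both claims via Sherman--Morrison rank-one perturbations.

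For part (a), I would first invoke the identity $(\bN^{-1})_{\text{top-left}} = (\bB - \bC\bD^{-1}\bC^\top)^{-1}$, so that $f(\bM) = \trace(\bQ \bN^{-1})$, where $\bQ = \bP^\top\bP \in \mathbb{R}^{2(d-1)\times 2(d-1)}$ is the orthogonal projection onto the first $d-1$ coordinates. Since $\bN \succeq \lambda m \bI$, we have $\bN^{-1} \preceq (\lambda m)^{-1} \bI$, hence $0 \leq \trace(\bQ\bN^{-1}) \leq \trace(\bQ)/(\lambda m) = (d-1)/(\lambda m) \leq d/(\lambda m)$. Equivalently, the Schur-complement representation shows $\bB - \bC\bD^{-1}\bC^\top \succeq \lambda m \bI_{d-1}$ by testing the quadratic form $\bN$ against vectors of the form $(\bu, -\bD^{-1}\bC^\top \bu)$, which then yields the same bound directly.

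For part (b), I would first observe that if $\bM, \bM'\in \mathcal{S}$ agree except in at most two indices $i,j\in[m]$, then $\bN' = \bN - \ba_i\ba_i^\top - \ba_j\ba_j^\top + \ba_i'\ba_i'^\top + \ba_j'\ba_j'^\top$ can be reached from $\bN$ via four successive rank-one updates. All intermediate matrices stay $\succeq \lambda m \bI$ (removals leave $\sum_{k\neq i}\ba_k\ba_k^\top + \lambda m\bI$ which is PSD above $\lambda m\bI$; additions only increase eigenvalues). Applying Sherman--Morrison, a single rank-one step at a matrix $\widebar{\bN} \succeq \lambda m \bI$ produces
\begin{align*}
\bigl|\trace(\bQ(\widebar{\bN}\pm \bb\bb^\top)^{-1}) - \trace(\bQ\widebar{\bN}^{-1})\bigr| = \bigg|\frac{\bb^\top \widebar{\bN}^{-1}\bQ \widebar{\bN}^{-1}\bb}{1\pm \bb^\top \widebar{\bN}^{-1}\bb}\bigg| \leq \|\widebar{\bN}^{-1}\bb\|_2^2 \leq \frac{\|\bb\|_2^2}{(\lambda m)^2},
\end{align*}
where for the removal case ($-$) the identity $1/(1-\bb^\top\widebar{\bN}^{-1}\bb) = 1 + \bb^\top(\widebar{\bN}-\bb\bb^\top)^{-1}\bb$ lets us rewrite the bound using the inverse of the resulting (still well-conditioned) matrix, yielding the same $\|\bb\|_2^2/(\lambda m)^2$ bound. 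On the regularity set $\mathcal{S}$, each perturbing vector satisfies $\|\ba_i\|_2^2 = \widetilde{G}_i^2 \|\bO_{\bu}^\top\bx_i\|_2^2 + G_i^2 \|\bO_{\bv}^\top \bz_i\|_2^2 \lesssim d\log(d)$, using $|G_i|,|\widetilde{G}_i|\lesssim \sqrt{\log d}$ and $\|\bx_i\|_2^2,\|\bz_i\|_2^2 \lesssim d$. Summing the contributions of the four rank-one steps and using $\lambda m \geq d$ yields $|f(\bM)-f(\bM')| \leq 4 \cdot C d\log(d)/(\lambda m)^2 \leq C_0 \log(d)/(\lambda m)$, as claimed.

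The main technical point is ensuring the removal-step bound $\|\ba\|_2^2/(\lambda m)^2$ rather than a larger expression involving an explicit $(1 - \ba^\top \widebar{\bN}^{-1}\ba)^{-1}$ factor, which could blow up; this is handled cleanly by re-expressing the ratio after removal as a ratio using the inverse of the matrix after removal, where the denominator is at least $1$. Beyond this, the argument is mechanical: Schur complement for part (a), and repeated Sherman--Morrison combined with the $O(d\log d)$ Euclidean-norm control afforded by the regularity set for part (b).
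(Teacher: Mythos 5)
Your proposal is correct and follows essentially the same route as the paper: part (a) via identifying $f(\bM)$ with the trace of the top-left block of $\bigl(\sum_i \ba_i\ba_i^{\top} + \lambda m \bI\bigr)^{-1}$ and using the operator-norm bound $(\lambda m)^{-1}$, and part (b) via Sherman--Morrison rank-one perturbations whose denominators are at least $1$, combined with the regularity set's bound $\|\ba_i\|_2^2 \lesssim d\log d$. The only (immaterial) organizational difference is that you chain four sequential rank-one updates, whereas the paper reduces to $\rho(\bM,\bM')\leq 1$ by the triangle inequality and anchors both $f(\bM)$ and $f(\bM')$ at the common leave-one-out matrix $\bSig_1$, so that the difference is a difference of two correction terms each bounded by $\|\ba\|_2^2/(\lambda m)^2$ --- your handling of the removal step by passing to the inverse of the post-removal matrix is exactly the right fix and recovers the same bound.
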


\paragraph{Step 2: Truncation.} Let $D = 1$ and note that by Lemma~\ref{aucillary-lemma1-trace-expectation-concentration}(a), this setting ensures $D \geq \sup_{\bM \in \mathcal{S}} |f(\bM)|$ as long as $\lambda m\geq d$. We then define the function $f^{\downarrow}:\mathbb{R}^{2d\times m} \rightarrow \mathbb{R}$ as 
\begin{align}\label{definition-f-down}
	f^{\downarrow}(\bM) = \inf_{\bM' \in \mathcal{S}} \Big\{ f(\bM') + \Delta \cdot \rho(\bM,\bM') + 2D \cdot \mathbbm{1}\{ \rho(\bM,\bM') > 1 \} \Big\},
\end{align}
where $\Delta$ is as defined in Lemma~\ref{aucillary-lemma1-trace-expectation-concentration}. Applying Lemma~\ref{aucillary-lemma1-trace-expectation-concentration} in conjunction with Lemma~\ref{lemma:bounded-difference-truncate}, we deduce that both 
\begin{align}\label{properties-truncate-f-trace-inverse}
f^{\downarrow}(\bM) = f(\bM), \qquad \text{ and } \qquad  |f(\bM) - f(\bM')| \leq 2 \Delta,
\end{align}
for all $\bM\in \mathcal{S}$, $\bM' \in \mathbb{R}^{2d \times m}$ which satisfy $\rho(\bM, \bM') \leq 1$.  We emphasize that $\bM'$ need not be contained in the regularity set $\mathcal{S}$ for this to hold.  The following lemma establishes that with high probability, $\bM$ is contained in the regularity set, and further, that to bound $| f(\bM) - \EE f(\bM) |$, it suffices to establish deviation bounds on $f^{\downarrow}$.  We provide the proof of Lemma~\ref{aucillary-lemma3-trace-expectation-concentration} in Section~\ref{sec:proof-aucillary-lemma3-trace-expectation-concentration}.

\begin{lemma}\label{aucillary-lemma3-trace-expectation-concentration}
	Consider the setting of Lemma~\ref{lemma:trace-expectation-concentration} and let $\bM = \{ \bx_i, \bz_i\}_{i=1}^{m}$.  Then, for all $t\geq 2d^{-24}$, the following hold.
	\begin{itemize}
		\item [(a.)] $\Pr\{ \bM \in \mathcal{S} \} \geq 1-d^{-25}$, and
		\item[(b.)] $\Pr\{ | f(\bM) - \EE f(\bM) | \geq t \} \leq \Pr\{ | f^{\downarrow}(\bM) - \EE f^{\downarrow}(\bM) | \geq t/2 \} +  d^{-25}$.
	\end{itemize}
\end{lemma}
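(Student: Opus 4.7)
}

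The plan is to prove part (a.) by standard Gaussian concentration plus a union bound, and part (b.) by a truncation-style argument that exploits the boundedness of $f$ to control the bias $|\EE f - \EE f^{\downarrow}|$.

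For part (a.), I would verify each defining constraint of $\mathcal{S}$ in~\eqref{definition-of-regularity-set} in turn. The quantity $\|\bx_{i}\|_{2}^{2}$ is a $\chi^{2}_{d}$ random variable, so by standard chi-squared tail bounds (e.g.,~\citet[Lemma~1]{laurent2000adaptive}) it is at most $Cd$ with probability at least $1-d^{-30}$; likewise for $\|\bz_{i}\|_{2}^{2}$. Since $\|\bcoefX_{\sharp}\|_{2} \leq K$ by assumption, $\bx_{i}^{\top}\bcoefX_{\sharp}$ is a centered Gaussian with variance at most $K^{2}$, and Hoeffding's inequality gives $|\bx_{i}^{\top}\bcoefX_{\sharp}| \leq C\sqrt{\log d}$ with probability at least $1-d^{-30}$; identical reasoning applies to $\bz_{i}^{\top}\bcoefZ_{\sharp}$. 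A union bound over $i \in [m]$ (using $m\leq d$) and over the four types of events yields $\Pr\{\bM \notin \mathcal{S}\} \leq d^{-25}$ upon choosing the constants in $\mathcal{S}$ sufficiently large.

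For part (b.), I would begin by bounding the bias $|\EE\{f(\bM)\} - \EE\{f^{\downarrow}(\bM)\}|$. Since $f = f^{\downarrow}$ on $\mathcal{S}$ by~\eqref{properties-truncate-f-trace-inverse}, we have
\[
\bigl|\EE f(\bM) - \EE f^{\downarrow}(\bM)\bigr| = \bigl|\EE\bigl\{(f(\bM) - f^{\downarrow}(\bM))\mathbbm{1}\{\bM \notin \mathcal{S}\}\bigr\}\bigr| \leq \bigl(\|f\|_{\infty} + \|f^{\downarrow}\|_{\infty}\bigr)\cdot \Pr\{\bM \notin \mathcal{S}\}.
\]
By Lemma~\ref{aucillary-lemma1-trace-expectation-concentration}(a) and the assumption $\lambda m \geq d$, we have $\|f\|_{\infty} \leq d/(\lambda m) \leq 1$; and by the definition of $f^{\downarrow}$~\eqref{definition-f-down} together with the choice $D=1$ in Step~2, $\|f^{\downarrow}\|_{\infty} \lesssim 1$ as well. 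Combining with part (a.) yields $|\EE f - \EE f^{\downarrow}| \leq C'd^{-25} \leq t/2$, where the last inequality uses the hypothesis $t\geq 2d^{-24}$ (adjusting constants in part (a.) if needed).

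Armed with this bias bound, I would conclude with the standard deduction
\begin{align*}
\Pr\bigl\{|f(\bM) - \EE f(\bM)| \geq t\bigr\} &\leq \Pr\bigl\{|f(\bM) - \EE f(\bM)| \geq t,\; \bM \in \mathcal{S}\bigr\} + \Pr\{\bM \notin \mathcal{S}\}\\
&\leq \Pr\bigl\{|f^{\downarrow}(\bM) - \EE f(\bM)| \geq t\bigr\} + d^{-25}\\
&\leq \Pr\bigl\{|f^{\downarrow}(\bM) - \EE f^{\downarrow}(\bM)| \geq t/2\bigr\} + d^{-25},
\end{align*}
where the second line uses $f = f^{\downarrow}$ on $\mathcal{S}$ and the third line uses the triangle inequality together with $|\EE f - \EE f^{\downarrow}| \leq t/2$. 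No step is particularly delicate; the only place requiring care is making sure the constant in the high-probability bound for part (a.) is chosen so that the resulting bias is strictly below $t/2$ for all $t\geq 2d^{-24}$, which is why the threshold on $t$ appears in the statement.
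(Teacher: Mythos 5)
Your proposal is correct and follows essentially the same route as the paper: standard Gaussian/chi-squared concentration plus a union bound for part (a.), and for part (b.) the bias bound $|\EE f - \EE f^{\downarrow}|$ via the agreement of $f$ and $f^{\downarrow}$ on $\mathcal{S}$ followed by the same two-step peeling of the tail probability. The only minor imprecision is the claim $\|f^{\downarrow}\|_{\infty}\lesssim 1$: from the definition~\eqref{definition-f-down} one only gets $f^{\downarrow}\leq f(\bM') + m\Delta + 2D \lesssim \log d$ since $m\Delta = C_0\log(d)/\lambda$ can be of order $\log d$ when $m\asymp d$, but this costs only a $\log d$ factor against $d^{-25}$ and so does not affect the conclusion.
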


\paragraph{Step 3: Putting together the pieces.} By definition of $f^{\downarrow}$, for any $\bM \in \mathbb{R}^{2d \times m}$ and $\bM' \in \mathcal{S} $, 
\[
f^{\downarrow}(\bM) \leq f(\bM') + \Delta \rho(\bM,\bM') + 2D \leq f(\bM') + m\Delta + 2\lesssim \log d,
\]
where the last step follows from part Lemma~\ref{aucillary-lemma1-trace-expectation-concentration}(a), which ensures that $f(\bP') \leq 1$ since, by assumption, $\lambda m \geq d$ and $(m+1)\Delta \lesssim m\log d/d \leq \log d$.  Then, for $\rho(\bM, \bM') \leq 1$, we invoke the inequality~\eqref{properties-truncate-f-trace-inverse}, to obtain
\[
\big| f^{\downarrow}(\bM) - f^{\downarrow}(\bM') \big| \leq 
\left\{ \begin{array}{c} 2\Delta, \text{ if } \bM \in \mathcal{S},\\ C\log(d), \text{otherwise}. \\ \end{array}\right.
\]
Next, we set $\gamma_{k} = d^{-1},c_{k} = 2\Delta, d_{k} = C\log(d)$ for each $k \in [m]$, and apply~\citet[Theorem 2]{warnke2016method} to obtain the bound
\[
\Pr\Big\{ \big| f^{\downarrow}(\bM) - \EE f^{\downarrow}(\bM) \big| \geq t/2 \Big\} \leq 2\exp\bigg\{ \frac{-cd^{2}t^{2}}{m\log^{2}(d)}\bigg\} + 2d^{-23}.
\]
Combining the inequality in the display with Lemma~\ref{aucillary-lemma3-trace-expectation-concentration} yields the first of the desired inequalities in Lemma~\ref{lemma:trace-expectation-concentration}. Proceeding in an identical fashion yields the second. \qed

\subsubsection{Proof of Lemma~\ref{aucillary-lemma1-trace-expectation-concentration}}\label{sec:proof-aucillary-lemma1-trace-expectation-concentration} We prove each part in turn.

\paragraph{Proof of part (a):}
By the block matrix inverse Eq.~\eqref{eq:block-matrix-formula}, we note that 
\[
	\big\| \big(\bB - \bC\bD^{-1}\bC^{\top} \big)^{-1} \big\|_{\mathrm{op}} \leq \Big\| \Big(\sum_{i=1}^{m} \ba_i \ba_i^{\top} + \lambda m \bI \Big)^{-1} \Big\|_{\mathrm{op}} \leq (\lambda m)^{-1}.
\]
The desired result immediately follows by the definition of $f$.

\paragraph{Proof of part (b):}
By the triangle inequality, it suffices to prove 
\begin{align}\label{claim-proof-aucillary-lemma1-trace}
 	| f(\bM) - f(\bM') | \leq \Delta/2 \qquad \text{ for all } \qquad  \bM,\bM' \in \mathcal{S} \text{ such that } \rho(\bM,\bM') \leq 1.
\end{align} 
Towards establishing the above inequality, since $\rho(\bM,\bM') \leq 1$, we assume, without loss of generality, that $(\bx_i,\bz_i) = (\bx'_i,\bz'_i)$ for $2\leq i \leq m$.  
Continuing, let $G'_{1} = \langle \bx'_{1}, \bcoefX_{\sharp} \rangle$, $\widetilde{G}'_{1}= \langle \bz'_{1} \bcoefZ_{\sharp} \rangle$ and $(\ba'_{1})^{\top} = [\widetilde{G}'_{1} (\bx'_{1})^{\top} \bO_{\bu} \;\vert \; G'_{1} (\bz'_{1})^{\top} \bO_{\bv} ] $. Similarly, let $(\ba_{i})^{\top} = [ \widetilde{G}_{i} (\bx_{i})^{\top} \bO_{\bu} \;\vert \; G_{i} (\bz_{1})^{\top} \bO_{\bv} ] $ for $i\in[m]$, and let $\bSig = \sum_{i=1}^{m}\ba_{i} \ba_{i}^{\top} + \lambda m \bI$ and $\bSig_{1} = \bSig - \ba_{1}\ba_{1}^{\top}$.  We then consider matrices $\bB_1, \bC_1$, and $\bD_1$ which are perturbations of $\bB, \bC$, and $\bD$, defined as 
\begin{align}\label{notation-B1-C1-D1-Sig1}
	\bB_{1} = \bB - \widetilde{G}_{1}^{2} \bO_{\bu}^{\top} \bx_1 &(\bO_{\bu}^{\top} \bx_1)^{\top}, \qquad \bC_{1} = \bC - G_{1}\widetilde{G}_{1} \cdot \bO_{\bu}^{\top}\bx_1 (\bO_{\bv}^{\top}\bz_{1})^{\top}, \qquad \text{ and }\nonumber \\
	& \bD_{1} = \bD - G_{1}^{2} \bO_{\bv}^{\top} \bz_1 (\bO_{\bv}^{\top} \bz_1)^{\top}.
\end{align}
We additionally recall that, by the relation~\eqref{eq:block-matrix-proof-sketch}, 
\begin{align*}
\bSig^{-1} = \begin{bmatrix} (\bB-\bC\bD^{-1}\bC^{\top})^{-1} & -(\bB-\bC\bD^{-1}\bC^{\top})^{-1} \bC\bD^{-1}\\  -(\bD-\bC^{\top}\bB^{-1}\bC)^{-1} \bC^{\top}\bB^{-1} & (\bD-\bC^{\top}\bB^{-1}\bC)^{-1} \\ \end{bmatrix}.
\end{align*}
Moreover, by the Sherman–Morrison formula, $\bSig^{-1} = \bSig_1^{-1} - \frac{\bSig_1^{-1} \ba_{1} \ba_{1}^{\top} \bSig_1^{-1}}{1+\ba_1^{\top} \bSig_1^{-1} \ba_{1}}$.  Combining this with the previous display and only using the $d\times d$ submatrix in the upper left corner yields
\[
	(\bB-\bC\bD^{-1}\bC^{\top})^{-1} = \underbrace{(\bB_1-\bC_1\bD_1^{-1}\bC_1^{\top})^{-1}}_{\bP_{11}} - \frac{ \big[ \bSig_1^{-1} \big]_{[d],[2d]} \ba_1 \ba_1^{\top} \big[ \bSig_1^{-1} \big]_{[d],[2d]}^{\top} }{1+\ba_1^{\top} \bSig_1^{-1} \ba_{1}},
\]
where $\big[ \bSig_1^{-1} \big]_{[d],[2d]} \in \mathbb{R}^{d \times 2d}$ is the submatrix of $\bSig_{1}^{-1}$ which consists of the first $d$ rows. Taking the trace of both sides of the equation in the display above yields
\[
	f(\bM) = \trace( \bP_{11} ) - \frac{\big\|\big[ \bSig_1^{-1} \big]_{[d],[2d]} \ba_1 \big \|_{2}^{2}}{1+\ba_1^{\top} \bSig_1^{-1} \ba_{1}}, \qquad \text{ and } \qquad  f(\bM') = \trace( \bP_{11} ) - \frac{\big\|\big[ \bSig_1^{-1} \big]_{[d],[2d]} \ba'_1 \big \|_{2}^{2}}{1+(\ba'_1)^{\top} \bSig_1^{-1} \ba'_{1}}.
\]
Thus, since $\bSig_{1} \succeq \boldsymbol{0}$, we find that
\[
	\big| f(\bM) - f(\bM') \big| \leq \Big\|\big[ \bSig_1^{-1} \big]_{[d],[2d]} \ba_1 \Big \|_{2}^{2} + \Big\|\big[ \bSig_1^{-1} \big]_{[d],[2d]} \ba'_1 \Big \|_{2}^{2}.
\]
Moreover, since $\|\big[ \bSig_1^{-1} \big]_{[d],[2d]}\|_{2} \leq \|\bSig_1^{-1}\|_{2} \leq (\lambda m)^{-1}$, we obtain the inequality
\begin{align*}
	\Big\|\big[ \bSig_1^{-1} \big]_{[d],[2d]} \ba_1 \Big \|_{2}^{2} &\leq  \frac{\widetilde{G}_{1}^{2} \|\bx_{1}\|_{2}^{2} + G_{1}^{2}\|\bz_{1}\|_{2}^{2} }{(\lambda m)^{2}} \leq \frac{C'\log(d)}{\lambda m},
\end{align*}
where the last step follows since $\bM \in \mathcal{S}$ and $\lambda m \geq d$, so that
$
	|G_1|,|\widetilde{G}_1| \lesssim \sqrt{\log d},\; \|\bx_1\|_2^2$, and $\|\bz_1\|_2^2 \lesssim d$.
Similarly, since $\bM' \in \mathcal{S}$, we obtain the bound $\big\|\big[ \bSig_1^{-1} \big]_{[d],[2d]} \ba'_1 \big \|_{2}^{2} \lesssim \log(d)/(\lambda m)$. Putting the pieces together yields $|f(\bM) - f(\bM')| \lesssim \log(d)/(\lambda m)$, which concludes the proof. \qed

\subsubsection{Proof of Lemma~\ref{aucillary-lemma3-trace-expectation-concentration}}\label{sec:proof-aucillary-lemma3-trace-expectation-concentration} 
We prove each part in turn, beginning with part (a).

\paragraph{Proof of Lemma~\ref{aucillary-lemma3-trace-expectation-concentration}(a):} We invoke~\citet[Theorem 3.1.1]{vershynin2018high} to obtain the following four concentration inequalities for each $i \in [m]$,
\begin{align}\label{probability-bound-1-lemma3-trace-expectation-concentration}
	&\Pr\big\{ \|\bx_i\|_2^{2} \geq Cd \big\} \leq 2d^{-30}, \qquad \Pr\big\{ \|\bz_i\|_2^{2} \geq Cd \big\} \leq 2d^{-30}, \qquad \Pr\big\{ |G_i| \geq C\sqrt{\log d} \big\} \leq 2d^{-30}\nonumber\\
	& \qquad \qquad  \qquad \qquad \text{ and } \qquad \Pr\big\{ |\widetilde{G}_i| \geq C\sqrt{\log d} \big\} \leq 2d^{-30}.
\end{align}
The conclusion follows upon applying the union bound.

\paragraph{Proof of Lemma~\ref{aucillary-lemma3-trace-expectation-concentration}(b):} The proof of this part is similar to Section~\ref{proof-auxiliary-lemma2-component-expectation-concentration}.  We thus omit the steps which are identical, and restrict ourselves to the differences. 

It suffices to bound $|\EE\{f\} - \EE\{f^{\downarrow}\}|$. To this end, by definition of the function $f^{\downarrow}$, we find that for any $\bM \in \mathbb{R}^{2d \times m}$ and $\bM' \in \mathcal{S} $, the following inequality holds
\[
	f^{\downarrow}(\bM) \leq f(\bM') + \Delta \rho(\bM,\bM') + 2D \leq f(\bM') + m\Delta + 2\lesssim \log d.
\]
Above, the final inequality follows since by Lemma~\ref{aucillary-lemma1-trace-expectation-concentration}, $f(\bM') \leq 1$,  as long as $\lambda m \geq d$ and $m\Delta \lesssim m\log d/d \leq \log d$.
Consequently, we bound its expectation as
\begin{align*}
	\EE\{f^{\downarrow}\} &= \EE\{f^{\downarrow}(\bM) \cdot \mathbbm{1}\{ \bM \in \mathcal{S} \}\} + \EE\{f^{\downarrow} \cdot \mathbbm{1}\{ \bM \notin \mathcal{S} \}\} 
	\\&\overset{\1}{\leq} \EE\{f(\bM) \cdot \mathbbm{1}\{ \bM \in \mathcal{S} \} \} + C\log(d) \cdot \EE\{ \mathbbm{1}\{ \bM \notin \mathcal{S} \} \} \leq \EE\{f(\bM)\} + d^{-24},
\end{align*}
where step $\1$ follows from the inequality~\eqref{properties-truncate-f-trace-inverse} as well as since $f$ and $f^{\downarrow}$ agree on $\mathcal{S}$. Similarly,
\begin{align*}
	\EE\{f\} &= \EE\{f(\bM) \cdot \mathbbm{1}\{ \bM \in \mathcal{S} \}\} + \EE\{f(\bM) \cdot \mathbbm{1}\{ \bM \notin \mathcal{S} \}\} 
	\\&\overset{\1}{\leq} \EE\{f^{\downarrow}(\bM)\} + \EE\{ \mathbbm{1}\{ \bM \notin \mathcal{S} \} \}
	\leq \EE\{f^{\downarrow}\} + d^{-25},
\end{align*}
where step $\1$ follows since $f$ and $f^{\downarrow}$ agree on $\mathcal{S}$, and as $f \leq 1$ by Lemma~\ref{aucillary-lemma1-trace-expectation-concentration}(a) for $\lambda m \geq d$. Putting the pieces together yields $\big| \EE\{f\} - \EE\{f^{\downarrow}\} \big| \leq d^{-24}$.
The desired result then follows by proceeding similarly as in Section~\ref{proof-auxiliary-lemma2-component-expectation-concentration}. \qed

\subsection{Proof of Lemma~\ref{lemma:expectation-of-trace-fixed-dolution-distance}} \label{sec:proof-lemma-expectation-trace-fixed-solution-distance}
Consider $\bB_1,\bC_1,\bD_1$ as defined in~\eqref{notation-B1-C1-D1-Sig1} and recall the shorthand $G_i = \bx_i^{\top}\bcoefX_{\sharp}$ and $\widetilde{G}_i = \bz_i^{\top} \bcoefZ_{\sharp}$.  Further define the leave one out quantities $\bar{\bx}_{i} = \bO_{\bu}^{\top} \bx_{i},\; \bar{\bz}_{i} = \bO_{\bv}^{\top} \bz_{i}$ and $\ba_{i}^{\top} =[ \widetilde{G}_{i}\bar{\bx}_{i}^{\top}  G_{i} \bar{\bz}_{i}^{\top}]$.
A straightforward calculation yields
\begin{align}\label{fixed-point-equation-matrix}
	\bI_{2d-2} = \EE\big\{ \bSig^{-1} \bSig\big\} &= \EE\Big\{ \bSig^{-1} \Big(\sum_{i=1}^{m} \ba_{i} \ba_{i}^{\top} + \lambda m \bI_{2d-2}\Big) \Big\} \nonumber = m \cdot \EE\big\{ \bSig^{-1}  \ba_{1} \ba_{1}^{\top} \big\} + \lambda m \cdot \EE\big\{ \bSig^{-1} \big \} \nonumber \\
	& = m \cdot \EE \bigg\{ \frac{\bSig_{1}^{-1} \ba_1 \ba_{1}^{\top} }{1 + \ba_{1}^{\top} \bSig_{1}^{-1} \ba_{1} }\bigg\} + \lambda m \cdot \EE\big\{ \bSig^{-1} \big \},
\end{align}
where the last step follows from the Sherman–Morrison formula.  Next, define the three matrices $\bP_{11} = (\bB_1-\bC_1\bD_1^{-1}\bC_1^{\top})^{-1}$, $\bP_{12} = -\bP_{11} \bC_1\bD_1^{-1}$, and $\bP_{22} = (\bD_1-\bC_1^{\top}\bB_1^{-1}\bC_1)^{-1}$ and apply the block matrix inversion formula to obtain
\begin{align}\label{block-matrix-inverse-1}
\hspace{-1cm}
	\bSig_{1}^{-1} =\begin{bmatrix} \bB_{1} & \bC_{1} \\ \bC_{1}^{\top} & \bD_{1} \end{bmatrix}^{-1} &= \left[\begin{array}{cc} \bP_{11} & \bP_{12}\\  \bP_{12}^{\top} & \bP_{22} \\ \end{array}\right]. 
\end{align}
Combining Eqs.~\eqref{fixed-point-equation-matrix} and~\eqref{block-matrix-inverse-1} yields the pair of relations
\begin{align*}
	\bI_{d-1} &= m \cdot \EE\bigg\{ \frac{\widetilde{G}_{1}^{2} \bP_{11} \bar{\bx}_{1}\bar{\bx}_{1}^{\top} + G_{1}\widetilde{G}_{1}\bP_{12}\bar{\bz}_{1}\bar{\bx}_{1}^{\top} }{1 + \ba_{1}^{\top} \bSig_{1}^{-1} \ba_{1}} \bigg\} + \lambda m \cdot \EE\big\{ (\bB-\bC\bD^{-1}\bC^{\top})^{-1} \big\} \\
	\bI_{d-1} &= m \cdot \EE\bigg\{ \frac{G_{1}^{2} \bP_{22} \bar{\bz}_{1} \bar{\bz}_{1}^{\top} + G_{1}\widetilde{G}_{1}\bP_{12}^{\top}\bar{\bx}_{1}\bar{\bz}_{1}^{\top} }{1 + \ba_{1}^{\top} \bSig_{1}^{-1} \ba_{1}} \bigg\} + \lambda m \cdot \EE\big\{ (\bD-\bC^{\top}\bB^{-1}\bC)^{-1} \big\}.
\end{align*}
Taking the trace of both sides of each relation above yields
\begin{subequations}\label{eq:fixed-point-empirical}
\begin{align}
 \frac{d-1}{m} &= \underbrace{ \EE\bigg\{ \frac{\widetilde{G}_{1}^{2} \bar{\bx}_{1}^{\top} \bP_{11} \bar{\bx}_1 + G_{1}\widetilde{G}_{1}\bar{\bx}_{1}^{\top}\bP_{12} \bar{\bz}_{1} }{1 + \ba_{1}^{\top} \bSig_{1}^{-1} \ba_{1} } \bigg\} }_{\Omega_1} + \lambda \cdot \EE\big\{ \trace \big( (\bB-\bC\bD^{-1}\bC^{\top})^{-1} \big) \big\},\\
\frac{d-1}{m} &= \underbrace{ \EE\bigg\{ \frac{G_{1}^{2} \bar{\bz}_{1}^{\top} \bP_{22} \bar{\bz}_1 + G_{1}\widetilde{G}_{1} \bar{\bz}_{1}^{\top}\bP_{12}^{\top}\bar{\bx}_{1} }{1 + \ba_{1}^{\top} \bSig_{1}^{-1} \ba_{1}} \bigg\} }_{\Omega_2} + \lambda \cdot \EE\big\{ \big( \trace(\bD-\bC^{\top}\bB^{-1}\bC)^{-1} \big) \big\}.
\end{align}
\end{subequations}
We then define $\tau$ and $\widetilde{\tau}$ as $\tau = \EE\big\{ \trace \big( (\bB-\bC\bD^{-1}\bC^{\top})^{-1} \big) \big\}$ and $\widetilde{\tau} = \EE\big\{ \big( \trace(\bD-\bC^{\top}\bB^{-1}\bC)^{-1} \big) \big\}$, respectively, and use these to define the scalars 
\begin{align*}
	\overline{\Omega}_1 = \EE\Big\{ \frac{\widetilde{G}_{1}^2 \tau}{1 + \widetilde{G}_{1}^2 \tau + G_1^{2} \widetilde{\tau}} \Big\} \qquad \text{ and } \qquad 
	\overline{\Omega}_2 = \EE\Big\{ \frac{G_1^2 \widetilde{\tau}}{1 + \widetilde{G}_{1}^2 \tau + G_1^{2} \widetilde{\tau}} \Big\}.
\end{align*}
We next make the following claim---deferring its proof to the end---which shows that $\Omega_1 \approx \widebar{\Omega}_1$ and $\Omega_2 \approx \widebar{\Omega}_2$
\begin{align}\label{claim-proof-fixed-point-equation}
	\big| \Omega_1 - \overline{\Omega}_1\big| \; \vee \;
	\big| \Omega_2 - \overline{\Omega}_2\big| \lesssim \frac{\log^{1.5}(d)}{\sqrt{d}}.
\end{align}
We then define the scalars $\omega_1$ and $\omega_2$ as
\[
\omega_1 = \EE\bigg\{ \frac{\widetilde{G}_{1}^2r_1^{-1}}{1+G_1^2r_2^{-1} + \widetilde{G}_{1}^2r_1^{-1}}\bigg\} \qquad \text{ and } \qquad \omega_2 = \EE\bigg\{ \frac{G_1^2r_2^{-1}}{1+G_1^2r_2^{-1} + \widetilde{G}_{1}^2r_1^{-1}}\bigg\},
\]
and compare the fixed point equation~\eqref{eq:fixed-point} with equation~\eqref{eq:fixed-point-empirical}, to obtain the pair
\begin{align*}
	\frac{1}{m} + \Omega_1 + \lambda \cdot \tau = \omega_1 + \lambda \cdot r_1^{-1} \qquad \text{ and } \qquad \frac{1}{m} + \Omega_2 + \lambda \cdot \widetilde{\tau} = \omega_2 + \lambda \cdot r_2^{-1}.
\end{align*}
Combining the pair of equations in the preceding display with the inequality~\eqref{claim-proof-fixed-point-equation} yields
\begin{align}\label{ineq:fixed-point-equation-difference}
	\big| \omega_1 + \lambda \cdot r_1^{-1} - \big( \overline{\Omega}_1 + \lambda \cdot \tau\big) \big| \; \vee \;
	\big| \omega_2 + \lambda \cdot r_2^{-1} - \big( \overline{\Omega}_2 + \lambda \cdot \widetilde{\tau}\big) \big| \lesssim \frac{\log^{1.5}(d)}{\sqrt{d}}+\frac{1}{m}.
\end{align}
Then, by the triangle inequality, 
\begin{align*}
\big| \omega_1 -  \overline{\Omega}_1 \big| \; \vee  \; \big| \omega_2 -  \overline{\Omega}_2 \big|  \leq  C \cdot |\tau -r_1^{-1}| + C \cdot |\widetilde{\tau} - r_2^{-1}|
\end{align*}
Applying the triangle inequality in conjunction with the inequality in the display above yields both
\begin{align*}
	&\big| \omega_1 + \lambda \cdot r_1^{-1} - \big( \overline{\Omega}_1 + \lambda \cdot \tau\big) \big| \geq (\lambda - C) \cdot |\tau - r_1^{-1}| - C \cdot |\widetilde{\tau} - r_2^{-1}|, \qquad \text{ and }\\
	&\big| \omega_2 + \lambda \cdot r_2^{-1} - \big( \overline{\Omega}_2 + \lambda \cdot \widetilde{\tau}\big) \big| \geq (\lambda - C) \cdot |\widetilde{\tau} - r_2^{-1}| - C \cdot |\tau - r_1^{-1}|.
\end{align*}
We next combine this pair of inequalities with the inequality~\eqref{ineq:fixed-point-equation-difference} to obtain
\begin{align*}
	(\lambda - C) \cdot |\tau - r_1^{-1}| -C \cdot |\widetilde{\tau} - r_2^{-1}|  \; \vee \;
	(\lambda - C) \cdot |\widetilde{\tau} - r_2^{-1}| - C \cdot |\tau - r_1^{-1}| \lesssim \frac{\log^{1.5}(d)}{\sqrt{d}} + \frac{1}{m}.
\end{align*}
Consequently, as long as $\lambda \geq 10Cd/m \geq 10C$, we conclude that
\[
	|\tau - r_1^{-1}|,\quad  |\widetilde{\tau} - r_2^{-1}| \lesssim \log^{1.5}(d)(\lambda\sqrt{d}) + (\lambda m)^{-1} \lesssim \frac{\log^{1.5}(d)}{\sqrt{d}},
\]
as desired.  It remains to prove the inequality~\eqref{claim-proof-fixed-point-equation}.
\paragraph{Proof of the inequality~\eqref{claim-proof-fixed-point-equation}:}
We first expand $\bSig_1^{-1}$ to obtain the expression
\[
\ba_{1}^{\top} \bSig_{1}^{-1} \ba_{1} = \widetilde{G}_{1}^{2} \bar{\bx}_{1}^{\top} \bP_{11} \bar{\bx}_1 + G_{1}^{2} \bar{\bz}_{1}^{\top} \bP_{22} \bar{\bz}_1 + 2G_{1}\widetilde{G}_{1} \bar{\bx}_{1}^{\top}\bP_{12} \bar{\bz}_{1}.
\]
 Then, applying the triangle yields the bound
\begin{align*}
	\bigg| \EE\bigg\{ \frac{\widetilde{G}_{1}^{2} \bar{\bx}_{1}^{\top} \bP_{11} \bar{\bx}_1 + G_{1}\widetilde{G}_{1}\bar{\bx}_{1}^{\top}\bP_{12} \bar{\bz}_{1} }{1 + \ba_{1}^{\top} \bSig_{1}^{-1} \ba_{1} } \bigg\} &- \EE\bigg\{ \frac{\widetilde{G}_{1}^{2} \bar{\bx_{1}}^{\top} \bP_{11} \bar{\bx}_1 }{1 + \widetilde{G}_{1}^{2} \bar{\bx}_{1}^{\top} \bP_{11} \bar{\bx}_1 + G_{1}^{2} \bar{\bz}_{1}^{\top} \bP_{22} \bar{\bz}_1 } \bigg\} \bigg| \leq 3\EE\big\{ \big|G_{1}\widetilde{G}_{1}\bar{\bx}_{1}^{\top}\bP_{12}\bar{\bz}_{1} \big|\big\},
\end{align*}
the right hand side of which we further bound by applying the Cauchy-Schwarz inequality to obtain
\[
	\EE\big\{ \big|G_{1}\widetilde{G}_{1}\bar{\bx}_{1}^{\top}\bP_{12}\bar{\bz}_{1} \big|\big\} \leq \sqrt{ \EE\{G_{1}^{2}\widetilde{G}_{1}^{2}\}} \cdot \sqrt{\EE\{(\bar{\bx}_{1}^{\top}\bP_{12}\bar{\bz}_{1})^{2}\}}.
\]
Explicit evaluation then yields the upper bound $\EE\{G_{1}^{2}\widetilde{G}_{1}^{2}\} \lesssim 1$.  Moreover,
\[
 \EE\{(\bar{\bx}_{1}^{\top}\bP_{12}\bar{\bz}_{1})^{2}\}= \EE\{\|\bP_{12}\bar{\bz}_{1}\|_{2}^{2}\} \overset{\1}{\leq} \EE\{\|\bP_{12}\|_{2}^{2}\} \cdot \EE\{\|\bar{\bz}_{1}\|_{2}^{2}\} \overset{\2}{\leq} d/(\lambda m)^{2}
\]
where step $\1$ follows by exploiting the independence between $\bP_{12}$ and $\bar{\bz}_{1}$ and step $\2$ follows from the pair of inequalities
$\|\bP_{12}\|_{2} \leq \|\bSig_{1}^{-1}\|_{2} \leq 1/(\lambda m)$ and $\EE\{\|\bar{\bz}_{1}\|_{2}^{2}\} \leq d$. Putting the pieces together and invoking the assumption $\lambda m \geq d$ yields
\begin{align}\label{ineq1-Term1-expectation-trace-inverse}
	\bigg| \EE\bigg\{ \frac{\widetilde{G}_{1}^{2} \bar{\bx}_{1}^{\top} \bP_{11} \bar{\bx}_1 + G_{1}\widetilde{G}_{1}\bar{\bx}_{1}^{\top}\bP_{12} \bar{\bz}_{1} }{1 + \ba_{1}^{\top} \bSig_{1}^{-1} \ba_{1} } \bigg\} - \EE\bigg\{ \frac{\widetilde{G}_{1}^{2} \bar{\bx_{1}}^{\top} \bP_{11} \bar{\bx}_1 }{1 + \widetilde{G}_{1}^{2} \bar{\bx}_{1}^{\top} \bP_{11} \bar{\bx}_1 + G_{1}^{2} \bar{\bz}_{1}^{\top} \bP_{22} \bar{\bz}_1 } \bigg\} \bigg| \lesssim \frac{1}{\sqrt{d}}.
\end{align}
We then claim the following pair of inequalities, postponing their proof to the end of the section
\begin{align}\label{ineq2-Term2-expectation-trace-inverse}
	\EE\{| \bar{\bx}_1^{\top}\bP_{11} \bar{\bx}_1 - \tau|\} \lesssim \frac{\log^{\frac{3}{2}}(d)}{\sqrt{d}} \qquad \text{ and } \qquad \EE\{| \bar{\bz}_1^{\top}\bP_{22} \bar{\bz}_1 - \widetilde{\tau}|\} \lesssim \frac{\log^{\frac{3}{2}}(d)}{\sqrt{d}}.
\end{align}
Combining the inequalities~\eqref{ineq1-Term1-expectation-trace-inverse},~\eqref{ineq2-Term2-expectation-trace-inverse} and the triangle inequality finally yields the bound
\begin{align*}
	\bigg| \EE\bigg\{ \frac{\widetilde{G}_{1}^{2} \bar{\bx}_{1}^{\top} \bP_{11} \bar{\bx}_1 + G_{1}\widetilde{G}_{1}\bar{\bx}_{1}^{\top}\bP_{12} \bar{\bz}_{1} }{1 + \ba_{1}^{\top} \bSig_{1}^{-1} \ba_{1} } \bigg\} - \EE\bigg\{ \frac{\widetilde{G}_{1}^{2} \tau }{1 + \widetilde{G}_{1}^{2} \tau + G_{1}^{2} \widetilde{\tau} } \bigg\} \bigg|  \lesssim \frac{\log^{\frac{3}{2}}(d)}{\sqrt{d}},
\end{align*}
which proves one of the pair of inequalities in~\eqref{claim-proof-fixed-point-equation}. The remaining inequality admits an identical proof.  We turn now to establishing the inequality~\eqref{ineq2-Term2-expectation-trace-inverse}\\ 

\medskip
\noindent \underline{Proof of inequality~\eqref{ineq2-Term2-expectation-trace-inverse}}:
We use the shorthand $\tau_1 = \EE\big\{ \trace \big( (\bB_1-\bC_1\bD_{1}^{-1}\bC_1^{\top})^{-1} \big) \big\}$ and $\widetilde{\tau}_1 = \EE\big\{ \trace \big( (\bD_1-\bC_{1}^{\top}\bB_{1}^{-1}\bC_{1})^{-1} \big) \big\}$.   We then apply the triangle inequality to obtain the decomposition
\[
	| \bx_1^{\top}\bP_{11} \bx_1 - \tau| \leq \underbrace{ \big| \bx_1^{\top}\bP_{11} \bx_1 - \trace(\bP_{11}) \big| }_{T_1} + \underbrace{| \trace(\bP_{11}) - \tau_1|}_{T_2} + \underbrace{ |\tau_1 - \tau| }_{T_3},
\]
and proceed to bound the expectation of each of $T_1, T_2$, and $T_3$ in turn, starting with $\EE\{T_1\}$. 
Towards bounding $T_1$, we note that $\|\bP_{11}\|_{F}^{2} \leq \frac{d}{(\lambda m)^{2}} \leq \frac{1}{\lambda m}$ and $\|\bP_{11}\|_{\mathrm{op}} \leq \frac{1}{\lambda m}$.  Then, we apply the Hanson--Wright inequality to obtain the inequality
$\Pr\{ T_{1} \geq t\} \leq 2\exp\{-c\lambda m \min(t^{2},t)\}$ for all $t\geq 0$. Consequently, we integrate the tail to obtain the expectation bound $\EE \{ T_1 \} \lesssim (\lambda m)^{-1/2} \leq d^{-1/2}$. 

Turning to bounding $\EE\{T_2\}$, we take $C$ to denote a large enough constant and set $\delta = C\log^{3/2}(d)d^{-1/2}$.  We then consider the decomposition $\EE\{T_2\} = \EE\{T_2 \cdot \mathbbm{1}\{T_{2} \leq \delta\}\} + \EE\{T_2 \cdot \mathbbm{1}\{T_{2} > \delta\}\}$, noting 
 that $\EE\{T_2 \cdot \mathbbm{1}\{T_{2} \leq \delta \}\} \leq \delta$. On the other hand, an application of the Cauchy--Schwarz inequality yields $
	\EE\big\{T_2 \cdot \mathbbm{1}\{T_{2} > \delta\}\big\} \leq \sqrt{ \EE\{T_2^{2}\}} \cdot \sqrt{\EE\{\mathbbm{1}\{T_{2} > \delta\}\}}$.  Then, using the inequality $\| \bP_{11} \|_{\mathrm{op}} \leq (\lambda m)^{-1}$, we obtain the bound
	\[
	\EE\{T_2^{2}\} \leq \EE\{\trace(\bP_{11})^{2}\}\leq  \Big( \frac{d}{\lambda m} \Big)^{2} \leq 1,
	\]
	where the final inequality follows by assumption.  Moreover, by Lemma~\ref{lemma:trace-expectation-concentration},
Note that 
\begin{align*}
	 \EE\{\mathbbm{1}\{T_{2} > \delta\}\} \leq \Pr\{|\trace{\bP_{11}} - \EE\{ \trace{\bP_{11}} \}| \geq \delta \} \leq d^{-18},
\end{align*}
Assembling the pieces yields the ultimate bound $\EE\{T_{2}\} \lesssim \log^{3/2}(d)/\sqrt{d}$. 

Towards bounding $T_3$, we apply the Sherman–Morrison formula to obtain
\[
	\bSig^{-1} = \big(\ba_{1}\ba_{1}^{\top} + \bSig_{1}^{-1} \big)^{-1} = \bSig_{1}^{-1} - \frac{\bSig_{1}^{-1} \ba_{1}\ba_{1}^{\top} \bSig_{1}^{-1} }{1 + \ba_{1}^{\top} \bSig_{1}^{-1} \ba_{1} }.
\]
Focusing on the $d\times d$ submatrices in the upper left of both sides of the preceding equation yields the relation
\[
	\big( \bB - \bC \bD^{-1} \bC^{\top} \big)^{-1} = \big( \bB_1 - \bC_1 \bD_{1}^{-1} \bC_1^{\top} \big)^{-1} - \frac{ \big(\widetilde{G}_{1} \bP_{11} \bar{\bx}_1 + G_{1}\bP_{12}\bar{\bz}_{1} \big)\big( \widetilde{G}_{1} \bP_{11} \bar{\bx}_1 + G_1\bP_{12}\bar{\bz}_{1} \big)^{\top} }{1+\ba_{1}^{\top} \bSig_{1}^{-1} \ba_{1}}
\]
We then take the trace of both sides and re-arrange terms to obtain the bound
\[
	\Big| \trace\big( \big( \bB - \bC \bD^{-1} \bC^{\top} \big)^{-1} \big) - \trace\big( \big( \bB_1 - \bC_1 \bD_{1}^{-1} \bC_1^{\top} \big)^{-1} \big) \Big| \leq \big\| \widetilde{G}_{1} \bP_{11} \bar{\bx}_1 + G_1\bP_{12}\bar{\bz}_{1} \big\|_{2}^{2},
\]
where we have additionally used the fact that $\bSig_{1} \succeq 0$. 
Consequently, we deduce the bound
\begin{align*}
	\EE\{ T_{3} \} &\leq \EE\big\{ \big\| \widetilde{G}_{1} \bP_{11} \bar{\bx}_1 + G_1\bP_{12}\bar{\bz}_{1} \big\|_{2}^{2} \big\} \lesssim
	\frac{d}{(\lambda m)^{2}} \leq d^{-1/2},
\end{align*}
where in the last step we have used the bounds $\|\bP_{11}\|_{2},\|\bP_{12}\|_{2} \leq (\lambda m)^{-1}$. Collecting the bounds on $\EE T_{1}, \EE T_{2}$, and $\EE T_{3}$ yields
\[ 
	\EE\big\{ | \bar{\bx}_{1}^{\top} \bP_{11}  \bar{\bx} - \tau | \big\} \lesssim \frac{\log^{\frac{3}{2}}(d)}{\sqrt{d}}.
\]
The proof of the second inequality is identical and we thus omit it. \qed
\section{Auxiliary lemmas}\label{aucillary-lemmas}
\begin{lemma}\label{lemma:bounded-difference-truncate}
Let $f:\mathbb{R}^{d \times m} \rightarrow \mathbb{R}$. Define the Hamming metric $\rho: \mathbb{R}^{d \times m} \times \mathbb{R}^{d \times m} \rightarrow [m]$ such that if $\bM,\bM'$ differ in exactly $k$ columns then $\rho(\bM,\bM') = k$. Let $\mathcal{S} \subseteq \mathbb{R}^{d \times m}$ and suppose there exists some $\Delta \geq 0$ such that for all $\bM, \bM' \in \mathcal{S}$ which satisfy $\rho(\bM, \bM') \leq 2$
\begin{align}\label{assumption-bd-truncate}
	| f(\bM) - f(\bM') | \leq \Delta.
\end{align} 
Let $D \in \mathbb{R}$ such that $D \geq \sup_{\bM \in \mathcal{S} } |f(\bM)|$ and define $f^{\downarrow}:\mathbb{R}^{d \times m} \rightarrow \mathbb{R}$ as
\begin{align}\label{definition-f-downarrow}
	f^{\downarrow}(\bM) = \inf_{\bM' \in \mathcal{S}} \Big\{ f(\bM') + \Delta \cdot \rho(\bM,\bM') + 2D \cdot \mathbbm{1}\{ \rho(\bM,\bM') > 1 \} \Big\}.
\end{align}
Then, for any $\bM\in \mathcal{S}$ and $\bM'\in \mathbb{R}^{d \times m}$ such that $\rho(\bM,\bM') \leq 1$,  both
\begin{align*}
	f^{\downarrow}(\bM) = f(\bM) \qquad \text{ and } \qquad |f^{\downarrow}(\bM) - f^{\downarrow}(\bM')| \leq \Delta.
\end{align*}
\end{lemma}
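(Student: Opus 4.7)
The plan is to prove the two assertions separately, relying only on the defining infimum of $f^{\downarrow}$, the triangle inequality for the Hamming metric $\rho$, and the two hypotheses on $(f,\mathcal{S},D,\Delta)$.

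For the first assertion, I would establish the two inequalities $f^{\downarrow}(\bM)\leq f(\bM)$ and $f^{\downarrow}(\bM)\geq f(\bM)$ separately. The upper bound is immediate by feasibility: since $\bM\in\mathcal{S}$, plugging $\bM'=\bM$ into the infimum~\eqref{definition-f-downarrow} gives the candidate value $f(\bM)+\Delta\cdot 0+2D\cdot 0 = f(\bM)$. For the matching lower bound, the plan is to show that every feasible $\bM'\in\mathcal{S}$ in the infimum produces a value at least $f(\bM)$ via a three-case split on $\rho(\bM,\bM')\in\{0,1,\{>1\}\}$. When the distance is zero the candidate equals $f(\bM)$; when the distance equals one, the bounded-difference hypothesis~\eqref{assumption-bd-truncate} gives $f(\bM')\geq f(\bM)-\Delta$ and the added $\Delta\rho$ absorbs the slack; when the distance exceeds one, the $2D$ indicator dominates since $|f(\bM')|\leq D$ and $|f(\bM)|\leq D$.

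For the second assertion I would first use part one to replace $f^{\downarrow}(\bM)$ by $f(\bM)$, and then bound $f^{\downarrow}(\bM')$ both above and below. The upper bound $f^{\downarrow}(\bM')\leq f(\bM)+\Delta$ comes from inserting the candidate $\bM$ (which lies in $\mathcal{S}$) into the infimum defining $f^{\downarrow}(\bM')$ and using $\rho(\bM',\bM)\leq 1$. For the lower bound $f^{\downarrow}(\bM')\geq f(\bM)-\Delta$, the plan is again a case split over the feasible $\bM''\in\mathcal{S}$, but now using the triangle inequality $\rho(\bM,\bM'')\leq \rho(\bM,\bM')+\rho(\bM',\bM'')\leq 1+\rho(\bM',\bM'')$. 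If $\rho(\bM',\bM'')\in\{0,1\}$ then $\rho(\bM,\bM'')\leq 2$ and both points lie in $\mathcal{S}$, so hypothesis~\eqref{assumption-bd-truncate} yields $f(\bM'')\geq f(\bM)-\Delta$; if $\rho(\bM',\bM'')>1$ the $2D$ term again dominates.

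The main obstacle, and the reason the hypothesis~\eqref{assumption-bd-truncate} is imposed for $\rho\leq 2$ rather than only $\rho\leq 1$, is precisely the second case of the lower bound in part two: after invoking the triangle inequality to pass from $\rho(\bM',\bM'')=1$ to a constraint on $\rho(\bM,\bM'')$, one is forced to compare $f$-values at a pair of points whose Hamming distance can be as large as $2$. Once this is recognized, the proof consists of careful but routine bookkeeping across the $(0,1,{>}1)$ cases, and no further analytic machinery is required.
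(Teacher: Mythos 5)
Your proposal is correct and follows essentially the same route as the paper's proof: feasibility of the infimum gives the upper bounds in both parts, case splits on the Hamming distance (with the $2D$ term dominating when the distance exceeds one) give the lower bounds, and the triangle inequality $\rho(\bM,\bM'')\le\rho(\bM,\bM')+\rho(\bM',\bM'')$ is correctly identified as the step that forces the hypothesis to hold up to distance~$2$. The only cosmetic difference is in the second part: the paper first disposes of the case $\bM'\in\mathcal{S}$ and then, for $\bM'\notin\mathcal{S}$, proves the one-sided monotonicity $f^{\downarrow}(\bM')\ge f^{\downarrow}(\bM)$, whereas you prove the weaker (but sufficient) two-sided bound $f(\bM)-\Delta\le f^{\downarrow}(\bM')\le f(\bM)+\Delta$ uniformly; both are valid bookkeeping.
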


\begin{proof} First note that, by definition, $f^{\downarrow}(\bM) \leq f(\bM)$ for $\bM \in \mathcal{S}$. For any $\bM'' \in \mathcal{S}$, by the assumption~\eqref{assumption-bd-truncate} and the definition of $D$, we note the inequality
\[
	f(\bM'') + \Delta\cdot \rho(\bM,\bM'') + 2D \cdot \mathbbm{1}\{ \rho(\bM,\bM'') > 1 \geq f(\bM).
\]
Consequently, we obtain that $f^{\downarrow}(\bM) \geq f(\bM)$ for $\bM \in \mathcal{S}$ by the definition of $f^{\downarrow}$. We thus conclude that $f(\bM) = f^{\downarrow}(\bM)$ for $\bM \in \mathcal{S}$. 

Next, let $\bM' \in \mathbb{R}^{d \times m}, \bM \in \mathcal{S}$ and $\rho(\bM,\bM') \leq 1$. If $\bM' \in \mathcal{S}$, then we invoke the assumption~\eqref{assumption-bd-truncate} to obtain the bound
\[
	|f^{\downarrow}(\bM) - f^{\downarrow}(\bM')| = |f(\bM) - f(\bM')| \leq \Delta.
\]
 Now, consider $\bM' \notin \mathcal{S}$. Note that by the definition of $f^{\downarrow}$,
\begin{align}\label{ineq1-lemma-bounded-difference-truncate}
	f^{\downarrow}(\bM') \leq f(\bM) + \Delta = f^{\downarrow}(\bM) + \Delta.
\end{align}
We claim that $f^{\downarrow}(\bM') \geq f^{\downarrow}(\bM)$.  To see this, note that for any $\bM'' \in \mathcal{S}$, $\rho(\bM',\bM'') \geq 1$ since $\bM' \notin \mathcal{S}$. If $\rho(\bM',\bM'') > 1$ then
\begin{align*}
	f(\bM'') + \Delta \cdot \rho(\bM',\bM'') + 2D \cdot \mathbbm{1}\{ \rho(\bM',\bM'') > 1 \} \geq f(\bM'') + 2D \geq f(\bM) = f^{\downarrow}(\bM).
\end{align*}
Otherwise $\rho(\bM',\bM'') = 1$. Consequently,
\begin{align*}
	f(\bM'') + \Delta \cdot \rho(\bM',\bM'') + 2D \cdot \mathbbm{1}\{ \rho(\bM',\bM'') > 1 \} = f(\bM'') + \Delta \overset{\1}{\geq} f(\bM) = f^{\downarrow}(\bM),
\end{align*}
where step $\1$ follows from assumption~\eqref{assumption-bd-truncate} and the fact that  $\rho(\bM,\bM'') \leq \rho(\bM,\bM') + \rho(\bM',\bM'') \leq 2$. Consequently, putting the two cases together yields 
\begin{align*}
	f^{\downarrow}(\bM') = \inf_{\bM'' \in \mathcal{S}} \Big\{ f(\bM'') + \Delta \cdot \rho(\bM',\bM'') + 2D \cdot \mathbbm{1}\{ \rho(\bM',\bM'') > 1 \} \Big\} \geq f^{\downarrow}(\bM),
\end{align*}
as desired.  Combining the bound $f^{\downarrow}(\bM') \geq f^{\downarrow}(\bM)$ with the inequality~\eqref{ineq1-lemma-bounded-difference-truncate} yields
\[
	|f^{\downarrow}(\bM) - f^{\downarrow}(\bM')| \leq \Delta \qquad \text{ for } \qquad \rho(\bM,\bM') \leq 1 \text{ and } \bM\in \mathcal{S},
\]
as desired.
\end{proof}

\begin{lemma}\label{fixed-point-equations-unique-solution}
Recall the definitions of fixed point equations~\eqref{eq:fixed-point},~\eqref{det_updates_eta1} and~\eqref{det_updates_eta2}, and $G_1 \sim \mathsf{N}(0,L^2)$, $G_2 \sim \mathsf{N}(0,\LZ^2)$ are independent. There exists a universal positive constant $C$ such that if $\lambda \geq C\max\{1, L^{4},\LZ^{4}\}d/m$ then the following holds.
\begin{enumerate}
	\item[(a.)] \label{item1-lemma} Fixed point equation~\eqref{eq:fixed-point} has a unique positive solution $(r_{1},r_{2})$, and $r_{1},r_{2} \geq \lambda m/d$. Moreover, if $\lambda \geq \max\{1,L^{2},\LZ^{2}\}$ then $r_{1},r_{2} \leq 2\lambda m/d$.
	\item[(b.)] \label{item2-lemma} Fixed point equations~\eqref{det_updates_eta1} and~\eqref{det_updates_eta2} have a unique, nonnegative solution.
	\item[(c.)] \label{item3-lemma} Recall that $\Err_{\sharp} = (\parcompX_{\sharp} \parcompZ_{\sharp}-1)^{2}+\perpcompX_{\sharp}^{2} + \perpcompZ_{\sharp}^{2}$. If $\perpcompX_{\sharp}, \perpcompZ_{\sharp} \leq 0.1$ and $0.3 \leq \|\bcoefX_{\sharp}\|_{2}, \|\bcoefZ_{\sharp}\|_{2} \leq 1.7$, then we have
	\[
		\frac{1}{5} \cdot \| \bcoefX_{\sharp} \bcoefZ_{\sharp}^{\top} - \bcoefX_{\star} \bcoefZ_{\star}^{\top}\|_{F}^{2} \leq \Err_{\sharp} \leq 12.5 \cdot \| \bcoefX_{\sharp} \bcoefZ_{\sharp}^{\top} - \bcoefX_{\star} \bcoefZ_{\star}^{\top}\|_{F}^{2}. 
	\]
\end{enumerate}
\end{lemma}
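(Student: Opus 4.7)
The three parts are essentially independent and each reduces to routine inequalities once the right quantities are isolated. My plan is to prove (a) by Banach fixed point on a compact box, (b) by noting the $\eta$-system is linear in $(\etaX^2,\etaZ^2)$ with a strictly subdominant coefficient matrix, and (c) by orthogonal expansion of the Frobenius norm. I expect the only mildly delicate step to be verifying the Lipschitz constants in part (a), where the assumption $\lambda \geq C\max\{1,L^4,\LZ^4\}d/m$ has to be used twice.

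\textbf{Part (a).} Rewrite~\eqref{eq:fixed-point} as $r_i = \Phi_i(r_1,r_2)$ with
\[
\Phi_1(r_1,r_2) = \oversamp\lambda + \oversamp\,\EE\Big\{\tfrac{G_2^2}{1 + G_2^2/r_1 + G_1^2/r_2}\Big\}, \qquad \Phi_2 \text{ analogously}.
\]
The lower bound $r_i\geq \lambda\oversamp$ is immediate since $\Phi_i\geq \oversamp\lambda$, and the upper bound $r_i\leq 2\lambda\oversamp$ follows from the trivial estimate $\frac{r_1 r_2 G_2^2}{r_1r_2+r_1G_1^2+r_2G_2^2}\leq G_2^2$, giving $\Phi_1\leq \oversamp(\lambda+\LZ^2)\leq 2\lambda\oversamp$ under $\lambda\geq \LZ^2$. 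Hence $\Phi$ maps the closed box $B := [\lambda\oversamp, 2\lambda\oversamp]^2$ into itself. For uniqueness I would show that $\Phi$ is a strict contraction on $B$ by bounding its partial derivatives; a direct computation yields
\[
\partial_{r_1}\Phi_1 \leq \oversamp\cdot\EE\{G_2^4\}/r_1^2 \leq \oversamp \cdot 3\LZ^4/(\lambda\oversamp)^2,\qquad \partial_{r_2}\Phi_1 \leq \oversamp\cdot \EE\{G_1^2G_2^2\}/r_2^2 \leq \oversamp \cdot L^2\LZ^2/(\lambda\oversamp)^2,
\]
and symmetrically for $\Phi_2$. Using $\lambda\oversamp \geq C\max\{1,L^4,\LZ^4\}$ together with AM--GM ($L^2\LZ^2\leq (L^4+\LZ^4)/2$) gives all four partial derivatives bounded by $O(1/C)$, so for $C$ sufficiently large $\Phi$ is a contraction in $\ell^\infty$ on $B$. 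Banach's fixed point theorem yields existence and uniqueness simultaneously.

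\textbf{Part (b).} Setting $x = \etaX^2$ and $y = \etaZ^2$, the equations~\eqref{det_updates_eta1}--\eqref{det_updates_eta2} become the affine system
\[
(I - c\bM)\begin{pmatrix}x\\y\end{pmatrix} = c\begin{pmatrix}V_3\\V_4\end{pmatrix},\qquad c = \tfrac{(d-2)m}{d^2},
\]
where $\bM\in\mathbb{R}^{2\times 2}$ has nonnegative entries given by the four expectations appearing in~\eqref{det_updates_eta1}--\eqref{det_updates_eta2}. The key estimate is that each entry of $\bM$ is bounded above by $\EE\{G_i^2\}/r_j$ via the pointwise inequality $\frac{r_j G_j^2}{r_1r_2+r_1G_1^2+r_2G_2^2}\leq 1$ (so its square is bounded by itself), and hence $c\|\bM\|_\infty \leq (m/d)\max\{L^2,\LZ^2\}/(\lambda m/d) = \max\{L^2,\LZ^2\}/\lambda$. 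For $\lambda$ as in part (a) this is $\leq 1/C$. It follows that $I-c\bM$ is invertible with Neumann expansion $\sum_{k\geq 0}(c\bM)^k$ having nonnegative entries; applied to the nonnegative vector $(V_3,V_4)^\top$ (nonnegativity of $V_3,V_4$ is immediate from their definitions~\eqref{definition-V3}--\eqref{definition-V4} as expectations of squares), this produces the unique nonnegative solution.

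\textbf{Part (c).} Orthogonally decompose $\bcoefX_\sharp = \parcompX_\sharp \bcoefX_\star + \perpcompX_\sharp\bcoefX^\perp$ and $\bcoefZ_\sharp = \parcompZ_\sharp\bcoefZ_\star + \perpcompZ_\sharp\bcoefZ^\perp$, where $\bcoefX^\perp\perp\bcoefX_\star$ and $\bcoefZ^\perp\perp\bcoefZ_\star$ are unit vectors. Since the four rank-one outer products appearing in the expansion of $\bcoefX_\sharp\bcoefZ_\sharp^\top - \bcoefX_\star\bcoefZ_\star^\top$ are pairwise Frobenius-orthogonal, one obtains
\[
\|\bcoefX_\sharp\bcoefZ_\sharp^\top - \bcoefX_\star\bcoefZ_\star^\top\|_F^2 = (\parcompX_\sharp\parcompZ_\sharp-1)^2 + \parcompX_\sharp^2\perpcompZ_\sharp^2 + \perpcompX_\sharp^2\parcompZ_\sharp^2 + \perpcompX_\sharp^2\perpcompZ_\sharp^2.
\]
The hypotheses $\|\bcoefX_\sharp\|_2,\|\bcoefZ_\sharp\|_2\in[0.3,1.7]$ and $\perpcompX_\sharp,\perpcompZ_\sharp\leq 0.1$ yield $\parcompX_\sharp^2,\parcompZ_\sharp^2 \in [0.08, 2.89]$. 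The upper direction follows from $\parcompX_\sharp^2\perpcompZ_\sharp^2 \geq 0.08\, \perpcompZ_\sharp^2$ (and symmetrically), which gives $\perpcompX_\sharp^2 + \perpcompZ_\sharp^2 \leq 12.5\,(\parcompZ_\sharp^2\perpcompX_\sharp^2 + \parcompX_\sharp^2\perpcompZ_\sharp^2) \leq 12.5\,\|\cdot\|_F^2$, combined with $(\parcompX_\sharp\parcompZ_\sharp-1)^2\leq\|\cdot\|_F^2$. The lower direction uses $\parcompX_\sharp^2,\parcompZ_\sharp^2\leq 2.89$ and $\perpcompX_\sharp^2\perpcompZ_\sharp^2\leq 0.01\min(\perpcompX_\sharp^2,\perpcompZ_\sharp^2)$ to bound $\|\cdot\|_F^2 \leq (\parcompX_\sharp\parcompZ_\sharp-1)^2 + (2.89+0.01)(\perpcompX_\sharp^2+\perpcompZ_\sharp^2) \leq 5\,\Err_\sharp$ (the constant $5$ absorbing the $2.9$ with some slack).
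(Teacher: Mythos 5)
Your proposal is correct in substance and follows the same overall route as the paper: a contraction-mapping argument for part (a), a $2\times 2$ linear-algebra argument for part (b), and the orthogonal Frobenius expansion for part (c). Two remarks. First, in part (b) your Neumann-series argument ($I - c\bM$ invertible with entrywise-nonnegative inverse $\sum_k (c\bM)^k$, applied to the nonnegative right-hand side) is a slightly cleaner route to nonnegativity than the paper's, which moves the off-diagonal terms to the left, bounds the determinant below by $0.9^2-0.1^2$, and reads off nonnegativity from the explicit Cramer formula; both are fine, and your pointwise bound $\frac{r_jG_j^2}{r_1r_2+r_1G_1^2+r_2G_2^2}\leq 1$ combined with $\frac{r_2G_2^2}{r_1r_2+\cdots}\leq G_2^2/r_1$ is exactly what is needed. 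Second, there is a small bookkeeping slip in part (c): adding the bound $(\parcompX_{\sharp}\parcompZ_{\sharp}-1)^2\leq \|\cdot\|_F^2$ to the bound $\perpcompX_{\sharp}^2+\perpcompZ_{\sharp}^2\leq 12.5\,\|\cdot\|_F^2$ yields $\Err_{\sharp}\leq 13.5\,\|\cdot\|_F^2$, not $12.5$. The fix is to apply the factor $12.5$ to the whole sum at once, as the paper does: since each of the three terms $(\parcompX_{\sharp}\parcompZ_{\sharp}-1)^2$, $\parcompX_{\sharp}^2\perpcompZ_{\sharp}^2$, $\parcompZ_{\sharp}^2\perpcompX_{\sharp}^2$ in the Frobenius expansion dominates $0.08$ times the corresponding term of $\Err_{\sharp}$, one gets $\|\cdot\|_F^2\geq 0.08\,\Err_{\sharp}$ directly. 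Finally, note that your self-map of the compact box $[\lambda\oversamp,2\lambda\oversamp]^2$ uses $\lambda\geq\max\{L^2,\LZ^2\}$, which the lemma lists only in the ``Moreover'' clause; this is harmless because under the paper's standing assumption $m\leq d$ the main hypothesis $\lambda\geq C\max\{1,L^4,\LZ^4\}d/m$ already implies it, but it is worth saying so explicitly (the paper avoids the issue by running the contraction on the unbounded set $\{r_1,r_2\geq\lambda m/d\}$).
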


\begin{proof} We prove each item in turn.

\paragraph{Proof of Lemma~\ref{fixed-point-equations-unique-solution}(a.):} We first show that fixed point equation~\eqref{eq:fixed-point} has a unique solution in the set $\mathcal{S} = \{(r_1,r_2): r_1, r_2 \geq \lambda m/d\}$. To this end, we reformulate equation~\eqref{eq:fixed-point} as 
\begin{align}\label{fixed-point-eq-reformulate}
	\left[ \begin{array}{c} r_{1} \\ r_{2}\end{array} \right] = g(r_{1},r_{2}), \qquad \text{ where } \qquad g(r_1,r_2) = \oversamp \cdot \left[ \begin{array}{c} \EE\big\{ \frac{r_1r_2G_2^2}{r_1r_2 + r_1G_1^2 + r_2G_2^2}\big\} + \lambda \\ 
	\EE\big\{ \frac{r_1r_2G_1^2}{r_1r_2 + r_1G_1^2 + r_2G_2^2}\big\} + \lambda \end{array} \right]
\end{align}
By the contraction mapping theorem, it suffices to show: 
\begin{enumerate}
	\item[(a)] Set $\mathcal{S} = \{(r_1,r_2): r_1, r_2 \geq \lambda m/d\}$ is a closed set;
	\item[(b)] For all $(r_{1},r_{2}) \in \mathcal{S}$, we have $g(r_1,r_2) \in \mathcal{S}$;
	\item[(c)] There exists $q<1$ such that the Jacobian satisfies $\| \nabla g(r_1,r_2)\|_{1} \leq q$ for all $(r_1,r_2) \in \mathcal{S}$.
\end{enumerate}
Note that the first point is true. For the second point, if $r_{1},r_{2}\geq 0$, then clearly $g(r_1,r_2) \in \mathcal{S}$ since the expectations in Eq.~\eqref{fixed-point-eq-reformulate} are non-negative. We next verify the third point. Computing the Jacobian of $g$ yields
\begin{align*}
	\nabla g(r_1,r_2) =  \oversamp \cdot \left[  \begin{array}{cc}  \EE\big\{ \frac{r_2^2 G_2^4}{(r_1r_2 + r_1G_1^2 + r_2G_2^2)^{2}}\big\} & 
	\EE\big\{ \frac{r_1r_2 G_1^2G_2^2}{(r_1r_2 + r_1G_1^2 + r_2G_2^2)^{2}}\big\} \\
	\EE\big\{ \frac{r_1 r_2 G_1^2G_2^2}{(r_1r_2 + r_1G_1^2 + r_2G_2^2)^{2}}\big\} & 
	\EE\big\{ \frac{r_1^2 G_1^4}{(r_1r_2 + r_1G_1^2 + r_2G_2^2)^{2}}\big\} 
	\end{array} \right].
\end{align*}
We thus compute
\begin{align*}
	\| \nabla g(r_1,r_2) \|_{1} &= \oversamp \cdot \EE \bigg\{ \frac{r_2^2 G_2^4 + 2r_1r_2 G_1^2G_2^2 + r_1^2 G_1^4 }{(r_1r_2 + r_1G_1^2 + r_2G_2^2)^{2}} \bigg\} \\ &\leq \oversamp \cdot \frac{r_2^2 \EE\{G_2^4\} + 2r_1r_2 \EE\{G_1^2\} \EE\{G_2^2\} + r_1^2 \EE\{G_1^4\} }{(r_1r_2)^2}  \overset{\1}{\leq}
	\frac{3\LZ^{4} + 2L^2 \LZ^2 + 3L^4}{\lambda^2 \oversamp} < 0.5,
\end{align*}
where step $\1$ follows from the bounds $r_1,r_2\geq \lambda m/d = \lambda \oversamp$,  and the facts that $G_1 \sim \mathsf{N}(0,L^2)$ and $G_2 \sim \mathsf{N}(0,\LZ^2)$; and last step follows from the bound $\lambda \geq C \max\{L^4,\LZ^4\} \oversamp$ with $C$ a large enough constant. Taking stock, we have showed that fixed point equation~\eqref{eq:fixed-point} has a unique solution in the set $\mathcal{S} = \{(r_1,r_2): r_1, r_2 \geq \lambda m/d\}$.

Next, if fixed point equation~\eqref{eq:fixed-point} admits a non-negative solution $r_1,r_2 \geq 0$, then since the expectations in Eq.~\eqref{eq:fixed-point} are non-negative, we obtain that $r_{1},r_{2} \geq \lambda \oversamp = \lambda m/d$, so that $r_1,r_2$ must be in $\mathcal{S}$. Therefore, we conclude that fixed point equation~\eqref{eq:fixed-point} has a unique, non-negative solution $r_1,r_2 \geq 0$.

Finally, note that 
\[
	\EE\Big\{ \frac{r_1r_2G_2^2}{r_1r_2 + r_1G_1^2 + r_2G_2^2} \Big\} \leq \EE\{G_{2}^{2}\} = \LZ^{2}\qquad \text{ and } \qquad
	\EE\Big\{ \frac{r_1r_2G_1^2}{r_1r_2 + r_1G_1^2 + r_2G_2^2} \Big\} \leq L^{2}.
\]
Consequently, if $\lambda \geq \max\{ L^2,\LZ^2\}$, then from Eq.~\eqref{fixed-point-eq-reformulate}, we obtain that $r_1 \leq \oversamp(\lambda + \LZ^2) \leq 2\lambda \oversamp = 2\lambda m/d$ and similarly $r_{2} \leq 2\lambda m/d$, which concludes the proof of part (a). 

\paragraph{Proof of Lemma~\ref{fixed-point-equations-unique-solution}(b.):} To reduce the notational burden, we use the shorthand
\begin{align*}
	a &= 1- \EE\Big\{ \frac{ \frac{(d-2)m}{d^{2}} r_2^2 G_2^4}{(r_1r_2 + r_1G_1^2 + r_2G_2^2)^2}\Big\},\qquad& a_2 = \EE\Big\{ \frac{ -\frac{(d-2)m}{d^{2}} r_2^2 G_1^2G_2^2}{(r_1r_2 + r_1G_1^2 + r_2G_2^2)^2}\Big\},\\
	a_3 &= \EE\Big\{ \frac{ -\frac{(d-2)m}{d^{2}} r_1^2 G_1^2G_2^2}{(r_1r_2 + r_1G_1^2 + r_2G_2^2)^2}\Big\}, \qquad \text{ and } 
	&a_4 = 1 - \EE\Big\{ \frac{ \frac{(d-2)m}{d^{2}} r_1^2 G_1^4}{(r_1r_2 + r_1G_1^2 + r_2G_2^2)^2}\Big\}.
\end{align*}
Equipped with this notation, we recall $V_3$~\eqref{definition-V3} and $V_4$~\eqref{definition-V4}, and re-write the fixed point equations~\eqref{det_updates_eta1} and~\eqref{det_updates_eta2} as  
\[
	\left[ \begin{array}{cc} a_1 & a_2 \\ a_3 & a_4 \end{array} \right] \left[ \begin{array}{c} \etaX^2 \\ \etaZ^2 \end{array} \right] 
	= \frac{(d-2)m}{d^{2}} \left[  \begin{array}{c} V_3 \\ V_4 \end{array} \right].
\]
The linear equation in the preceding display admits a unique solution if and only if the determinant $a_1a_4-a_2a_3 \neq 0$. Note that
\[
 a_1 \geq 1 - \frac{m}{d} \frac{ \EE\{G_2^4\} }{r_1^{2}} \geq 1 - \frac{3\LZ^{4}}{\lambda^{2} m/d} \geq 0.9 \qquad \text{ and } \qquad
 |a_2| \leq  \frac{m}{d}\frac{L^{2}\LZ^{2}}{r_1^2} \leq \frac{L^{2}\LZ^{2}}{\lambda^{2} m/d} \leq 0.1,
\]
since $r_1,r_2 \geq \lambda m/d$ and $\lambda \geq C\max\{1, L^{4},\LZ^{4}\}d/m$. Similarly, we find that $a_4\geq 0.9$ and $|a_3| \leq 0.1$. Consequently, $a_1a_4 - a_2a_3\geq 0.9^{2} - 0.1^{2} >0$, so that the solution of equations~\eqref{det_updates_eta1} and~\eqref{det_updates_eta2} is unique. Moreover, solving the linear equation yields
\[
	\etaX^{2} = \frac{(d-2)m}{d^{2}}\frac{a_4V_{3} - a_2 V_4}{a_1a_4-a_2a_3} \qquad \text{ and } \qquad 
	\etaZ^{2} = \frac{(d-2)m}{d^{2}}\frac{a_1V_{3} - a_3 V_4}{a_1a_4-a_2a_3}.
\]
Since $a_1,a_4 \geq 0.9$, $a_2,a_3 \leq 0$ and $V_3,V_4 \geq 0$, we obtain that the solution is non-negative.

\paragraph{Proof of Lemma~\ref{fixed-point-equations-unique-solution}(c.):} By definition we have 
\[
	\| \bcoefX_{\sharp} \bcoefZ_{\sharp}^{\top} -  \bcoefX_{\star} \bcoefZ_{\star}^{\top} \|_{F}^{2} = \|\bcoefX_{\sharp} \bcoefZ_{\sharp}^{\top}\|_{F}^{2} - 2\langle \bcoefX_{\sharp} \bcoefZ_{\sharp}^{\top}, \bcoefX_{\star} \bcoefZ_{\star}^{\top} \rangle + 
	\|\bcoefX_{\star} \bcoefZ_{\star}^{\top}\|_{F}^{2} = (\parcompX_\sharp \parcompZ_\sharp - 1)^{2} + \parcompX_{\sharp}^{2}\perpcompZ_{\sharp}^{2}
	+ \parcompZ_{\sharp}^{2}\perpcompX_{\sharp}^{2} + \perpcompX_{\sharp}^{2}\perpcompZ_{\sharp}^{2}.
\]
Using $\perpcompX_{\sharp} \leq 0.1$, $|\parcompX_{\sharp}| \leq \|\bcoefX_{\sharp}\|_{2} \leq 1.7$ and $|\parcompZ_{\sharp}| \leq \|\bcoefZ_{\sharp}\|_{2} \leq 1.7$, we obtain that
\[
	\| \bcoefX_{\sharp} \bcoefZ_{\sharp}^{\top} -  \bcoefX_{\star} \bcoefZ_{\star}^{\top} \|_{F}^{2} \leq (1.7^{2} + 0.1) \cdot 
	\big[(\parcompX_\sharp \parcompZ_\sharp - 1)^{2} + \perpcompX_{\sharp}^{2} + \perpcompZ_{\sharp}^{2}
	\big]\leq 5\cdot \Err_{\sharp}.
\]
Continuing, note that $\parcompX_{\sharp}^{2} = \|\bcoefX_{\sharp}\|_{2}^{2} - \perpcompX_{\sharp}^{2} \geq 0.3^{2} - 0.1^{2} = 0.08$ and similarly $\parcompZ_{\sharp}^{2} \geq 0.08$. Thus,
\[
	\| \bcoefX_{\sharp} \bcoefZ_{\sharp}^{\top} -  \bcoefX_{\star} \bcoefZ_{\star}^{\top} \|_{F}^{2} \geq 0.08 \big[ (\parcompX_\sharp \parcompZ_\sharp - 1)^{2} + \perpcompX_{\sharp}^{2} + \perpcompZ_{\sharp}^{2} \big] \geq 0.08 \Err_{\sharp}.
\]
Putting the pieces together yields the desired result.
\end{proof}

\end{document}